\pgfplotsset{compat=1.18}
\tikzset{
>=stealth',
  punktchain/.style={
    rectangle,
    rounded corners,
    draw=black, thick,
    minimum height=3em,
    text centered,
    on chain},
  line/.style={draw, thick, <-},
  element/.style={
    tape,
    top color=white,
    bottom color=blue!50!black!60!,
    minimum width=8em,
    draw=blue!40!black!90, very thick,
    text width=10em,
    minimum height=3.5em,
    text centered,
    on chain},
  every join/.style={->, thick,shorten >=1pt},
  decoration={brace},
  tuborg/.style={decorate},
  tubnode/.style={midway, right=2pt},
}
\newtheorem*{rep@theorem}{\rep@title}
\newcommand{\newreptheorem}[2]{%
\newenvironment{rep#1}[1]{%
 \def\rep@title{#2 \ref{##1}}%
 \begin{rep@theorem}}%
 {\end{rep@theorem}}}
\newcommand{\leftsup}[2]{{}^{#1}\!#2}
\newtheorem{Thm}{Theorem}[section]
\newtheorem{Prop}[Thm]{Proposition}
\newtheorem{Lem}[Thm]{Lemma}
\newtheorem{Cor}[Thm]{Corollary}
\newtheorem{thm-int}{Theorem}
\theoremstyle{definition}
\newtheorem{Def-s}[Thm]{Definition}
\newtheorem{Def}[Thm]{Definition}
\newcommand{\ignore}[1]{}
\newcommand{\sol}[1]{\textcolor{red}{#1}}
\newcommand{\Ale}[1]{\textcolor{purple}{#1}}
\renewcommand\_{^{}_}
\newcommand\PP{\mathbb P}
\newcommand\C{\mathbb C}
\newcommand\R{\mathbb R}
\newcommand\Z{\mathbb Z}
\newcommand\cA{\mathcal A}
\newcommand\cD{\mathcal D}
\newcommand\cH{\mathcal H}
\newcommand\cT{\mathcal T}
\newcommand\cV{\mathcal V}
\newcommand\du{\mathbb D}
\newcommand\ku{\operatorname{Ku}}
\newcommand\rd{\operatorname{\textbf{r}}}
\newcommand\dd{\operatorname{\textbf{d}}}
\newcommand\nd{\operatorname{\textbf{n}}}
\newcommand\T{{\mathcal T}_C}
\newcommand\cO{{\mathcal O}_C}
\newcommand\cK{{\mathcal O}_x}
\newcommand\Hom{\operatorname{Hom}}
\newcommand\Ext{\operatorname{Ext}}
\newcommand\Stab{\operatorname{Stab}}
\newcommand\gl{\operatorname{gl}}
\newcommand\Coh{\operatorname{Coh}}
\newcommand\cl{\operatorname{cl}}
\newcommand\cone{\operatorname{cone}}
\newcommand\Ho{\mathscr{H}\text{\kern -3pt {\calligra\large om}}}
\newcommand{\GL}{\widetilde{\textnormal{GL}}^{+}(2,\mathbb{R})}
\newcommand\stab{\operatorname{Stab}}
\newcommand\rk{\operatorname{rk}}
\begin{document}
	\title[Derived category of Coherent systems on curves]
	{Derived category of Coherent systems on curves and stability conditions\vspace{-2mm}}

\author{Soheyla Feyzbakhsh}
\address{Department of Mathematics, Imperial College, London SW7 2AZ, United Kingdom}
\email{s.feyzbakhsh@imperial.ac.uk}
    
\author{Aliaksandra Novik}
\address{Department of Mathematics, Imperial College, London SW7 2AZ, United Kingdom}
\email{a.novik24@imperial.ac.uk}

\begin{abstract} \noindent
Let $C$ be a smooth projective curve of genus $g>0$. We describe an open locus of Bridgeland stability conditions on the bounded derived category of coherent systems on $C$, and show that stability manifold detects the Brill--Noether theory of the curve. 
\end{abstract}
	\maketitle	

\vspace{-.4cm}

\setcounter{tocdepth}{1}
\tableofcontents

\section{Introduction}
Let $C$ be a smooth irreducible complex projective curve of genus $g > 0$. It was shown in \cite{macri:stability-conditions-on-curves} that the space of stability conditions (modulo the $\GL$–action) consists of a single point; thus there is no room to deform stability conditions and extract geometric information via wall–crossing on $\mathcal{D}^b(C)$. A first way around this is to embed $C$ into a “nice’’ higher–dimensional variety (e.g.\ a K3 surface), push the problem to the ambient variety, and apply wall–crossing there. This approach has resolved interesting questions in the Brill–Noether theory of curves, see e.g. \cite{bayer:brill-noether, feyz:mukai-program, feyz-li:clifford-indices, chunyi:stability-condition-quintic-threefold, bayer-li}. Its drawbacks are that it restricts attention to special curves admitting such embeddings and may lose track of certain data under pushforward to higher dimension. The alternative developed in this paper keeps the variety fixed but enlarges the category from coherent sheaves to \emph{coherent systems}. Following an idea originally suggested to us by Angela Ortega, we study the bounded derived category of coherent systems on $C$ and its Bridgeland stability conditions. As shown in \cite{AlexeevKuznetsov2025}, this category identifies with the Kuznetsov component of the blow-up of any smooth Fano threefold containing $C$ along $C$. For the origins of coherent systems and a review of the literature, see Section~\ref{related work}.

A (generalised) coherent system on $C$ is a triple $(V,E,\varphi)$, where $V$ is a finite-dimensional $\mathbb{C}$–vector space, $E$ is a coherent sheaf on $C$, and $\varphi\colon \cO\otimes V\to E$ is an arbitrary sheaf morphism.\footnote{In much of the literature on coherent systems one assumes $H^0(\varphi)$ is injective; in this paper we allow $\varphi$ to be arbitrary.}  We also write such a triple as
\[
[\cO \otimes V \xrightarrow{\ \varphi\ } E].
\]
The category of these triples, denoted $\mathcal{T}_C$, is abelian, and its bounded derived category is $\mathcal{D}(\mathcal{T}_C)$. The numerical Grothendieck group of $\cD(\cT_C)$ has rank three, identified via the class map
\[
\operatorname{cl}\colon \mathcal{N}\big(\cD(\cT_C)\big)\xrightarrow{\ \sim\ }\mathbb{Z}^3,
\]
which associates to any object $T=[\cO\otimes V\xrightarrow{\,\varphi\,} E]$ the vector
\[
\cl(T)
= \big(\rk E,\ \deg E,\ \dim_{\C} V\big)
\eqqcolon (\rd(T),\dd(T),\nd(T)).
\]
In the main part of the paper, we are concerned with an open subset
\[
\mathrm{Stab}^{\circ}\big(\mathcal{D}(\mathcal{T}_C)\big)\subset \mathrm{Stab}\big(\mathcal{D}(\mathcal{T}_C)\big)
\]
consisting of stability conditions $\sigma$ on $\mathcal{D}(\mathcal{T}_C)$ such that
\begin{enumerate}
  \item $[\cO \to 0]$ is $\sigma$–stable; and
  \item for each point $x\in C$, the object $[0 \to \cK]$ is $\sigma$–stable. 
\end{enumerate}
The complement of $\mathrm{Stab}^{\circ}\big(\mathcal{D}(\mathcal{T}_C)\big)$ will be addressed in subsequent work. There are two ways to construct stability conditions in $\Stab^{\circ}\big(\cD(\cT_C)\big)$.

\medskip
\noindent
$\bullet$ \textbf{(Gluing stability condition)} 
We know that $[\cO \to 0]$ is an exceptional object in $\cD(\cT_C)$, which induces a semiorthogonal decomposition 
\begin{equation}\label{eq.sod1}
    \cD(\cT_C) = \langle [\cO \to 0],\, \leftsup{\perp\,}{[\cO \to 0]} \rangle .
\end{equation}
The left orthogonal component $^{\perp}[\cO \to 0]$ is equivalent to the bounded derived category of coherent sheaves $\cD(C)$. 
By \cite[Theorem 2.7]{macri:stability-conditions-on-curves}, there is an isomorphism
\begin{align*}
      \GL &\xrightarrow{\;\cong\;} \Stab(\cD(C)), \qquad g \mapsto \sigma_g,
\end{align*}
where $\sigma_g$ is defined in~\eqref{eq_sigma_g} in Section~\ref{sec gluing}. Let $\sigma_{\mathcal{V}}$ denote the trivial stability condition on the bounded derived category of $\mathbb{C}$-vector spaces $\cD(\mathcal{V})$ (generated by $[\cO \to 0]$), given by 
\[
\cA_{\mathcal{V}} = \{\C^{\oplus n}\}_{n \ge 0}, \qquad Z_{\mathcal{V}}(n) = -n.
\]
In \cite{Collins-gluing-stability-conditions} it is shown that, for a suitable choice of $\sigma_{g}$ on $\cD(C)$, one can glue $\sigma_{\mathcal{V}}$ and $\sigma_{g}$ to obtain a pre-stability condition on the full category $\cD(\cT_C)$, denoted
\[
\gl^{(1)}(\sigma_{\mathcal{V}}, \sigma_{g}) \in \Stab^{\circ}\big(\cD(\cT_C)\big),
\]
where the superscript~$(1)$ refers to the first type of semiorthogonal decomposition given in \eqref{eq.sod1}.

\medskip
\noindent
$\bullet$ \textbf{(Tilting stability condition)} 
In parallel with Bridgeland’s construction of stability conditions on surfaces \cite{bridgeland:K3-surfaces}, we start with the slope function $\mu(T) = \frac{\dd(T)}{\rd(T)}$ (see Definition \ref{def-alpha}) for objects $T \in \cT_C$ with nonzero rank. This induces the notion of $\mu$–stability on $\cT_C$. Then every object in $\cT_C$ admits a Harder–Narasimhan filtration with respect to $\mu$–stability. 
We write $\mu^{+}(T)$ and $\mu^{-}(T)$ for the maximal and minimal slopes occurring in the HN filtration of $T$, respectively. 

Tilting the abelian category $\cT_C$ at slope $b \in \R$ yields the torsion pair $(\mathbb{T}^{b},\, \mathbb{F}^{b})$, where 
\begin{enumerate}
  \item[$-$] $\mathbb{T}^{b}$ is the full subcategory of objects $T \in \cT_C$ satisfying $\mu^{-}(T) > b$, and 
  \item[$-$] $\mathbb{F}^{b}$ is the full subcategory of objects $T \in \cT_C$ satisfying $\mu^{+}(T) \le b$.
\end{enumerate}

\noindent
To determine the region of tilting stability conditions, we define the \emph{Brill–Noether function}, analogous to the Le~Potier function \cite{DLP}, as
\begin{equation*}
    \Phi_{C} \colon \R \longrightarrow \R, \qquad
    \Phi_{C}(x) \coloneqq 
    \limsup_{\mu \to x}
    \left\{
      \frac{h^{0}(C,F)}{\rk(F)} \ \middle|\ 
      \begin{array}{l}
      F \in \Coh(C) \text{ semistable,}\\
      \mu(F) = \mu
      \end{array}
    \right\}.
\end{equation*}

Our main theorem states that these two types of stability conditions control the full open subset $\mathrm{Stab}^{\circ}\big(\mathcal{D}(\mathcal{T}_C)\big)$.  
\begin{Thm}[= Theorem~\ref{thm-main}]\label{thm-main-intro}
Up to the action of $\GL$, any stability condition $\sigma \in \mathrm{Stab}^{\circ}\big(\mathcal{D}(\mathcal{T}_C)\big)$ is of one of the following types:  
  \begin{enumerate}
      \item[\textbf{Type A.}] $\sigma$ is the gluing $\gl^{(1)}(\sigma_{\mathcal{V}}, \sigma_g)$ where $g= (T, f) \in \GL$ with $f(0) < \frac{1}{2}$, and $\sigma_{\mathcal{V}}$ is the stability condition on $\cD(\mathcal{V})$ with the heart $\cA_{\mathcal{V}}= \{\mathbb{C}^{\oplus n}\}_{n\geq 0}$ and stability function $Z_{\mathcal{V}}(n) = -n$.  
   
   \item[\textbf{Type B.}] The heart of $\sigma$ is given by $\cA(b) = \langle \mathbb{F}^b[1], \mathbb{T}^b \rangle$ for $b \in \mathbb{R}$ and the stability function is given by 
  \begin{equation*}
  Z_{b,w} \colon \mathcal{N}(\cD(\mathcal{T})) \rightarrow \mathbb{C} \ ,\;\;\; Z_{b,w}(T) = -\nd(T)+w\rd(T)+i (\dd(T)-b\rd(T))
  \end{equation*}
  where $w > \Phi_C(b)$. 
   \end{enumerate}
  \end{Thm}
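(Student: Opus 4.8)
The plan is to start from an arbitrary $\sigma=(\mathcal P,Z)\in\Stab^{\circ}\big(\cD(\cT_C)\big)$ and to reconstruct, in turn, its central charge and its heart $\mathcal P\big((0,1]\big)$ from the stability of the two distinguished families alone. Since $Z$ factors through the class map, it is the linear form $Z(T)=z_1\rd(T)+z_2\dd(T)+z_3\nd(T)$ with $z_i\in\C$, and the hypotheses fix $z_3=Z\big([\cO\to0]\big)$ and $z_2=Z\big([0\to\cK]\big)$ as charges of stable objects; the value $z_2$ is the same for every $x$ because all skyscrapers share the class $(0,1,0)$. First I would use the $\GL$–action to normalize $z_3=-1$, so that $[\cO\to0]$ sits on the negative real axis at phase $1$; this consumes the rotation and scaling and leaves a two–parameter stabilizer of the ray through $z_3$, which I keep in reserve.

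The next step is to extract a geometric slope structure on the sheaf part from the stable family of skyscrapers. Stability of every $[0\to\cK]$, the short exact sequences $0\to[0\to\cO(-x)]\to[0\to\cO]\to[0\to\cK]\to0$ relating line bundles to skyscrapers, and the support property together force the phases of nonzero–rank objects to be governed by the slope $\mu=\dd/\rd$, exactly as in a Macrì stability condition on $\cD(C)$; this is the $\cT_C$–analogue of the rigidity in \cite[Theorem 2.7]{macri:stability-conditions-on-curves}, and it records the sheaf–theoretic data as a point $g=(T,f)\in\GL$. The whole remaining problem is relational: how the rank–zero exceptional object $[\cO\to0]$ of phase $1$ attaches to this slope structure, and this is measured by the single phase $f(0)$.

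From here I would split according to $f(0)$. When $f(0)<\tfrac{1}{2}$ the relative phase configuration satisfies the gluing criterion of \cite{Collins-gluing-stability-conditions}, so the gluing of $\sigma_{\mathcal V}$ and $\sigma_g$ along the semiorthogonal decomposition \eqref{eq.sod1} is defined; matching central charges and comparing its extension–closed heart $\langle\cA,\Coh^{f(0)}(C)\rangle$ with $\mathcal P\big((0,1]\big)$ then identifies $\sigma=\gl(\sigma_{\mathcal V},\sigma_g)$, which is Type A. When $f(0)\ge\tfrac{1}{2}$ the gluing criterion fails, $[\cO\to0]$ can no longer be split off through \eqref{eq.sod1}, and the heart is forced to be the tilt $\cA(b)=\langle\mathbb F^{b}[1],\mathbb T^{b}\rangle$; in the retained normalization, where the residual stabilizer places the skyscraper charge at $z_2=i$, the central charge reads off as $Z_{b,w}$. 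The final inequality is pure positivity: a slope–$b$ semistable coherent system $[V\otimes\cO\to F]$ lies in $\mathbb F^{b}$, so it contributes $[V\otimes\cO\to F][1]$ to $\cA(b)$ with central charge of real part $\nd-w\rd=\dim V-w\,\rk F$, and $Z_{b,w}$ is a stability function precisely when this is negative for all such objects; since $\dim V\le h^0(F)$, this is exactly the Brill–Noether bound $w>\Phi_C(b)$, giving Type B.

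I expect the genuine obstacle to be the rigidity underlying the case split: that no bounded t–structure other than the gluing heart or the tilt $\cA(b)$ can carry $\sigma$. Establishing this means showing that $[\cO\to0]$, the skyscrapers, and their extensions already generate the heart, and controlling Harder–Narasimhan filtrations uniformly over $\Stab^{\circ}\big(\cD(\cT_C)\big)$ so that the single number $f(0)$ really does dichotomise the possibilities; the delicate half is proving that the gluing fails to produce a heart exactly when $f(0)\ge\tfrac{1}{2}$, which is where the phase geometry must be pinned down precisely. The second hard point, carrying the curve–theoretic content, is the sharpness of the Type B bound: one must check both that $w>\Phi_C(b)$ suffices — using the $\limsup$ defining $\Phi_C$ to bound $h^0/\rk$ for semistable sheaves of nearby slope — and that it is necessary, by exhibiting slope–$b$ coherent systems that destabilise $\cA(b)$ once $w\le\Phi_C(b)$.
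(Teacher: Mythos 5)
Your outline reproduces the shape of the statement (normalise by $\GL$, then a dichotomy between gluing and tilting), but the step that carries all the weight is assumed rather than proved: you posit that stability of $[\cO\to 0]$ and of all $[0\to\cK]$ ``forces the phases of nonzero-rank objects to be governed by the slope $\mu$, exactly as in a Macr\`i stability condition'', so that the sheaf data of $\sigma$ is a point $g=(T,f)\in\GL$ and everything reduces to the number $f(0)$. This is not a formal consequence of Macr\`i's theorem: the definition of $\Stab^{\circ}$ does not require $[0\to\cO]$ or any positive-rank object of $j_*\cD(C)$ to be $\sigma$-stable, and a priori $\sigma$ need not induce a stability condition on $j_*\cD(C)$ at all --- the $\sigma$-semistable factors of $j_*F$ could be genuine coherent systems, and $\cA\cap j_*\cD(C)$ need not be a heart of a bounded t-structure on $j_*\cD(C)$. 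Establishing this structure is exactly the content of the paper's proof, which moreover splits along a different dichotomy, namely the phase of $j_*\cK$ relative to $[\cO\to 0]$ (the integer $n$ of Lemma~\ref{lem-vanishing of cohomology}). When $\phi_{\sigma}(j_*\cK)>1$, the paper proves cohomology vanishing (Lemmas~\ref{lem-vanishing of cohomology} and~\ref{lem-vanishing i^*T}), constructs a torsion pair on $j_*\Coh(C)$ whose tilt is $\cA\cap j_*\cD(C)$, and verifies $\Hom^{\le 0}\bigl([\cO\to 0],\,\cA\cap j_*\cD(C)\bigr)=0$ so that Proposition~\ref{prop 2.2 cp10} applies; only at that point does your $g$ exist. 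When $\phi_{\sigma}(j_*\cK)\le 1$, the paper never passes through the sheaf part: it shows directly that $(\T\cap\mathcal{P}(0,1],\,\T\cap\mathcal{P}(-1,0])$ is a torsion pair in $\T$ whose tilt is the heart (Lemma~\ref{lem- case 0 -torsion pair}) and that $\mu$-stable objects fall into the two pieces according to the sign of $\Im[Z]$ (Lemma~\ref{lem-case 0 - slope stable sheaves}), which is what identifies $\cA$ with $\cA(b)$. By contrast, your inference ``for $f(0)\ge\tfrac{1}{2}$ the gluing criterion fails, hence the heart is forced to be the tilt $\cA(b)$'' is a non sequitur --- failure of a sufficient criterion for gluing produces no torsion pair --- and your proposed mechanism for the rigidity, that $[\cO\to 0]$, the skyscrapers and their extensions ``already generate the heart'', is false: no positive-rank object of $\cA$ is an iterated extension of these rank-zero objects.

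The second genuine gap is the inequality $w>\Phi_C(b)$. Your argument tests positivity of $Z_{b,w}$ only on $\mu$-semistable triples of slope exactly $b$. Since slopes of sheaves are rational, for irrational $b$ there are no such objects, so your test imposes no condition whatsoever, while the theorem still asserts $w>\Phi_C(b)$; even for rational $b$ it gives at most $w\ge \sup\{h^0(F)/\rk F\}$ over semistable $F$ of slope exactly $b$, which is weaker than the $\limsup$ defining $\Phi_C$. For the same reason your converse (``exhibit slope-$b$ coherent systems that destabilise $\cA(b)$ once $w\le\Phi_C(b)$'') cannot work as stated: the sheaves computing the $\limsup$ may exist only at nearby slopes, and what fails for $w\le\Phi_C(b)$ is the support property, not positivity of the stability function on the heart. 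The paper closes this by an openness argument: $\Phi_C$ is upper semicontinuous, so the region $w>\Phi_C(b)$ is open, and by the deformation theory of stability conditions it suffices to prove the bound for rational $b$, where testing the objects $[\cO\otimes H^0(E)\to E][1]\in\cA(b)$ does suffice. Without these two ingredients --- the torsion-pair analysis of the heart and the deformation argument for the bound --- your proposal is a correct description of what must be shown, but not a proof of it.
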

Note that the condition $f(0)<\frac{1}{2}$ for Type~A stability conditions is necessary for the gluing construction. Moreover, the intersection of the Type~A and Type~B loci is described in Proposition~\ref{prop-gluing-type 1}.

The Type~B stability conditions form a real two–dimensional family parameterised by \((b,w)\) with \(w>\Phi_C(b)\), which we denote by
\(\sigma_{b,w}:=(\cA(b),Z_{b,w})\).
Their wall–chamber decomposition is described in Proposition~\ref{prop. locally finite set of walls}.
Whereas the stability manifold of \(\cD(C)\) is essentially independent of the curve,
the stability manifold of \(\cD(\cT_C)\) is closely controlled by the Brill–Noether theory of \(C\).
We will return to applications of wall-crossing in this two–dimensional slice to the Brill–Noether theory of vector bundles on curves in subsequent work.

As a result of Theorem \ref{thm-main-intro}, we can describe the complex manifold. 

\begin{Cor}[= Corollary~\ref{cor complex manifold}]\label{cor-stability-manifold-intro}
We have
\[
\Stab^{\circ}\big(\cD(\cT_C)\big)=U_A\cup U_B.
\]
as the union of the open loci \(U_A\) and \(U_B\), described as follows:
\begin{itemize}
  \item[\(\bullet\)] The open locus \(U_A\) consists of stability conditions \(\sigma\) such that
\([\cO\to0]\), \([0\to\cK]\), and \([0\to\cO]\) are
\(\sigma\)-stable of phases \(\phi_1,\phi_2,\phi_3\), respectively. Writing
\[
z_i=m_ie^{i\pi\phi_i},
\qquad
m_i>0,\ \phi_i\in\R,
\]
we obtain
\begin{equation}\label{eq.UA}
    U_A=
\Bigl\{
(z_1,z_2,z_3)\in(\widetilde{\C^*})^3
\ \Big|\
\phi_1-1<\phi_3<\phi_2<\phi_3+1
\Bigr\} \footnote{Via Bridgeland's local homeomorphism, \(U_A\) is identified with an open subset of
\((\widetilde{\C^*})^3\), where
\(\widetilde{\C^*}\cong\R_{>0}\times\R\) denotes the universal covering of
\(\C^*\).}.
\end{equation}


  \item[\(\bullet\)] The open locus $U_B$ consists of stability conditions \(\sigma\) such that \([\cO\to 0]\) and \([0\to \cK]\) are \(\sigma\)-stable and
  \(\phi_{\sigma}([0\to \cK])<\phi_{\sigma}([\cO\to 0])\).
  On \(U_B\) the right action of \(\GL\) is free, and
  \[
  U_B\big/\GL\;=\;\{\,b+iw\in\C \mid w>\Phi_C(b)\,\}.
  \]
\end{itemize}
\end{Cor}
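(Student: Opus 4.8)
The plan is to read the Corollary as a coordinate description of the two families produced by Theorem~\ref{thm-main-intro}, using the central charges of the three standard objects as coordinates. First I would record that the classes $\cl([\cO\to0])=(0,0,1)$, $\cl([0\to\cK])=(0,1,0)$ and $\cl([0\to\cO])=(1,0,0)$ form a $\Z$-basis of $\mathcal{N}\big(\cD(\cT_C)\big)\cong\Z^3$, so the forgetful map $\sigma\mapsto Z$ (a local homeomorphism onto an open subset of $\Hom(\mathcal{N},\C)\cong\C^3$ by Bridgeland's deformation theorem, whence $\dim_{\C}\Stab=3$) is recorded faithfully by the triple $(z_1,z_2,z_3):=\big(Z([\cO\to0]),\,Z([0\to\cK]),\,Z([0\to\cO])\big)$. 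On $\Stab^{\circ}$ the first two objects are stable by definition, so $z_1,z_2\in\C^*$, and whenever an object is stable I write $z_i=m_ie^{i\pi\phi_i}$ with $m_i>0$. The content of the Corollary is then to match the Type~A and Type~B loci of the Theorem with the regions cut out by stability and the phase inequalities among the $\phi_i$.

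For $U_A$ I would argue both inclusions. For a gluing $\gl(\sigma_{\mathcal{V}},\sigma_g)$ the object $[\cO\to0]$ is stable as the generator of the $\cD(\mathcal{V})$-factor, and since every stability condition on the orthogonal $[\cO\to0]^{\perp}\cong\cD(C)$ is a $\GL$-translate of $\mu$-stability \cite{macri:stability-conditions-on-curves}, the stable sheaves $\cK$ and $\cO$ stay stable after gluing; this places the three phases in the configuration $\phi_3<\phi_2<\phi_3+1$ (the skyscraper and structure sheaf always lie within one unit in $\cD(C)$), while the gluing criterion of \cite{Collins-gluing-stability-conditions} with $f(0)<\tfrac12$ is exactly $\phi_1-1<\phi_3$. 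Conversely, given $(z_1,z_2,z_3)\in(\C^*)^3$ with all three objects stable and these inequalities, the restriction of $\sigma$ to $\cD(C)$ along \eqref{eq.sod1} is a genuine stability condition and $[\cO\to0]$ glues back, reconstructing a Type~A condition. Hence the forgetful map identifies $U_A$ with the open subset of $(\C^*)^3$ in \eqref{eq.UA}; openness is immediate from the strict inequalities.

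For $U_B$ I would exploit that the family $\sigma_{b,w}$ has the special feature $z_1=Z_{b,w}([\cO\to0])=-1$ and $z_2=Z_{b,w}([0\to\cK])=i$ for all $(b,w)$, with only $z_3=Z_{b,w}([0\to\cO])=w-ib$ varying; in particular $\phi_2=\tfrac12<1=\phi_1$, and $w>\Phi_C(b)\ge0$ forces $z_3\in\C^*$. Conversely, for any $\sigma\in U_B$ the pair $(z_1,z_2)$ is an $\R$-basis of $\C$ with the same orientation as $(-1,i)$ precisely because $\phi_2<\phi_1$, so there is a unique $T\in\mathrm{GL}^+(2,\R)$ with $T^{-1}z_1=-1$ and $T^{-1}z_2=i$; lifting $T$ to the element of $\GL$ carrying the heart to $\cA(b)$ normalizes $\sigma$ to some $\sigma_{b,w}$, with $(b,w)$ read off from $T^{-1}z_3=w-ib$ and $w>\Phi_C(b)$ forced by $\sigma\in\Stab^{\circ}$. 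The stabilizer of any such $\sigma$ must fix the two linearly independent vectors $z_1,z_2$, hence $T=\mathrm{id}$ and the lift is trivial, so $\GL$ acts freely; the family $\{\sigma_{b,w}:w>\Phi_C(b)\}$ is therefore a global slice and $U_B/\GL=\{b+iw:w>\Phi_C(b)\}$. Finally $\Stab^{\circ}\big(\cD(\cT_C)\big)=U_A\cup U_B$ is the $\GL$-invariant reformulation of the Type~A/Type~B dichotomy of Theorem~\ref{thm-main-intro}.

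The step I expect to be the main obstacle is the two reconstruction arguments, namely showing that the purely intrinsic data — stability of the standard objects together with the phase inequalities — forces the structural description of the Theorem, i.e.\ a genuine gluing in the $U_A$ case and the tilted heart $\cA(b)$ in the $U_B$ case. In particular, for $U_B$ the delicate point is verifying that the normalized value $T^{-1}z_3=w-ib$ lands in the admissible region $w>\Phi_C(b)$ exactly when $\sigma$ satisfies the support property, so that the Brill--Noether bound of the Theorem is precisely what cuts out the boundary of the quotient.
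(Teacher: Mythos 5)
Your proposal has the right skeleton but leaves unproven exactly the steps that constitute the paper's proof, and in one place the outline itself would not go through. You flag ``the two reconstruction arguments'' as the expected main obstacle and do not carry them out; but these reconstructions are not something to be re-derived from the coordinates --- they are precisely what Theorem~\ref{thm-main} (= Theorem~\ref{thm-main-intro}) supplies, \emph{provided} one also deals with the overlap of the two types, which your outline ignores. Concretely: for $\sigma\in U_A$, Theorem~\ref{thm-main} says $\sigma$ is, up to $\GL$, either a gluing of Type~A or a tilting $\sigma_{b,w}$ of Type~B. To conclude that every element of $U_A$ is a gluing --- which is what the identification \eqref{eq.UA} requires --- you must absorb the second possibility. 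The paper does this in two steps you never invoke: (i) if $\sigma_{b,w}\in U_A$ then $b<0$, because for $b\ge 0$ the short exact sequence $0\to[\cO\to 0]\to[0\to\cO][1]\to[\cO\to\cO][1]\to 0$ in $\cA(b)$ shows $[0\to\cO][1]$ is not stable; and (ii) Corollary~\ref{cor-tilting is gluing}, which says a tilting with $b<0$ literally coincides with a gluing $\gl^{(1)}(\sigma_{\cV},\sigma_g)$. Your closing sentence treats $U_A$ and $U_B$ as matching Type~A and Type~B exactly; this is false --- the types overlap (the paper notes $U_A\cap U_B\neq\emptyset$) --- and without (i)+(ii) the description of $U_A$ does not follow. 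Similarly, for $U_B$ your normalization of $(z_1,z_2)$ to $(-1,i)$ and the freeness argument are fine (they match the paper's ``central charge not contained in a real line''), but the assertion that the normalized $\sigma$ \emph{is} some $\sigma_{b,w}$ with $w>\Phi_C(b)$ is again Case~(I) of the proof of Theorem~\ref{thm-main} (one shows the heart is the tilt $\cA(b)$, and then $w>\Phi_C(b)$ from $\Re\,Z<0$ on the objects $[\cO\otimes H^0(E)\to E][1]$), combined with Theorem~\ref{thm-tilting} for the quotient description; you identify this as ``delicate'' but neither prove it nor reduce it to the theorem.

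A smaller but real issue is your forward direction for $U_A$: the claim that $j_*\cK$ and $j_*\cO$ ``stay stable after gluing'' does not follow from $\Stab(\cD(C))\cong\GL$; stability of objects of a factor is not automatic under gluing. What makes it true here is that the SOD projection functors are triangulated: for a short exact sequence $0\to T'\to j_*F\to T''\to 0$ in the glued heart one gets $i_*i^*T''\cong i_*i^*T'[1]$ with both $i^*T'$ and $i^*T''$ in $\cA_{\cV}$, forcing both to vanish, so all sub- and quotient objects of $j_*F$ lie in $j_*\Coh^{f(0)}(C)$, where the glued central charge restricts to $Z_g$. With that fixed, your forward inclusion is a useful complement to the paper's terse appeal to Proposition~\ref{prop-gluing-type 1}. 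But overall, the parts of your proposal that are complete (coordinates, phase inequalities, freeness) are the bookkeeping; the load-bearing steps are missing, and the Type~A/Type~B overlap is mishandled, so as it stands the proof is incomplete.
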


Note that, in the Corollary  above, \(U_A\) (resp. \(U_B\)) consists, up to the \(\GL\)-action, of the Type~A (resp. Type~B) stability conditions of Theorem~\ref{thm-main-intro}, and \(U_A\cap U_B\neq \emptyset\).

On the other hand, analogously to the space of (weak) stability conditions on varieties of dimension $\ge 2$, the large-volume limit is also significant in our two–dimensional slice $(b,w)$: in this regime, we recover the classical notion of $\alpha$–stability for coherent systems, see Proposition~\ref{prop-positive alpha} for more details. 

In the final section, we describe another open locus $\widetilde{\Stab}^{\circ}(\mathcal{D}(\T))$ of the stability manifold, consisting of stability conditions $\sigma$ such that $[0\to\cO]$ and $[0\to\cK]$ are $\sigma$-stable for all $x\in C$. To this end, we study stability conditions arising from the gluing construction with respect to the second semiorthogonal decomposition
\[
\cD(\cT_C)=\langle {[\cO \to \cO]}^{\perp},\ [\cO \to \cO]\rangle,
\]
where $[\cO \to \cO]^{\perp}$ is equivalent to $\cD(C)$. We denote by $\gl^{(2)}(\sigma_g,\sigma_{\mathcal V})$ the stability condition obtained by gluing a stability condition $\sigma_g$ on $\cD(C)$ with the trivial stability condition $\sigma_{\mathcal V}$ on $\cD(\mathcal V)$. Our final theorem states that if $\sigma\in\widetilde{\Stab}^{\circ}(\mathcal{D}(\T))$, then $[0\to E]$ is $\sigma$-stable if and only if $E$ is a slope-stable sheaf on $C$, up to a shift.

\begin{Thm}[= Theorem~\ref{thm-second open subset}] \label{theorem-intro-second gluing}
If $\sigma \in \widetilde{\Stab}^{\circ}(\cD(\T))$, then, up to the $\GL$-action, $\sigma$ is either of the form $\gl^{(1)}(\sigma_g, \sigma_{\mathcal{V}})$ or $\gl^{(2)}( \sigma_g, \sigma_{\mathcal{V}})$ for some $g \in \GL$. 
\end{Thm}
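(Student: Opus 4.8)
Throughout I use the short exact sequence in $\cT_C$
\[
0 \longrightarrow [0\to\cO] \longrightarrow [\cO\to\cO] \longrightarrow [\cO\to0] \longrightarrow 0,
\]
with classes $(1,0,0)+(0,0,1)=(1,0,1)$, as the bridge between the two semiorthogonal decompositions. The plan is to first determine $\sigma$ on the sheaf part, and then to decide the gluing type by following the exceptional direction across this sequence.

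The first step is to pin $\sigma$ down on $\cD_1:=[\cO\to0]^{\perp}\simeq\cD(C)$, embedded by $E\mapsto[0\to E]$. I would show that any $\sigma$-stable object $T$ with $\nd(T)=0$ lies in $\cD_1$: its component along the exceptional factor $\langle[\cO\to0]\rangle$ of \eqref{eq.sod1} is the only source of $\nd$, and $\nd(T)=0$ together with stability (and the exceptionality of $[\cO\to0]$) forces that component to vanish, so $T\simeq[0\to E]$. Granting this, the hypothesis that $[0\to\cO]$ and every $[0\to\cK]$ is $\sigma$-stable shows that $\sigma$ induces a stability condition on $\cD_1\simeq\cD(C)$; by the isomorphism $\Stab(\cD(C))\cong\GL$ of \cite[Theorem~2.7]{macri:stability-conditions-on-curves} this induced condition is $\sigma_g$ for a unique $g$, and its stable objects are exactly the slope-stable sheaves up to shift. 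This is the ``$\sigma$-stable $\Leftrightarrow$ slope-stable'' statement advertised before the theorem, and it determines $\sigma$ completely on $\cD_1$.

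It remains to locate the exceptional direction, and here the displayed sequence is decisive. Since the assertion is $\GL$-equivariant, the only remaining $\GL$-invariant datum is the phase of the exceptional direction relative to the Macr\`i phases of $\cD_1$, i.e.\ the position of $\arg Z_\sigma([\cO\to0])$ relative to the sheaf phases. If this phase lies above the sheaf region, the sequence exhibits $[\cO\to0]$ as a $\sigma$-stable object sitting on top, and --- after using $\GL$ to render the vector-space factor trivial --- I would verify the gluing criterion of \cite{Collins-gluing-stability-conditions} for \eqref{eq.sod1}: the heart is the extension-closure $\langle\cA,\cA_g\rangle$ of the vector-space heart with $\cA_g$, the object $[\cO\to0]$ is $\sigma$-stable, and the central charges match, whence $\sigma=\gl^{(1)}(\sigma_g,\sigma_{\cV})$. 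If instead this phase lies below the sheaf region, then $[\cO\to0]$ can no longer be $\sigma$-stable; the sequence realises the mutation replacing it by the nonsplit extension $[\cO\to\cO]$, which is now the $\sigma$-stable exceptional object, and the identical verification for the decomposition $\langle[\cO\to\cO]^{\perp},[\cO\to\cO]\rangle$ yields $\sigma=\gl^{(2)}(\sigma_g,\sigma_{\cV})$.

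The main obstacle is the phase and gluing bookkeeping of the last step: one must check that the two phase regimes exhaust every $\sigma$ satisfying the hypotheses (so that nothing fails to be a gluing), that the compatibility inequalities of \cite{Collins-gluing-stability-conditions} hold throughout each regime, and --- most delicately --- that at the transition the displayed sequence genuinely swaps the stable exceptional object from $[\cO\to0]$ to $[\cO\to\cO]$ rather than destabilising the sheaf part $\cD_1$. A secondary technical point is the forward direction of the first step; here the facts that $[\cO\to0]$ is simple in $\cT_C$ and that $\Hom([0\to E],[\cO\to0])=0$ keep the projection argument finite and under control.
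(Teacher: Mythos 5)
There is a genuine gap --- in fact two, and they are exactly the two places where the paper has to work hardest. First, your opening step asserts that any $\sigma$-stable object $T$ with $\nd(T)=0$ lies in $j_*\cD(C)$, and from this that $\sigma$ \emph{restricts} to a stability condition $\sigma_g$ on $[\cO\to 0]^{\perp}\simeq\cD(C)$. Neither assertion is justified: an object can have $\nd(T)=0$ while $i^*T$ is a nonzero complex with cancelling dimensions (e.g.\ concentrated in two adjacent degrees), and "stability plus exceptionality of $[\cO\to 0]$" gives no mechanism to rule this out before you know the heart's structure. More fundamentally, a stability condition does not automatically restrict to an admissible subcategory: one must produce a bounded t-structure on $j_*\cD(C)$ compatible with $\sigma$ and verify the HN property. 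The paper never argues via classes with $\nd=0$; instead it proves cohomology-vanishing statements for objects of the heart (using stability of all $j_*\cK$, as in Lemma~\ref{lem-vanishing-heart}) and builds the induced heart $\cA_1$ on $j_*\Coh(C)$ as a tilt along an explicit torsion pair, after which Proposition~\ref{prop 2.2 cp10} applies. Your "granting this" hides precisely that construction.

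Second, the decisive claim of your last step --- that when the exceptional direction sits "below the sheaf region" the triangle $[0\to\cO]\to[\cO\to\cO]\to[\cO\to 0]$ swaps the stable exceptional object from $[\cO\to 0]$ to $[\cO\to\cO]$ --- is stated, flagged as "the main obstacle," and never proved. This is the actual core of the paper's argument: Proposition~\ref{prop destab triangle [O_c -> 0] for geometric} together with Lemmas~\ref{lem-decompos}--\ref{lem-destab-triangle} analyzes an arbitrary HN/JH destabilizing triangle $T_1\to[\cO\to 0]\to T_2[1]$ and shows, using $\sigma$-stability of $[0\to\cO]$ and of all $j_*\cK$, that necessarily $T_1=[\cO\to\cO]$ and $T_2=[0\to\cO]$ with $T_1$ stable; only then can Case 2 proceed to the $\gl^{(2)}$ identification. (Note also that an unstable $[\cO\to 0]$ has no phase, so the dichotomy "phase above/below the sheaf region" is not even well defined without this analysis; and your exhaustiveness check is likewise missing.) For the complementary case, the paper does not re-verify gluing by hand as you propose: once $[\cO\to 0]$ is stable it invokes the classification of $\Stab^{\circ}$ (Theorem~\ref{thm-main}) plus Corollary~\ref{cor-tilting is gluing} to land in $\gl^{(1)}$. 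So your outline matches the paper's broad shape (two cases, one per gluing type, pivoting on the mutation triangle), but both load-bearing steps are absent.
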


\subsection{Foundational and Related Works}\label{related work}
The notion of a coherent system on a smooth projective curve $C$—a pair $(E,V)$ with $E$ a vector bundle and $V\subset H^0(C,E)$ a linear subspace of dimension $n$—together with the concept of $\alpha$-(semi)stability depending on a real parameter $\alpha$, originates in Le~Potier’s monograph~\cite{potier-1}. 
These ideas were foreshadowed by Bradlow’s study of ``stable pairs'' (the case $n=1$)~\cite{Bradlow-matrics} and further developed in the moduli-theoretic analyses of Thaddeus \cite{Thaddeus-stable-pairs} and He 
\cite{He1996}.

One of the first systematic treatments of coherent systems on curves of arbitrary type \((r,d,n)\) is due to Bradlow–García-Prada–Muñoz–Newstead~\cite{BradlowGaraPradaMunozNewstead2003}. They constructed projective moduli spaces of \(\alpha\)-stable coherent systems, identified the discrete set of critical values of \(\alpha\), and related the large-\(\alpha\) chamber to the classical Brill–Noether loci. 
A substantial body of subsequent work has investigated the birational and topological geometry of these moduli spaces (see, e.g.,~\cite{cris-hodge,bradlow-coherent,bradlow-biratonal}) and their non-emptiness (see, e.g.,~\cite{news-exit,news-det,news-small,bigas-exist,Zhang-ex}); for an overview, see Newstead’s survey~\cite{news-survey}.

A detailed analysis of coherent systems and their moduli spaces has been carried out for various classes of special curves~\cite{angela-special}, including the projective line~\cite{news-p}, elliptic curves~\cite{lange-elliptic}, and Petri-general curves~\cite{news-petri}.  
The theory has also been extended to singular settings such as nodal or cuspidal curves of compact type~\cite{usha-nodal, ballico-cuspidal}.

Beyond questions of existence and birational geometry, coherent systems play a key role in the study of Butler’s conjecture on the stability of kernels of evaluation maps~\cite{ortega-butler, leticia-butler}, which will be discussed in detail in subsequent work.

In very recent developments, Kuznetsov and Alexeev have shown that derived categories of coherent systems naturally arise in the context of compact-type degenerations of curves~\cite{AlexeevKuznetsov2025}. From the point of view of stability conditions, the space of stability conditions on the bounded derived category of holomorphic triples was studied in \cite{Eva:bridgeland-stability-condition-on-holomorphic-triples}, where objects are triples $(E_1,E_2,\varphi)$ with $E_1,E_2$ coherent sheaves on a curve $C$ and $\varphi$ an arbitrary morphism. Unlike our case, that stability manifold depends only on the genus of the curve $C$ and not on the ambient geometry of $C$. Moreover, in the very recent preprint~\cite{comma}, stability conditions on abelian comma categories—of which the category of coherent systems is an example—are studied. We have been informed of work in progress~\cite{new} in which the authors construct stability conditions on the bounded derived category of coherent systems on integral curves via tilting.

\subsection{Organization of the paper}
In Section~\ref{sec D(T)}, we introduce generalised coherent systems and analyse their derived category. Section~\ref{section tilt} establishes the existence of a real two-dimensional slice of stability conditions arising from the tilting construction. In Section~\ref{sec gluing}, we review the technique of gluing stability conditions with respect to a semiorthogonal decomposition and demonstrate their existence in our setting. In Section~\ref{sec main thm}, we study the open locus of the stability manifold and prove Theorem~\ref{thm-main-intro}. In Section~\ref{sec chamber dec}, we first describe the wall-and-chamber decomposition within the two-dimensional slice, and then study the large volume limit, recovering classical stability of coherent systems. In Section~\ref{sec second type}, we study the second open locus of the stability manifold and prove Theorem~\ref{theorem-intro-second gluing}. Finally, in the appendix, we specialize to the case where $C$ is an elliptic curve and give a complete description of the locus of geometric stability conditions.

\subsection*{Acknowledgments}
We are especially grateful to Angela Ortega for drawing our attention to the category of coherent systems, and to Sasha Kuznetsov for suggesting the idea behind Lemma~\ref{lem-dual} and for his helpful comments on a preliminary version of this paper. We also thank Arend Bayer, Gavril Farkas, Richard Thomas, and Yukinobu Toda for helpful discussions.
S.F.\ acknowledges support from the Royal Society (URF/R1/23119). 
     
\section{Derived category of coherent systems}\label{sec D(T)}
	Let $C$ be a smooth irreducible complex projective curve of genus $g$. Let $\mathcal{V}$ be the abelian category of finite-dimensional $\mathbb{C}$-vector spaces, and let $\mathcal{T}_C$ be the category of triples $(V, E , \varphi)$ where $V \in \mathcal{V}$, $E \in \text{Coh}(C)$, and $\varphi \colon \cO \otimes V \to E$ is a sheaf morphism. A morphism $\psi \colon (V, E , \varphi) \to (V', E' , \varphi')$ between two triples consists of a pair $\psi = (\psi_1, \psi_2)$ of a morphism of vector spaces $\psi_1 \colon V \to V'$ and a sheaf morphism $\psi_2 \colon E \to E'$ such that the following diagram commutes:
 \begin{equation*}
     \xymatrix{
	\
    \cO \otimes V\ar[r]^{\text{id}\otimes \psi_1}\ar[d]^{\varphi}&\cO \otimes V'\ar[d]^{\varphi'} \\
			 E\ar[r]^{\psi_2}&E'.	}
 \end{equation*}
 We usually denote the triple $(V, E , \varphi)$ by $[\cO \otimes V \xrightarrow{\varphi} E]$. One can easily check that $\mathcal{T}_C$ is an abelian category. Note that $\mathcal{T}_C$ contains the non-abelian category of coherent systems $[\mathcal{O}_C \otimes V\xrightarrow{\varphi} E]$ where $H^0(\varphi)$ is injective. We denote by $\cD(\mathcal{T}_C)$ the bounded derived category of $\mathcal{T}_C$. Its objects are the same as the objects of the category of complexes Kom$(\mathcal{T}_C)$ namely, complexes of the form 
	\begin{equation*}\label{objects}
	\xymatrix{
		\dots \ar[r] &\mathcal{O}_C \otimes V_{{i-1}}\ar[r]\ar[d]&\mathcal{O}_C \otimes V_{i} \ar[r]\ar[d]&\mathcal{O}_C \otimes V_{i+1}\ar[d]\ar[r]&\dots \\
			\dots \ar[r] &E_{i-1}\ar[r]&E_i\ar[r]&E_{i+1}\ar[r]&\dots	}
	\end{equation*} 

 We may enlarge the category $\mathcal{T}_C$ to $\mathcal{T}_C^{\text{quasi}}$ which contains triples $(V, E, \varphi)$ such that $E$ is a quasi-coherent sheaf and $V$ is allowed to be an infinite-dimensional $\mathbb{C}$-vector space. By \cite[Theorem 1.3]{He:espaces-de-modules}, an object in $\mathcal{T}_C^{\text{quasi}}$ is injective if and only if it is of the form 
   \begin{equation*}
       [\cO \otimes V \rightarrow 0] \oplus [\cO \otimes \Hom(\cO, I) \xrightarrow{ev} I],
   \end{equation*}
   where $I$ is an injective quasi-coherent sheaf on $C$. 
   \begin{Lem}\label{lem-injective-resol}
	Any object $[\mathcal{O}_C \otimes V \xrightarrow{\varphi} E] \in \mathcal{T}_C^{\text{quasi}}$ has an injective resolution of the form
	   \begin{equation*}
 	\xymatrix{
0 \ar[r]\ar[d]&\mathcal{O}_C \otimes V \ar[r]^-{d_0'}\ar[d]_-{\varphi}& \mathcal{O}_C \otimes (V_1 \oplus H^0(I_1)) \ar[r]^{d_1'}\ar[d]_-{(0, \text{ev})}& \mathcal{O}_C \otimes (V_2 \oplus H^0(I_2))\ar[r]\ar[d]_-{(0, \text{ev})}&\mathcal{O}_C \otimes V_3\ar[d]\ar[r]&0\ar[d] \\
0 \ar[r]& E \ar[r]^-{d_0}&I_1 \ar[r]^{d_1}& I_2\ar[r]  & 0\ar[r]&0    
}
  \end{equation*} 
  	for suitable vector spaces $V_1, V_2$ and $V_3$ where $0 \rightarrow E \xrightarrow{d_0} I_1 \xrightarrow{d_1} I_2 \rightarrow 0$ is an injective resolution of $E$. 
	\end{Lem}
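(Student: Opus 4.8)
The plan is to deduce the statement from the classification of injectives already recalled from \cite{He:espaces-de-modules}, together with the horseshoe lemma. Two preliminary observations drive everything. First, kernels and cokernels in $\cT_C^{\mathrm{quasi}}$ are computed componentwise, so a sequence of triples is exact if and only if its top (vector-space) row and its bottom (sheaf) row are each exact. Second, the two families of injectives are precisely the images of the right adjoints $R_{\mathcal V}(W)=[\cO\otimes W\to 0]$ and $R_{\mathrm{Q}}(I)=[\cO\otimes H^0(I)\xrightarrow{\mathrm{ev}} I]$ to the forgetful functors $T\mapsto V$ and $T\mapsto E$. Since $C$ is a smooth curve, every quasi-coherent $E$ has injective dimension at most one, so I may fix an injective resolution $0\to E\xrightarrow{d_0} I_1\xrightarrow{d_1} I_2\to 0$; this is the source of the two sheaves $I_1,I_2$ on the bottom row.

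First I would record the tautological short exact sequence of triples
\[
0\to [\cO\otimes 0\to E]\to [\cO\otimes V\xrightarrow{\varphi} E]\to [\cO\otimes V\to 0]\to 0,
\]
whose outer terms are easy to resolve. The quotient $[\cO\otimes V\to 0]=R_{\mathcal V}(V)$ is already injective, while $[\cO\otimes 0\to E]$ is resolved by threading the chosen resolution of $E$ through $R_{\mathrm{Q}}$: I claim that
\[
0\to[\cO\otimes 0\to E]\to R_{\mathrm{Q}}(I_1)\to R_{\mathrm{Q}}(I_2)\oplus R_{\mathcal V}(H^0(E))\to R_{\mathcal V}(H^1(E))\to 0
\]
is an injective resolution, with bottom map $E\xrightarrow{d_0} I_1$. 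Applying the horseshoe lemma to the short exact sequence above then assembles an injective resolution of $[\cO\otimes V\xrightarrow{\varphi} E]$ whose degree one, two and three injective terms are $R_{\mathrm{Q}}(I_1)\oplus R_{\mathcal V}(V)$, $R_{\mathrm{Q}}(I_2)\oplus R_{\mathcal V}(H^0(E))$ and $R_{\mathcal V}(H^1(E))$. Unwinding the definitions of $R_{\mathcal V}$ and $R_{\mathrm{Q}}$ reproduces exactly the displayed diagram, with $V_1=V$, $V_2=H^0(E)$, $V_3=H^1(E)$ and vertical maps $\varphi$ and $(0,\mathrm{ev})$.

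Two points then need checking. Commutativity of the squares is essentially automatic: the first square commutes by the definition of the top map $\cO\otimes V\to\cO\otimes(V\oplus H^0(I_1))$, whose $H^0(I_1)$-component is the adjoint $H^0(d_0)\varphi^{\flat}$ of $d_0\varphi$, and the remaining squares commute by the naturality of $\mathrm{ev}$, which forces the $H^0$-components of the horizontal maps to be $H^0(d_1)$ while the $V_i$-summands map by zero to the bottom. Exactness is verified one row at a time: the bottom row is the fixed injective resolution of $E$, and the top row is the complex $0\to V\to V\oplus H^0(I_1)\to H^0(E)\oplus H^0(I_2)\to H^1(E)\to 0$, whose exactness is read off from the cohomology long exact sequence $0\to H^0(E)\to H^0(I_1)\to H^0(I_2)\to H^1(E)\to 0$ attached to the resolution of $E$. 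The single genuinely non-formal ingredient, and the step I expect to be the main obstacle, is that $H^0=\Hom(\cO,-)$ is not exact, so naively applying $R_{\mathrm{Q}}$ to the resolution of $E$ does not give an exact top row; the remedy is to insert the pure-top correction terms $R_{\mathcal V}(H^0(E))$ and $R_{\mathcal V}(H^1(E))$, equivalently to choose a vector-space splitting of $H^0(I_1)\twoheadrightarrow H^0(I_1)/H^0(E)$, which is exactly what makes the top row exact while leaving the bottom row and the commutativity of all squares untouched.
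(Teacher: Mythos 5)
Your proposal is correct, but it reaches the resolution by a genuinely different route from the paper. The paper's proof is a direct, degree-by-degree construction: writing $V\cong\ker H^0(\varphi)\oplus V_1$, it embeds $[\cO\otimes V\xrightarrow{\varphi}E]$ into the injective $[\cO\otimes(\ker H^0(\varphi)\oplus H^0(I_1))\xrightarrow{(0,\mathrm{ev})}I_1]$ using the injection $H^0(E)\hookrightarrow H^0(I_1)$, computes the cokernel, which is again a triple $[\cO\otimes(H^0(I_1)/V_1)\to I_2]$, and repeats; the trivial summands of the statement are thus the successive kernels and quotients, the first one being $\ker H^0(\varphi)$. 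You instead decompose the object at the outset, via the tautological sequence $0\to[0\to E]\to T\to[\cO\otimes V\to 0]\to 0$, resolve the subobject by pushing the chosen resolution of $E$ through the right adjoint $R_{\mathrm{Q}}$ and correcting the non-exactness of $H^0$ with the pure-top terms $R_{\cV}(H^0(E))$ and $R_{\cV}(H^1(E))$, and then reassemble with the horseshoe lemma, the quotient being already injective. Both arguments rest on the same two pillars, namely the classification of injectives from \cite{He:espaces-de-modules} (which you usefully repackage as the right adjoints $R_{\cV}$, $R_{\mathrm{Q}}$ to the two forgetful functors) and componentwise exactness in $\mathcal{T}_C^{\mathrm{quasi}}$, and both require exactly one choice of linear splitting: the paper splits $V$, you split $H^0(I_1)\twoheadrightarrow H^0(I_1)/H^0(E)$. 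What your route buys is that injectivity of the terms and all needed Hom computations become formal adjunction statements, and the vector spaces acquire intrinsic names $V_1=V$, $V_2=H^0(E)$, $V_3=H^1(E)$; what the paper's route buys is a leaner resolution (only $\ker H^0(\varphi)$, rather than all of $V$, is added as a trivial summand in degree one) and no appeal to the horseshoe lemma. One cosmetic caveat: after the horseshoe assembly the top-row differentials are not the naive block-diagonal ones (they acquire cross terms involving $H^0(d_0\varphi)$ and the chosen splitting), so exactness of the top row follows from the long exact sequence only after an evident change of coordinates --- or, more simply, it is part of the conclusion of the horseshoe lemma and needs no separate verification.
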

\begin{proof}
We may write $V \cong \ker H^0(\varphi) \oplus V_1$, then injection $H^0(d_{0}) \colon H^0(E) \to H^0(I_1)$ gives the injection in $\mathcal{T}_C^{\text{quasi}}$: 
	   \begin{equation*}
 	\xymatrix@C=3.9em{
 0\ar[r]\ar[d] &\mathcal{O}_C \otimes (\ker H^0(\varphi) \oplus V_1)   \ar[r]^{\hspace*{-1em}(\mathrm{id}, H^0(d_0))~~~~}\ar[d]_-{\varphi}&\mathcal{O}_C \otimes ({\ker H^0(\varphi) \oplus H^0(I_1)}) \ar[d]_-{(0, \mathrm{ev})}\\
 0\ar[r]&E \ar[r]^{d_0}&I_1.     
}
  \end{equation*}
Then the quotient in $\mathcal{T}_C^{\text{quasi}}$ is of the form $[\cO \otimes V' \xrightarrow{\varphi'} I_2]$ where $V' = H^0(I_1)/V_1$. One may apply the same argument to construct $V_2$ and the map $d_2'$, and then $V_3$ will be the final quotient. 
\end{proof}

For an object $T \in \mathcal{T}_C^{\text{quasi}}$, since $\Hom(T, -)$ is a left exact covariant functor, and the category of $T^{\text{quasi}}$ has enough injectives, we can define $R\Hom(T, -)$ as the right derived functor of $\Hom(T, -)$. As a consequence of Lemma~\ref{lem-injective-resol}, the category $\T$ has homological dimension~$2$; that is, for any objects $T_1,T_2\in\T$, we have $\Ext^k(T_1,T_2)=0$ for all \(k\notin\{0,1,2\}\).

\begin{Prop}{\cite[Proposition 1.5]{He:espaces-de-modules}}, \cite[Lemma 2.6]{AlexeevKuznetsov2025}\label{prop.hom seqeunce}
Let $T_i =[\cO \otimes V_i \xrightarrow{\varphi_i} E_i] \in \mathcal{T}_C$ for $i=1, 2$. 
Then, there is a long exact sequence of vector spaces 
\begin{align*}
    0 & \rightarrow \Hom(T_1, T_2) \rightarrow \Hom(V_1, V_2) \oplus \Hom(E_1, E_2) \rightarrow \Hom(\cO \otimes V_1 , E_2) \\
    & \rightarrow \Ext^1(T_1, T_2) \rightarrow \Ext^1(E_1, E_2) \rightarrow \Ext^1(\cO \otimes V_1 , E_2) \rightarrow \Ext^2(T_1, T_2) \rightarrow 0.
\end{align*} 
\end{Prop}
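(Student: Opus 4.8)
The plan is to reduce the entire statement to a single distinguished triangle in the derived category of vector spaces,
\[
R\Hom_{\cT_C}(T_1,T_2)\longrightarrow \Hom(V_1,V_2)\oplus R\Hom_C(E_1,E_2)\xrightarrow{\ d\ } R\Hom_C(\cO\otimes V_1,E_2)\xrightarrow{\ +1\ },
\]
where $d$ sends $(\alpha,\beta)$ to $\varphi_2\circ(\mathrm{id}\otimes\alpha)-\beta\circ\varphi_1$. Granting this triangle, the proposition is immediate from its long exact cohomology sequence: because $C$ is a smooth curve, $\Ext^k_C(-,-)=0$ for $k\notin\{0,1\}$, so the middle and right terms have cohomology only in degrees $0$ and $1$ (with $H^0$ and $H^1$ of the middle being $\Hom(V_1,V_2)\oplus\Hom_C(E_1,E_2)$ and $\Ext^1_C(E_1,E_2)$, and of the right $\Hom_C(\cO\otimes V_1,E_2)$ and $\Ext^1_C(\cO\otimes V_1,E_2)$). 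The long exact sequence then collapses to exactly the seven-term sequence of the statement; in degree $0$ the kernel of $d$ recovers the elementary description of a morphism of triples as a commuting square, the vanishing for $k>2$ follows since the middle and right terms vanish in degrees $\ge 2$, and $\Ext^{<0}$ vanishes automatically.

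To produce the triangle I would filter each triple by its sheaf and vector-space parts. For every $T=[\cO\otimes V\xrightarrow{\varphi}E]$ there is a canonical short exact sequence in $\cT_C$,
\[
0\to [0\to E]\to T\to[\cO\otimes V\to 0]\to 0,
\]
exhibiting $T$ as an extension of the vector-space object $[\cO\otimes V\to 0]$ by the sheaf object $[0\to E]$. Applying $R\Hom_{\cT_C}(-,T_2)$ to this sequence for $T=T_1$, and $R\Hom_{\cT_C}([\cO\otimes V_1\to 0],-)$ and $R\Hom_{\cT_C}([0\to E_1],-)$ to the analogous sequence for $T_2$, reduces everything to four building-block computations: (i) $R\Hom_{\cT_C}([0\to E_1],[0\to E_2])=R\Hom_C(E_1,E_2)$, since $E\mapsto[0\to E]$ is an exact fully faithful embedding with extension-closed image, realizing $\cD(C)\simeq\leftsup{\perp}{[\cO\to 0]}$; (ii) $R\Hom_{\cT_C}([\cO\otimes V_1\to 0],[0\to E_2])=R\Hom_C(\cO\otimes V_1,E_2)[-1]$, the degree shift being the crucial point and reflecting that an extension of a vector-space object by a sheaf object is precisely a map $\cO\otimes V_1\to E_2$; (iii) $R\Hom_{\cT_C}([0\to E_1],[\cO\otimes V_2\to 0])=0$, which is the semiorthogonality $[0\to E]\in\leftsup{\perp}{[\cO\to 0]}$; and (iv) $R\Hom_{\cT_C}([\cO\otimes V_1\to 0],[\cO\otimes V_2\to 0])=\Hom(V_1,V_2)$, from exceptionality of $[\cO\to 0]$. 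Feeding (i)--(iv) into the two triangles and splicing them by the octahedral axiom yields the displayed triangle and pins down the connecting map $d$.

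Alternatively, one can compute $R\Hom_{\cT_C}(T_1,T_2)$ directly from the injective resolution $T_2\to I^{\bullet}$ of the preceding Lemma: a short check gives $\Hom_{\cT_C}(T_1,[\cO\otimes W\to 0])=\Hom(V_1,W)$ and $\Hom_{\cT_C}(T_1,[\cO\otimes\Hom(\cO,\mathcal I)\xrightarrow{\mathrm{ev}}\mathcal I])=\Hom_C(E_1,\mathcal I)$ for an injective quasi-coherent $\mathcal I$, so that $\Hom_{\cT_C}(T_1,I^{\bullet})$ is the total complex of the evident double complex assembled from $\Hom(V_1,-)$ and $R\Hom_C(E_1,-)$ together with the comparison through $\Hom_C(\cO\otimes V_1,-)$, again exhibiting the triangle. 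I expect the main obstacle to be the bookkeeping in this last step — correctly identifying the connecting morphism $d$ with $(\alpha,\beta)\mapsto\varphi_2(\mathrm{id}\otimes\alpha)-\beta\varphi_1$ and verifying the octahedral splice (equivalently, tracking the differentials through the resolution of the Lemma) — rather than any of the individual building-block Ext computations, which are elementary.
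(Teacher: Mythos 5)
First, note that the paper does not prove this proposition at all: it is quoted from \cite[Proposition~1.5]{He:espaces-de-modules}, and the only ingredient the text supplies toward it is the preceding lemma producing injective resolutions in $\cT_C^{\text{quasi}}$. So your proposal has to be judged on its own merits, against the (unwritten) resolution computation that lemma is evidently designed for.

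Your global architecture is correct: the triangle
\[
R\Hom_{\cT_C}(T_1,T_2)\longrightarrow \Hom(V_1,V_2)\oplus R\Hom_C(E_1,E_2)\xrightarrow{\ d\ } R\Hom_C(\cO\otimes V_1,E_2)\xrightarrow{\ +1\ }
\]
does hold, and granting it, the seven-term sequence and the vanishing for $k<0$, $k>2$ follow by taking cohomology exactly as you say; the splitting of the middle term in your octahedral splice is also unproblematic, since in $\cD(\cV)$ there are no nonzero maps from a complex with cohomology in degrees $0,1$ to one concentrated in degree $-1$. The genuine gap is in how you justify building blocks (i) and (ii) in top degree. Extension-closedness of an exact fully faithful embedding gives agreement of $\Hom$ and $\Ext^1$ only; it does not give $\Ext^2_{\cT_C}([0\to E_1],[0\to E_2])=0$. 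Likewise your description of extensions in (ii) identifies $\Ext^1_{\cT_C}([\cO\otimes V_1\to 0],[0\to E_2])\cong\Hom_C(\cO\otimes V_1,E_2)$, but says nothing about the degree-two statement $\Ext^2_{\cT_C}([\cO\otimes V_1\to 0],[0\to E_2])\cong\Ext^1_C(\cO\otimes V_1,E_2)$. These degree-two identities are precisely where the content of the proposition sits — they produce the last three terms of the sequence and the vanishing $\Ext^{>2}=0$ — so they cannot be waved through as ``elementary''. Moreover, appealing to $\cD(C)\simeq{}^{\perp}[\cO\to 0]$ is circular relative to the paper's logic: derived full faithfulness of $j_*$, hence that semiorthogonal decomposition, is exactly the special case $V_1=V_2=0$ of the proposition you are proving. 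The repair is the computation you relegate to an ``alternative'': resolve $[0\to E_2]$ and $T_2$ by the injectives $[\cO\otimes W\to 0]\oplus[\cO\otimes H^0(I)\xrightarrow{\mathrm{ev}}I]$ supplied by the preceding lemma, and apply your two (correct) identities $\Hom_{\cT_C}(T_1,[\cO\otimes W\to 0])=\Hom(V_1,W)$ and $\Hom_{\cT_C}(T_1,[\cO\otimes H^0(I)\xrightarrow{\mathrm{ev}}I])=\Hom_C(E_1,I)$ with $T_1$, $[0\to E_1]$, and $[\cO\otimes V_1\to 0]$ as sources. This establishes (i)--(iv) in all degrees and the triangle in one stroke. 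In other words, that computation is not bookkeeping around the main argument; it is the main argument.
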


For any $T_1, T_2\in \T$ we define
\begin{equation*}
\chi(T_1, T_2) \coloneqq \sum_{k}  (-1)^k\dim_{\mathbb{C}} \text{Hom}(T_1 , T_2[k]).
\end{equation*}
Take two objects $T_i = [\cO \otimes V_i \xrightarrow{\varphi _i} E_i] \in \mathcal{T}_C$ for $i=1, 2$ with $\cl(T_i) = (r_i, d_i, n_i)$. Then Proposition \ref{prop.hom seqeunce} implies that 
\begin{align*}
   \chi(T_1, T_2) = & \ \dim_{\mathbb{C}} \Hom(V_1, V_2) + \sum_{k=0}^1 \dim_{\mathbb{C}}\Hom(E_1, E_2[k]) - \sum_{k=0}^1 \dim_{\mathbb{C}}\Hom(\cO \otimes V_1, E_2[k]) \\
   = & \ n_1n_2 +\chi(E_1, E_2) - \chi(\cO \otimes V_1, E_2)\\
   = & \ n_1n_2 + r_1d_2-r_2d_1 + r_1r_2(1-g) - (n_1d_2+ n_1r_2(1-g)  )\\
    = & \ (d_2+r_2(1-g))(r_1-n_1) +n_1n_2 -r_2d_1\\
    = & \ r_1 (d_2 +r_2(1-g)) -d_1r_2 + n_1(n_2 -d_2-r_2(1-g)). 
\end{align*}

\subsection{Semiorthogonal decompositions} 
\subsection*{First type:}
Since $[\cO \rightarrow 0]$ is an injective simple object, it is an exceptional object, so we have an exact functor  
	\begin{equation*}
	i_* \colon \mathcal{V} \rightarrow \mathcal{T}_C \;\;\;\;\ \mathbb{C} \mapsto [\mathcal{O}_C \rightarrow 0].
	\end{equation*} 
 Then we take the corresponding derived functor and we obtain a fully faithfull embedding $i_* \colon \cD(\mathcal{V}) \rightarrow \cD(\mathcal{T}_C)$ which has both adjoints $i^*  \dashv i_* \dashv i^!$,
where $i^*, i^! \colon \cD(\mathcal{T}_C) \rightarrow \mathcal{D}(\mathcal{V})$ are defined as 
\begin{equation}\label{eq-adjoints to i}
i^*(T) =  R\Hom(T, [\cO \rightarrow 0])^*, \quad i^!(T) =  R\Hom([\cO \rightarrow 0], T).
\end{equation}

On the other hand, the exact functor $j_* \colon \text{Coh}(C) \rightarrow \mathcal{T}_C$ sending $E$ to $[0 \to E]$ induces the fully faithful embedding $j_* \colon \cD(C) \to \cD(\mathcal{T}_C)$. Since $\cD(C)$ is saturated, $j_*\cD(C)$ is an admissible subcategory of $\cD(\cT_C)$, i.e. it has left and right adjoints $ j^* \dashv j_* \dashv j^!$. 

\begin{Lem}\label{lem-sod1}
    There is a semiorthogonal decomposition
	 \begin{align}\label{sod.1}
	    \mathcal{D}(\mathcal{T}_C) =\ &  \langle i_*\cD(\mathcal{V}) ,\ j_*\mathcal{D}(C) \rangle. 
	 \end{align}
\end{Lem}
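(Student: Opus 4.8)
The plan is to verify the two defining conditions of a semi-orthogonal decomposition for the ordered pair $\langle i_*\cD(\mathcal{V}),\, j_*\cD(C)\rangle$: the one-sided vanishing of morphisms, and generation. The two subcategories have just been shown to be admissible, hence thick full triangulated subcategories, so the only work is to establish semi-orthogonality and to check that together they classically generate $\cD(\cT_C)$. In the convention used in \eqref{eq.sod1}, where $\langle\mathcal A,\mathcal B\rangle$ requires $\Hom(\mathcal B,\mathcal A)=0$, the relevant vanishing is $\Hom^{\bullet}\big(j_*\cD(C),\, i_*\cD(\mathcal{V})\big)=0$.

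First I would establish this semi-orthogonality. Since $i_*$ and $j_*$ are exact and fully faithful, and the subcategories are generated by the objects $[0\to E]$ and $[\cO\otimes V\to 0]$, it suffices to show $\Ext^{k}\big([0\to E],\,[\cO\to 0]\big)=0$ for every $E\in\Coh(C)$ and all $k$. This is immediate from Proposition~\ref{prop.hom seqeunce} applied to $T_1=[0\to E]$ (so $V_1=0$) and $T_2=[\cO\to 0]$ (so $E_2=0$): every vector space appearing in the long exact sequence there is assembled from $\Hom(V_1,V_2)$, $\Hom(E_1,E_2)$, $\Hom(\cO\otimes V_1,E_2)$ and their higher analogues, each of which vanishes because either $V_1=0$ or $E_2=0$. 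Note that the reverse Homs do \emph{not} all vanish (indeed $\Ext^{1}([\cO\to 0],[0\to E])=H^0(E)$), which is exactly what pins down the order of the two factors.

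For generation, the key observation is the canonical short exact sequence in the abelian category $\cT_C$
\[
0 \longrightarrow [0\to E] \longrightarrow [\cO\otimes V\xrightarrow{\ \varphi\ } E] \longrightarrow [\cO\otimes V\to 0] \longrightarrow 0,
\]
whose first map is $(0,\mathrm{id}_E)$ and whose cokernel, computed componentwise, is $[\cO\otimes V\to 0]=i_*V$. This exhibits every object of the heart $\cT_C$ as an extension of an object of $i_*\mathcal{V}$ by an object of $j_*\Coh(C)$, i.e.\ in $\cD(\cT_C)$ it is a triangle with one vertex in each subcategory. As $\cT_C$ is the heart of a bounded $t$-structure it classically generates $\cD(\cT_C)$, so the smallest triangulated subcategory containing $i_*\cD(\mathcal{V})$ and $j_*\cD(C)$ is all of $\cD(\cT_C)$. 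I would then conclude with the standard criterion that full triangulated subcategories which are semi-orthogonal and jointly generating form a semi-orthogonal decomposition: the class of objects admitting a decomposition triangle $B\to X\to A$ with $A\in i_*\cD(\mathcal{V})$ and $B\in j_*\cD(C)$ is closed under shifts and cones (the octahedral axiom together with the Hom-vanishing propagates the decomposition along morphisms), contains both generators, and is therefore everything. There is no serious obstacle here beyond this bookkeeping; the only points requiring care are that the morphism vanishing holds in \emph{all} cohomological degrees (precisely what Proposition~\ref{prop.hom seqeunce} supplies) and that the chosen order is compatible with it, the genuine content being the explicit short exact sequence above.
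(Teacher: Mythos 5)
Your proof is correct, but it follows a genuinely different route from the paper's. The paper's proof leans entirely on the theory of exceptional objects: since $[\cO\to 0]$ is injective and simple, it is exceptional, so the subcategory $i_*\cD(\mathcal V)$ it generates is admissible and the decomposition $\cD(\cT_C)=\langle [\cO\to 0],\,{}^{\perp}[\cO\to 0]\rangle$ exists automatically; the only thing left to prove is the identification ${}^{\perp}[\cO\to 0]=j_*\cD(C)$, which the paper gets from injectivity via the formula $\Hom(T,[\cO\to 0])=\Hom(V,\C)$ for $T=[\cO\otimes V\to E]$ (one inclusion is the vanishing for $V=0$, the other is that vanishing for all shifts forces $V=0$). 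You instead verify both SOD axioms by hand: semi-orthogonality through Proposition~\ref{prop.hom seqeunce} (your computation is right, and your side remark that $\Ext^{1}([\cO\to 0],[0\to E])\cong H^{0}(E)$ correctly explains why the order of the factors cannot be reversed), and generation through the canonical short exact sequence $0\to j_*E\to T\to i_*V\to 0$ in $\cT_C$ together with the fact that the heart generates, followed by the standard criterion that semi-orthogonality plus joint generation yields a semi-orthogonal decomposition. What each approach buys: the paper's argument is shorter and sidesteps the generation/octahedron bookkeeping entirely, at the price of invoking admissibility of subcategories generated by exceptional objects; yours is more elementary and self-contained, and it makes the decomposition triangle concrete on the heart---your short exact sequence is exactly the restriction to $\cT_C$ of the triangle~\eqref{rep.1} that the paper only writes down abstractly after the lemma.
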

\begin{proof}
    Since $[\cO \to 0]$ is exceptional, we only need to show that $^{\perp}[\cO \to 0]\simeq j_*\mathcal{D}(C)$. Note  that since $[\cO \to 0]$ is injective, for any $T
     \in \mathcal{D}(\T)$ we have
    \[
    \Hom_{\mathcal{D}(\T)}(T, [\cO \to 0])= \Hom_{\mathrm{Kom}(\T)}(T, [\cO \to 0])
    .
    \]
   This implies that for any $F\in \mathcal{D}(C)$, there is vanishing $\Hom(j_*F, [\cO \to 0])=0$, so $j_*\mathcal{D}(C)\subset~^{\perp}[\cO \to 0]$. And vice versa, if $T
   \in\ ^{\perp}[\cO \to 0]$, then $i^*T =0$, 
   thus $T = j_*E^{\bullet}$ for some $E^{\bullet} \in \cD(C)$.  
\end{proof}
 Therefore, any object $T \in \cD(\mathcal{T}_C)$ lies in the exact triangle 
\begin{equation}\label{rep.1}
    R_{[\cO \rightarrow 0]}(T)  = j_*j^!T   \to T \to i_*i^{*}T \xrightarrow{\varphi_T}j_*j^!T [1]. 
\end{equation}

\begin{Lem}\label{lem-cohomology}
    For every $i \in \mathbb{Z}$, we have the short exact sequence in $\cT_C$
    \begin{equation}\label{eq.coho}
0 \to \cH^{i}(j_*j^!T) \to \cH^i(T) \to \cH^i(i_*i^{*}T) \to 0.
\end{equation}
\end{Lem}
\begin{proof}
    Taking cohomology with respect to the heart $\cT_C$ of the exact triangle~\eqref{rep.1} gives a long exact sequence of objects in $\cT_C$:
\[
\dots \to \cH^{i-1}(i_*i^{*}T) \xrightarrow{d_{i-1}} \cH^{i}(j_*j^!T)
\to \cH^i(T) \to \cH^i(i_*i^{*}T) \xrightarrow{d_i} \cH^{i+1}(j_*j^!T) \to \dots
\]
Since $\Hom([\cO\to 0], j_*F)=0$ for any sheaf $F\in\Coh(C)$, the morphisms $d_i$ vanish, and hence the claim follows. 
\end{proof}
\subsection*{Second type:} Applying Proposition \ref{prop.hom seqeunce}, one can easily check the object $[\cO \xrightarrow{\text{id}} \cO]$ is also an exceptional object inducing the exact embedding
	\begin{equation*}
	i'_* \colon \cD(\mathcal{V}) \rightarrow \cD(\mathcal{T}_C) \;\;\;\; \mathbb{C} \mapsto [\mathcal{O}_C \xrightarrow{\text{id}} \cO], 
	\end{equation*}
which has left and right adjoints $i'^*  \ \dashv i'_*  \ \dashv i'^!$ defined in the same way as in~\eqref{eq-adjoints to i}. Analogously to Lemma~\ref{lem-sod1} this induces the semiorthogonal decomposition
	 \begin{align}\label{sod.3}
	    \mathcal{D}(\mathcal{T}_C) =\ &  \langle j_*\mathcal{D}(C), \ i'_*\cD(\mathcal{V})  \rangle. 
	 \end{align}
Thus any object $T \in \cD(\mathcal{T}_C)$ lies in the distinguished triangle
\begin{equation}\label{rep.3}
    	 i'_*{i'}^!T \overset{ev}{\rightarrow} T \rightarrow j_*j^*T = L_{[\cO \rightarrow \cO]}(T) \xrightarrow{\delta_T} i'_*{i'}^!T[1]. 
\end{equation}

The following Lemma describes each factor in the above two types of semiorthogonal decomposition.

\begin{Lem}\label{lem-simple}
  Any object $T \in \cD(\T)$ is uniquely determined by a triple
$(V,E,\varphi)$, where $V \in \cD(\mathcal{V})$, $E \in \cD(C)$, and
\[
\varphi \colon \cO \otimes V \to E
\]
is a morphism in $\cD(C)$. Conversely, every such triple determines an object of $\cD(\T)$. With this notation, we have
\[
i'^!(T)=V \in \cD(\mathcal{V})
\quad\text{and}\quad
j^*(T)=\cone(\varphi)\in\cD(C).
\]
Moreover, in \eqref{rep.3}, the morphism $\delta_T$ is given by
\[
\delta_T=\delta_{[\cO\otimes V\to 0]}\circ j_*(\varphi'),
\]
where $\varphi'$ denotes the connecting morphism in the distinguished triangle
\[
\cO\otimes V
\xrightarrow{\varphi}
E
\longrightarrow
\cone(\varphi)
\xrightarrow{\varphi'}
\cO\otimes V[1]
\]
in $\cD(C)$.
\end{Lem}

\begin{proof}
    Consider the unique exact triangle \eqref{rep.1} for $T$ with $ \varphi_T \in \Hom_{ \cD(\cT)}(i_*i^*T, \ j_*j^!T[1])$, then by adjunction 
    \begin{equation*}
       j^* \varphi_T \in 
       \Hom_{\cD(C)}(j^*(i_*i^*T)[-1], j^!T).
    \end{equation*}
    We know 
    $$
    j^*(i_*i^*T)[-1] = j^{!} L_{[\cO \rightarrow \cO]} (i_*i^*T)[-1] \overset{(*)}{ = }j^{!}j_* ( i^*T \otimes \cO ) = i^*T \otimes \cO
    $$
   where $(*)$ follows from injectivity of $[\cO \to 0]$. Therefore, we have
   $$
   j^*\varphi_T \in \Hom_{\cD(C)}(i^*T \otimes \cO,\, j^{!}T)
   $$
   So we set $V:=i^*T$ and $E := j^!T$, and $\varphi : = j^{*} \varphi_T$ is the corresponding morphism.
    
           \medskip 
           
    For the second claim, consider the exact triangle \eqref{rep.1}, and take $i'^! (-) = R\Hom([\cO \to \cO] , - )$. From~\eqref{sod.3} it follows that $R\Hom([\cO \to \cO] , j_* j^! T ) = 0$, thus
     \[
     i'^!T = R\Hom([\cO \to \cO], i_*i^*T)= R\Hom(i^*[\cO \to \cO], i^*T)=R\Hom(\mathbb{C}, V)=V.
     \]
By taking $j^!$ from the exact sequence~\eqref{rep.3}, we obtain
$$
j^!(i'_*{i'}^!T) \xrightarrow{j^!(ev)} j^!T \xrightarrow{} j^!(j_*j^*T).
$$
We claim $j^!(ev) = \varphi$, and hence $j^!(j_*j^*(T)) = \cone(\varphi)$ as required. 

By Lemma~\ref{lem-simple}, the evaluation morphism $\mathrm{ev}\colon i'_*{i'}^!T \to T$
in $\mathcal{D}(\T)$ corresponds to the following commutative diagram in $\mathcal{D}(C)$:

     \[
     \begin{tikzcd}
\cO \otimes  R\Hom(\mathbb{C}, V)\arrow[d, "id"] \arrow[r, "ev"] & \cO \otimes V \arrow[d, "\varphi"] \\
\cO \otimes R\Hom(\mathbb{C}, V) \arrow[r]  & E,       
\end{tikzcd}
     \]
thus the bottom morphism is $\varphi$. By adjunction, it follows that $j^!(ev) = \varphi$, which concludes the proof that $j^*T=\text{cone}(\varphi)$. 

From $j^!(ev) = \varphi$ we also get that $j^!(\delta_T)=\varphi'$. Alongside with $j^!j_*= \text{id}$ we get that both $\delta_T$ and $\delta_{[\cO\otimes V\to 0]}j_*(\varphi')$ go by adjunction to the same morphism, this shows the last part of the claim. 
\end{proof}


Note that Lemma \ref{lem-simple} shows that any 
object $T \in \cD(T_C)$ can be represented by 
$$
[ \cO \otimes V \xrightarrow{\varphi} E ]
$$
for
a morphism $\varphi$ in $\cD(C)$. 

\bigskip

\subsection{Serre functor}
Since $\cD(\mathcal V)$ and $\cD(C)$ both admit Serre functors, the triangulated category $\cD(\T)$ also admits a Serre functor, which we denote by $S$. The following result was also computed in \cite[Theorem 3.8 and Remark 3.9]{AlexeevKuznetsov2025}.  
\begin{Lem}
    Given an object $T = [\cO \otimes V \xrightarrow{\varphi} E] \in \cD(\mathcal{T}_C)$, we have 
\[
S\left[
\vcenter{
  \hbox{
    $\xymatrix@C=0em@R=2em{
      \cO \otimes V \ar[d]^{\varphi} \\
      E
    }$
  }
}
\right]
=
\left[
\vcenter{
  \hbox{
    $\xymatrix@C=0em@R=2em{
      \cO \otimes \cone\big(R\Hom(\cO, \cone(\varphi) \otimes \omega_C) \xrightarrow{\pi} V\big)  \ar[d]^{\tilde{ev}} \\
      \cone(\varphi) \otimes \omega_C[1]
    }$
  }
}
\right].
\]
Here $\tilde{ev}$ is the induced evaluation map, and $\pi$ is the composition 
$$
R\Hom(\cO, \cone(\varphi) \otimes \omega_C) \xrightarrow{\varphi\otimes \omega_C} (H^0(\omega_C) \otimes V[1] \oplus V) \to  V,
$$
where the second arrow is the projection onto the second factor. As a result, if $\operatorname{cl}(T)=(r,d,n)$, then
\[
  \operatorname{cl}(S(T))=\bigl(n-r,\,-d+2(n-r)(g-1),\,n-d+(n-r)(g-1)\bigr).
\]
\end{Lem}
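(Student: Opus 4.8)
The plan is to use Lemma~\ref{lem-simple}, which says that $S(T)$ is completely determined by the triple $\bigl(i^{*}S(T),\, j^{\dagger}S(T),\,\psi\bigr)$ consisting of its vector-space part $i^{*}S(T)\in\cD(\cV)$, its sheaf part $j^{\dagger}S(T)\in\cD(C)$, and the structure morphism $\psi$ between them. I will use that the Serre functors of the two components are $S_{\cV}=\mathrm{id}$ on $\cD(\cV)$ (finite-dimensional duality) and $S_{C}=(-)\otimes\omega_{C}[1]$ on $\cD(C)$ (Serre duality on the curve). The point is that both the sheaf part and the vector-space part can be extracted purely from Serre duality on $\cD(\T)$ together with the adjunctions $i^{*}\dashv i_{*}\dashv i^{\dagger}$, $j^{*}\dashv j_{*}\dashv j^{\dagger}$ and Lemma~\ref{lem-computing j^*}.

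First I would compute the two ``$\Hom$-into'' invariants, which are the clean ones. For the sheaf part, for every $F\in\cD(C)$ adjunction gives $\Hom(j_{*}F,S(T))\cong\Hom(T,j_{*}F)^{*}\cong\Hom_{C}(j^{*}T,F)^{*}$; by Lemma~\ref{lem-computing j^*} we have $j^{*}T=\cone(\varphi)$, and Serre duality on $C$ converts $\Hom_{C}(\cone(\varphi),F)^{*}$ into $\Hom_{C}(F,\cone(\varphi)\otimes\omega_{C}[1])$. Since this is natural in $F$, the defining property of $j^{\dagger}$ gives $j^{\dagger}S(T)=\cone(\varphi)\otimes\omega_{C}[1]$, which is the bottom row of the claimed formula. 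Likewise, Serre duality on $\cD(\T)$ yields $i^{\dagger}S(T)\cong R\Hom(T,[\cO\to0])^{*}\cong (i^{*}T)^{**}=V$, using $i^{*}T=V$.

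Next I would recover the vector-space part $i^{*}S(T)$, which is the subtle one because it is a ``$\Hom$-from'' invariant and so resists a direct application of Serre duality. Instead I would apply $i^{\dagger}=R\Hom([\cO\to0],-)$ to the universal triangle $j_{*}j^{\dagger}Y\to Y\to i_{*}i^{*}Y$ of \eqref{rep.1} for $Y=S(T)$; using $i^{\dagger}j_{*}(-)=R\Gamma(C,-)[-1]$ and $i^{\dagger}i_{*}=\mathrm{id}$ this produces an exact triangle
\[
R\Gamma\!\bigl(C,\,j^{\dagger}S(T)\bigr)[-1]\longrightarrow i^{\dagger}S(T)\longrightarrow i^{*}S(T)\xrightarrow{\ \pi\ }R\Gamma\!\bigl(C,\,j^{\dagger}S(T)\bigr).
\]
Substituting $i^{\dagger}S(T)=V$ and $j^{\dagger}S(T)=\cone(\varphi)\otimes\omega_{C}[1]$ and rotating identifies
\[
i^{*}S(T)=\cone\!\bigl(R\Hom(\cO,\cone(\varphi)\otimes\omega_{C})\xrightarrow{\ \pi\ }V\bigr),
\]
which is exactly the top row of the claim. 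To match the stated description of $\pi$, I would feed the triangle $V\otimes\omega_{C}\xrightarrow{\varphi\otimes\omega_{C}}E\otimes\omega_{C}\to\cone(\varphi)\otimes\omega_{C}$ into $R\Gamma(C,-)$ and use the canonical splitting $R\Gamma(\omega_{C})=H^{0}(\omega_{C})\oplus H^{1}(\omega_{C})[-1]$ with $H^{1}(\omega_{C})=\C$, which produces the factor $H^{0}(\omega_{C})\otimes V[1]\oplus V$ and exhibits $\pi$ as the stated composite ending in the projection onto $V$; the structure map is then the induced evaluation $\tilde{ev}$.

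The main obstacle I anticipate is not the identification of the underlying objects, which follows formally, but the bookkeeping of the connecting morphisms: showing that the boundary map $\pi$ coming from \eqref{rep.1} really coincides with the $\varphi$-induced composite written in the statement, and that the induced structure map is the evaluation $\tilde{ev}$. This amounts to a compatibility check among the several exact triangles involved, equivalently a naturality argument for the morphism $S(i_{*}V\to j_{*}E[1])$ obtained by applying the exact functor $S$ to the triangle defining $T$. Once the formula is established, the class statement is routine: applying $\cl$ to the two triangles above and using Riemann--Roch $\chi(\cO,G)=\deg G+\rk G\,(1-g)$ on $C$ gives $\cl(S(T))=\bigl(n-r,\,-d+2(n-r)(g-1),\,n-d+(n-r)(g-1)\bigr)$.
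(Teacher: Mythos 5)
Your proposal is correct at the object level and takes a genuinely different route from the paper. The paper works with the decomposition \eqref{sod.3}: it applies $S$ to the triangle \eqref{rep.3}, $i'_*V\to T\to j_*\cone(\varphi)$, quotes \cite[Section~2.1]{Kuz21:serre-functor} for the formulas $S(i'_*V)=i_*V$ and $S(j_*\cone(\varphi))=j'_*(\cone(\varphi)\otimes\omega_C[1])$, unpacks $j'_*=L_{[\cO\to 0]}\circ j_*$ as an explicit triple, and then identifies the connecting map $S(\delta_T)$. You instead work with \eqref{sod.1} and compute the triple data of $S(T)$ in the sense of Lemma~\ref{lem-simple} directly: $j^{\dagger}S(T)=\cone(\varphi)\otimes\omega_C[1]$ and $i^{\dagger}S(T)=V$ via Serre duality combined with the adjunctions (this re-proves, in the case at hand, exactly the formulas the paper imports from Kuznetsov, so your argument is more self-contained), and then $i^{*}S(T)$ by applying $i^{\dagger}$ to \eqref{rep.1} for $S(T)$, using $i^{\dagger}i_*=\mathrm{id}$ and $i^{\dagger}j_*=R\Gamma(C,-)[-1]$, the latter being a correct consequence of Proposition~\ref{prop.hom seqeunce}. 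These steps are all sound (note only that the map you label $\pi$ in your displayed triangle is the connecting map; it is its rotation $R\Hom(\cO,\cone(\varphi)\otimes\omega_C)\to V$ whose cone is $i^{*}S(T)$), and your closing computation of $\cl(S(T))$ is indeed routine and correct.

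However, the morphism-level part of the statement—that this map is the specific composite $\pi$ built from $\varphi$, and that the glued structure map is $\tilde{ev}$—is genuinely part of the lemma, and your proposal flags it as the main obstacle without closing it. The missing ingredient is the last assertion of Lemma~\ref{lem-computing j^*}: $\delta_T=\delta_{[\cO\otimes V\to 0]}\circ j_*(\varphi')$, where $\varphi'$ is the boundary map of $\cO\otimes V\xrightarrow{\varphi}E\to\cone(\varphi)$. Applying the exact functor $S$ to \eqref{rep.3} produces the canonical triangle of $S(T)$ for the mutated decomposition $\langle j'_*\cD(C),\,i_*\cD(\cV)\rangle$, with connecting map $S(\delta_T)=S(\delta_{[\cO\otimes V\to 0]})\circ j'_*S_{\cD(C)}(\varphi')$; unpacking $j'_*=L_{[\cO\to 0]}\circ j_*$ by an octahedron then recovers your triangle from \eqref{rep.1}, and applying $i^{*}$ to this factorization yields exactly the stated composite for $\pi$, with $\tilde{ev}$ appearing as the induced map on cones. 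This is how the paper concludes, and it is precisely the content your ``naturality argument for $S(i_*V\to j_*E[1])$'' must spell out; with that step made explicit, your proof is complete.
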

\begin{proof}
After a mutation of~\eqref{sod.1}, we get the semiorthogonal decomposition
\begin{equation}\label{sod.21}
    \cD(\mathcal{T}_C) = \langle L_{[\cO \rightarrow 0]}j_*\mathcal{D}(C), i_*\cD(\mathcal{V}) \rangle. 
\end{equation}
For simplicity, we denote the functor $j'_* :=  L_{[\cO \rightarrow 0]} \circ j_* \colon \cD(C) \to \cD(\mathcal{T}_C)$.
     
From Lemma~\ref{lem-simple}, we get ${i'}^!T = V$ and $j^*T=\text{cone}(\varphi)$. Thus, we have $S(i'_*{i'}^!T)=i_*V$ and $S(j_*j^*T)=j_*'(\text{cone}(\varphi)\otimes \omega_C[1])$ by~\cite[Section 2.1]{Kuz21:serre-functor}. By applying $S(-)$ to the exact sequence given by SOD~\eqref{sod.21}, we obtain that $S(T)$ fits into the exact sequence 
\[
i_*V \to S(T) \to  j_*'(\text{cone}(\varphi)\otimes \omega_C[1]) \overset{S(\delta_T)}{\to} i_*V[1].
\]
We compute that
\[
j_*'(\text{cone}(\varphi)\otimes \omega_C[1])=\left[R\Hom(\cO, \text{cone}(\varphi)\otimes \omega_C)\overset{ev}{\to} \text{cone}(\varphi)\otimes \omega_C\right][1]. 
\]
It remains to understand the morphism $S(\delta_T)$. We know 
    $$
    \delta_{[\cO \to 0]} \in \Hom([0 \to \cO][1] , [\cO \to \cO][1] ) \cong \C
    $$
and so $S(\delta_{[\cO \to 0]})$ is the unique non-zero map in 
$$
\Hom\left(\left[\cO^{h^0(\omega_C)}[2] \oplus \cO[1] \xrightarrow{ev} \omega_C[2]\right], [\cO[1] \to 0]\right). 
$$
On the other hand, from Lemma~\ref{lem-simple} we have 
\begin{equation*}
    S(\delta_T) = S(\delta_{[\cO \otimes V \to 0]}) \circ j'_*S_{\cD(C)}(\varphi').
\end{equation*}
Taking $i^*$ gives 
$$
R\Hom(\cO, \cone(\varphi) \otimes \omega_C[1]) \xrightarrow{\varphi\otimes \omega_C} (H^0(\omega_C) \otimes V[2] \oplus V[1]) \to  V[1],
$$
where the second map is simply projection to the second component. This shows the first part of the claim. 

If $\cl(T) = (r, d, n)$, then
\begin{align*}
    \cl(j_*j^*T) = & \cl(T) - \chi([\cO \rightarrow \cO], T)\cl([\cO \rightarrow \cO])\\
    = & (r-n, d, 0). 
\end{align*}
Similarly, we have 
\begin{align*}
    \cl(j_*'j'^*T) = & \cl(T) - \chi([\cO \rightarrow 0], T)\cl([\cO \rightarrow 0])\\
    = & (r,\ d,\ d+r(1-g) ). 
\end{align*}
Combining those together we obtain 
\begin{equation*}
    \cl(j_*'(\text{cone}(\varphi)\otimes \omega_C[1])) = (n-r,\ -d +2(n-r)(g-1),\ -d +(n-r)(g-1)  ), 
\end{equation*}
which implies the second part of the claim. 

\end{proof}

\subsection{Dual functor} In this section, we define an involutive anti-autoequivalence $\du$ of $\cD(\cT_C)$; that is, $\du^2=\mathrm{id}$.
Consider an embedding of the curve $C$ into a smooth Fano threefold $X$ (e.g. $\PP^3$). Let
$\tilde X$ be the blow-up of $X$ along $C$ and let $\mathbb{E}$ be the exceptional divisor. We have a commutative diagram
\[
\begin{tikzcd}
\mathbb{E} \arrow[d, "p"] \arrow[r, "i"] & \tilde X \arrow[d, "\pi"] \\
C \arrow[r, "j"] & X
\end{tikzcd}
\]
and, by Orlov's blow-up formula, a semiorthogonal decomposition
\[
\cD(\tilde X)=\langle \pi^*\cD(X),\, i_*p^*\cD(C)\rangle.
\]
Since $X$ is Fano, $\mathcal{O}_X$ is exceptional; using $\pi^*\mathcal{O}_X=\mathcal{O}_{\tilde X}$, we refine this to
\[
\cD(\tilde X)=\langle \pi^*(\mathcal{O}_X^{\perp}),\, \mathcal{O}_{\tilde X},\, i_*p^*\cD(C)\rangle.
\]
Set
\[
\ku(\tilde X):={}^{\perp}\!\bigl(\pi^*(\mathcal{O}_X^{\perp})\bigr)=\langle \mathcal{O}_{\tilde X},\, i_*p^*\cD(C)\rangle.
\]
It is shown in~\cite[Lemma~3.4]{AlexeevKuznetsov2025} that $\ku(\tilde X)\simeq \cD(\cT_C)$.
We consider the involutive functor
\[
\du\colon \cD(\tilde X)\to \cD(\tilde X),\qquad
\du(-):=(-)^{\vee}\otimes \mathcal{O}_{\tilde X}(-\mathbb{E}).
\]

\begin{Lem}\label{lem-dual}
    The restriction of $\du$ to $\mathcal{D}(\mathcal{T}_C)$ gives a well-defined functor on $\mathcal{D}(\mathcal{T}_C)$ such that
    \[
    \du([\cO \otimes V \overset{\varphi}{\to} E]) = [\cO \otimes V^{\vee} \xrightarrow{\psi^{\vee}} (\cone(\varphi))^{\vee}[1]], 
    \]
    where $\psi$ fits in the exact triangle in $\cD(C)$
    \begin{equation*}
        \cone(\varphi)[-1] \xrightarrow{\psi} \cO \otimes V \overset{\varphi}{\to} E.
    \end{equation*} 
\end{Lem}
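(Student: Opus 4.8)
The plan is to transport the geometric involution $\du$ through the equivalence $\ku(\tilde X)\simeq\cD(\cT_C)$ of \cite[Lemma~3.4]{AlexeevKuznetsov2025}, under which the exceptional object $[\cO\to 0]$ corresponds to $\mathcal O_{\tilde X}$ and, for $E\in\cD(C)$, the sheaf-type object $j_*E=[0\to E]$ corresponds to $i_*p^*E$ up to a shift. As $\du=(-)^{\vee}\otimes\mathcal O_{\tilde X}(-\mathbb E)$ is a contravariant, triangulated, involutive functor, it suffices to (i) compute it on the two building blocks $\mathcal O_{\tilde X}$ and $i_*p^*\cD(C)$ generating $\ku(\tilde X)$---this simultaneously settles well-definedness and records the values of $\du$ on the generators of $\cD(\cT_C)$---and then (ii) propagate the answer to an arbitrary triple along the canonical triangle \eqref{rep.3}.

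First I would compute $\du(\mathcal O_{\tilde X})=\mathcal O_{\tilde X}(-\mathbb E)$ and check it lies in $\ku(\tilde X)={}^{\perp}(\pi^*\mathcal O_X^{\perp})$: for $F\in\mathcal O_X^{\perp}$ the projection formula together with the blow-up computation $R\pi_*\mathcal O_{\tilde X}(\mathbb E)=\mathcal O_X$ (which uses $N_{\mathbb E/\tilde X}=\mathcal O_{\mathbb E}(-1)$ and $Rp_*\mathcal O_{\mathbb E}(-1)=0$ on the $\mathbb P^1$-bundle $p\colon\mathbb E\to C$) reduces $R\Hom(\mathcal O_{\tilde X}(-\mathbb E),\pi^*F)$ to $R\Gamma(X,F)=0$. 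For the sheaf part, Grothendieck--Serre duality along the divisor $i\colon\mathbb E\hookrightarrow\tilde X$ gives $(i_*p^*E)^{\vee}=i_*\big(p^*E^{\vee}\otimes\mathcal O_{\mathbb E}(\mathbb E)\big)[-1]$, and tensoring with $\mathcal O_{\tilde X}(-\mathbb E)$ cancels the normal-bundle twist via the projection formula, so that $\du(i_*p^*E)=i_*p^*(E^{\vee})[-1]\in\ku(\tilde X)$. Since $\du^2=\mathrm{id}$, these two inclusions prove well-definedness. Translated back, they read $\du([\cO\to 0])=[\cO\xrightarrow{\mathrm{id}}\cO]$ and $\du([0\to E])=[0\to E^{\vee}[1]]$; in particular $\du$ interchanges the semiorthogonal decompositions \eqref{sod.1} and \eqref{sod.3}.

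For a general $T=[\cO\otimes V\xrightarrow{\varphi}E]$ I would apply $\du$ to the triangle \eqref{rep.3}, $i'_*{i'}^{\dagger}T\to T\to j_*j^*T\xrightarrow{\delta_T}i'_*{i'}^{\dagger}T[1]$. By Lemma~\ref{lem-computing j^*} we have ${i'}^{\dagger}T=V$ and $j^*T=\cone(\varphi)$, so this is $[\cO\otimes V\xrightarrow{\mathrm{id}}\cO\otimes V]\to T\to[0\to\cone(\varphi)]$. Feeding in the generator values from the previous step, the contravariant functor $\du$ turns it into the distinguished triangle
\[
[0\to\cone(\varphi)^{\vee}[1]]\ \to\ \du(T)\ \to\ [\cO\otimes V^{\vee}\to 0]\ \xrightarrow{\ \du(\delta_T)\ }\ [0\to\cone(\varphi)^{\vee}[1]][1].
\]
Its first term lies in $j_*\cD(C)$ and its third in $i_*\cD(\mathcal V)$, so by uniqueness this is exactly the canonical triangle \eqref{rep.1} for $\du(T)$. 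Reading off $j^{\dagger}(\du T)=\cone(\varphi)^{\vee}[1]$ and $i^{*}(\du T)=V^{\vee}$, Lemma~\ref{lem-simple} identifies $\du(T)$ with the triple $[\cO\otimes V^{\vee}\to\cone(\varphi)^{\vee}[1]]$ whose structure morphism is the one classifying $\du(\delta_T)$ under the adjunction of Lemma~\ref{lem-simple}.

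The step I expect to be the main obstacle is identifying this structure morphism with $\psi^{\vee}$, since it amounts to matching a connecting map rather than merely the vertices of a triangle. Here I would imitate the bookkeeping in Lemma~\ref{lem-computing j^*}: using the factorization $\delta_T=\delta_{[\cO\otimes V\to 0]}\circ j_*(\varphi')$ there, where $\varphi'$ is the boundary of $\cO\otimes V\xrightarrow{\varphi}E\to\cone(\varphi)\xrightarrow{\varphi'}\cO\otimes V[1]$, and the defining triangle of $\psi$, one has $\psi=\varphi'[-1]$ and hence $\psi^{\vee}=(\varphi')^{\vee}[1]\colon\cO\otimes V^{\vee}\to\cone(\varphi)^{\vee}[1]$. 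Applying the contravariant $\du$ to the factorization and invoking functoriality together with $\du(j_*\varphi')=j_*\big((\varphi')^{\vee}[1]\big)=j_*(\psi^{\vee})$ and the already-computed universal map $\du(\delta_{[\cO\otimes V\to0]})$ should show that $\du(\delta_T)$ corresponds, under the adjunction isomorphism $\Hom(i_*i^{*}\du T,\,j_*j^{\dagger}\du T[1])\cong\Hom_{\cD(C)}(V^{\vee},\cone(\varphi)^{\vee}[1])$, to $\psi^{\vee}$, completing the proof. Throughout, the class identity $\cl\colon(r,d,n)\mapsto(n-r,d,n)$---manifestly an involution---serves as a consistency check.
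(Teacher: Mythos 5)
Your proposal is correct and takes essentially the same approach as the paper: compute $\du$ on the generators $\mathcal{O}_{\tilde X}$ and $i_*p^*\cD(C)$ of $\ku(\tilde X)$ via Grothendieck--Verdier duality and the projection formula, translate back to get $\du([\cO\to 0])=[\cO\to\cO]$ and $\du(j_*E)=j_*E^{\vee}[1]$, and then propagate to a general triple by dualizing a canonical decomposition triangle, identifying the structure morphism through the factorization of the boundary map in Lemma~\ref{lem-computing j^*}. The only cosmetic difference is that you dualize the triangle \eqref{rep.3}, so that $\du(T)$ appears directly in its \eqref{rep.1}-decomposition and Lemma~\ref{lem-simple} reads off the triple, whereas the paper dualizes \eqref{rep.1}; since $\du$ interchanges the two semiorthogonal decompositions, these are the same argument read in opposite directions.
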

\begin{proof}
   We first compute $\du([\cO \to 0])$. Under the equivalence $\ku(\tilde{X})\simeq \mathcal{D}(\T)$, we have that $[\cO \to 0]$ corresponds to $\mathcal{O}_{\tilde{X}}$. Thus, $\mathbb{D}([\cO \to 0])= \mathcal{O}_{\tilde{X}}(-\mathbb{E}) \in \cD(\tilde{X})$ which lies in the exact triangle 
   $$
   \mathcal{O}_{\tilde{X}}(-\mathbb{E}) \to \mathcal{O}_{\tilde{X}} \to \mathcal{O}_{\mathbb{E}} = i_*p^*\cO.
   $$
 Thus under our correspondence, we get $\du([\cO \to 0]) = [\cO \xrightarrow{\text{id}} \cO] \in \cD(\mathcal{T}_C)$.    

   The next step is to compute $\du([0 \to E])$ for an object $E \in \cD(C)$, that corresponds to 
   \begin{align*}
   \du (i_*p^*E[-1]) & \ = (i_*p^*E[-1])^{\vee} \otimes \mathcal{O}_{\tilde{X}}(-\mathbb{E})\\
   & \ \overset{\text{GV}}{=} i_* \big( (p^*E[-1])^{\vee} \otimes i^*(\mathcal{O}_{\tilde{X}}(-\mathbb{E})) \otimes \omega_{\mathbb{E}} \otimes i^*\omega_{\tilde{X}} [-1]  \big) \\
    & \ = i_* \big( (p^*E[-1])^{\vee} \big)\\
    & \ \overset{}{= } i_*p^* E^{\vee}, 
   \end{align*}
   where by GV we mean Grothendieck--Verdier duality. Under $\ku(\tilde{X})\simeq \mathcal{D}(\T)$ we get that $i_*p^* E^{\vee}$ corresponds to $[0 \rightarrow E^{\vee}[1]] \in \cD(\T)$. Hence, the functor $\du$ preserves $j_*\cD(C)$ and acts by the usual derived dual, shifted by one. Thus, a morphism $f \in \Hom(j_*E, j_*F)$ is sent to
\[
\du(f) = f^{\vee}[1] \colon \du(j_*F) = j_*F^{\vee}[1] \to \du(j_*E) = j_*E^{\vee}[1].
\]
On the other hand, the unique morphism
\[
t \in \Hom\left([\cO[-1] \to 0],\ [0 \to \cO]\right)
\]
is sent to the unique morphism
\[
\du(t) \in \Hom\left([0 \to \cO][1],\ [\cO \to \cO][1]\right).
\]
Applying 
these two observations to the exact sequence~\eqref{rep.1} proves the claim.
 
   
\end{proof}

\section{Tilting stability conditions}\label{section tilt}
In this section, we describe a two-dimensional slice of the space of Bridgeland stability conditions on $\cD(\T)$ obtained by tilting the natural heart $\T$ with respect to a torsion pair. The construction is analogous to the surface case first treated by Bridgeland \cite{bridgeland:K3-surfaces}. For definitions and background on (pre-)stability conditions and the support property, see \cite[Appendix~1]{bayer:the-space-of-stability-conditions-on-abelian-threefolds}; for the general theory of tilting see~\cite{happel:tilting-in-abelian-categories-and-quasitilted-algebras}.

\bigskip

We start by extending the classical notion of $\mu$-stability of sheaves on a curve to triples. 
\begin{Def}\label{def-alpha}
Fix $\alpha \in \mathbb{R}_{\geq 0}$. For any object $T = [\cO \otimes V \xrightarrow{\varphi} E] \in \T$, we define the slope 
\begin{equation}\label{mu}
\mu_{\alpha}(T)\ \coloneqq\ \left\{\!\!\!\begin{array}{cc} \frac{\deg(E)}{\rk(E)} + \alpha \frac{\dim V}{\rk(E)}
& \text{if }\rk(E)\ne0, \\
+\infty & \text{if }\rk(E)=0. \end{array}\right.
\end{equation}
We say that $T \in \mathcal{T}_C$ is $\mu_{\alpha}$-(semi)stable if for all non-trivial subobject $0 \neq T' \subset T$ in $\T$, we have $\mu_{\alpha}(T') < (\leq)\ \mu_{\alpha}(T/T')$.
\end{Def}
We call an object 
$T = [\cO \otimes V \xrightarrow{\varphi} E] \in \T$
with \(\operatorname{rk}E>0\) \emph{torsion-free} if \(E\) is a torsion-free sheaf and the induced map
\(H^0(\varphi)\colon V\to H^0(E)\) is injective. 
By definition, any \(\mu_{\alpha}\)-semistable object in \(\cT_C\) of positive rank is torsion-free. From~\cite[Proposition 2.3]{He1996} it follows that every object $T\in\cT_C$ admits a unique Harder–Narasimhan filtration with $\mu_{\alpha}$-semistable factors\footnote{It has been assumed in \cite{He1996} that $\alpha>0$, but the same proof is valid for $\alpha=0$. }. 

For the remainder of this section, we focus on the case $\alpha=0$; we write $\mu:=\mu_{0}$ for simplicity. By truncating the HN filtration of the objects in $\T$ at a real number $b \in \mathbb{R}$ with respect to slope $\mu$, we get a torsion pair. Let $\mathbb{T}^b$ and $\mathbb{F}^b$ be the full subcategories of $\T$ such that $\mathbb{T}^b$ consists of objects whose quotients have slope bigger than $b$, and $\mathbb{F}^b$ consists of objects whose subobjects have slope less than or equal to $b$. Then $(\mathbb{T}^b, \mathbb{F}^b)$ is a torsion pair in $\T$, and so 
\begin{equation*}
        \cA(b) \coloneqq \langle \mathbb{T}^b,\  \mathbb{F}^b[1] \rangle 
    \end{equation*}
is the heart of a bounded $t$-structure. 
Note that any $T\in \mathbb{F}^b$ has $\rd(T)>0$ and is torsion-free.


To describe our two-dimensional slice of stability conditions, we need to define the Brill-Noether function $\Phi_C \colon \mathbb{R} \to \mathbb{R}$ similar to the Le Potier function on surfaces. We define
\begin{equation*}
    \Phi_C(x) \coloneqq \limsup_{\mu \to x} 
    \left\{ \frac{h^0(C, F)}{\rk(F)} \colon\ \text{$F \in \Coh(C)$ is semistable with slope $\mu(F) =\mu$}     \right\}. 
\end{equation*}

\begin{Lem}\label{lem-BN function}
The BN function is well-defined, satisfying $\Phi_C(x) = 0$ if $x<0$, $\Phi_C(x) = x+1-g$ if $x>2g-2$ and $\Phi_C(x) \leq \frac{1}{2}x+1$ if $x \in [0, 2g-2]$. The BN function is the smallest upper semicontinuous function $\Phi$ satisfying 
\begin{equation*}
    \frac{h^0(F)}{\rk(F)} \leq \Phi(\mu(F))
\end{equation*}
for every semistable sheaf $F$ on $C$. 
\end{Lem}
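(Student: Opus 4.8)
The plan is to reduce everything to pointwise bounds on the ratio $h^0(F)/\rk(F)$ over semistable sheaves $F$ of a fixed (necessarily rational) slope $\mu$, and then to promote these bounds to the three stated (in)equalities by passing to the limsup. Write $g(\mu)\coloneqq\sup\{h^0(F)/\rk(F): F\text{ semistable},\ \mu(F)=\mu\}$, so that $\Phi_C(x)=\limsup_{\mu\to x}g(\mu)$, the limsup ranging over the rational $\mu$ near $x$ for which such $F$ exist. Once I show $g$ is bounded on bounded sets, the limsup is finite and $\Phi_C$ is well defined.

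First I would treat the three regimes for $g(\mu)$. For $\mu<0$: the saturation $L$ of the image of a nonzero section of a semistable $F$ is a line subbundle with $\deg L\ge 0$, hence $\mu(L)\ge 0>\mu=\mu(F)$, contradicting semistability; thus $h^0(F)=0$ and $g(\mu)=0$. For $\mu>2g-2$: a nonzero map $F\to\omega_C$ would produce a rank-one subsheaf of $\omega_C$, of degree $\le 2g-2<\mu$, which is a quotient of $F$, so semistability forces its slope $\ge\mu$, a contradiction; hence $\Hom(F,\omega_C)=0$ and $h^1(F)=0$ by Serre duality, and Riemann--Roch gives $h^0(F)/\rk(F)=\mu+1-g$ exactly, independent of $F$, so $g(\mu)=\mu+1-g$. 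The regime $\mu\in[0,2g-2]$ is the crux: here the desired bound $h^0(F)/\rk(F)\le\tfrac12\mu+1$ is precisely the Clifford inequality for semistable bundles. After disposing of the cases $h^0(F)=0$ or $h^1(F)=0$ (where the inequality follows directly from Riemann--Roch and $\mu\le 2g-2<2g$), the nontrivial case $h^0,h^1>0$ is the Clifford inequality proper, which I would invoke as a known result; it generalizes classical Clifford and can be proved by bounding $h^0(F)+h^0(\omega_C\otimes F^\vee)$ through a multiplication map $H^0(F)\otimes H^0(\omega_C\otimes F^\vee)\to H^0(\omega_C)$, combined with the Riemann--Roch relation $h^0(F)-h^0(\omega_C\otimes F^\vee)=\rk(F)(\mu+1-g)$.

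Passing to the limsup then yields the three assertions: for $x<0$ (resp.\ $x>2g-2$) every nearby slope stays in the same regime, giving $\Phi_C(x)=0$ (resp.\ $\Phi_C(x)=x+1-g$); and on $[0,2g-2]$ the bound $g(\mu)\le\tfrac12\mu+1$ passes to $\Phi_C(x)\le\tfrac12 x+1$. The jumps of $\Phi_C$ at $x=0$ and $x=2g-2$, witnessed by $\mathcal{O}_C$ and $\omega_C$, are downward from the right and hence compatible with upper semicontinuity.

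Finally I would establish the extremal characterization. That $\Phi_C$ is upper semicontinuous is formal: a regularized limsup $x\mapsto\limsup_{\mu\to x}g(\mu)$ has open sublevel sets $\{\Phi_C<\alpha\}$, by the usual $\varepsilon/2$ argument. It satisfies the required inequality because the limsup includes the point, so $\Phi_C(\mu(F))\ge g(\mu(F))\ge h^0(F)/\rk(F)$. For minimality, let $\Phi$ be any upper semicontinuous function with $\Phi(\mu(F))\ge h^0(F)/\rk(F)$ for all semistable $F$; then $\Phi\ge g$ pointwise, and upper semicontinuity of $\Phi$ gives $\Phi(x)\ge\limsup_{\mu\to x}\Phi(\mu)\ge\limsup_{\mu\to x}g(\mu)=\Phi_C(x)$. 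The main obstacle is the Clifford inequality in the middle regime; everything else is either a short semistability argument or the soft analysis of limsups.
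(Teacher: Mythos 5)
Your proposal is essentially correct in structure and actually proves more of the statement than the paper's own proof does: the paper's argument is two sentences (existence of stable bundles of every coprime rank and degree, plus Clifford's theorem as the upper bound), whereas you work out the three regimes, the passage to the limsup, and the extremal characterization explicitly. The key bound in the middle regime is the same in both cases, namely Clifford's inequality for semistable bundles, which both you and the paper invoke as a known result.

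However, there is one ingredient the paper makes explicit that you silently assume, and it is genuinely needed: the \emph{existence} of semistable sheaves of every rational slope (the paper gets this from slope-stable bundles of rank $r$ and degree $d$ for every coprime pair, which holds on curves of genus $g\geq 1$). Your limsup is taken ``over the rational $\mu$ near $x$ for which such $F$ exist,'' but you never show this set is nonempty in every neighbourhood of every real $x$. Without density of realized slopes, $\Phi_C(x)$ could be a limsup over the empty set (hence $-\infty$, not a real number), and — more concretely — your exact equalities fail: for $x<0$ the claim ``every nearby slope stays in the same regime, giving $\Phi_C(x)=0$'' only yields $\Phi_C(x)\leq 0$ unless slopes near $x$ are actually realized by semistable sheaves with $h^0=0$; similarly, for $x>2g-2$ you need realized slopes $\mu\to x$ to conclude $\Phi_C(x)=x+1-g$ rather than merely $\Phi_C(x)\leq x+1-g$. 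The fix is one line — cite the nonemptiness of the moduli of stable bundles for coprime $(r,d)$ (Atiyah for $g=1$, dimension count for $g\geq 2$) — but as written this step is missing, and it is precisely the first half of the paper's proof.
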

\begin{proof}
    There is a slope-stable rank $r$ and degree $d$ vector bundle on $C$ for any integers $r>0$ and $d$ which are coprime, 
    see~\cite{mumford:slope-stability-projective-invarinats}. Thus for any rational number $\mu$, there is a stable bundle of slope $\mu$. Since Clifford's Theorem gives an upper bound for $\frac{h^0(F)}{\rk(F)}$ for any stable bundle $F$, see~\cite{mercat:clifford-theorem}, the function $\Phi_C$ is well-defined.
\end{proof}
Recall that for any $T=[\cO\otimes V \to F]\in \mathcal{D}(\T)$, we associate a vector $\cl(T)$ in $\mathbb{Z}^3$ as 
\[
\cl(T)= (\rd(T),\dd(T),\nd(T)) = \big(\rk E,\ \deg E,\ \dim_{\C} V\big).
\]
The main goal of this section is to prove the following. 

\begin{Thm}\label{thm-tilting}
    There is a two-dimensional continuous family of stability conditions parametrized by  $(b,w) \in \mathbb{R}^2$ for $w> \Phi_C(b)$ given by $(b,w) \mapsto \sigma_{b,w} \coloneqq (\cA(b), Z_{b,w})$ for the the group homomorphism 
  \begin{equation*}
  Z_{b,w} \colon \mathcal{N}(\cD(T)) \rightarrow \mathbb{C} \;\;\;,\;\;\; Z_{b,w}(T) = -\mathbf n(T)+w\mathbf r(T)+i (\mathbf d(T)-b\mathbf r(T)).
  \end{equation*}
\end{Thm}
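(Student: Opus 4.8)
The plan is to verify, for each pair $(b,w)$ with $w>\Phi_C(b)$, the three defining properties of a Bridgeland stability condition on $\cD(\T)$---that $Z_{b,w}$ is a stability function on $\cA(b)$, that the Harder--Narasimhan property holds, and that the support property is satisfied---and then to check that the resulting point of $\Stab(\cD(\T))$ varies continuously with $(b,w)$. Since every object of $\cA(b)$ is an extension of an object of $\mathbb{T}^b$ by an object of $\mathbb{F}^b[1]$, and $Z_{b,w}$ is additive, it suffices to verify positivity on these two subcategories: a sum of two elements of the closed upper half-plane lands in $\mathbb{H}\cup\R_{<0}$ unless both summands vanish.

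\textbf{Positivity.} Writing $\cl(T)=(r,d,n)$, the imaginary part is $\Im Z_{b,w}(T)=d-br$. For $T\in\mathbb{T}^b$ of positive rank one has $\mu^-(T)>b$, hence $\mu(T)>b$ and $d-br>0$; for $T\in\mathbb{T}^b$ of rank zero, $\Im Z_{b,w}(T)=\deg E\ge 0$, and equality forces $E=0$, so $T=[\cO\otimes V\to 0]$ and $\Re Z_{b,w}(T)=-\dim V<0$. For $T=F[1]$ with $F\in\mathbb{F}^b$ one has $\Im Z_{b,w}(T)=br-d\ge 0$ because $\mu^+(F)\le b$. The only delicate case is the boundary $\Im Z_{b,w}(F[1])=0$, i.e.\ $\mu(F)=b$; combined with $\mu^+(F)\le b$ this forces $F$ to be $\mu$-semistable of slope $b$, and here the hypothesis $w>\Phi_C(b)$ is consumed.

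\textbf{The Brill--Noether bound.} For such an $F=[\cO\otimes V\xrightarrow{\varphi}E]$ the sheaf $E$ is $\mu$-semistable of slope $b$: any subsheaf $E'\subset E$ with $\mu(E')>b$, together with the largest compatible $V'\subseteq V$, would produce a subobject of slope $>b$ contradicting semistability. Moreover $F$ is torsion-free in the sense of Section~\ref{section tilt}, so $H^0(\varphi)\colon V\hookrightarrow H^0(E)$ is injective and $\dim V\le h^0(E)$. Lemma~\ref{lem-BN function} then gives $h^0(E)\le\Phi_C(b)\,\rk E$, whence $\Re Z_{b,w}(F[1])=\dim V-w\,\rk E\le(\Phi_C(b)-w)\rk E<0$. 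This is precisely the inequality the Brill--Noether function was designed to furnish, and it completes the proof that $Z_{b,w}$ is a stability function.

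\textbf{HN, support property, and continuity.} Granting positivity, I would deduce the HN property from the noetherianity of $\cA(b)$, inherited from the noetherian, artinian category $\T$ through the torsion pair $(\mathbb{T}^b,\mathbb{F}^b)$, via the standard argument of Bridgeland ruling out infinite towers of subobjects with increasing phase. For the support property I would exhibit an explicit quadratic form $Q$ on $\mathcal N(\cD(\T))\otimes\R\cong\R^3$ that is negative definite on the real line $\ker Z_{b,w}=\R\cdot(1,b,w)$ and nonnegative on the class of every $\sigma_{b,w}$-semistable object; the verification of $Q\ge 0$ reduces, through the HN filtration and the description of the extremal stable objects ($[\cO\to 0]$, the skyscrapers $[0\to\cK]$, and $\mu$-semistable triples), to the same Clifford-type bound $h^0(E)\le\Phi_C(\mu)\,\rk E$. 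Finally, $Z_{b,w}$ is real-analytic in $(b,w)$; continuity is immediate on any region where $\cA(b)$ is constant, and Bridgeland's deformation theorem for a fixed support-property form extends it across the walls where the tilt changes. I expect the support property to be the main obstacle: isolating the correct quadratic form in the rank-three lattice and checking its nonnegativity on all semistable classes is where the geometry of $C$, through $\Phi_C$, must be used most delicately, whereas positivity---though it is the step that consumes $w>\Phi_C(b)$---is comparatively direct once the Brill--Noether bound is available.
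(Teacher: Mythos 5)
Your positivity argument is correct and is essentially the paper's: the paper (Lemma~\ref{lem-pre}) phrases it via the cohomology objects $\cH^{-1}(T)\in\mathbb{F}^b$, $\cH^0(T)\in\mathbb{T}^b$ of a general $T\in\cA(b)$, while you reduce to the torsion-pair generators, but both hinge on the same mechanism in the boundary case: a slope-$b$ semistable triple has semistable underlying sheaf and injective $H^0(\varphi)$, so $\nd \le h^0 \le \Phi_C(b)\,\rd < w\,\rd$. The genuine gap is in your HN step, and it propagates to the whole parameter range. For irrational $b$ the image of $\Im Z_{b,w}=\dd-b\,\rd$ is dense in $\R$, so the ``standard argument of Bridgeland ruling out infinite towers'' does not apply: that argument needs discreteness of the imaginary part. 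Moreover, noetherianity of $\cA(b)$ is not ``inherited'' formally from $\T$ through a torsion pair (tilts of noetherian hearts need not be noetherian), and even if granted it would not suffice without discreteness. The paper is careful about exactly this: Lemma~\ref{lem-pre} proves the HN property \emph{only for} $b\in\Q$, using discreteness of $\Im Z_{b,w}$ together with noetherianity of $\T$ and a cohomological boundedness argument ruling out infinite filtrations with vanishing imaginary part. The irrational case is never proved directly; it is obtained at the end by deforming from rational points \emph{and} invoking the classification theorem (Theorem~\ref{thm-main}) to identify the deformed stability conditions with the pairs $(\cA(b),Z_{b,w})$. Your proposal never restricts to rational $b$ and offers no substitute for this deformation-plus-classification mechanism, so as written the theorem is unproven for irrational $b$ (which is most of the claimed two-dimensional family). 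Relatedly, your continuity claim ``immediate on any region where $\cA(b)$ is constant'' is vacuous: since slopes of semistable triples are dense in $\R$, the heart $\cA(b)$ is not locally constant anywhere.

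Two further, lesser, shortfalls. Your support-property plan has the right shape (a quadratic form negative definite on $\ker Z_{b,w}=\R\cdot(1,b,w)$ and nonnegative on semistable classes), but it is only a plan: the paper's Lemma~\ref{lem-support} needs the explicit form $Q(r,d,n)=\delta^{-1}(d-b_0r)^2+r^2(w_0-\delta)-nr$ with $\delta$ chosen so that $\delta^{-1}(x-b_0)^2+w_0-\delta>\Phi_C(x)$, the large-volume-limit Lemma~\ref{lem-vertical-large-volume-limit} identifying $\sigma_{b_0,w\gg0}$-semistable objects with (shifts of) $\mu$-semistable ones, and an induction on $\Im Z$ via well-behaved wall-crossing; none of this is supplied by ``reduces through the HN filtration to the Clifford-type bound.'' Finally, note that walls in $b$ are dense, so any completion of your argument must, like the paper, route both the HN property at irrational $b$ and the continuity of $(b,w)\mapsto\sigma_{b,w}$ through the deformation theorem combined with a classification of nearby stability conditions, rather than through direct verification.
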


In this section, we prove the claim only on the restricted domain \((b,w)\in\mathbb{Q}\times\mathbb{R}_{>0}\). Lemmas~\ref{lem-Z is stability function} and~\ref{lem-pre} prove that $\sigma_{b,w}$ are pre-stability conditions; that is, $Z_{b,w}$ is a stability function on $\mathcal{A}(b)$ satisfying the HN property. And Lemma \ref{lem-support} verifies the
support property. The theorem then follows from the classification in Theorem \ref{thm-main} together with the
deformation theory of Bridgeland stability conditions \cite[Theorem 1.2]{bridgeland:space-of-stability-conditions} or \cite[Theorem~1.2]{bayer:deformation}, which allows us to extend the result to all $b$.

\begin{Lem}\label{lem-Z is stability function}
    The group homomorphism $Z_{b,w}$ is a stability function on $\mathcal{A}(b)$.
\end{Lem}
\begin{proof}
    Recall that being a stability function is equivalent to $Z_{b,w}(T) \in \mathbb{H} \cup \mathbb{R}^{<0}$ for any $0 \neq T \in \cA(b)$, where $\mathbb{H}$ is the upper half-plane $\{\Im z>0\}$. By definition, we know $\Im[Z_{b,w}(T)] \geq 0$. Take $T$ such that $\Im[Z_{b,w}(T)] = 0$, it fits in the exact sequence in $\mathcal{A}(b)$
    \[
    0\to \mathcal{H}^{-1}(T)[1]\to T \to \mathcal{H}^0(T)\to 0.
    \]
    Since $\Im [Z_{b,w}]$ is additive, it follows that $j^!\mathcal{H}^0(T)=0$ and $j^!\mathcal{H}^{-1}(T)$ is a $\mu$-semistable sheaf with $\mu(j^!\mathcal{H}^{-1}(T))=b$. If $\mathcal{H}^{-1}(T)=0$, then $\nd(T)=\nd(\mathcal{H}^{0}(T))>0$ which gives $\Re [Z_{b, w}(T)]<0$.
        
    

    Now assume $\mathcal{H}^{-1}(T)\neq 0$. Since $\mathcal{H}^{-1}(T)\in \mathbb{F}^b$, it implies that $\rd(\mathcal{H}^{-1}(T))>0$. As $\Re [Z_{b, w}(\mathcal{H}^0(T))]\leq 0$, it is enough to show that $\Re [Z_{b, w}(\mathcal{H}^{-1}(T)[1])]<0$. Since $\mathcal{H}^{-1}(T)\in \mathbb{F}^b$ and, in particular, torsion-free, we get 
    \[
    \frac{\dim i^* \mathcal{H}^{-1}(T)}{\rd( \mathcal{H}^{-1}(T))} \leq \Phi_C(\mu( \mathcal{H}^{-1}(T)))< w, 
    \]
    which implies $\Re [Z_{b, w}(T_1[1])]<0$ as required. 
\end{proof}

Before proceeding to the proof of the HN property for $Z_{b,w}$, we recall the notion of \(\sigma_{b,w}\)-stability. For any non-zero object $T \in \cA(b)$, we define the slope function 
  \begin{equation*}
      \nu_{b,w}(T) = -\frac{\Re[Z_{b,w}(T)]}{\Im[Z_{b,w}(T)]} = \frac{\nd(T)-w \rd(T)}{\dd(T)-b  \rd(T)}. 
  \end{equation*}
  From Lemma~\ref{lem-Z is stability function}, we have $\dd(T)-b  \rd(T) \geq 0$, and if it is zero, then we set $\nu_{b,w}(T) = +\infty$. 
  \begin{Def}
We say $0\neq T \in \cD(\cT_C)$ is $\sigma_{b,w}$-(semi)stable if and only if
\begin{itemize}
\item $T[k]\in\cA(b)$ for some $k\in\Z$, and
\item $\nu\_{b,w}(T')\,< (\le)\,\nu\_{b,w}\big(T[k]/T')$ for all non-trivial subobjects $T'\hookrightarrow T[k]$ in $\cA(b)$.
\end{itemize}
\end{Def} 
\begin{Lem}\label{lem-pre}
The stability function $Z_{b,w}$ on $\mathcal{A}(b)$ has the Harder--Narasimhan property when $b \in \mathbb{Q}$.
\end{Lem}
\begin{proof}
    It is enough to verify that $\mathcal{A}(b)$ satisfies the chain conditions of~\cite[Proposition~2.4]{bridgeland:space-of-stability-conditions}. Since $\Im [Z_{b,w}]$ is discrete when $b$ is rational and $\T$ is noetherian, following the proof of~\cite[Proposition~7.1]{bridgeland:K3-surfaces}, it suffices to show that for any $T \in \mathcal{A}(b)$ there is no infinite filtration in $\mathcal{A}(b)$
    \[
    0=A_0 \subsetneq A_1 \subsetneq \dots \subsetneq A_k\subsetneq \dots \subsetneq T, 
    \]
    such that $\Im [Z_{b, w}(A_k)]=0$ for all $k$. From the proof of Lemma~\ref{lem-Z is stability function} it follows that  $j^!\mathcal{H}^0(A_k)=0$ for any $k$. Denote $Q_k = T/A_k$. Following~\cite[Lemma 6.17]{macri:intro-bridgeland-stability} we may assume $\mathcal{H}^0(Q_{k-1})=\mathcal{H}^0(Q_{k})$ and $\mathcal{H}^{-1}(A_{k-1})=\mathcal{H}^{-1}(A_{k})$ for all $k$. So there is the following long exact sequence of cohomology for any $k$
    \begin{equation}\label{eq les hn filtration}
        0\to \mathcal{H}^{-1}(A_k)\to \mathcal{H}^{-1}(T) \to \mathcal{H}^{-1}(Q_k) \to \mathcal{H}^{0}(A_k)\to \mathcal{H}^{0}(T)\to \mathcal{H}^{0}(Q_k)\to 0.
    \end{equation}
    By taking $j^!$ of it we get that $j^!\mathcal{H}^{-1}(Q_{k-1}) = j^!\mathcal{H}^{-1}(Q_k)$ for all $k$. Since $\mathcal{H}^{-1}(Q_k)$ is torsion-free, we have that $\dim i^*\mathcal{H}^{-1}(Q_k)$ is bounded by $\dim i^*\mathcal{H}^{-1}(Q_k) \leq h^0(C, j^!\mathcal{H}^{-1}(Q_k))$. Therefore, $\dim i^* \mathcal{H}^{0}(A_k)$ has only finitely many possibilities for all $k$. Combining with $j^!\mathcal{H}^{0}(A_k) =0$ and $\mathcal{H}^{-1}(A_{k-1})=\mathcal{H}^{-1}(A_{k})$, we obtain that there is no infinite sequence like above, this shows the claim.
\end{proof}

To prove the support property, we first analyze the large-volume limit along vertical lines.

\begin{Lem}\label{lem-vertical-large-volume-limit}
   Fix $b \in \mathbb{Q}$. If $T\in \mathcal{A}(b)$ is $\sigma_{b, w}$-semistable for all $w \gg 0$, then it satisfies one of the following conditions 
    \begin{enumerate}
        \item $\mathcal{H}^{-1}(T)=0$ and $\mathcal{H}^0(T)$ is $\mu$-semistable,
        \item $j^!\mathcal{H}^0(T)=0$ and $\mathcal{H}^{-1}(T)$ is $\mu$-semistable. 
    \end{enumerate}
\end{Lem}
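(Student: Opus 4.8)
The plan is to run the standard large--volume limit argument, comparing the asymptotics of $\nu_{b,w}$ as $w\to+\infty$ on the two pieces of the canonical decomposition of $T$. First I record the decomposition: every $0\neq T\in\cA(b)$ sits in a short exact sequence $\cH^{-1}(T)[1]\hookrightarrow T\twoheadrightarrow\cH^0(T)$ in $\cA(b)$, with $\cH^{-1}(T)\in\mathbb{F}^b$ and $\cH^0(T)\in\mathbb{T}^b$. The key elementary input is the behaviour of the slope: for $0\neq A\in\cA(b)$ one has $\Im Z_{b,w}(A)\ge 0$, and as $w\to+\infty$ we get $\nu_{b,w}(A)\to-\infty$ if $\rd(A)>0$, $\nu_{b,w}(A)\to+\infty$ if $\rd(A)<0$, while $\nu_{b,w}(A)$ is constant and finite if $\rd(A)=0$ with $\Im Z_{b,w}(A)>0$, and $\nu_{b,w}(A)=+\infty$ whenever $\Im Z_{b,w}(A)=0$. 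Throughout I use the see--saw inequality: $\sigma_{b,w}$--semistability of $T$ forces $\nu_{b,w}(A)\le\nu_{b,w}(T)\le\nu_{b,w}(B)$ for every short exact sequence $A\hookrightarrow T\twoheadrightarrow B$ in $\cA(b)$.

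Suppose first $\cH^{-1}(T)\neq 0$. Since rank--zero objects have slope $+\infty\nleq b$, any nonzero object of $\mathbb{F}^b$ is torsion--free of positive rank, so $\rd(\cH^{-1}(T)[1])=-\rd(\cH^{-1}(T))<0$ and hence $\nu_{b,w}(\cH^{-1}(T)[1])\to+\infty$. As $\cH^{-1}(T)[1]$ is a subobject of $T$ in $\cA(b)$, the quotient $\cH^0(T)$ must satisfy $\nu_{b,w}(\cH^0(T))\to+\infty$ as well; since $\cH^0(T)\in\mathbb{T}^b$ has $\rd\ge 0$ (with $\dd-b\rd>0$ as soon as $\rd>0$), the asymptotics force $\rd(\cH^0(T))=\dd(\cH^0(T))=0$, i.e.\ a torsion sheaf part of degree zero, so $j^{\dagger}\cH^0(T)=0$. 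This is the first half of Type~(2). For the second half, suppose $\cH^{-1}(T)$ is not $\mu$--semistable and let $F_1\subset\cH^{-1}(T)$ be its maximal $\mu$--destabilising subobject; then $F_1$ and the quotient lie in $\mathbb{F}^b$, both of positive rank, with $\mu(F_1)>\mu(\cH^{-1}(T))$. Thus $F_1[1]\hookrightarrow\cH^{-1}(T)[1]\hookrightarrow T$ in $\cA(b)$, and the leading--order estimate $\nu_{b,w}(F_1[1])=\tfrac{w}{b-\mu(F_1)}+O(1)$ together with $\nu_{b,w}(T)=\tfrac{w}{b-\mu(\cH^{-1}(T))}+O(1)$ (the rank--zero summand $\cH^0(T)$ does not affect the leading term) gives $\nu_{b,w}(F_1[1])>\nu_{b,w}(T)$ for $w\gg0$, contradicting semistability; the degenerate case $\mu(F_1)=b$ gives $\nu_{b,w}(F_1[1])=+\infty>\nu_{b,w}(T)$ directly. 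Hence $\cH^{-1}(T)$ is $\mu$--semistable and we are in Type~(2).

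Finally suppose $\cH^{-1}(T)=0$, so $T=\cH^0(T)\in\mathbb{T}^b\subset\T$, and I must show $T$ is $\mu$--semistable. If $\rd(T)=0$ this is trivial (all slopes are $+\infty$), so assume $\rd(T)>0$ and $\nu_{b,w}(T)\to-\infty$. A torsion subobject $[0\to\tau]\subset T$ would have $\nu_{b,w}=0$, and a vector--space subobject $[\cO\otimes W\to 0]$ would have $\nu_{b,w}=+\infty$; both violate the see--saw for $w\gg0$, so $T$ is torsion--free and every $\mu$--HN factor has positive rank. Were $T$ not $\mu$--semistable, its maximal destabiliser $A\subset T$ would lie in $\mathbb{T}^b$ with $\mu(A)>\mu(T/A)>b$, and the estimates $\nu_{b,w}(A)=\tfrac{-w}{\mu(A)-b}+O(1)$, $\nu_{b,w}(T/A)=\tfrac{-w}{\mu(T/A)-b}+O(1)$ give $\nu_{b,w}(A)>\nu_{b,w}(T/A)$ for $w\gg0$, again contradicting the see--saw. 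Thus $T$ is $\mu$--semistable, placing us in Type~(1). The only delicate points, which I expect to be the main obstacle, are the bookkeeping that the relevant sub/quotients genuinely lie in $\cA(b)$ (so that the see--saw applies) and the correct treatment of the degenerate slope--$+\infty$ cases where $\Im Z_{b,w}=0$; the driving mechanism is simply that the sign of $\rd$ dictates the $w\to\infty$ limit of $\nu_{b,w}$.
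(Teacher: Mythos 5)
Your proof is correct and follows essentially the same route as the paper: both use the short exact sequence $\cH^{-1}(T)[1]\hookrightarrow T\twoheadrightarrow \cH^{0}(T)$ in $\cA(b)$, the observation that the sign of $\rd$ governs the $w\to\infty$ asymptotics of $\nu_{b,w}$, and a shifted HN-destabiliser to force $\mu$-semistability of the relevant cohomology object. Your write-up is in fact somewhat more explicit than the paper's on the degenerate slope-$b$ cases and on ruling out rank-zero subobjects in the $\cH^{-1}(T)=0$ case, but the underlying mechanism is identical.
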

\begin{proof}
First assume $\cH^{-1}(T)=0$ and that $\cH^0(T)$ is $\mu$-unstable, so it fits into a short exact sequence
\begin{equation}\label{eq}
0 \to T'' \to T=\cH^0(T) \to T' \to 0,
\end{equation}
in $\cT_C$, where $T'$ is $\mu$-semistable and $\mu(T'')>\mu(T')=\mu^{-}(\cH^0(T))>b$. This implies that \eqref{eq} is also a short exact sequence in the heart $\cA(b)$. Moreover, for objects of positive rank, the ordering by $\nu_{b,w}$-slope agrees with the ordering by $\mu$-slope, because
\[
\lim_{w\to\infty}\frac{\nu_{b,w}(T)}{w}
=
\left(b-\frac{\dd(T)}{\rd(T)}\right)^{-1}.
\]
Hence $\sigma_{b,w}$-semistability of $T$ for $w \gg0 $ gives a contradiction. Therefore, $\cH^0(T)$ is $\mu$-semistable, as claimed in part~(a).

Now suppose \(\mathcal{H}^{-1}(T)\neq 0\). Recall that it implies that $\rd(\mathcal{H}^{-1}(T))>0$. We claim that \(j^!\mathcal{H}^0(T)=0\), or equivalently
\(\Im [Z_{b,w}\big(\mathcal{H}^0(T)\big)]=0\).
Otherwise, taking cohomology yields a short exact sequence in \(\mathcal{A}(b)\)
\[
0 \longrightarrow \mathcal{H}^{-1}(T)[1] \longrightarrow T \longrightarrow \mathcal{H}^0(T) \longrightarrow 0 .
\]
Then
\[
\lim_{w\to\infty}\Re [Z_{b,w}\!\big(\mathcal{H}^{-1}(T)[1]\big)]
=-\infty < -\nd(\mathcal{H}^0(T)) \leq \lim_{w\to\infty}\Re [Z_{b,w}\!\big(\mathcal{H}^{0}(T)\big)] ,
\]
which implies \(\nu_{b,w\gg 0}\!\big(\mathcal{H}^{-1}(T)[1]\big)>\nu_{b,w\gg 0}\!\big(\mathcal{H}^0(T)\big)\), a contradiction to the \(\sigma_{b,w}\)-semistability of \(T\). 

Finally, for any subobject \(T'\hookrightarrow \mathcal{H}^{-1}(T)\) in $\T$ 
we have
\(\mu^{+}(T')\le \mu^{+}\big(\mathcal{H}^{-1}(T)\big)\).
Hence \(T'[1]\) is a subobject of \(T\) in \(\mathcal{A}(b)\), and the \(\mu\)-semistability of \(\mathcal{H}^{-1}(T)\) follows by the same argument as in part~(a).
\end{proof}

\begin{Lem}\label{lem-support}
       The pre-stability condition $\sigma_{b_0, w_0} = (\cA(b_0), Z_{b_0,w_0})$ satisfies the support property when $b_0 \in \mathbb{Q}$.
\end{Lem}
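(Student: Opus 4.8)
The plan is to verify the support property via the criterion of \cite[Lemma~11.4]{bayer:the-space-of-stability-conditions-on-abelian-threefolds}: it suffices to exhibit a real quadratic form $Q$ on $\mathcal N(\cD(\cT_C))\cong\Z^3$ that is (i) negative definite on the real kernel of $Z_{b_0,w_0}$ and (ii) satisfies $Q(\cl(T))\ge 0$ for every $\sigma_{b_0,w_0}$-semistable object $T$. Since Lemma~\ref{lem-BN function} bounds $\Phi_C$ above by an affine function and $w_0>\Phi_C(b_0)$, the argument of \cite[Remark~3.5]{li-albanese} produces constants $\delta,\delta_0>0$ with
\[
\delta^{-1}(x-b_0)^2+w_0-\delta_0>\Phi_C(x)\qquad\text{for all }x\in\R ,
\]
the point being that the parabola on the left eventually dominates the affine growth of $\Phi_C$ while $w_0>\Phi_C(b_0)$ controls the value at $x=b_0$. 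I then take
\[
Q(r,d,n)=\delta^{-1}(d-b_0 r)^2+r^2(w_0-\delta_0)-nr .
\]

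Condition (i) is immediate: the real kernel of $Z_{b_0,w_0}$ is the line spanned by $(1,b_0,w_0)$, and $Q(1,b_0,w_0)=(w_0-\delta_0)-w_0=-\delta_0<0$. For condition (ii) I would argue by well-founded induction on $\Im Z_{b_0,w_0}(T)=\dd(T)-b_0\rd(T)$, which lies in a discrete subset of $\R_{\ge 0}$ because $b_0$ is rational (as already exploited in Lemma~\ref{lem-pre}) and is independent of $w$, hence constant along the vertical line $b=b_0$.

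The base case splits into two families. Objects with $\Im Z_{b_0,w_0}(T)=0$ are dispatched directly: as in the proof of Lemma~\ref{lem-pre}, the underlying $\mu$-semistable part has slope $b_0$ and satisfies $\nd/\rd\le\Phi_C(b_0)<w_0-\delta_0$, from which $Q(\cl(T))\ge 0$ follows by a short computation. The remaining base case consists of $T$ that stay $\sigma_{b_0,w}$-semistable for all $w\gg 0$; by Lemma~\ref{lem-vertical-large-volume-limit} such a $T$ is, up to shift, a $\mu$-semistable object of $\cT_C$, and since $Q$ is homogeneous of degree two we have $Q(\cl(T))=Q(-\cl(T))$, reducing us to the unshifted case. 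A $\mu$-semistable $[\cO\otimes V\xrightarrow{\varphi}E]$ of positive rank is torsion-free with $E$ a $\mu$-semistable sheaf—every subsheaf $E'\subset E$ yields the subobject $[0\to E']$—so $\nd(T)\le h^0(E)\le\rd(T)\,\Phi_C(\dd(T)/\rd(T))$ by the defining property of $\Phi_C$; dividing $Q$ by $\rd(T)^2$ and inserting the parabola inequality gives $Q(\cl(T))>0$, while rank-zero objects satisfy $Q=\delta^{-1}\dd(T)^2\ge 0$.

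For the inductive step, suppose $\Im Z_{b_0,w_0}(T)>0$ and $T$ is not $\sigma_{b_0,w}$-semistable for all large $w$. Then there is a smallest $w_1\ge w_0$ at which $T$ is strictly $\sigma_{b_0,w_1}$-semistable, and its Jordan--Hölder factors $T_1,\dots,T_k$ ($k\ge 2$) share the common finite slope $\nu_{b_0,w_1}(T)$; hence each has $0<\Im Z_{b_0,w_0}(T_i)<\Im Z_{b_0,w_0}(T)$, so $Q(\cl(T_i))\ge 0$ by induction. Because the central charges $Z_{b_0,w_1}(T_i)$ lie on a common ray, $Q(\cl(T))=Q\big(\sum_i\cl(T_i)\big)\ge 0$ will follow from the linear-algebra fact that $\{Q\ge 0\}$ is closed under adding classes whose $Z_{b_0,w_1}$-images are positive multiples of one another—a statement that, restricting $Q$ to the plane spanned by two such classes and using that $Q$ is negative on the one-dimensional $Z_{b_0,w_1}$-kernel of that plane (condition (i)), reduces to a sign check on the associated bilinear form. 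I expect this inductive step to be the main obstacle. The linear-algebra lemma is routine once (i) is in hand, but producing the destabilizing filtration is delicate: one must guarantee a well-defined first wall above $w_0$ at which $T$ is merely strictly semistable rather than already unstable. I would secure this from local finiteness of walls along $b=b_0$—each potential destabilizer contributes at most one wall since $\nu_{b_0,w}$ is affine in $w$—taking care that this finiteness is established independently of the support property to avoid circularity.
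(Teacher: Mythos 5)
Your proposal follows the paper's proof in every main step: the same criterion \cite[Lemma~11.4]{bayer:the-space-of-stability-conditions-on-abelian-threefolds}, the same parabola inequality from \cite[Remark~3.5]{li-albanese}, the identical quadratic form $Q$, the same induction on the discrete quantity $\Im[Z_{b_0,w_0}(T)]$ (discreteness from $b_0\in\Q$, as in Lemma~\ref{lem-pre}), the same base case via Lemma~\ref{lem-vertical-large-volume-limit}, and the same final convexity argument --- your ``linear-algebra fact'' about classes whose central charges lie on a common ray is precisely \cite[Lemma~3.7]{bayer:the-space-of-stability-conditions-on-abelian-threefolds}, which the paper cites rather than reproves.

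The one genuine issue is the point you yourself flag as the main obstacle, and there your proposed fix runs in the wrong direction. You suggest securing the existence of a first wall above $w_0$ by establishing local finiteness of walls ``independently of the support property to avoid circularity.'' This is unlikely to work: along $b=b_0$ a destabilizing subobject $T'\hookrightarrow T$ in $\cA(b_0)$ has $\Im[Z_{b_0,w}(T')]$ bounded between $0$ and $\Im[Z_{b_0,w}(T)]$, but this does not bound $\rd(T')$ or $\nd(T')$, so a priori infinitely many classes can produce walls accumulating just above $w_0$; the fact that each single class contributes at most one wall (affinity of $\nu_{b_0,w}$ in $w$) does not prevent accumulation over infinitely many classes. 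The paper's resolution is the opposite of avoiding the support property: since every relevant sub- and quotient object of $T$ has strictly smaller imaginary part, the \emph{inductive hypothesis} already supplies $Q(\cl(T'))\geq 0$, i.e.\ the support property, for all potential destabilizers; feeding this into \cite[Proposition~9.3]{bridgeland:K3-surfaces} gives well-behaved wall-crossing along the vertical line, hence a wall at which $T$ is strictly $\sigma_{b_0,w}$-semistable with a destabilizing sequence $T_1\to T\to T_2$ to which induction and \cite[Lemma~3.7]{bayer:the-space-of-stability-conditions-on-abelian-threefolds} apply. There is no circularity precisely because the induction is on $\Im[Z_{b_0,w_0}]$: the support property needed to control the walls for $T$ concerns only objects strictly below $T$ in the induction order. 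With this correction your argument closes and coincides with the paper's.
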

\begin{proof}
    By \cite[Lemma 11.4]{bayer:the-space-of-stability-conditions-on-abelian-threefolds}, we only need to find a quadratic form $Q$ on $\mathbb{Z}^3$ so that (i) kernel of $Z_{b_0,w_0}$ is negative definite with respect to $Q$, and (ii) any $\sigma_{b_0, w_0}$-semistable object $T \in \cA(b_0)$ satisfies $Q(\cl(T)) \geq 0$. As noted in \cite[Remark 3.5]{li-albanese}, there is always $\delta>0$ satisfying 
    \begin{equation*}\label{eq.3}
\delta^{-1}(x-b_0)^2+w_0-\delta > \Phi_C(x).        
    \end{equation*}
Then we can consider the quadratic form 
\begin{equation}\label{eq quadratic form}
   Q( r, d, n) = \delta^{-1}( d-b_0 r)^2 +  r^2(w_0-\delta) - n r ,
\end{equation}
which clearly satisfies condition (i). To prove (ii) we apply induction over $\Im[Z_{b_0, w_0}(T)]$. Note that if $\rd(T)=0$, then clearly $Q(\cl(T))\geq 0$, thus we assume $r\neq 0$ and rewrite \eqref{eq quadratic form} as
\begin{equation}\label{eq.2}
\frac{Q( r, d, n)}{ r^2}=\delta^{-1}\left(\frac{ d}{ r}-b_0\right)^2 +(w_0-\delta)-\frac{n}{r}> \Phi_C\left(\frac{ d}{ r}\right)-\frac{ n}{ r}.    
\end{equation}
If $\Im[Z_{b_0, w_0}(T)]$ is zero or minimal, then $T$ is $\sigma_{b_0, w \gg 0}$-semistable. Thus, from Lemma~\ref{lem-vertical-large-volume-limit} and \eqref{eq.2} we get $Q(\text{cl}(T))\geq 0$. Now let $T \in \cA(b_0)$ be an arbitrary $\sigma_{b_0, w_0}$-semistable object which is not $\sigma_{b_0, w \gg 0}$-semistable. Note that as $w$ increases, all quotient and subobjects of $T$ have $\Im[Z_{b_0, w}]$ strictly less then $T$. So, by the inductive assumption, they satisfy the support property. Following~\cite[Proposition 9.3]{bridgeland:K3-surfaces}, we get that $T$ satisfies well-behaved wall-crossing. Thus, there is a wall on which $T$ is strictly $\sigma_{b_0, w}$-semistable, let $T_1\to T \to T_2$ be a destabilizing sequence. From the inductive assumption, we get $Q(\cl(T_i))\geq 0$. Thus, from~\cite[Lemma 3.7]{bayer:the-space-of-stability-conditions-on-abelian-threefolds}, it follows that $Q(\cl(T))\geq 0$ as well. 
\end{proof}

\section{Gluing stability conditions}\label{sec gluing}
In this section, we first review the gluing of stability conditions along a semiorthogonal decomposition, as investigated in~\cite{Collins-gluing-stability-conditions}, and then apply it to our category \(\cD(\cT_C)\). From now on, we assume that the genus of $C$ satisfies $g(C)>0$.
\medskip

Consider a semiorthogonal decomposition of a triangulated category $\mathcal{D}  = \langle \mathcal{D}_1, \mathcal{D}_2 \rangle$. Let $i_1^*$ be the right adjoint functor to the inclusion $i_1 \colon \mathcal{D}_1 \to \mathcal{D}$ and $i_2^!$ be the left adjoint functor to the inclusion $i_2 \colon \mathcal{D}_2 \to \mathcal{D}$. Let $\sigma_i = (\mathcal{A}_i, Z_i)$ be stability conditions on $\mathcal{D}_i$ for $i=1,2$ satisfying $\Hom^{\leq 0}(i_1\mathcal{A}_1, i_2\mathcal{A}_2)=0$. We define
  \[
  \gl (\mathcal{A}_1, \mathcal{A}_2):=\{X \in \mathcal{D} \colon i^*_1 X \in \mathcal{A}_1, i_2^! X \in \mathcal{A}_2 \}.
  \]
  It is shown in~\cite[Lemma 2.1]{Collins-gluing-stability-conditions} that $\gl (\mathcal{A}_1, \mathcal{A}_2)$ is a heart of a bounded $t$-structure on $\mathcal{D}$.

  We say that a stability condition $\sigma = (\mathcal{A}, Z)$ on $\cD$ is \emph{glued} from $\sigma_1$ and $\sigma_2$, and write $\sigma = \gl(\sigma_1, \sigma_2)$, if the heart $\cA$ is given by $\gl(\mathcal{A}_1, \mathcal{A}_2)$ and the stability function is
\[
  Z = Z_{\gl}(E) := Z_1(i_1^*E) + Z_2(i_2^!E)
  \quad \text{for all } E \in \cD.
\]
The following proposition characterizes glued stability conditions.

\begin{Prop}\label{prop 2.2 cp10}
\cite[Proposition~2.2(1)]{Collins-gluing-stability-conditions}
Let $\sigma = (\mathcal{A}, Z)$ be a stability condition on $\mathcal{D}$, and let $\sigma_i = (\cA_i, Z_i)$ be stability conditions on $\mathcal{D}_i$ for $i = 1,2$ such that 
$\cA_i \subset \cA$ for $i=1,2$, $\Hom^{\le 0}(\cA_1, \cA_2) = 0$, and $Z_i = Z|_{\mathcal{D}_i}$. 
Then $\sigma = \gl(\sigma_1, \sigma_2)$.  
\end{Prop}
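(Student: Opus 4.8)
The plan is to verify separately the two defining conditions of a glued stability condition: that the central charge equals $Z_{\gl}$ and that the heart equals $\gl(\mathcal{A}_1,\mathcal{A}_2)$. The common tool is the functorial projection triangle attached to the semi-orthogonal decomposition and the chosen adjoints: for every $X\in\mathcal{D}$ there is an exact triangle
\[
i_1 i_1^* X \longrightarrow X \longrightarrow i_2 i_2^{\dagger} X \xrightarrow{+1}
\]
with $i_1 i_1^* X\in\mathcal{D}_1$ and $i_2 i_2^{\dagger} X\in\mathcal{D}_2$, coming from the counit of $i_1\dashv i_1^*$ and the unit of $i_2^{\dagger}\dashv i_2$ (the composite $i_1 i_1^* X\to i_2 i_2^{\dagger} X$ vanishes by semi-orthogonality, which is what makes this a triangle).

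For the central charge, I would simply apply $Z$ to this triangle. Since $Z$ is a group homomorphism on the (numerical) Grothendieck group, it is additive on triangles, so $Z(X)=Z(i_1 i_1^* X)+Z(i_2 i_2^{\dagger} X)$. The hypothesis $Z_i=Z|_{\mathcal{D}_i}$ then gives $Z(i_1 i_1^* X)=Z_1(i_1^* X)$ and $Z(i_2 i_2^{\dagger} X)=Z_2(i_2^{\dagger} X)$, hence $Z(X)=Z_1(i_1^*X)+Z_2(i_2^{\dagger}X)=Z_{\gl}(X)$. This step is immediate and requires nothing beyond additivity.

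For the heart, I would first establish the containment $\gl(\mathcal{A}_1,\mathcal{A}_2)\subseteq\mathcal{A}$. If $X\in\gl(\mathcal{A}_1,\mathcal{A}_2)$ then by definition $i_1^*X\in\mathcal{A}_1$ and $i_2^{\dagger}X\in\mathcal{A}_2$; since $\mathcal{A}_1\subset\mathcal{A}$ and $\mathcal{A}_2\subset\mathcal{A}$ as subcategories of $\mathcal{D}$, both outer terms of the projection triangle lie in $\mathcal{A}$. As $\mathcal{A}$ is the heart of a bounded t-structure it is closed under extensions, so $X\in\mathcal{A}$. Note this direction uses neither the $\mathrm{Hom}^{\le 0}$ hypothesis nor the support property; those enter only through \cite[Lemma~2.1]{Collins-gluing-stability-conditions}, which guarantees that $\gl(\mathcal{A}_1,\mathcal{A}_2)$ is itself a heart of a bounded t-structure. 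I would then upgrade the containment to an equality.

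The only step demanding a genuine argument—and hence the main obstacle—is the general lemma that two hearts of bounded t-structures $\mathcal{B}\subseteq\mathcal{A}$ must in fact coincide. I would prove it by cohomological truncation: given $0\neq A\in\mathcal{A}$, consider its $\mathcal{B}$-cohomology $H^{\bullet}_{\mathcal{B}}(A)$, let $p$ and $q$ be its top and bottom nonzero degrees (these exist by boundedness), and use the truncation maps $A\to H^{p}_{\mathcal{B}}(A)[-p]$ and $H^{q}_{\mathcal{B}}(A)[-q]\to A$, which are nonzero by maximality/minimality of $p,q$. Since $\mathcal{B}\subseteq\mathcal{A}$, all these cohomology objects lie in $\mathcal{A}$, and vanishing of negative Ext-groups in the heart $\mathcal{A}$ forces $p\le 0\le q$; combined with $q\le p$ this yields $p=q=0$, i.e. $A\in\mathcal{B}$. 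Applying this with $\mathcal{B}=\gl(\mathcal{A}_1,\mathcal{A}_2)$ gives $\mathcal{A}=\gl(\mathcal{A}_1,\mathcal{A}_2)$, which together with the central-charge computation completes the identification $\sigma=\gl(\sigma_1,\sigma_2)$.
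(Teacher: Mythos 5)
This proposition is cited in the paper from \cite[Proposition~2.2]{Collins-gluing-stability-conditions} without an in-paper proof, so there is nothing internal to compare against; your argument is correct and is essentially the original one. The three ingredients you use—additivity of $Z$ on the projection triangle $i_1i_1^*X \to X \to i_2i_2^{\dagger}X$, extension-closure of $\mathcal{A}$ to get $\gl(\mathcal{A}_1,\mathcal{A}_2)\subseteq\mathcal{A}$ (with \cite[Lemma~2.1]{Collins-gluing-stability-conditions} supplying, via the hypothesis $\Hom^{\le 0}(\mathcal{A}_1,\mathcal{A}_2)=0$, that the glued subcategory is a heart of a bounded t-structure), and the standard truncation argument showing that two nested hearts of bounded t-structures coincide—are exactly the steps of the cited proof.
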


The converse also holds under stronger Hom-vanishing conditions.

\begin{Prop}\label{thm 3.6 cp10}
\cite[Theorem~3.6]{Collins-gluing-stability-conditions}
Let $(\sigma_1, \sigma_2)$ be a pair of stability conditions on $\mathcal{D}_1$ and $\cD_2$ with slicing $\mathcal{P}_i$ for $i=1, 2$. Let $a$ be a real number in $(0,1)$ such that 
\begin{enumerate}
    \item $\Hom^{\le 0}\big(\mathcal{P}_1(0,1], \mathcal{P}_2(0,1]\big) = 0$, and 
    \item $\Hom^{\le 0}\big(\mathcal{P}_1(a,a+1], \mathcal{P}_2(a,a+1]\big) = 0$.
\end{enumerate}
Then there exists a glued pre-stability condition $\sigma = \gl(\sigma_1, \sigma_2)$ on $\cD$. 
\end{Prop}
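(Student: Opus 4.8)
The plan is to verify the three data of a pre-stability condition in Bridgeland's heart-plus-central-charge formulation, so that the slicing and its Harder--Narasimhan property are produced at the end rather than assumed directly. I would take as candidate heart $\mathcal{A}:=\gl(\mathcal{A}_1,\mathcal{A}_2)$ with $\mathcal{A}_i:=\mathcal{P}_i(0,1]$, and as candidate central charge $Z:=Z_1\circ i_1^{*}+Z_2\circ i_2^{\dagger}$, matching the definition of a glued stability condition. The orthogonality $\Hom(\mathcal{P}(\phi_1),\mathcal{P}(\phi_2))=0$ for $\phi_1>\phi_2$ is then automatic once $(\mathcal{A},Z)$ is shown to be a stability function with the HN property.

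Two of the three points are quick. Hypothesis~(1) is exactly the vanishing $\Hom^{\le 0}(i_1\mathcal{A}_1,i_2\mathcal{A}_2)=0$, so \cite[Lemma~2.1]{Collins-gluing-stability-conditions} gives that $\mathcal{A}$ is the heart of a bounded $t$-structure and that $i_1\mathcal{A}_1,i_2\mathcal{A}_2\subset\mathcal{A}$. For the positivity of $Z$, observe that for $0\neq E\in\mathcal{A}$ the canonical triangle $i_2 i_2^{\dagger}E\to E\to i_1 i_1^{*}E$ of the semiorthogonal decomposition forces $i_1^{*}E\in\mathcal{A}_1$ and $i_2^{\dagger}E\in\mathcal{A}_2$ not to be simultaneously zero; since each $Z_i$ sends nonzero objects of $\mathcal{A}_i$ into $\mathbb{H}\cup\mathbb{R}^{<0}$ and this region is closed under addition, $Z(E)\in\mathbb{H}\cup\mathbb{R}^{<0}$.

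The main work, and where I expect the real difficulty, is the HN property of $(\mathcal{A},Z)$. For $X\in\mathcal{A}$ I would use the same decomposition triangle $i_2 B\to X\to i_1 A$, with $A:=i_1^{*}X\in\mathcal{A}_1$ and $B:=i_2^{\dagger}X\in\mathcal{A}_2$. Both $A$ and $B$ carry finite HN filtrations for $\sigma_1,\sigma_2$, whose pushforwards (each factor again lying in $\mathcal{A}$) present $X$ as a finite iterated extension of $\sigma_1$- and $\sigma_2$-semistable objects with well-defined $Z$-phases. When every phase occurring in the sub $i_2 B$ dominates every phase occurring in the quotient $i_1 A$, this is already an HN filtration of $X$; the obstruction is the phase-crossing case, in which semistable factors genuinely mixing the two components can appear and the two filtrations cannot simply be merged.

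To resolve the crossings I would bring in the second glued heart $\mathcal{A}^{a}:=\gl\big(\mathcal{P}_1(a,a+1],\mathcal{P}_2(a,a+1]\big)$, which by hypothesis~(2) and \cite[Lemma~2.1]{Collins-gluing-stability-conditions} is again the heart of a bounded $t$-structure; together $\mathcal{A}$ and $\mathcal{A}^{a}$ play the roles of $\mathcal{P}(0,1]$ and $\mathcal{P}(a,a+1]$ in the slicing being constructed. Their truncation functors cut any object along the phase value $a\ (\mathrm{mod}\ 1)$ in a manner compatible with both components, and the two Hom-vanishing hypotheses are precisely what guarantee this compatibility, so that phase-monotone chains of subobjects in $\mathcal{A}$ project to phase-monotone chains in $\mathcal{A}_1$ and $\mathcal{A}_2$, where local finiteness of $\sigma_1,\sigma_2$ forces termination; the existence of a maximal destabilising subobject and Bridgeland's chain criterion then yield the HN filtration. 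The genuinely delicate point---on which I would concentrate---is that a single auxiliary phase $a\in(0,1)$, rather than a continuum of reference hearts, already suffices to separate all phase crossings once the fine resolution is supplied by $Z$; this is exactly why the theorem requires the second condition and not merely condition~(1).
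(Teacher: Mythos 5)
First, a point of reference: the paper offers no proof of this Proposition at all—it is imported verbatim from \cite[Theorem~3.6]{Collins-gluing-stability-conditions}—so your attempt can only be measured against Collins--Polishchuk's argument. Your first three steps are correct and match the standard setup: $\mathcal{A}:=\gl(\mathcal{A}_1,\mathcal{A}_2)$ is a heart by hypothesis~(1) and \cite[Lemma~2.1]{Collins-gluing-stability-conditions}; the glued charge $Z=Z_1\circ i_1^{*}+Z_2\circ i_2^{\dagger}$ sends nonzero objects of $\mathcal{A}$ into $\mathbb{H}\cup\mathbb{R}^{<0}$ because at least one of $i_1^{*}E$, $i_2^{\dagger}E$ is nonzero and that region is closed under addition; and orthogonality of the slicing is indeed automatic once the HN property is known.

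The genuine gap is the HN property, which you correctly flag as the crux but then do not prove, and the two bridges you propose do not hold up. (i) The claim that ``phase-monotone chains of subobjects in $\mathcal{A}$ project to phase-monotone chains in $\mathcal{A}_1$ and $\mathcal{A}_2$'' is false: the glued phase is the argument of the \emph{sum} $Z_1(i_1^{*}X)+Z_2(i_2^{\dagger}X)$, and monotonicity of this argument along a chain of subobjects imposes no monotonicity on the arguments of the two summands separately; consequently Bridgeland's chain criterion cannot be verified by projection, and ``local finiteness'' of $\sigma_1,\sigma_2$ is in any case not the hypothesis that terminates such chains. (ii) More fundamentally, cutting at the single phase $a$ via the torsion pair $\bigl(\mathcal{A}\cap\mathcal{A}^{a},\ \mathcal{A}\cap\mathcal{A}^{a}[-1]\bigr)$ only separates phases in $(0,a]$ from phases in $(a,1]$; it does nothing about crossings \emph{inside} $(a,1]$, and these genuinely occur. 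Hypotheses (1)--(2) bound only $\Hom$'s in degrees $\le 0$, so $\Hom^{1}\bigl(\mathcal{P}_1(\phi),\mathcal{P}_2(\psi)\bigr)$ can be nonzero for $\phi>\psi$ both in $(a,1]$; this produces non-split extensions $i_2B_{\psi}\hookrightarrow X\twoheadrightarrow i_1A_{\phi}$ inside $\mathcal{A}\cap\mathcal{A}^{a}$ whose two projections are semistable of \emph{crossed} phases. Whether such an $X$ is $\sigma$-semistable, or what its maximal destabilizing subobject is, depends on which subobjects of $A_{\phi}$ the extension class vanishes on—so the semistable objects of the glued pair are not the ``product-type'' objects that your filtration-merging picture presupposes, and no splitting/reordering argument is available. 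Supplying exactly this analysis (using that all projected phases lie in an interval of length $<1$, together with condition~(2)) is the substance of Collins--Polishchuk's proof; your proposal defers it (``on which I would concentrate'') rather than carrying it out. In short: right candidate heart and charge, right auxiliary heart $\mathcal{A}^{a}$ and torsion-pair first move, but the core of the theorem is missing and one of the proposed mechanisms is incorrect.
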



\subsection*{First type of gluing}

For our category $\cD(\cT_C)$, we first consider the semiorthogonal decomposition
\begin{equation}\label{s.1}
    \cD(\cT_C) = \langle i_*\cD(\cV),\, j_*\cD(C) \rangle.
\end{equation}
Recall that $\sigma_{\cV}$ denotes the trivial stability condition on $\cD(\cV)$, whose heart and central charge are given by
\[
  \cA_{\cV} = \{\, \C^{\oplus n} \,\}_{n \ge 0}, 
  \qquad 
  Z_{\cV}(n) = -n.
\]
On $\cD(C)$, we consider the stability condition 
\[
  \sigma_{\mu} = (\Coh(C), Z_{\mu}), \qquad 
  Z_{\mu} = -\deg + i\rk,
\]
with corresponding slicing is denoted by $\mathcal{P}_{\mu}$.  
We then define
\[
  \Coh^{x}(C) := \mathcal{P}_{\mu}(x, x+1]
  \qquad \text{for } x \in \mathbb{R}.
\]
Indeed, for $x = \theta + n$ with $n \in \Z$ and $\theta \in [0,1)$, we have
\[
  \Coh^{x}(C) = \Coh^{\theta}(C)[n].
\]
For any $g = (T,f) \in \GL \simeq \Stab(\cD(C))$, we set $ \sigma_g := \sigma_{\mu} \cdot g$, which corresponds to the stability condition 
\begin{equation}\label{eq_sigma_g}
      \sigma_g = \big(\Coh^{f(0)}(C),\, T^{-1} \circ Z_{\mu}\big).
\end{equation}

\begin{Prop}\label{prop-gluing-type 1}
Take $g = (T, f) \in \GL$.
Then there exists a stability condition glued from $\sigma_{\mathcal{V}}$ and $\sigma_g$ with respect to the semiorthogonal decomposition~\eqref{s.1}, 
denoted by $\gl^{(1)}(\sigma_{\mathcal{V}}, \sigma_g)$ if and only if $f(0) < \tfrac{1}{2}$. 
\end{Prop}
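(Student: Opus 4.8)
The plan is to verify the hypotheses of the gluing criterion, Proposition~\ref{thm 3.6 cp10}, for the pair $(\sigma_{\cV},\sigma_g)$ along the semi-orthogonal decomposition~\eqref{s.1}. Concretely, I would search for a real number $a\in(0,1)$ for which the two $\Hom^{\le 0}$-vanishing conditions between the slicings $\mathcal{P}_1$ (of $\sigma_{\cV}$) and $\mathcal{P}_2=\mathcal{P}_g$ (of $\sigma_g$) hold. The first observation is that $\mathcal{P}_1$ is supported only at integer phases, with $\mathcal{P}_1(1)=\cA_{\cV}=\{[\cO\to 0]^{\oplus k}\}$; hence for every $a\in(0,1)$ the window $(a,a+1]$ contains the single integer $1$, so $\mathcal{P}_1(a,a+1]=\cA_{\cV}$ independently of $a$. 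On the curve side, $\mathcal{P}_2(a,a+1]=\Coh^{f(a)}(C)$. Thus both conditions of Proposition~\ref{thm 3.6 cp10} take the uniform shape
\[
\Hom^{\le 0}\big([\cO\to 0],\, j_*\Coh^{f(x)}(C)\big)=0,\qquad x\in\{0,a\}.
\]

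The heart of the argument is a single $\Hom$-computation. By Proposition~\ref{prop.hom seqeunce} one has $\Hom^{\bullet}([\cO\to 0],\, j_*F)\cong R\Gamma(C,F)[-1]$ for $F\in\cD(C)$, while $\Hom^{\bullet}(j_*F,[\cO\to 0])=0$ because $j_*\cD(C)={}^{\perp}[\cO\to 0]$ by Lemma~\ref{lem-sod1}. Writing the tilt as $\Coh^{f(x)}(C)=\langle \cF[1],\,\mathcal{T}\rangle$, where $\cF$ is the torsion-free part (sheaves of slope $\le s_{f(x)}$) and $\mathcal{T}$ the torsion part, the only possibly non-zero contribution in degree $\le 0$ comes from the shifted factor: $\Hom^{0}([\cO\to 0], j_*G)\cong H^0(C,\mathcal{H}^{-1}(G))$ for $G\in\Coh^{f(x)}(C)$, all lower degrees vanishing since $C$ is a curve. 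Hence the displayed vanishing is equivalent to $H^0(C,F)=0$ for all $F\in\cF$. Since the $\sigma_\mu$-phase $f(x)$ corresponds to the slope cut $s_{f(x)}=-\cot(\pi f(x))$, and a torsion-free sheaf with $\mu^+<0$ has no sections while $\cO$ (slope $0$) does, this holds precisely when $s_{f(x)}<0$, i.e.\ when $f(x)<\tfrac12$.

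For sufficiency, suppose $f(0)<\tfrac12$. Since $f$ is continuous and increasing, I can choose $a\in(0,1)$ small enough that $f(a)<\tfrac12$; then both instances $x=0$ and $x=a$ of the vanishing hold, and Proposition~\ref{thm 3.6 cp10} yields the glued pre-stability condition $\gl^{(1)}(\sigma_{\cV},\sigma_g)$, the support property being handled as elsewhere in the paper. Note that, $f$ being increasing, the case $x=a$ is the stronger one and already forces $x=0$, so really only one instance must be arranged.

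The delicate direction is necessity, which I expect to be the main obstacle. If $f(0)\ge\tfrac12$ then $\cO$ has slope $0\le s_{f(0)}$, so $\cO[1]\in\Coh^{f(0)}(C)$ and $\Hom^{0}([\cO\to 0], j_*\cO[1])\cong H^0(C,\cO)\neq 0$, so $\Hom^{\le 0}(\cA_{\cV},\Coh^{f(0)}(C))=0$ fails. I would then argue that this vanishing is \emph{necessary}: by the characterization in Proposition~\ref{prop 2.2 cp10}, any stability condition restricting to $\sigma_{\cV}$ and $\sigma_g$ on the two factors contains $\cA_{\cV}$ and $\Coh^{f(0)}(C)$ in its heart and satisfies $\Hom^{\le 0}(\cA_{\cV},\Coh^{f(0)}(C))=0$; equivalently, $[\cO\to 0]$ would be semistable of maximal phase $1$ admitting a non-zero map to $j_*\cO[1]$, whose phase is $\le 1$, contradicting stability. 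The genuinely subtle point is the boundary $f(0)=\tfrac12$, where $[\cO\to 0]$ and $j_*\cO[1]$ share phase $1$ and the naive phase inequality is not violated; here I would rule out existence by a closer analysis of the phase-$1$ subcategory, tracking that the non-zero map forces a subquotient with degenerate central charge (using $Z([\cO\to\cO])=Z_g(\cO)-1$). Throughout, the fiddliest bookkeeping is keeping the gluing conventions and adjoints consistent, so that the two conditions of Proposition~\ref{thm 3.6 cp10} are matched against the correct $\Hom$-direction.
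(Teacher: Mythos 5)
Your reduction to Proposition~\ref{thm 3.6 cp10} and the $\Hom$-computation at its core are correct and are essentially the paper's route to the \emph{pre}-stability condition: $\mathcal{P}_1(a,a+1]=\cA_{\cV}$ for every $a\in(0,1)$, $\Ext^k([\cO\to 0],j_*F)\cong H^{k-1}(C,F)$, and hence $\Hom^{\le 0}([\cO\to 0],\,j_*\Coh^{f(x)}(C))=0$ holds exactly when $f(x)<\tfrac12$. The genuine gap is the support property. In this paper a ``stability condition'' includes the support property, and Proposition~\ref{thm 3.6 cp10} only yields a glued \emph{pre}-stability condition; the bulk of the paper's proof of this proposition is devoted to exactly this point, and it cannot be waved off as ``handled as elsewhere in the paper''. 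The paper argues in cases: for $f(0)<-\tfrac12$ every indecomposable object of the glued heart lies in one of the two factors (there are no extensions between them), so support is automatic; for $f(0)\in[-\tfrac12,0)$, $f(0)=0$, and $f(0)\in(0,\tfrac12)$ one normalizes $Z_{\gl}$ by the $\GL$-action and exhibits explicit quadratic forms ($Q(r,d,n)=nr$ or $Q(r,d,n)=nd$) that are negative definite on $\ker Z_{\gl}$, then proves $Q(\cl(T))\ge 0$ for semistable $T$ by induction on the imaginary part (reducing to $\mu$-stability in $\cT_C$ at large volume), invoking deformation of stability conditions and the classification Theorem~\ref{thm-main}. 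None of this is covered by Lemma~\ref{lem-support}, which concerns the tilting slice $\sigma_{b,w}$ with a different (discriminant-type) quadratic form, so your proposal proves strictly less than the proposition claims.

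On necessity, two corrections. First, your appeal to Proposition~\ref{prop 2.2 cp10} is backwards: the $\Hom^{\le 0}$-vanishing is a \emph{hypothesis} of that proposition, not a consequence of being glued. In the paper's framework the vanishing is necessary simply because the definition of $\gl(\sigma_1,\sigma_2)$ (following Collins--Polishchuk) presupposes $\Hom^{\le 0}(i_1\cA_1,i_2\cA_2)=0$, which is what makes $\gl(\cA_1,\cA_2)$ a heart. Consequently the boundary case $f(0)=\tfrac12$, which you single out as the main obstacle and leave as a sketch, needs no extra analysis: already $\Hom^0([\cO\to 0],j_*\cO[1])\cong H^0(\cO)\neq 0$ kills the vanishing, hence no glued stability condition exists there, full stop. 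Second, your witness $\cO[1]\in\Coh^{f(0)}(C)$ is only correct for $f(0)\in[\tfrac12,\tfrac32)$; for larger $f(0)$ the slope-cut description $s_{f(0)}=-\cot(\pi f(0))$ no longer applies verbatim, and one should instead take $\cO[k]$ with $k-\tfrac12\le f(0)<k+\tfrac12$, noting that $H^0(\cO)$ then contributes to $\Hom^{1-k}\subseteq\Hom^{\le 0}$.
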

\begin{proof}
The condition $f(0)<\tfrac{1}{2}$ is necessary in order to obtain the vanishing
\[
\Hom^{\leq 0}([\cO \to 0], \Coh^{f(0)}(C))=0,
\]
which is required to define the heart $\gl(\mathcal{A}_{\mathcal{V}}, \Coh^{f(0)}(C))$. 
    It also guarantees that the objects in $\Coh^{f(0)}(C)$ are of the form $F[p]$, 
where either $p \le 0$, or $p = 1$ and $\mu^{+}(F) < 0$. 
Hence, the assumptions of Proposition~\ref{thm 3.6 cp10} are satisfied, 
and thus $\gl^{(1)}(\sigma_{\mathcal{V}}, \sigma_g)$ defines a pre-stability condition. It remains to prove the support property, which we divide into three cases. 

If $f(0) < -\frac{1}{2}$, then all indecomposable objects in  $\gl(\mathcal{A}_{\mathcal{V}},\Coh^{f(0)}(C))$ lie either in $i_* \mathcal{A}_{\mathcal{V}}$ or in $j_*\Coh^{f(0)}(C)$ as there are no non-trivial extensions between them, and so the support property follows automatically. 

Now suppose $f(0) \in [-\tfrac{1}{2}, 0)$. Then there exists $b \in \mathbb{R}_{\ge 0}$ such that 
\[
  \Coh^{f(0)}(C) = \langle \mathbb{F}^b,\, \mathbb{T}^b[-1] \rangle,
\]
where $\mathbb{F}^b$ consists of sheaves $F$ on $C$ with $\mu^{+}(F) \le b$, and $\mathbb{T}^b$ consists of sheaves $F$ on $C$ with $\mu^{-}(F) > b$.  
Moreover, up to a $\GL$-action, we may assume
\[
  Z_{\gl}(T) = -\nd(T) - \rd(T)w + i\big(-\dd(T) + b\,\rd(T)\big)
\]
for some $w \in \mathbb{R}$.  
Note that, since the stability function
\[
  -\rd(T)w + i\big(-\dd(T) + b\,\rd(T)\big)
\]
 on $\mathcal{D}(C)$ is obtained from $Z_{\mu}$ by a $\GL$-action, we have $w > 0$.  

We consider the quadratic form $Q(r, d, n) = n r$. 
Clearly, $\ker Z_{\gl}$ is negative definite with respect to $Q$. Moreover, any object $T$ in the glued heart lies in the exact sequence
$$\mathcal{H}^0(T)\to T \to [0\to F][-1]$$ 
as $i^*T\in \mathcal{V}$. Since $\Hom([0\to F][-1], \mathcal{H}^0(T)[1])=\Hom(F, j^{!}\mathcal{H}^0(T)[2])=0$, any stable object $T$ is either of them form  $T = [0\to F][-1]$ or $T = \mathcal{H}^0(T)$, and for both we have $Q(T)=\nd (T) \rd(T) \geq 0$.

Lastly, we consider $f(0)\in [0, \frac{1}{2})$. Similarly, if $f(0) = 0$, then, up to a $\GL$-action, we may assume
\[
  Z_{\gl}(T) = -\nd(T) - \dd(T)\,\alpha + i\,\rd(T)
\]
for some $\alpha \in \mathbb{R}_{>0}$, and if $f(0) \in (0, \tfrac{1}{2})$, then
\[
  Z_{\gl}(T) = -\nd(T) + \rd(T)\,w + i\big(\dd(T) - b\,\rd(T)\big)
\]
for some $b \in \mathbb{R}_{<0}$ and $w \in \mathbb{R}_{>0}$.  
Then by applying a similar argument as in Lemma~\ref{lem-support}, one can show that the support property holds with respect to the quadratic form $Q(r,d,n) = nd$. Namely, we focus on rational values of $b$ and prove the claim by induction on the imaginary part: when $w \gg 0$, we recover $\mu$-stability of objects in $\cT_C$. The final claim then follows from the deformation of stability conditions as discussed in \cite[Theorem 1.2]{bridgeland:space-of-stability-conditions} and the classification of stability conditions in Theorem \ref{thm-main}.

    \end{proof}

The next corollary shows that half of the tilting stability conditions from Section~\ref{section tilt} are, in fact, also of gluing type.
\begin{Cor}\label{cor-tilting is gluing}
Let $\sigma_{b,w}$ be a tilting stability condition as in Section~\ref{section tilt} with $b<0$. 
Then $\sigma_{b,w}$ coincides with $\gl^{(1)}(\sigma_{\cV},\sigma_{g})$, where $g=(T,f)\in\GL$ is given by 
\[
T^{-1}=\begin{pmatrix}
0 & w \\
-1 & -b
\end{pmatrix},
\qquad
f(0)=-\frac{1}{\pi}\text{arctan}(b).
\]
Moreover, the converse holds, namely if $\sigma_{b,w}$ belongs to $\GL$-orbit of $\gl^{(1)}(\sigma_{\cV},\sigma_{g})$ for some $g\in \GL$, then $b<0$.
\end{Cor}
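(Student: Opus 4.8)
The plan is to invoke the characterization of glued stability conditions in Proposition~\ref{prop 2.2 cp10} for the semi-orthogonal decomposition~\eqref{s.1}, taking $\sigma=\sigma_{b,w}$, the condition $\sigma_1=\sigma_{\mathcal V}$ on $i_*\cD(\cV)$, and $\sigma_2=\sigma_g$ on $j_*\cD(C)$. Both $\sigma_{b,w}$ (Theorem~\ref{thm-tilting}) and the gluing $\gl^{(1)}(\sigma_{\mathcal V},\sigma_g)$ (Proposition~\ref{prop-gluing-type 1}, whose existence will be guaranteed once we exhibit $f(0)<\tfrac12$) are already known to be genuine stability conditions, so there is no support property to recheck; it suffices to verify the three hypotheses of Proposition~\ref{prop 2.2 cp10}, namely that the hearts $i_*\cA_{\mathcal V}$ and $j_*\Coh^{f(0)}(C)$ both lie in $\cA(b)$, that $\Hom^{\le 0}(i_*\cA_{\mathcal V},\, j_*\Coh^{f(0)}(C))=0$, and that $Z_{b,w}$ restricts to $Z_{\mathcal V}$ and to $Z_{\sigma_g}$ on the two pieces. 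Reading off $T$ and $f(0)$ from these checks is precisely what produces the element $g=(T,f)$ in the statement.

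The central-charge restriction pins down $T$. On $i_*\cD(\cV)$ we have $\cl([\cO\to 0])=(0,0,1)$, hence $Z_{b,w}([\cO\to 0])=-1=Z_{\mathcal V}(\C)$, giving $Z_{b,w}|_{i_*\cD(\cV)}=Z_{\mathcal V}$ at once. On $j_*\cD(C)$, an object $j_*E$ with $\rk E=r$ and $\deg E=d$ has $\cl(j_*E)=(r,d,0)$, so
\[
Z_{b,w}(j_*E)=wr+i\,(d-br).
\]
Matching this with $Z_{\sigma_g}(E)=(T^{-1}\!\circ Z_\mu)(E)=T^{-1}(-d+ir)$ for all $(r,d)$ is a single linear identity in the coordinates $(r,d)$, and solving it recovers the matrix $T$ recorded in the statement.

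The heart containments pin down $f(0)$. Since $[\cO\to 0]$ has rank-zero underlying sheaf, it has $\mu$-slope $+\infty>b$, so $[\cO\to 0]\in\mathbb T^b$ and $i_*\cA_{\mathcal V}\subset\cA(b)$. For the second piece, recall that $j_*$ is exact, so $\mathcal H^i(j_*F)=j_*\mathcal H^i(F)$; consequently $j_*F\in\cA(b)$ precisely when $\mathcal H^0(F)$ is a sheaf with all quotient slopes $>b$ and $\mathcal H^{-1}(F)$ is a sheaf with all sub-slopes $\le b$, i.e.\ when $F$ lies in the tilt of $\Coh(C)$ at slope $b$. Thus the containment $j_*\Coh^{f(0)}(C)\subset\cA(b)$ forces $\Coh^{f(0)}(C)=\mathcal P_\mu(f(0),f(0)+1]$ to equal this sheaf tilt, which happens exactly when $f(0)$ is the $\mathcal P_\mu$-phase of a $\mu$-semistable sheaf of slope $b$ under $Z_\mu=-\deg+i\rk$. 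Computing that phase yields the value $f(0)$ in the statement.

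Finally, for $b<0$ one has $f(0)<\tfrac12$, so the vanishing $\Hom^{\le 0}(i_*\cA_{\mathcal V},\, j_*\Coh^{f(0)}(C))=0$ holds exactly as in the proof of Proposition~\ref{prop-gluing-type 1}. With all three hypotheses of Proposition~\ref{prop 2.2 cp10} in place, we conclude $\sigma_{b,w}=\gl^{(1)}(\sigma_{\mathcal V},\sigma_g)$ for the stated $g$. I expect the main obstacle to be the heart bookkeeping of the third paragraph: one must match the phase-slicing cut $\mathcal P_\mu(f(0),f(0)+1]$ on $\cD(C)$ with the $\mu$-tilt defining $\cA(b)$, using the dictionary $j_*G\in\mathbb T^b\Leftrightarrow\mu^-(G)>b$ and $j_*G\in\mathbb F^b\Leftrightarrow\mu^+(G)\le b$ together with exactness of $j_*$. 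This slope-to-phase translation is exactly what produces the $\arctan$ in $f(0)$ and is the only step requiring genuine care; the remaining verifications are routine computations with the class map $\cl$.
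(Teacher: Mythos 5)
Your strategy is the same as the paper's: both proofs reduce the corollary to Proposition~\ref{prop 2.2 cp10} for the decomposition~\eqref{s.1}, and the Hom-vanishing you import from Proposition~\ref{prop-gluing-type 1} is exactly the one hypothesis the paper checks explicitly (the paper leaves the heart containments and the charge matching implicit in ``take $g$ as in the statement''). Your write-up is in that sense more complete, and the slope-to-phase dictionary in your third paragraph (subobjects and quotients of $j_*G$ in $\T$ are pushforwards of subsheaves and quotient sheaves, so $j_*G\in\mathbb T^b\Leftrightarrow\mu^-(G)>b$, etc.) is correct.

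The gap is that the two computations you defer --- ``solving it recovers the matrix $T$ recorded in the statement'' and ``computing that phase yields the value $f(0)$ in the statement'' --- are precisely the substance of the corollary, and neither assertion holds as stated. With the paper's conventions ($Z_{\sigma_g}=T^{-1}\circ Z_\mu$, $Z_\mu=-\deg+i\rk$, heart $\mathcal P_\mu(f(0),f(0)+1]$), matching $Z_{b,w}(j_*E)=wr+i(d-br)$ against $T^{-1}(-d+ir)$ for all $(r,d)$ forces
\[
T^{-1}=\begin{pmatrix}0 & w\\ -1 & -b\end{pmatrix},
\qquad
T=\frac1w\begin{pmatrix}-b & -w\\ 1 & 0\end{pmatrix},
\]
and matching $\Coh^{f(0)}(C)$ with the slope-$b$ tilt of $\Coh(C)$ forces $f(0)=\tfrac1\pi\arg(i-b)=\tfrac12+\tfrac1\pi\arctan(b)$, the $\mathcal P_\mu$-phase of a slope-$b$ semistable sheaf. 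The values printed in the statement are different: the printed matrix has determinant $-w<0$, so it is not even an element of $\mathrm{GL}^+(2,\R)$, and $-\tfrac1\pi\arctan(b)$ is the phase of slope-$1/b$ (not slope-$b$) semistables; the two expressions for $f(0)$ coincide only at $b=-1$. This is evidently a typo in the statement, and the paper's own proof glosses over the same point, since it never verifies the hypothesis $Z_i=Z|_{\mathcal D_i}$ of Proposition~\ref{prop 2.2 cp10} for the printed $g$. So your proof is structurally right, but as written it consists of assertions that are false for the stated $g$; to be a proof, you must actually carry out the two identifications, which completes the argument and simultaneously corrects the constants in the statement.
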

\begin{proof}
    Take $g=(T, f)\in \GL$ as in the statement. Since $b< 0$, we have 
    \[
    \Hom^{\leq 0}([\cO \to 0], j_*\Coh^{f(0)}(C))=\Hom^{\leq -1}(\cO , \Coh^{f(0)}(C))=0, 
    \]
    as if $F[1]\in \Coh^{f(0)}(C)$, then $\mu^{+}(F)\leq b$. Thus, the first part of the claim follows from Proposition~\ref{prop 2.2 cp10}.

For the second part, let $\sigma_{b,w}$ belong to the $\GL$-orbit of $\gl^{(1)}(\sigma_{\cV},\sigma_{g})$ for some $g\in \GL$. Then $j_*\cO$ is $\gl^{(1)}(\sigma_{\cV},\sigma_{g})$-stable by~\cite[Proposition 2.2(3)]{Collins-gluing-stability-conditions}. But if $b\geq 0$, then the short exact sequence
\[
[\cO \to 0]\to [0\to \cO][1]\to [\cO \to \cO][1]
\]
in $\cA(b)$ makes $j_*\cO[1]$ either strictly $\sigma_{b,w}$-semistable (for $b=0$) or unstable (for $b>0$), and so the second claim follows.
\end{proof}
\subsection*{Second type of gluing}
Now we consider the second semiorthogonal decomposition
\begin{equation}\label{s.2}
    \cD(\cT_C) = \langle j_* \cD(C),\, i_*' \cD(\cV) \rangle.
\end{equation}
Applying a similar argument as in Proposition~\ref{prop-gluing-type 1} implies the following.

\begin{Prop}\label{prop-gluing-type 2}
Take $g = (T, f) \in \GL$.
Then there exists a stability condition glued from $\sigma_g$ and $\sigma_{\cV}$ with respect to the semiorthogonal decomposition~\eqref{s.2}, 
denoted by $\gl^{(2)}(\sigma_g, \sigma_{\cV})$ if and only if  $f(0) \geq  \tfrac{1}{2}$. 
\end{Prop}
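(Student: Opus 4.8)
The plan is to mirror the proof of Proposition~\ref{prop-gluing-type 1} with the roles of the two exceptional objects exchanged. Write $\mathcal{A}_1=\Coh^{f(0)}(C)$ for the heart of $\sigma_g$, which is now the \emph{left} factor of~\eqref{s.2}, and $\mathcal{A}_2=\mathcal{A}_{\cV}$ for the heart generated by $i'_*\C=[\cO\xrightarrow{\mathrm{id}}\cO]$, the right factor. A stability condition glued from $\sigma_g$ and $\sigma_{\cV}$ exists exactly when two things hold: first, the semiorthogonality $\Hom^{\le 0}(\mathcal{A}_1,\mathcal{A}_2)=0$, so that $\gl(\mathcal{A}_1,\mathcal{A}_2)$ is the heart of a bounded $t$-structure and the hypotheses of Proposition~\ref{thm 3.6 cp10} can be verified; and second, the support property for the resulting pre-stability condition. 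The first task is thus to determine precisely when the semiorthogonality holds, and this is what pins down the threshold $f(0)=\tfrac12$.

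First I would compute, applying Proposition~\ref{prop.hom seqeunce} to $T_1=[0\to F]$ and $T_2=[\cO\to\cO]$, that for any sheaf $F$ one has $\Hom^{k}(j_*F,\,i'_*\C)\cong\Ext^{k}_C(F,\cO)$, concentrated in degrees $k=0,1$. The decisive point is that $\cO$ has $\sigma_\mu$-phase exactly $\tfrac12$, since $Z_\mu(\cO)=i$; hence $\cO$ lies in the torsion part $\mathbb{T}$ of the tilt defining $\Coh^{f(0)}(C)$ precisely when $f(0)<\tfrac12$. In that range $j_*\cO\in\mathcal{A}_1$ and $\Hom^{0}(j_*\cO,\,i'_*\C)=\Hom(\cO,\cO)=\C\neq 0$, so the semiorthogonality fails; by Proposition~\ref{prop 2.2 cp10} no gluing can then exist, which gives the ``only if'' direction. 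Conversely, for $f(0)\ge\tfrac12$ the sheaf $\cO$ lies in $\mathbb{F}$, so only its shift $\cO[1]$ enters $\mathcal{A}_1$: every semistable sheaf in $\mathbb{T}$ now has positive slope or is torsion, whence $\Hom(F,\cO)=0$ by semistability, while the $\mathbb{F}[1]$-part contributes only $\Ext^{\le -1}(F,\cO)=0$ in non-positive degrees. Thus $\Hom^{\le 0}(\mathcal{A}_1,\mathcal{A}_2)=0$ exactly when $f(0)\ge\tfrac12$. The two Hom-vanishing hypotheses of Proposition~\ref{thm 3.6 cp10} then follow from the structural description of the objects of $\Coh^{f(0)}(C)$ as shifts of $\mu$-semistable sheaves, exactly as in the proof of Proposition~\ref{prop-gluing-type 1}, producing a glued pre-stability condition $\gl^{(2)}(\sigma_g,\sigma_{\cV})$.

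The main obstacle is the support property, and I see two routes. The direct route mirrors Lemma~\ref{lem-support} and the three-case analysis of Proposition~\ref{prop-gluing-type 1}: restrict to rational $b$, induct on $\Im Z_{\gl}$, note that in the large-volume limit $\sigma_{b,w}$-stability with $w\gg 0$ recovers $\mu$-stability of objects of $\cT_C$, and run the quadratic-form argument with the analogous forms $Q(r,d,n)=nd$ or $nr$, invoking \cite[Lemma~11.4]{bayer:the-space-of-stability-conditions-on-abelian-threefolds} and \cite[Lemma~3.7]{bayer:the-space-of-stability-conditions-on-abelian-threefolds} across each wall. The slicker route uses the duality $\du$ of Lemma~\ref{lem-dual}: being a contravariant autoequivalence with $\du([\cO\to 0])=[\cO\to\cO]$ and $\du(j_*E)=j_*(E^{\vee}[1])$, it carries~\eqref{s.1} to~\eqref{s.2}, swaps $i_*\cD(\cV)\leftrightarrow i'_*\cD(\cV)$, and fixes $j_*\cD(C)$, where it acts by $Z_\mu\mapsto\overline{Z_\mu}$. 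Since an (anti-)autoequivalence sends stability conditions to stability conditions and preserves the support property, transporting $\gl^{(1)}(\sigma_{\cV},\sigma_{\tilde g})$ along $\du$ yields $\gl^{(2)}(\sigma_g,\sigma_{\cV})$ for a dual parameter $\tilde g=(\tilde T,\tilde f)$ with $\tilde f(0)<\tfrac12\iff f(0)\ge\tfrac12$, so that both existence and the support property descend from Proposition~\ref{prop-gluing-type 1}. I expect the delicate bookkeeping to be the parameter correspondence and the conjugation of central charges through the contravariant $\du$ on the second route, or the case split over the ranges of $f(0)$ on the first.
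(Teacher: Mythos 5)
Your analysis of the semiorthogonality threshold is essentially correct and matches the paper: the computation $\Hom^{k}(j_*F,[\cO\to\cO])\cong\Ext^{k}_C(F,\cO)$, the failure of $\Hom^{\le 0}(\Coh^{f(0)}(C),[\cO\to\cO])=0$ when $f(0)<\tfrac12$ (though for $f(0)<-\tfrac12$ you need a negative shift of $\cO$ rather than $\cO$ itself, which then no longer lies in the heart), and the vanishing for $f(0)\ge\tfrac12$ using the description of objects of $\Coh^{f(0)}(C)$ as $F[p]$ with $p\ge 1$, or $p=0$ and $\mu^{-}(F)>0$. This, via the existence theorem of Collins--Polishchuk, yields the glued pre-stability condition exactly as in the paper.

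The genuine gap is the support property, and neither of your two routes closes it. Route 1 ("mirror Lemma~\ref{lem-support} and the three-case analysis") does not transfer: the glued heart $\gl(\Coh^{f(0)}(C),\cA_{\cV})$ for the second decomposition does not contain $\cT_C$, there is no $(b,w)$-family or large-volume limit recovering $\mu$-stability of triples here, and the induction on $\Im Z$ in Proposition~\ref{prop-gluing-type 1} rested on identifying those gluings with tilting stability conditions $\sigma_{b,w}$ --- a structure absent for \eqref{s.2}. Route 2 (transport along $\du$) founders precisely on the "delicate bookkeeping" you flag: the dual of a heart $\mathcal{P}(0,1]$ is $\du(\mathcal{P}[0,1))$, so dualizing the family of first gluings $\{\tilde f(0)<\tfrac12\}$ produces, after the induced $\GL$-normalization, only the second gluings with $f(0)>\tfrac12$; your claimed correspondence $\tilde f(0)<\tfrac12\iff f(0)\ge\tfrac12$ is false at the boundary. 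The case $f(0)=\tfrac12$ --- which is part of the statement, since the first gluing fails at $\tilde f(0)=\tfrac12$ while the second exists there --- is therefore not reached by duality, and also $\du$ acts on $j_*\cD(C)$ by $E\mapsto E^{\vee}[1]$, inducing $Z_\mu\mapsto -\overline{Z_\mu}$, not $\overline{Z_\mu}$. What you are missing is the paper's short argument, which bypasses all of this: in the glued heart $[\cO\to\cO]$ has phase $1$, the maximal phase, so any glued-stable $T\not\cong[\cO\to\cO]$ satisfies
\[
0=\Hom([\cO\to\cO],T)=\Hom(\C,i'^{\dagger}T)=\cH^{0}(i'^{\dagger}T),
\]
and since $i'^{\dagger}T\in\cA_{\cV}$ this forces $i'^{\dagger}T=0$, i.e.\ $T=j_*j^*T$. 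Hence every stable object lies in $i'_*\cA_{\cV}$ or in $j_*\Coh^{f(0)}(C)$, and the support property is inherited directly from $\sigma_{\cV}$ and $\sigma_g$ with no induction, wall-crossing, or duality needed.
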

\begin{proof}
    The condition $f(0)\geq \tfrac{1}{2}$ implies that any object in $\Coh^{f(0)}(C)$ is of the form $F[p]$, where $p\geq 1$, or $p=0$ and $\mu^-(F)>0$. This guarantees that the assumptions of Proposition~\ref{thm 3.6 cp10} are satisfied, so $\gl^{(2)}(\sigma_g, \sigma_{\mathcal{V}})$ defines a pre-stability condition. The inequality $f(0)\geq \tfrac{1}{2}$ is also necessary, to obtain the vanishing 
    \[
    \Hom^{\leq 0} (\Coh^{f(0)}(C), [\cO \to \cO])=0. 
    \]

    Since $\phi_{\gl}([\cO \to \cO])=1$, if $T\in \gl(\Coh^{f(0)}(C), \mathcal{A}_{\mathcal{V}})$ is stable and not equal to $[\cO \to \cO]$, then
    \[
    0= \Hom([\cO \to \cO], T) = \Hom(\mathbb{C}, i'^!T)=\mathcal{H}^0(i'^!T), 
    \]
    so $T = j_*j^*T$. Thus, all $\gl^{(2)}(\sigma_g, \sigma_{\mathcal{V}})$-stable objects lie either in $i'_*\mathcal{A}_{\mathcal{V}}$ or $j_*\Coh^{f(0)}(C)$ and so the support property follows automatically.
\end{proof}

\section{An open locus of the stability manifold}\label{sec main thm}
In this section we investigate the open subset
\(\mathrm{Stab}^{\circ}\big(\mathcal{D}(\mathcal{T}_C)\big)\subset \mathrm{Stab}\big(\mathcal{D}(\mathcal{T}_C)\big)\),
described in the Introduction, and prove the classification theorem
(Theorem~\ref{thm-main}), which restates Theorem~\ref{thm-main-intro} from the Introduction. Recall that $\mathrm{Stab}^{\circ}\big(\mathcal{D}(\mathcal{T}_C)\big)$ consists of stability conditions $\sigma$ such that $[\cO \to 0]$ and $[0\to \cK]$ are $\sigma$-stable for all $x\in C$.

\begin{Thm}\label{thm-main}
Up to the action of $\GL$, any stability condition $\sigma \in \mathrm{Stab}^{\circ}\big(\mathcal{D}(\mathcal{T}_C)\big)$ is of one of the following types:  
  \begin{enumerate}
      \item[\textbf{Type A.}] $\sigma$ is the gluing $\gl^{(1)}(\sigma_{\mathcal{V}} ,\sigma_g)$ where $g= (T, f) \in \GL$ with $f(0) < \frac{1}{2}$ and $\sigma_{\mathcal{V}}$ is the stability condition on $\cD(\mathcal{V})$ with the heart $\cA_{\cV}= \{\mathbb{C}^{\oplus n}\}_{n\geq 0}$ and stability function $Z_{\cV}(n) = -n$.  
   
   \item[\textbf{Type B.}] $\sigma$ is the tilting stability condition $\sigma_{b,w} =(\cA(b), Z_{b,w})$ for some $b, w \in \mathbb{R}$ such that $w > \Phi_C(b)$. 
   \end{enumerate}
  \end{Thm}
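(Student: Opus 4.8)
The plan is to identify the structure that $\sigma$ induces on the orthogonal component $j_*\cD(C)=[\cO\to 0]^{\perp}$ with one of Macrì's stability conditions $\sigma_g$, and then to reconstruct all of $\sigma$ from $\sigma_g$ together with the position of the exceptional object $[\cO\to 0]$; a single real parameter $f(0)$ will separate the gluing case from the tilting case. First I would normalise. The classes $\cl([\cO\to 0])=(0,0,1)$, $\cl([0\to\cK])=(0,1,0)$, $\cl([0\to\cO])=(1,0,0)$ form a basis of $\mathcal N(\cD(\cT_C))\cong\Z^3$, so $Z$ is pinned down by its three values on them; since $\cl([0\to\cK])$ does not depend on $x$, all skyscrapers are $\sigma$-stable of one common phase. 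Acting by $\GL$, I would arrange $Z([\cO\to 0])=-1$, so that $\phi([\cO\to 0])=1$.

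The central structural step is to show that $\sigma$ restricts to a stability condition on $j_*\cD(C)$: that for every sheaf $F$ on $C$ the $\sigma$-Harder--Narasimhan factors of $j_*F$ again lie in $j_*\cD(C)$. Granting this, the slices $\mathcal P_\sigma(\phi)\cap j_*\cD(C)$ form a slicing on $j_*\cD(C)\simeq\cD(C)$ with central charge $Z|_{\cD(C)}$, and the isomorphism $\GL\xrightarrow{\sim}\Stab(\cD(C))$ of \cite[Theorem~2.7]{macri:stability-conditions-on-curves} forces this restriction to be $\sigma_g$ for a unique $g=(T,f)$. To establish the invariance I would use that $[\cO\to 0]$ is $\sigma$-stable and $\Hom(j_*\cD(C),[\cO\to 0])=0$ (Lemma~\ref{lem-sod1}), bootstrapping from the stability of the skyscrapers to the semistability of torsion sheaves and then, through the filtrations $0\to L(-x)\to L\to\cK\to 0$, to line bundles and arbitrary sheaves, in the spirit of Macrì's argument for curves.

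With $\sigma_g$ in hand the argument splits on $f(0)$. If $f(0)<\tfrac12$, then $\Hom^{\le 0}(\cA_{\cV},\,\Coh^{f(0)}(C))=0$ (as in Proposition~\ref{prop-gluing-type 1}); since both $\cA_{\cV}$ and $\Coh^{f(0)}(C)$ sit inside the heart of $\sigma$ (by the normalisation and the restriction step) and $Z$ restricts to $Z_{\cV}$ and to $Z_g$, Proposition~\ref{prop 2.2 cp10} identifies $\sigma=\gl^{(1)}(\sigma_{\cV},\sigma_g)$, which is Type~A. If $f(0)\ge\tfrac12$, the first gluing is obstructed; here $\Coh^{f(0)}(C)$ is the tilt of $\Coh(C)$ at a slope $b\ge 0$ determined by $f(0)$, and $[\cO\to 0]$ sits \emph{above} the sheaves since $\phi([\cO\to 0])=1$ exceeds the skyscraper phase. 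I would then show the heart $\mathcal P_\sigma(0,1]$ is forced to be the tilted heart $\cA(b)=\langle\mathbb T^b,\mathbb F^b[1]\rangle$: it restricts to the slope-$b$ tilt of $\Coh(C)$ on $j_*\cD(C)$, it contains $[\cO\to 0]\in\mathbb T^b$, and via the triangle \eqref{rep.1} $j_*j^{\dagger}T\to T\to i_*i^{*}T$ every object decomposes into these pieces. A direct computation puts $Z$ in the form $Z_{b,w}$; and since $Z(\cA(b))\subset\mathbb H\cup\R_{<0}$, evaluating on objects $F\in\mathbb F^b$ that are $\mu$-semistable of slope exactly $b$ (for which $\Im Z_{b,w}(F[1])=0$) forces $\nd(F)/\rd(F)\le\Phi_C(b)$, whence $w>\Phi_C(b)$. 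This is Type~B.

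I expect the main obstacle to be the structural step of the second paragraph: since $\cD(\cT_C)$ has homological dimension two, there is genuine mixing between the two semiorthogonal factors (nonzero $\Ext^2$), so it is not formal that the HN filtration of a sheaf never produces $[\cO\to 0]$-factors, and ruling this out requires using the stability of $[\cO\to 0]$ and of every skyscraper simultaneously. A second delicate point is the reconstruction of the heart as $\cA(b)$ in the regime $f(0)\ge\tfrac12$, together with the boundary value $f(0)=\tfrac12$ (equivalently $b=0$), where Type~A degenerates into Type~B and the two descriptions must be shown to agree.
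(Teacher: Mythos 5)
Your overall geography is right --- the branch $f(0)<\tfrac12$ lands on Type~A exactly as in the paper's Case~(III) (Lemma~\ref{lem-U_0-torsion pair-1} plus Proposition~\ref{prop 2.2 cp10}), and $f(0)\ge\tfrac12$ should give Type~B --- but there is a genuine gap at the step you yourself flag as the main obstacle, and it is not a repairable technicality within your scheme: the claim that an \emph{arbitrary} $\sigma\in\Stab^{\circ}$ restricts to a stability condition on $j_*\cD(C)$, i.e.\ that the HN factors of every $j_*F$ stay in $j_*\cD(C)$. This cannot be proved by the Hom-vanishing bootstrap you describe, because it is a \emph{quantitative} statement about the central charge rather than a formal one. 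Concretely, for $0\le b<1$ the short exact sequence $0\to[\cO\to\cO]\to[\cO\xrightarrow{s_x}\cO(x)]\to j_*\cK\to 0$ in $\T$ rotates to a short exact sequence $0\to[\cO\to\cO(x)]\to j_*\cK\to[\cO\to\cO][1]\to 0$ in $\cA(b)$, and $\nu_{b,w}([\cO\to\cO(x)])=\tfrac{1-w}{1-b}>0=\nu_{b,w}(j_*\cK)$ precisely when $w<1$; since $\Phi_C(b)\le \tfrac{b-1}{g}+1<1$ on $(0,1)$ for curves as in Lemma~\ref{lem-classic}, this wall genuinely crosses the Type~B slice. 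So already for the skyscraper, ``HN factors lie in $j_*\cD(C)$'' is equivalent to a numerical inequality on $Z$ (here it is simply your hypothesis that $j_*\cK$ is stable); for other sheaves the analogous destabilizers with $\nd\neq 0$ exist in the heart but their exclusion is \emph{not} among your hypotheses: e.g.\ for a stable extension $0\to\cO\to E\to L\to 0$ of a line bundle $L$ of degree $d$, one gets $0\to[\cO\to E]\to j_*L\to[\cO\to\cO][1]\to 0$ in $\cA(b)$, destabilizing iff $w<1-b/d$. Ruling all of these out on $\Stab^{\circ}$ amounts to comparing such walls with the skyscraper walls via Brill--Noether estimates, which presupposes that $\sigma$ already has the shape $\sigma_{b,w}$ or $\gl^{(1)}(\sigma_{\cV},\sigma_g)$ --- the very conclusion sought. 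The paper avoids this circularity by arguing in the opposite order: it pins down the heart and the charge first, using only the stability of $[\cO\to 0]$ and $j_*\cK$ through cohomology-vanishing and torsion-pair lemmas (Lemmas~\ref{lem-vanishing of cohomology}, \ref{lem-case 0-heart}, \ref{lem- case 0 -torsion pair}, \ref{lem-vanishing i^*T}), and a restriction-to-$j_*\cD(C)$ statement is proved (and needed) only in Case~(III), where the phase separation makes it formal; in the tilting Case~(I) the restriction property is never used at all.

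There is a secondary gap in your $f(0)\ge\tfrac12$ branch even if the restriction step were granted. The heart $\cA(b)$ is \emph{not} the extension closure of $i_*\cA_{\cV}$ and the tilted sheaf heart: an object of $\mathbb{F}^b[1]$ with $\nd\neq0$, such as $[\cO\to\cO][1]$ for $b\ge 0$, is the cone of a map $i_*\C\to j_*\cO[1]$ rather than an extension of such pieces, so ``every object decomposes via \eqref{rep.1}'' does not yield $\mathcal{P}_\sigma(0,1]=\cA(b)$; one needs the cohomological bounds on $\sigma$-stable objects (the paper's Lemma~\ref{lem-case 0-heart}) and the identification of the torsion pair through the sign of $\Im Z$ (Lemma~\ref{lem-case 0 - slope stable sheaves}). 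Two smaller points: equality of numerical classes does not by itself force all $j_*\cK$ to have a common phase (the paper invokes a connectedness argument as in \cite{li-albanese}); and the strict inequality $w>\Phi_C(b)$ for irrational $b$ requires the deformation/openness argument, since evaluating $Z$ on slope-$b$ semistable objects only makes sense for rational $b$ and a limsup alone gives a non-strict bound.
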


Pick a stability condition $\sigma = (\cA, Z) \in \stab^{\circ}(\cD(\mathcal{T}_C))$. Up to the $\GL$-action, we may assume $[\cO \to 0]$ is $\sigma$-stable of phase one. By a similar argument as in~\cite[Proposition 2.9]{li-albanese}, we can assume that $j_*\cK$ are all $\sigma$-stable of the same phase.
\begin{Lem}\label{lem-vanishing of cohomology}
There exists $n \geq 0$ such that $j_*\cK[-n] \in \cA$. Moreover, if $T \in \cA$ is a $\sigma$-stable object not isomorphic to $[\cO \to 0]$ or to $j_*\cK[-n]$ for any $x\in C$, then it satisfies the following:
\begin{equation*}
     \cH^{\geq n+1}(j^*T) =  \cH^{\leq n-2}(j^!T) = \cH^{\geq 1}(i^*T) =0.
\end{equation*}
In particular, we get $\cH^{\geq n+ 1}(T) = 0$, $\cH^{\geq n+ 1}(j^!T) = 0$ and $\cH^{\leq -2}(i^*T) = 0$. 
\end{Lem}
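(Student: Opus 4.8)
The plan is to fix $n$ first and then read off each vanishing through the adjoint functors of the two semiorthogonal decompositions, detecting $\Coh(C)$-cohomology by skyscrapers.

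\emph{Existence of $n$.} Since the objects $j_*\cK$ are $\sigma$-stable of one common phase $\phi_2$, a single shift works for every $x$. By Proposition~\ref{prop.hom seqeunce} one computes $\Ext^1([\cO\to 0],j_*\cK)\cong\C$, so there is a nonzero map $[\cO\to 0]\to j_*\cK[1]$. Comparing the phases of the stable source (phase $1$) and target (phase $\phi_2+1$) forces $\phi_2\ge 0$; moreover $\phi_2=0$ is impossible, since then the two stable objects would share the phase $1$, while a nonzero morphism between non-isomorphic stable objects of equal phase cannot exist. Hence $\phi_2>0$, and $n:=\lceil\phi_2\rceil-1\ge 0$ satisfies $\phi_2-n\in(0,1]$, i.e. $j_*\cK[-n]\in\cA$.

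\emph{The three displayed vanishings.} Each reduces to the heart property $\Hom^{<0}(\cA,\cA)=0$ applied after an adjunction. From $i^*T=R\Hom(T,[\cO\to 0])^{*}$ we get $\cH^{k}(i^*T)=\Ext^{-k}(T,[\cO\to 0])^{*}$, which is $0$ for $k\ge 1$ because $T,[\cO\to 0]\in\cA$; this is the statement for $i^*T$. For $j^*T$ I would use that on a smooth curve the top $\Coh$-degree of any $G\in\cD(C)$ equals the largest $-l$ with $\Hom(G,\cK[l])\ne 0$; by $j^*\dashv j_*$, $\Hom(j^*T,\cK[l])=\Hom(T,(j_*\cK[-n])[l+n])$, which vanishes once $l+n<0$, giving $\cH^{\ge n+1}(j^*T)=0$. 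The bottom bound on $j^\dagger T$ is the dual detection via $j_*\dashv j^\dagger$ and $\Hom(\cK[l],j^\dagger T)=\Hom((j_*\cK[-n])[l+n],T)$, which vanishes for $l+n>0$; here the curve's pairing $\Ext^1(\cK,-)\cong\Hom(-,\cK)^{*}$ shifts the detected degree by one, which is exactly why the bound is $n-2$ and not $n-1$.

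\emph{The ``in particular'' part, and the main obstacle.} The top bounds $\cH^{\ge n+1}(T)=0$ and $\cH^{\ge n+2}(j^\dagger T)=0$ follow by feeding the above into the structural triangles: the sequence~\eqref{eq.coho} presents $\cH^k(T)$ as an extension of $i_*\cH^k(i^*T)$ by $j_*\cH^k(j^\dagger T)$, while the triangle $\cO\otimes i^*T\xrightarrow{\varphi}j^\dagger T\to j^*T$ of Lemma~\ref{lem-computing j^*} gives $\cH^k(j^\dagger T)\cong\cH^k(j^*T)$ for $k\ge 1$; combined with the vanishings for $i^*T$ and $j^*T$ these force $\cH^{\ge n+1}(j^\dagger T)=0$ and hence $\cH^{\ge n+1}(T)=0$. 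The genuinely delicate point is the last one, $\cH^{\le -2}(i^*T)=0$, equivalently $\Ext^{\ge 2}(T,[\cO\to 0])=0$: this is \emph{not} forced by the heart, because the relevant Hom runs from a low phase to a high phase. I would flip it by Serre duality, using $S[\cO\to\cO]=[\cO\to 0]$ from the Serre-functor computation to rewrite $\Ext^{j}(T,[\cO\to 0])\cong\Hom([\cO\to\cO],T[-j])^{*}$; for $j\ge 2$ the target $T[-j]$ has phase $\le -1$, so this group vanishes as soon as the minimal Harder--Narasimhan phase $\phi^{-}([\cO\to\cO])$ exceeds $-1$. This is the main obstacle, and it reduces to bounding below the $\sigma$-phase of the line bundle $j_*\cO$ (the nontrivial Harder--Narasimhan factor of $[\cO\to\cO]$); I would control it through the fact that $\sigma$ restricts to a geometric stability condition on $j_*\cD(C)\simeq\cD(C)$, whose semistable sheaves occupy a single length-one phase interval topped by the skyscraper phase in $(n,n+1]$, forcing every sheaf phase---in particular that of $j_*\cO$---to exceed $n-1\ge-1$.
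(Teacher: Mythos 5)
Your treatment of the existence of $n$, of the three displayed vanishings, and of the top bounds $\cH^{\geq n+1}(T)=0$, $\cH^{\geq n+2}(j^{\dagger}T)=0$ is correct and coincides with the paper's argument: adjunction against the two families of $\sigma$-stable objects, skyscraper detection of top/bottom cohomology on the curve (including the Serre-duality shift by one for $j^{\dagger}T$, which you identify correctly), and then the sequence \eqref{eq.coho} together with the triangle $\cO\otimes i^*T\to j^{\dagger}T\to j^*T$ from Lemma~\ref{lem-computing j^*}.

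The gap is in the last step, $\cH^{\leq -2}(i^*T)=0$. Your Serre-duality reduction is fine as a formula ($S([\cO\to\cO])=[\cO\to 0]$ is indeed correct, so $\Ext^{j}(T,[\cO\to 0])\cong \Hom([\cO\to\cO],T[-j])^{*}$), but the bound you then need, namely $\phi^{-}_{\sigma}([\cO\to\cO])>-1$, is not available. Your justification assumes (i) that $j_*\cO$ is the nontrivial HN factor of $[\cO\to\cO]$, and (ii) that $\sigma$ restricts to a geometric stability condition on $j_*\cD(C)\simeq\cD(C)$ whose semistable sheaves fill a single length-one phase interval below the skyscraper phase. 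Neither is known at this stage: the only hypotheses in force are that $[\cO\to 0]$ and the $j_*\cK$ are $\sigma$-stable; nothing is known about the $\sigma$-semistability or phase of $j_*\cO$, the restriction of $\sigma$ to the subcategory $j_*\cD(C)$ is not a priori a stability condition at all (HN factors of objects of $j_*\cD(C)$ need not lie in $j_*\cD(C)$), and statement (ii) is essentially a consequence of the classification (Theorem~\ref{thm-main}) that this lemma is a step towards, so the argument is circular. The step can instead be closed using only what you have already proved: since $n\geq 0$, your bound $\cH^{\leq n-2}(j^{\dagger}T)=0$ gives $\cH^{\leq -2}(j^{\dagger}T)=0$. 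If $\cH^{-p}(i^*T)\neq 0$ for some $p\geq 2$, then, since complexes of vector spaces split, there is a nonzero map $[\cO\to 0][p]\to i_*i^*T$. By Proposition~\ref{prop.hom seqeunce}, $\Ext^{k}([\cO\to 0],j_*F)$ is nonzero only for $k=1,2$, where it equals $H^{0}(F)$ and $H^{1}(F)$; hence $\Hom([\cO\to 0][p],j_*j^{\dagger}T[1])$ receives contributions only from $\cH^{-p}(j^{\dagger}T)$ and $\cH^{-p-1}(j^{\dagger}T)$, which vanish. Applying $\Hom([\cO\to 0][p],-)$ to the triangle \eqref{rep.1} then produces a nonzero element of $\Hom([\cO\to 0][p],T)$, which is impossible since $[\cO\to 0][p]$ is stable of phase $1+p>1\geq\phi_{\sigma}(T)$. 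This is how the paper closes the step, using only the two families of stable objects you already control.
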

\begin{proof}
    We know $\Hom([\cO \to 0], j_*\cK[1]) \neq 0$, so $0 < \phi_{\sigma}(j_*\cK)$ and thus $j_*\cK[-n] \in \cA$ for some $n \geq 0$. Now take a $\sigma$-stable object $T$ as in the statement, then for every $p>0$ and any $x\in C$, we have 
    \begin{equation*}
         \Hom(j^*T,\cK [-n-p] ) = \Hom(T[p] , j_*\cK [-n]) = 0,
    \end{equation*}
    \begin{equation*}
        \Hom(j^!T, \cK[1-n+p]) =\Hom(\cK , j^!T[n-p])  = \Hom(j_*\cK[-n], T[-p]) = 0,
    \end{equation*}
    \begin{equation*}
         \Hom(i^*T, \mathbb{C}[-p]) = \Hom(T ,[\cO \to 0][-p]) = 0,
    \end{equation*}
    These imply that $\mathcal{H}^{\geq n+1}(T)=0$, and so Lemma~\ref{lem-cohomology} yields $\mathcal{H}^{\geq n+1}(j^!T)=0$.

It remains only to show $\cH^{\leq -2}(i^*T) = 0$. Suppose not, take the highest $p \geq 2$ such that $\cH^{-p}(i^*T) \neq 0$. Then there is a non-zero map $[\cO \to 0][p] \to i_*i^*T$. Since $\Hom([\cO \to 0][p], j_*j^!T[1]) =0$ as $\cH^{\leq -2}(j^!T) =0$, taking $\Hom([\cO \to 0][p], -)$ from $T \to i_*i^*T \to j_*j^!T[1]$ implies that $\Hom([\cO \to 0][p], T) \neq 0$ which is not possible as $T\in \mathcal{A}$.
\end{proof}
We investigate three cases separately depending on the phase of $j_*\cK$.

\subsection*{Case (I)} Suppose $n=0$ and $\phi_{\sigma}(j_*\cK) <1$. 
\begin{Lem}\label{lem-case 0-heart}Take a $\sigma$-stable object $T \in \cA$ which is not isomorphic to $[\cO \to 0]$ or to $j_*\cK$ for any $x \in C$. Then we have $\cH^{p}(T) = \cH^p(j^!T) = \cH^p(i^*T) = 0$ if $p \neq 0, -1$. If $\cH^{-1}(j^!T) \neq 0$, then it is a torsion-free sheaf. Moreover, if $T$ has phase one, then $T = [\cO \otimes {V} \xrightarrow{\varphi} E][1]$ so that $H^0(\varphi)$ is injective.
\end{Lem}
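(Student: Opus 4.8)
The plan is to sharpen the cohomological bounds already obtained in Lemma~\ref{lem-vanishing of cohomology} down to the window $\{-1,0\}$, and then to read off the finer structural statements (local freeness, and the shape of phase-one objects) from $\sigma$-stability together with the fixed phases $\phi_\sigma([\cO\to 0])=1$ and $\phi_\sigma(j_*\cK)\in(0,1)$. First I would establish the three concentration claims in order. For $i^*T$ there is nothing to do: Lemma~\ref{lem-vanishing of cohomology} already gives $\cH^{\geq 1}(i^*T)=0$ and $\cH^{\leq -2}(i^*T)=0$. For $j^\dagger T$ the same lemma yields concentration in $[-1,1]$, and I would kill degree $1$ using the triangle $\cO\otimes i^*T\xrightarrow{\varphi} j^\dagger T\to j^*T$ from Lemma~\ref{lem-simple} and Lemma~\ref{lem-computing j^*} (so $j^*T=\cone(\varphi)$): in the long exact cohomology sequence the term $\cH^1(j^\dagger T)$ is squeezed between $\cH^1(\cO\otimes i^*T)$ and $\cH^1(j^*T)$, both of which vanish (the first because $i^*T$ lives in $[-1,0]$ and $\cO$ is flat, the second by Lemma~\ref{lem-vanishing of cohomology}). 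Feeding $\cH^{\leq -2}(j^\dagger T)=\cH^{\leq -2}(i^*T)=0$ into the short exact sequences~\eqref{eq.coho} then gives $\cH^{\leq -2}(T)=0$, and $\cH^{\geq 1}(T)=0$ is already known, which settles the first assertion.

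For local freeness of $\cH^{-1}(j^\dagger T)$, I would show it is torsion-free by proving $\Hom(\cK,\cH^{-1}(j^\dagger T))=0$ for every $x\in C$. Using the adjunction $j_*\dashv j^\dagger$ together with the truncation triangle $\cH^{-1}(j^\dagger T)[1]\to j^\dagger T\to \cH^0(j^\dagger T)$ (now that $j^\dagger T$ is concentrated in $[-1,0]$), one identifies
\[
\Hom(\cK,\cH^{-1}(j^\dagger T))\;\cong\;\Hom(j_*\cK, T[-1])\;=\;\Hom(j_*\cK[1], T).
\]
Since $j_*\cK$ is $\sigma$-stable of phase $\phi_\sigma(j_*\cK)\in(0,1)$, the shift $j_*\cK[1]$ is stable of phase $>1\geq \phi_\sigma(T)$, so this last group vanishes; hence $\cH^{-1}(j^\dagger T)$ has no torsion and is locally free on the smooth curve $C$.

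For the phase-one case I would argue in two steps. As $T\not\cong[\cO\to 0]$ are both $\sigma$-stable of phase $1$, we get $\Hom(T,[\cO\to 0])=0$; since $i^*T=R\Hom(T,[\cO\to 0])^{*}$ by~\eqref{eq-adjoints to i}, this forces $\cH^0(i^*T)=0$, so $i^*T$ sits in degree $-1$ and~\eqref{eq.coho} gives $\cH^0(T)=j_*\cH^0(j^\dagger T)$. If $\cH^0(j^\dagger T)\neq 0$, a point of its support yields a nonzero map $\cH^0(j^\dagger T)\to\cK$, hence a nonzero composite $T\to\cH^0(T)=j_*\cH^0(j^\dagger T)\to j_*\cK$, contradicting $\phi_\sigma(T)=1>\phi_\sigma(j_*\cK)$. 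Thus $\cH^0(T)=0$ and $T=\cH^{-1}(T)[1]$ with $\cH^{-1}(T)=[\cO\otimes V\xrightarrow{\varphi}E]\in\T$. Finally, $\ker H^0(\varphi)$ defines a subobject $i_*\ker H^0(\varphi)\hookrightarrow\cH^{-1}(T)$ in $\T$, whose shift gives a morphism $i_*\ker H^0(\varphi)[1]\to T$ that is nonzero as soon as the kernel is; but $i_*\ker H^0(\varphi)[1]$ is a direct sum of copies of $[\cO\to 0][1]$, of phase $2>\phi_\sigma(T)$, so the morphism must vanish, forcing $H^0(\varphi)$ injective.

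The step I expect to be the main obstacle is the second one: correctly chaining the adjunction $j_*\dashv j^\dagger$ with the truncation triangle to obtain the clean isomorphism $\Hom(\cK,\cH^{-1}(j^\dagger T))\cong\Hom(j_*\cK[1],T)$, and making sure every phase inequality used is \emph{strict} (which relies on $\phi_\sigma(j_*\cK)>0$ and $\phi_\sigma(T)\le 1$, both available in Case~(I)). Once the relevant $\Hom$-groups are identified, the stability comparisons are immediate.
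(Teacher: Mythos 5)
Your proof is correct, and assertions two and three follow the paper's own route essentially verbatim: local freeness via the adjunction $j_*\dashv j^{\dagger}$, truncation, and the strict phase inequality $\phi_\sigma(j_*\cK)+1>1\ge\phi_\sigma(T)$; and the phase-one case via $\Hom(T,[\cO\to 0])=0$, a skyscraper quotient of $\cH^0(j^{\dagger}T)$ contradicting $\phi_\sigma(T)=1>\phi_\sigma(j_*\cK)$, and $\Hom([\cO\to 0][1],T)=0$ forcing $H^0(\varphi)$ injective. The genuine difference is in how you kill $\cH^1(j^{\dagger}T)$. The paper argues by contradiction with stability: a nonzero $\cH^1(j^{\dagger}T)$ maps onto some $\cK$, and pushing this through the rotated triangle $i_*i^*T[-1]\to j_*j^{\dagger}T\to T$, together with $\Hom(i_*i^*T[-1],j_*\cK[-1])=0$, yields $\Hom(T,j_*\cK[-1])\neq 0$, which is impossible since $\phi_\sigma(T)>0>\phi_\sigma(j_*\cK)-1$. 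You instead use the functorial triangle $\cO\otimes i^*T\to j^{\dagger}T\to j^*T$ from Lemmas~\ref{lem-simple} and~\ref{lem-computing j^*} (the same triangle the paper only deploys later, in Section~\ref{sec second type}) and squeeze $\cH^1(j^{\dagger}T)$ in the long exact sequence between $\cO\otimes\cH^1(i^*T)=0$ and $\cH^1(j^*T)=0$, the latter being precisely the part of Lemma~\ref{lem-vanishing of cohomology} that the paper's proof never invokes. Your step is purely homological, requiring no stability input beyond Lemma~\ref{lem-vanishing of cohomology}, and it exhibits the concentration of $j^{\dagger}T$ in degrees $[-1,0]$ as a formal consequence of that lemma; the paper's argument instead stays inside the phase-comparison formalism it reuses for the remaining claims. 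Both routes are complete.
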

\begin{proof} From Lemmas~\ref{lem-vanishing of cohomology} and~\ref{lem-cohomology}, we immediately get the vanishing $\cH^{p}(T) = \cH^p(j^!T) = \cH^p(i^*T) = 0$ if $p \neq 0, -1$.
From the inequality of phases it follows $\Hom(j_*\cK,T[-1])=\Hom(\cK, j^!T[-1]) =0$ for any $x\in C$ which implies that $\cH^{-1}(j^!T)$ is torsion-free.

 If $T$ is of phase one but not equal to $[\cO \to 0]$ or $j_*\cK$, we have 
    \begin{equation*}
        \Hom(T, [\cO \to 0] ) = \Hom(i^*T , \mathbb{C} ) =0, 
    \end{equation*}
    which implies that $\cH^0(i^*T) = 0$. If $\cH^0(j^!T) \neq 0$, then there is a non-zero map $j_*j^!T \to j_*\cK$. Since $\Hom(i_*i^*T[-1], j_*\cK) = 0$, we get $\Hom(T, j_*\cK) \neq 0$ which is not possible. Therefore, $T \cong [\cO \otimes {V} \xrightarrow{\varphi} E][1]$ for a torsion-free sheaf $E$. Finally, since we have $\Hom([\cO \to 0][1] , T) = 0$, the map $H^0(\varphi)$ is injective.
\end{proof}

The next step is to describe the heart $\cA$ via a torsion pair in $\T$. 
\begin{Lem}\label{lem- case 0 -torsion pair}
\begin{enumerate}
    \item If $T\in \T$ then $T \in \mathcal{P}_{\sigma}(-1, 1]$. 
    \item The pair of subcategories $(\mathcal{F}_1, \mathcal{F}_2)$, defined as
\begin{equation*}
    \mathcal{F}_1 = \T \cap \mathcal{P}_{\sigma}(0, 1], \qquad \qquad 
    \mathcal{F}_2 = \T \cap \mathcal{P}_{\sigma}(-1, 0],
\end{equation*}
is a torsion pair on the abelian category $\T$ and the heart $\cA = \mathcal{P}_{\sigma}(0, 1]= \langle \mathcal{F}_2[1], \mathcal{F}_1  \rangle $ is the corresponding tilt.
\end{enumerate}
\end{Lem}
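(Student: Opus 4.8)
The plan is to compare the two bounded hearts $\T$ and $\cA=\mathcal{P}_\sigma(0,1]$ through their cohomologies and to deduce both parts from a single containment. Write $\cH^\bullet$ for cohomology with respect to the standard heart $\T$, let $\cD^{[a,b]}_{\T}$ denote the full subcategory of objects whose $\T$-cohomology is concentrated in degrees $a,\dots,b$, and write $\cD^{\le k}_{\cA},\cD^{\ge k}_{\cA}$ for the aisles of the $\sigma$-heart. The key input is the containment
\[
\cA \subseteq \cD^{[-1,0]}_{\T},
\]
i.e. every object of $\cA$ has standard cohomology only in degrees $-1$ and $0$. To prove it I would first treat $\sigma$-stable objects: by Lemma~\ref{lem-case 0-heart} any $\sigma$-stable $S\in\cA$ not isomorphic to $[\cO\to 0]$ or to some $j_*\cK$ satisfies $\cH^p(S)=0$ for $p\ne 0,-1$, while $[\cO\to 0]$ and the $j_*\cK$ lie in $\T$ itself; hence every $\sigma$-stable object of $\cA$ lies in $\cD^{[-1,0]}_{\T}$. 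Since each $\mathcal{P}_\sigma(\phi)$ is a finite-length abelian category, every object of $\cA$ is a finite iterated extension of $\sigma$-stable objects, and $\cD^{[-1,0]}_{\T}$ is closed under extensions (by the long exact sequence in $\cH^\bullet$); this yields the containment.

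Part~(1) is then a formal consequence. From $\cA\subseteq\cD^{\le 0}_{\T}$ and $\cA\subseteq\cD^{\ge -1}_{\T}$, together with the stability of the $\T$-aisles under the appropriate shifts and extensions, one obtains
$\cD^{\le 0}_{\T}\subseteq \cD^{\le 0}_{\cA}\subseteq \cD^{\le 1}_{\T}$
and the complementary inclusions $\cD^{\ge 1}_{\T}\subseteq \cD^{\ge 1}_{\cA}\subseteq \cD^{\ge 0}_{\T}$. Chasing these gives $\T\subseteq \cD^{\ge 0}_{\cA}\cap\cD^{\le 1}_{\cA}$, and the latter intersection consists exactly of objects with $\sigma$-cohomology in degrees $0,1$, namely $\mathcal{P}_\sigma(-1,1]$. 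Thus any $T=[\cO\otimes V\to E]\in\T$ lies in $\mathcal{P}_\sigma(-1,1]$.

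For part~(2), take $T\in\T$ and truncate the slicing at phase $0$, giving a triangle $E'\to T\to F'$ with $E'\in\mathcal{P}_\sigma(0,1]=\cA$ and $F'\in\mathcal{P}_\sigma(-1,0]=\cA[-1]$. Since $E'\in\cD^{[-1,0]}_{\T}$ and $F'\in\cD^{[0,1]}_{\T}$ by the containment above, feeding this triangle into the long exact sequence of $\cH^\bullet$ and using $T\in\T$ forces $\cH^{-1}(E')=0$ and $\cH^{1}(F')=0$; hence $E',F'\in\T$ and $0\to E'\to T\to F'\to 0$ is short exact in $\T$ with $E'\in\mathcal{F}_1=\T\cap\mathcal{P}_\sigma(0,1]$ and $F'\in\mathcal{F}_2=\T\cap\mathcal{P}_\sigma(-1,0]$. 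The vanishing $\Hom_{\T}(\mathcal{F}_1,\mathcal{F}_2)=0$ follows from $\Hom(\mathcal{P}_\sigma(0,1],\mathcal{P}_\sigma(-1,0])=0$, so $(\mathcal{F}_1,\mathcal{F}_2)$ is a torsion pair on $\T$. Finally, since $\mathcal{F}_1\subseteq\cA$ and $\mathcal{F}_2[1]\subseteq\cA$, the tilted heart $\langle \mathcal{F}_2[1],\mathcal{F}_1\rangle$ is contained in the bounded heart $\cA$, and two comparable bounded hearts coincide, giving $\cA=\langle \mathcal{F}_2[1],\mathcal{F}_1\rangle$.

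The only genuine content is the containment $\cA\subseteq\cD^{[-1,0]}_{\T}$: it rests on the structural Lemmas~\ref{lem-vanishing of cohomology} and~\ref{lem-case 0-heart} and on the local finiteness needed to reduce an arbitrary object of $\cA$ to its $\sigma$-stable Jordan--Hölder factors. Once this is established, both parts reduce to routine bookkeeping with the two t-structures, so I expect the \emph{verification of the containment} to be the main obstacle while the remaining steps are formal.
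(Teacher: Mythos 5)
Your proposal is, at its core, the same proof as the paper's: the paper also extracts from Lemma~\ref{lem-case 0-heart} (together with extension-closure through Harder--Narasimhan and Jordan--H\"older filtrations) the fact that objects of the slices have $\T$-cohomology in a window of width two, deduces part~(1) from the resulting Hom-vanishings $\Hom(\mathcal{P}(>1),T)=0=\Hom(T,\mathcal{P}(\le -1))$ for $T\in\T$ (which is exactly your aisle-orthogonality chase in different clothing), and proves part~(2) by truncating the HN filtration at phase $0$ and running the long exact sequence of $\cH^{\bullet}$, precisely as you do.

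However, one inclusion in your chase for part~(1) is false as stated, namely $\cD^{\le 0}_{\T}\subseteq \cD^{\le 0}_{\cA}$. From $\cA\subseteq\cD^{[-1,0]}_{\T}$ the correct inclusion is the reverse one, $\cD^{\le 0}_{\cA}\subseteq \cD^{\le 0}_{\T}$, because $\cA[n]\subseteq\cD^{[-1-n,-n]}_{\T}\subseteq\cD^{\le 0}_{\T}$ for every $n\ge 0$. Your version would give $\T\subseteq\cD^{\le 0}_{\cA}=\mathcal{P}(>0)$, which combined with part~(1) forces $\T\subseteq\mathcal{P}(0,1]=\cA$, hence $\T=\cA$ (two nested bounded hearts coincide) and $\mathcal{F}_2=0$: in other words, it asserts that the torsion pair you are constructing is always trivial. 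This fails for the stability conditions at hand: in Case~(I) the paper goes on to identify $\cA$ with the tilt $\cA(b)$, and $\mathbb{F}^b\neq 0$ for every $b$ (it contains $j_*L$ for any line bundle $L$ of degree at most $b$). The gap is local and repairable, since the chase only needs $\cD^{\ge 0}_{\T}\subseteq\cD^{\ge 0}_{\cA}$ (your correct inclusion $\cD^{\ge 1}_{\T}\subseteq\cD^{\ge 1}_{\cA}$, shifted) together with $\cD^{\le 0}_{\T}\subseteq\cD^{\le 1}_{\cA}$; the latter follows by orthogonality from your correct inclusion $\cD^{\ge 1}_{\cA}\subseteq\cD^{\ge 0}_{\T}$: shifting gives $\cD^{\ge 2}_{\cA}\subseteq\cD^{\ge 1}_{\T}$, hence $\cD^{\le 0}_{\T}={}^{\perp}\bigl(\cD^{\ge 1}_{\T}\bigr)\subseteq{}^{\perp}\bigl(\cD^{\ge 2}_{\cA}\bigr)=\cD^{\le 1}_{\cA}$. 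With that substitution the rest of your argument, including the final identification of $\cA$ with $\langle \mathcal{F}_2[1],\mathcal{F}_1\rangle$ via nested bounded hearts, is sound and matches the paper.
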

\begin{proof}
The proof is the same as in \cite[Lemma 10.1]{bridgeland:K3-surfaces}, we add it for completeness. For any object $A \in \mathcal{P}_{\sigma}(>1)$, Lemma \ref{lem-case 0-heart} implies that $\cH^i(A) = 0$ for $i \geq 0$, so $\Hom(A, T) = 0$. 
Similarly, if $B \in \mathcal{P}_{\sigma}(\leq -1)$, then $\cH^i(B) = 0$ for $i < 0$, thus $\Hom(T, B) = 0$. This implies $T \in \mathcal{P}_{\sigma}(-1, 1]$ for any $T\in \T$ as claimed in part (a). 

Therefore, any object $T \in \T$ lies in the exact triangle \begin{equation*}
    Q_1 \rightarrow T \rightarrow Q_2
\end{equation*}
with $Q_1 \in \mathcal{P}_{\sigma}(0, 1]$ and $Q_2 \in \mathcal{P}_{\sigma}(-1, 0]$. By Lemma \ref{lem-case 0-heart}, $\cH^i(Q_1) =0$ unless $i = 0, -1$ and $\cH^i(Q_2) = 0$ unless $i = 0, 1$. Then taking cohomology shows that $\cH^{-1}(Q_1) = 0$ and $\cH^1(Q_2) = 0$. This shows that $(\mathcal{F}_1, \mathcal{F}_2)$ is a torsion pair and the heart $\mathcal{A}$ is the corresponding tilt as claimed in part (b).      
\end{proof}

Now we analyze the stability function $Z$. 
Since $Z(0, 0, 1)$ has zero imaginary part, we get 
\begin{equation*}\label{im}
    \Im[Z(T)] = \alpha \dd(T) - \beta \rd(T)
\end{equation*}
for some $\alpha,\beta \in \mathbb{R}$. Since $j_*\cK \in \cA$ and $\phi_{\sigma}(j_*\cK) <1$, we must have $\alpha > 0$. Then define $$b \coloneqq \frac{\beta}{\alpha}.$$
Thus, up to the $\GL$-action, we may assume 
\begin{equation*}
    \Im[Z(T)] =  \dd(T) \,-\, b\, \rd(T). 
\end{equation*}
\begin{Lem}\label{lem-case 0 - slope stable sheaves}
Consider the torsion pair $(\mathcal{F}_1, \mathcal{F}_2)$ as in Lemma \ref{lem- case 0 -torsion pair}. If $T \in \T$ is $\mu$-stable of positive rank $\rd(T) > 0$, then either $T$ is in $\mathcal{F}_1$ or $\mathcal{F}_2$ depending on whether $\Im[Z(T)] >0$ or $\Im[Z(T)] \leq 0$.    
\end{Lem}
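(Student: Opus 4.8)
The plan is to read off membership in the torsion pair of Lemma~\ref{lem- case 0 -torsion pair}. Any $T\in\T$ fits in a short exact sequence $0\to T_1\to T\to T_2\to 0$ in $\T$ with $T_1\in\mathcal{F}_1=\T\cap\mathcal{P}(0,1]$ and $T_2\in\mathcal{F}_2=\T\cap\mathcal{P}(-1,0]$; since $\mathcal{F}_1\cap\mathcal{F}_2=0$, the two assertions are equivalent to ``$T_2=0$'' and ``$T_1=0$'' respectively. Two elementary observations drive everything. First, additivity of $\Im Z$ and the sign of $\sin(\pi\phi)$ give $\Im Z\ge 0$ on $\mathcal{P}(0,1]$ and $\Im Z\le 0$ on $\mathcal{P}(-1,0]$. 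Second, under the normalization $\Im Z(T)=\dd(T)-b\,\rd(T)$ one has $\Im Z(T)=\rd(T)\bigl(\mu(T)-b\bigr)$ when $\rd(T)>0$, so $\Im Z(T)>0\iff\mu(T)>b$ and $\Im Z(T)\le 0\iff\mu(T)\le b$. I will also use freely that a $\mu$-stable object of positive rank is torsion-free (hence has no nonzero rank-$0$ subobject) together with the standard slope inequalities $\mu(T_1)<\mu(T)<\mu(T/T_1)$ for proper nonzero subobjects.

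Suppose first $\mu(T)>b$ and, for contradiction, $T_2\neq 0$. If $\rd(T_2)=0$ then $E_2$ is torsion with $\Im Z(T_2)=\deg E_2\ge 0$, while $T_2\in\mathcal{P}(-1,0]$ forces $\Im Z(T_2)\le 0$; hence $E_2=0$ and $T_2=[\cO\otimes V_2\to 0]$ is a sum of copies of the phase-one object $[\cO\to 0]$, which cannot lie in $\mathcal{P}(-1,0]$ unless $T_2=0$. So $\rd(T_2)>0$, and then $\Im Z(T_2)\le 0$ gives $\mu(T_2)\le b$; but $\mu$-stability forces $\mu(T_2)\ge\mu(T)>b$ (with $T_2=T/T_1$ if $T_1\neq 0$ and $T_2=T$ otherwise), a contradiction. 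Thus $T_2=0$ and $T\in\mathcal{F}_1$.

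Now suppose $\mu(T)\le b$ and, for contradiction, $T_1\neq 0$. Since $T$ is torsion-free it has no nonzero rank-$0$ subobject, so $\rd(T_1)>0$, and $\Im Z(T_1)\ge 0$ yields $\mu(T_1)\ge b$. If $T_2\neq 0$ then $T_1$ is a proper subobject and $\mu(T_1)<\mu(T)\le b$, a contradiction. The only remaining possibility is $T_2=0$, i.e.\ $T=T_1\in\mathcal{F}_1$ with $\mu(T)=b$: then $\Im Z(T)=0$, and since every Harder--Narasimhan factor lies in $\mathcal{P}(0,1]$ with $\Im Z\ge 0$, all factors have phase exactly $1$, so $T$ is $\sigma$-semistable of phase $1$.

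The genuine obstacle is this boundary case $\mu(T)=b$, where I must rule out a positive-rank honest triple being $\sigma$-semistable of phase $1$. I would resolve it by a class computation using Lemma~\ref{lem-case 0-heart}: every $\sigma$-stable object of phase $1$ is either $[\cO\to 0]$, of class $(0,0,1)$, or a shift $G[1]$ with $G=[\cO\otimes V'\to E']\in\T$ of positive rank (the objects $j_*\cK$ are excluded here, having phase $<1$), of class $(-\rd(G),-\dd(G),-\nd(G))$ with $\rd(G)>0$. Hence every Jordan--H\"older factor of $T$ in $\mathcal{P}(1)$ has $\rd\le 0$, and additivity of $\rd$ forces $\rd(T)\le 0$, contradicting $\rd(T)>0$. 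Therefore $T\notin\mathcal{P}(1)$, the case $T_2=0$ is impossible, and we conclude $T_1=0$, i.e.\ $T\in\mathcal{F}_2$, which completes the proof.
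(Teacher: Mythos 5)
Your proof is correct and takes essentially the same route as the paper's: decompose $T$ by the torsion pair of Lemma~\ref{lem- case 0 -torsion pair}, play the sign of $\Im[Z]$ on $\mathcal{P}(0,1]$ and $\mathcal{P}(-1,0]$ against the $\mu$-stability see-saw to kill one piece of the decomposition, and invoke Lemma~\ref{lem-case 0-heart} to rule out a positive-rank $\sigma$-semistable object of phase one (the boundary case the paper disposes of by the same citation, and which you helpfully spell out via the Jordan--H\"older rank count). One small point to tighten: the ``standard'' strict inequality $\mu(T_1)<\mu(T)$ for a proper nonzero subobject can fail in $\T$ when the quotient has rank and degree zero, i.e.\ $T/T_1\cong[\cO\otimes V\to 0]$; in your Case~2 this is excluded because such a nonzero quotient would lie in $\mathcal{P}(1)\cap\mathcal{P}(-1,0]=0$ --- exactly the argument you already used in Case~1 --- so the gap closes in one line.
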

\begin{proof}
We know there is an exact sequence 
\begin{equation*}
    0 \rightarrow Q_1\rightarrow T \rightarrow Q_2 \rightarrow 0
\end{equation*}
in $\T$ where $Q_1 \in \mathcal{F}_1$ and $Q_2 \in \mathcal{F}_2$, so $Q_1 \in \mathcal{A}$ and $Q_2[1] \in \mathcal{A}$. Assume that both $Q_1$ and $Q_2$ are non-zero, otherwise, the claim follows from Lemma \ref{lem-case 0-heart} and Lemma \ref{lem- case 0 -torsion pair}. We know $j^!Q_2 \neq 0$ otherwise, $Q_2[1] = [\cO \otimes V \to 0][1] \in \cA$ for a vector space $V$ which is not possible. Thus Lemma \ref{lem-case 0-heart} implies that $j^!Q_2$ is a torsion-free sheaf, so $\rd(Q_2) \neq 0$. Since $T$ is $\mu$-stable of positive rank, $Q_1$ is also of positive rank. Thus by Lemma \ref{lem- case 0 -torsion pair}, part (b), 
\begin{equation*}
    \frac{1}{\rd(Q_1)} \Im[Z(Q_1)] = \mu(Q_1) -b \geq 0 \qquad \text{and} \qquad \frac{1}{\rd(Q_2)} \Im[Z(Q_2)] = \mu(Q_2) -b \leq 0 
\end{equation*}
which is not possible by $\mu$-stability of $T$. 
\end{proof}
The next step is to determine the real part of the stability function. We can write 
\begin{equation*}
    \Re[Z(T)] = x\,\rd(T) +y\,\dd(T) - z\,\nd(T)
\end{equation*}
for some $x, y, z \in \mathbb{R}$. We know $[\cO \rightarrow 0] \in \cA$ is of phase one, so $z >0$. Up to the $\GL$-action, we may change 
\begin{equation*}
    \Re[Z(T)]\ \  \mapsto\ \  \Re[Z(T)] -y \,\Im[Z(T)] = \rd(T)\, (x+yb) -z \,\nd(T).
\end{equation*}
Since $z>0$, we can also divide it by $z$ and assume 
\begin{equation*}
    \Re[Z(T)] =\rd(T) \,w -\nd(T).
\end{equation*}
for some $w \in \mathbb{R}$. Finally, we claim that $w>\Phi_C(b)$. Indeed, since $\Phi_C$ is upper-semicontinuous, the region
\[
\{(b,w)\in\mathbb{R}^2 \mid w>\Phi_C(b)\}
\]
is open. Hence, by the deformation theory of Bridgeland stability conditions
\cite[Theorem~1.2]{bridgeland:space-of-stability-conditions} or \cite[Theorem~1.2]{bayer:deformation},
it suffices to prove the claim for $b\in\mathbb{Q}$. By construction, for any slope-stable sheaf
$E$ of slope $b$ we have \([\cO\otimes H^0(E)\to E\,][1]\in\cA\), and therefore
\[
\Re \left[Z\bigl([\cO\otimes H^0(E)\to E\,][1]\bigr) \right]
= -\,\rk(E)\, w + h^0(E) \;<\; 0,
\]
so the claim follows from the definition of $\Phi_C$.

\subsection*{Case (II)} Assume $n=0$ and $\phi_{\sigma}(j_*\cK) =1$. 
Take a $\sigma$-stable object $T\in\cA$ that is not isomorphic to $[\cO\to 0]$ or to $j_*\cK$ (for any $x\in C$).
If $\cH^{-1}(T)\neq 0$, it implies that $j^!\mathcal{H}^{-1}(T)\neq 0$ as $[\cO \to 0]\in \mathcal{A}$. Then the injection $\cH^{-1}(T)[1]\hookrightarrow T$ in $\cA$, together with the
nonvanishing $\Hom\!\bigl(j_*\cK,\cH^{-1}(T)[1]\bigr)\neq 0$, implies $\Hom\!\bigl(j_*\cK,T\bigr)\neq 0$,
a contradiction. Therefore, $T\in\T$. By the same argument as in the last part, we conclude that, up to the
$\GL$-action, $\sigma=(\T,Z_\alpha)$ where
\[
Z_\alpha(T)=-\nd(T)-\alpha \dd(T) + i\rd(T) \qquad\text{for some }\alpha>0.
\]

\medskip

\subsection*{Case (III)}\label{sub} Suppose $n \geq 1$. Take an object $T \in \cA$. 
Using the vanishing from Lemma~\ref{lem-vanishing of cohomology}, we obtain the following:
\begin{Lem}\label{lem-vanishing i^*T}
We have $\cH^p(i^*T) = 0$ unless $p = 0$, and $\cH^{p}(j^!T) =0$ unless $p = n-1, n$.
\end{Lem}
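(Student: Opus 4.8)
The plan is to reduce the statement to $\sigma$-stable objects and treat those with the help of Lemma~\ref{lem-vanishing of cohomology}. Since $i^*$ and $j^{\dagger}$ are exact functors, a filtration of $T\in\cA$ by its $\sigma$-stable factors yields long exact sequences in $\T$-cohomology after applying $i^*$ or $j^{\dagger}$; because the connecting maps raise the cohomological degree by only one, the property of having $i^*(-)$ (resp.\ $j^{\dagger}(-)$) concentrated in a fixed \emph{interval} of degrees is closed under extensions. By the support property the slices $\mathcal{P}_{\sigma}(\phi)$ have finite length, so every $T\in\cA$ is a finite iterated extension of $\sigma$-stable objects of $\cA$; hence it suffices to bound $i^*$ and $j^{\dagger}$ on stable objects. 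For $T=[\cO\to 0]$ one has $i^*T=\mathbb{C}$ in degree $0$ and $j^{\dagger}T=0$, and for $T=j_*\cK[-n]$ one has $i^*T=0$ and $j^{\dagger}T=\cK[-n]$ in degree $n$; for every other stable $T$, Lemma~\ref{lem-vanishing of cohomology} gives $\cH^{\le -2}(i^*T)=\cH^{\ge 1}(i^*T)=0$ and $\cH^{\le n-2}(j^{\dagger}T)=\cH^{\ge n+2}(j^{\dagger}T)=0$. Thus in every case $i^*T$ is supported in degrees $\{-1,0\}$ and $j^{\dagger}T$ in degrees $\{n-1,n,n+1\}$, and by the extension argument the same holds for arbitrary $T\in\cA$.

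It remains to remove the two extremal degrees $-1$ and $n+1$. Feeding these bounds into the short exact sequences~\eqref{eq.coho} and using $n\ge 1$ (so that $-1<n-1$ and $n+1>0$), the $\T$-cohomology of $T$ is supported in $\{-1,0\}\cup\{n-1,n,n+1\}$, with extremal pieces
\[
\cH^{-1}(T)\;\cong\;i_*\cH^{-1}(i^*T)\;=\;[\cO\otimes V_{-1}\to 0],
\qquad
\cH^{n+1}(T)\;\cong\;j_*\cH^{n+1}(j^{\dagger}T).
\]
Consequently the lemma is equivalent to the vanishing of these two outermost objects, which I will deduce from the $\sigma$-stability of $[\cO\to 0]$ and of $j_*\cK$.

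Suppose first $\cH^{-1}(T)\neq 0$. As $\cH^{\le -2}(T)=0$, the truncation map $\cH^{-1}(T)[1]\to T$ is an isomorphism on $\cH^{-1}$, and restricting to a direct summand $[\cO\to 0][1]$ of $\cH^{-1}(T)[1]=[\cO\otimes V_{-1}\to 0][1]$ gives a nonzero morphism $[\cO\to 0][1]\to T$. But $[\cO\to 0][1]$ is $\sigma$-stable of phase $2$ while $\phi_{\sigma}^{+}(T)\le 1$, so this $\Hom$ vanishes, a contradiction. Hence $\cH^{-1}(i^*T)=0$ and $i^*T$ is concentrated in degree $0$.

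The remaining, and hardest, step is to kill $\cH^{n+1}(T)$. Write $G:=\cH^{n+1}(j^{\dagger}T)$ and suppose $G\neq 0$. Since $\cH^{\ge n+2}(T)=0$, the truncation gives $f\colon T\to j_*G[-(n+1)]$ which is an isomorphism on $\cH^{n+1}$. Every nonzero coherent sheaf on the smooth curve $C$ admits a surjection $G\twoheadrightarrow\cK$ for some $x\in C$; applying the exact functor $j_*$ and shifting, I compose $f$ with $j_*G[-(n+1)]\to j_*\cK[-(n+1)]$. On $\cH^{n+1}$ this composite is the surjection $j_*G\to j_*\cK$, hence nonzero, so it is a nonzero element of $\Hom\big(T,\,j_*\cK[-(n+1)]\big)$. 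However $j_*\cK$ is $\sigma$-stable of phase in $(n,n+1]$, so $j_*\cK[-(n+1)]$ has phase in $(-1,0]$, whereas $\phi_{\sigma}^{-}(T)>0$; thus this $\Hom$ vanishes, a contradiction. Therefore $\cH^{n+1}(j^{\dagger}T)=0$, leaving $j^{\dagger}T$ in degrees $n-1,n$. The main subtlety lies precisely in this last step: one must reduce the arbitrary sheaf $G$ to a skyscraper while ensuring that the truncation morphism survives the composition, which is exactly what the isomorphism on top cohomology guarantees.
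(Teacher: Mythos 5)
Your proof is correct, and its essential inputs coincide with the paper's: the vanishings of Lemma~\ref{lem-vanishing of cohomology} together with phase obstructions coming from the $\sigma$-stability of $[\cO\to 0]$ and of $j_*\cK$. The execution differs in two respects. First, you make the devissage step explicit: Lemma~\ref{lem-vanishing of cohomology} is stated only for $\sigma$-stable objects, and you justify its extension to arbitrary $T\in\cA$ by finite filtrations into stable factors, extension-closure of degreewise vanishing, and a separate check of the two exceptional objects $[\cO\to 0]$ and $j_*\cK[-n]$; the paper's proof opens with ``We know $\cH^{\geq 1}(i^*T)=0$'' and ``We know $\cH^{\leq n-2}(j^{\dagger}T)=0$'' for arbitrary $T\in\cA$, which tacitly relies on exactly this argument, so your version fills in a point the paper leaves implicit. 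Second, the contradicting morphisms are produced by different bookkeeping: you first transfer the two-sided bounds to the standard cohomology of $T$ via~\eqref{eq.coho} and then use truncation triangles to extract nonzero maps $[\cO\to 0][1]\to T$ and $T\to j_*\cK[-n-1]$, whereas the paper transfers Homs along the decomposition triangle $j_*j^{\dagger}T\to T\to i_*i^*T$ of~\eqref{rep.1}, obtaining $\Hom([\cO\to 0][p],T)\neq 0$ for some $p>0$, resp.\ $\Hom(T,j_*\cK[-p-n-1])\neq 0$ for some $p\geq 0$; the paper's route does not need the upper bound $\cH^{\geq n+2}(j^{\dagger}T)=0$, while yours uses it to pin the top cohomology in degree exactly $n+1$. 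Both mechanisms end with the same phase comparison, so the difference is one of mechanics rather than of strategy, and both are valid.
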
 
\begin{proof}
Lemma~\ref{lem-vanishing of cohomology} implies that $\cH^{p}(j^!T) =0$ unless $p = n-1, n$. So we only need to show the first part of the claim. We know $\cH^{\geq 1}(i^*T) = 0$. Assume $\cH^{<0}(i^*T) \neq 0$. Since $\cH^{\leq -1}(j^!T) = 0$, thus for any $p>0$, we have
    \begin{equation*}
        \Hom([\cO \to 0][p], j_*j^!T[1]) = 0.
    \end{equation*}
    But then taking $\Hom([\cO \to 0][p], -)$ from the exact triangle $T \to i_*i^*T \to j_*j^!T[1]$ implies that $\Hom([\cO \to 0][p], T) \neq 0$, a contradiction. 
\end{proof}

Applying a similar argument as in Lemma \ref{lem- case 0 -torsion pair} implies the following:

\begin{Lem}\label{lem-U_0-torsion pair-1}
    For any $F \in \Coh(C)$, we have $j_*F \in \mathcal{P}_{\sigma}(n-1, n+1]$. The pair of subcategories $(\mathcal{F}_1,\, \mathcal{F}_2)$, defined as
\begin{equation*}
    \mathcal{F}_1 = j_*\Coh(C) \cap \mathcal{P}(n, n+1], \qquad \qquad 
    \mathcal{F}_2 = j_*\Coh(C) \cap \mathcal{P}(n-1, n],
\end{equation*}
is a torsion pair on the abelian category $j_*\Coh(C)$ and the heart 
\begin{equation*}\label{eq.heart}
\cA_1 := \langle \mathcal{F}_2[1], \mathcal{F}_1  \rangle [-n]   
\end{equation*}
is the corresponding tilt which is the intersection $\cA \cap j_*\cD(C)$.
\end{Lem}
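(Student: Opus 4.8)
The plan is to follow the template of Lemma~\ref{lem- case 0 -torsion pair}, with the natural heart $\T$ there replaced by the subcategory $j_*\Coh(C)$ and the interval $(-1,1]$ replaced by $(n-1,n+1]$. The first goal is the claim $j_*F\in\mathcal{P}_{\sigma}(n-1,n+1]$ for every $F\in\Coh(C)$. By the squeezing characterization of slicings it suffices to prove the two vanishings
\[
\Hom\big(\mathcal{P}(>n+1),\,j_*F\big)=0,\qquad \Hom\big(j_*F,\,\mathcal{P}(\le n-1)\big)=0.
\]
Throughout I will use that, by Lemma~\ref{lem-vanishing i^*T} combined with the short exact sequences~\eqref{eq.coho}, every $T\in\cA$ has standard cohomology $\cH^p(T)=0$ for $p\notin\{0,n-1,n\}$, while $\cH^p(j^{\dagger}T)=0$ for $p\notin\{n-1,n\}$ and $\cH^p(i^*T)=0$ for $p\ne0$.

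For the first vanishing I would reduce, via the HN filtration, to a $\sigma$-stable $A$ of phase $>n+1$ and write $A=A'[m]$ with $A'\in\cA$ stable and $m\ge n+1$. The degree bound on $A'$ then forces $\cH^p(A)=\cH^{p+m}(A')=0$ for all $p\ge 0$, so $A$ lies in cohomological degrees $\le -1$; since $j_*F$ sits in degree $0$, the vanishing is immediate, and it propagates to all of $\mathcal{P}(>n+1)$ by closure under extensions. For the second vanishing I would instead pass through the adjunction $\Hom(j_*F,B)=\Hom_{\cD(C)}(F,j^{\dagger}B)$, which has the virtue of annihilating the $i_*$-part. Reducing to a $\sigma$-stable $B=B'[m]$ of phase $\le n-1$ forces $m\le n-2$, and the bound on $\cH^p(j^{\dagger}B')$ then gives $j^{\dagger}B\in\cD^{\ge 1}(C)$, whence $\Hom(F,j^{\dagger}B)=0$. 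This establishes $j_*F\in\mathcal{P}(n-1,n+1]$.

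Next I would produce the torsion pair. The slicing gives a truncation triangle $Q_1\to j_*F\to Q_2$ with $Q_1\in\mathcal{P}(n,n+1]=\cA[n]$ and $Q_2\in\mathcal{P}(n-1,n]=\cA[n-1]$, and the point is to check $Q_1,Q_2\in j_*\Coh(C)$. Here I would use that $j_*\cD(C)=\{X:i^*X=0\}$ (from the proof of Lemma~\ref{lem-sod1}): applying the exact functor $i^*$ to the triangle and using $i^*j_*F=0$ yields $i^*Q_2\cong i^*Q_1[1]$. But Lemma~\ref{lem-vanishing i^*T} shows $i^*Q_1$ and $i^*Q_2$ are concentrated in the single cohomological degrees $-n$ and $1-n$ respectively, which are distinct from $-(n+1)$; hence both vanish and $Q_1,Q_2\in j_*\cD(C)$. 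This in turn improves their cohomology via~\eqref{eq.coho} (the $i_*$-summand is now gone), so $Q_1$ lives in degrees $\{-1,0\}$ and $Q_2$ in $\{0,1\}$; the long exact cohomology sequence of the triangle, with $j_*F$ in degree $0$, then kills $\cH^{-1}(Q_1)$ and $\cH^{1}(Q_2)$, leaving $Q_i=j_*E_i$ for sheaves $E_i$. Thus $0\to Q_1\to j_*F\to Q_2\to 0$ is a short exact sequence in $j_*\Coh(C)$ with $Q_1\in\mathcal{F}_1$, $Q_2\in\mathcal{F}_2$; together with $\Hom(\mathcal{F}_1,\mathcal{F}_2)=0$ (immediate from $\phi(\mathcal{F}_1)>n\ge\phi(\mathcal{F}_2)$), this is exactly the asserted torsion pair.

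Finally, to identify the tilt $\cA_1=\langle\mathcal{F}_2[1],\mathcal{F}_1\rangle[-n]$ with $\cA\cap j_*\cD(C)$, one inclusion is formal: $\mathcal{F}_1,\mathcal{F}_2[1]\subset\mathcal{P}(n,n+1]$ gives $\cA_1\subset\mathcal{P}(0,1]\cap j_*\cD(C)=\cA\cap j_*\cD(C)$. For the converse, given $X\in\cA\cap j_*\cD(C)$ I would decompose $X[n]$ along the tilted heart $\mathcal{B}=\langle\mathcal{F}_2[1],\mathcal{F}_1\rangle$ of $j_*\cD(C)$; since $\mathcal{B}\subset\mathcal{P}(n,n+1]$, the pieces $\cH^k_{\mathcal{B}}(X[n])[-k]$ land in the disjoint phase windows $\mathcal{P}(n-k,n+1-k]$, and comparing with $X[n]\in\mathcal{P}(n,n+1]$ through the orthogonality $\Hom(\mathcal{P}(>a),\mathcal{P}(\le a))=0$ forces all $k\ne0$ pieces to vanish, so $X[n]\in\mathcal{B}$ and $X\in\cA_1$. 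I expect this reconciliation step — pinning the truncation objects $Q_1,Q_2$ inside $j_*\Coh(C)$ and then matching the tilt with the intersection — to be the main obstacle, since it is precisely where the slicing $\mathcal{P}$, the standard t-structure, and the subcategory $j_*\cD(C)$ must be played off against one another; the remaining steps are bookkeeping of cohomological degrees.
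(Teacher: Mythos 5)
Your proposal is correct and takes essentially the same approach as the paper: the same cohomological bounds from Lemma~\ref{lem-vanishing i^*T} squeeze $j_*F$ into $\mathcal{P}_{\sigma}(n-1,n+1]$ (the paper routes the first vanishing through the adjunction $\Hom(T,j_*F)=\Hom(j^*T,F)$ rather than the standard t-structure, an immaterial difference), and the key step—applying $i^*$ to the truncation triangle, using $i^*j_*=0$ and the fact that $i^*Q_1$ and $i^*Q_2$ are concentrated in distinct single degrees—is identical to the paper's. Your write-up is in fact more complete than the paper's, which leaves implicit both the verification that $Q_1,Q_2$ are actually objects of $j_*\Coh(C)$ and the final identification of the tilt with $\cA\cap j_*\cD(C)$.
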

\begin{proof}
For any object $T\in \mathcal{P}_{\sigma}(>n+1)$, Lemma \ref{lem-vanishing i^*T} implies $\mathcal{H}^{\geq 0}(j^!T)=\mathcal{H}^{\geq -n}(i^*T)=0$. Since $n\geq 1$ it follows that $\mathcal{H}^{\geq 0}(j^*T)=0$. Thus 
    \[
    \Hom(T, j_*F) = \Hom(j^*T, F)=\Hom(\mathcal{H}^0(j^*T)\oplus \mathcal{H}^1(j^*T)[-1], F)=0.
    \]
    Similarly, if $T\in \mathcal{P}_{\sigma}(\leq n-1)$ then Lemma~\ref{lem-vanishing i^*T} implies $\mathcal{H}^{\leq 0}(j^!T)=0$, thus
    \[
    \Hom(j_*F, T)= \Hom(F, j^!T)= \Hom(F, \mathcal{H}^0(j^!T)\oplus \mathcal{H}^{-1}(j^!T)[1])=0. 
    \]
    It follows that $j_*F \in \mathcal{P}_{\sigma}(n-1, n+1]$ as claimed in the first part. 

    Hence, any sheaf $F\in \text{Coh}(C)$ lies in the exact triangle 
    \[
    T_1 \to j_*F \to T_2, 
    \]
    such that $T_1 \in \mathcal{A}[n]$ and $T_2\in \mathcal{A}[n-1]$. From Lemma~\ref{lem-vanishing i^*T}, we have $\mathcal{H}^{\leq -n}(i^*T_2)=0$, so $\mathcal{H}^{-n}(i^*T_1)=0$. It implies that $\mathcal{H}^i(i^*T_1)=\mathcal{H}^i(i^*T_2)=0$ for any $i$. This shows that $(\mathcal{F}_1, \mathcal{F}_2)$ is a torsion pair on $j_*\Coh(C)$ as claimed. 


\end{proof}

Finally, we claim the vanishing $\Hom^{\le 0}([\cO\to 0],\cA_1)=0$. 
Let $j_*F\in\cA_1$. By Lemma~\ref{lem-vanishing i^*T} we have $\cH^{\le -1}(F)=0$. 
Thus,
\[
\Hom^{\le 0}\bigl([\cO\to 0],j_*F\bigr)
=\Hom^{\le -1}(\cO,F)=0,
\]
and the claim follows. Hence, Proposition~\ref{prop 2.2 cp10} implies that, up to $\GL$-action, $\sigma=\gl^{(1)}(\sigma_{\mathcal V},\sigma_g)$ where $\sigma_g=(\cA_1,Z|_{j_*\cD(C)})$ is a stability condition on $\cD(C)$, and $\sigma_{\mathcal V}$ is the trivial stability condition on $\cD(\mathcal V)$.

\begin{proof}[Proof of Theorem \ref{thm-main}]
As explained above, up to the $\GL$-action, any stability condition $\sigma \in \mathrm{Stab}^{\circ}\big(\mathcal{D}(\mathcal{T}_C)\big)$ falls into Case (I), (II), or (III). The first is of Type~B in the theorem, and the latter two are of Type~A; hence the claim follows.
\end{proof}

As a consequence of Theorem \ref{thm-main}, we can describe the complex manifold. 
\begin{Cor}\label{cor complex manifold}
We have
\[
\Stab^{\circ}\big(\cD(\cT_C)\big)=U_A\cup U_B.
\]
where $U_{A}$ and $U_{B}$ are the open loci described in Corollary~\ref{cor-stability-manifold-intro} of the Introduction.
\end{Cor}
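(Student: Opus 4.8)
The plan is to prove the two inclusions separately, using Theorem~\ref{thm-main} for the nontrivial direction. The inclusion $U_A\cup U_B\subseteq \Stab^{\circ}\big(\cD(\cT_C)\big)$ is immediate: by the descriptions in Corollary~\ref{cor-stability-manifold-intro}, both $U_A$ and $U_B$ require $[\cO\to 0]$ and $[0\to\cK]$ to be $\sigma$-stable, which are exactly the two defining conditions of $\Stab^{\circ}$. For the reverse inclusion, I would fix $\sigma\in\Stab^{\circ}$ and apply Theorem~\ref{thm-main}: up to the $\GL$-action, $\sigma$ is of Type~A or Type~B. Since $U_A$ and $U_B$ are cut out by stability of fixed objects together with phase inequalities, and phases transform under $g=(T,f)$ by the strictly increasing, period-$1$ map $f^{-1}$, these loci are $\GL$-invariant. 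Hence it suffices to show that each normalized representative of Type~A lies in $U_A$ and each of Type~B lies in $U_B$; openness of both loci then follows from openness of stability and of strict inequalities.

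For Type~B, write $\sigma=\sigma_{b,w}=(\cA(b),Z_{b,w})$ with $w>\Phi_C(b)$. Here $[\cO\to 0]$ and $[0\to\cK]$ have classes $(0,0,1)$ and $(0,1,0)$, so $Z_{b,w}([\cO\to 0])=-1$ and $Z_{b,w}([0\to\cK])=i$; both lie in $\cA(b)$ (as torsion objects in $\mathbb{T}^b$), are simple in $\cT_C$ hence stable, and have phases $1$ and $\tfrac12$. Thus $\phi_\sigma([0\to\cK])<\phi_\sigma([\cO\to 0])$ and $\sigma\in U_B$. Conversely, the analysis in the proof of Theorem~\ref{thm-main} shows that, after normalizing $\phi_\sigma([\cO\to 0])=1$, the condition $\phi_\sigma([0\to\cK])<1$ forces Case~(I), i.e. $n=0$, hence Type~B, since the remaining cases satisfy $\phi_\sigma(j_*\cK)\ge 1$. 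For the quotient, the stable objects $[\cO\to 0]$ and $[0\to\cK]$ have $\R$-linearly independent central charges $-1$ and $i$, so any $g=(T,f)$ fixing $\sigma$ has $T=\mathrm{id}$, and fixing the phase of the stable object $[\cO\to 0]$ forces $f=\mathrm{id}$; thus $\GL$ acts freely on $U_B$. The normalized form pins $Z_{b,w}([\cO\to 0])=-1$, $Z_{b,w}([0\to\cK])=i$ uniquely in each orbit, so $\sigma_{b,w}\mapsto b+iw$ identifies $U_B/\GL$ with $\{b+iw\mid w>\Phi_C(b)\}$, with surjectivity supplied by Theorem~\ref{thm-tilting}.

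For Type~A, write $\sigma=\gl^{(1)}(\sigma_{\cV},\sigma_g)$ with $g=(T,f)$ and $f(0)<\tfrac12$. The classes of $[\cO\to 0],[0\to\cK],[0\to\cO]$ are $(0,0,1),(0,1,0),(1,0,0)$, a $\Z$-basis of $\mathcal N(\cD(\cT_C))$, so the three central charges $z_1,z_2,z_3$ determine $Z$, and the glued slicing is in turn determined, making $\sigma\mapsto(z_1,z_2,z_3)$ injective on $U_A$. By the gluing construction $[\cO\to 0]$ is stable as the image of the exceptional generator of $\cD(\cV)$, while $\cO$ and $\cK$ are $\mu$-stable on $C$, hence $\sigma_g$-stable, and these stable objects of the component $j_*\cD(C)$ remain $\sigma$-stable once the gluing phase conditions hold. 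For the inequalities I would use that every stability condition on $\cD(C)$ is $\sigma_\mu\cdot g$, so $\phi_3=\phi(\cO)=f^{-1}(\tfrac12)$ and $\phi_2=\phi(\cK)=f^{-1}(1)$; monotonicity and $f^{-1}(x+1)=f^{-1}(x)+1$ give $\phi_3<\phi_2<\phi_3+1$, and the gluing bound $f(0)<\tfrac12$ is equivalent to $f^{-1}(\tfrac12)>0$, i.e. $\phi_3>\phi_1-1$ after normalizing $\phi_1=1$. Hence $\sigma\in U_A$, and conversely every triple in the domain \eqref{eq.UA} is realized by such a gluing.

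The main obstacle is the Type~A analysis: precisely matching the gluing parameter constraint $f(0)<\tfrac12$ to the phase bound $\phi_1-1<\phi_3$, verifying that the glued slicing is genuinely recovered from $(z_1,z_2,z_3)$ so that $\sigma\mapsto(z_1,z_2,z_3)$ is a homeomorphism onto the open domain \eqref{eq.UA}, and checking surjectivity (that each admissible triple arises from an actual gluing); the preservation of stability of $\cO$ and $\cK$ under $j_*$ in the glued heart is the subtler point here. With both descriptions established, the set equality $\Stab^{\circ}\big(\cD(\cT_C)\big)=U_A\cup U_B$ follows from Theorem~\ref{thm-main}, and the overlap $U_A\cap U_B\neq\emptyset$ is witnessed by the tilting conditions $\sigma_{b,w}$ with $b<0$, which are simultaneously of gluing type by Corollary~\ref{cor-tilting is gluing}.
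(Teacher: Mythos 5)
Your overall skeleton is the paper's own: both arguments rest on Theorem~\ref{thm-main}, with Proposition~\ref{prop-gluing-type 1}, Corollary~\ref{cor-tilting is gluing} and Theorem~\ref{thm-tilting} supplying the descriptions of $U_A$ and $U_B$. However, there is a genuine gap in your Type~B step. You justify that $[\cO\to 0]$ and $[0\to\cK]$ are $\sigma_{b,w}$-stable by saying they ``are simple in $\cT_C$ hence stable.'' That inference is invalid: $\sigma_{b,w}$-stability is tested against subobjects in the tilted heart $\cA(b)$, not in $\cT_C$, and $\cT_C$-simplicity does not survive tilting. Indeed, a proper nonzero subobject in $\cA(b)$ of a $\cT_C$-simple object $T\in\mathbb{T}^b$ is the same thing as a $\cT_C$-extension $0\to F\to A\to T\to 0$ with $0\neq F\in\mathbb{F}^b$ and $A\in\mathbb{T}^b$, and for $T=j_*\cK$ such subobjects exist: taking a line bundle $L$ with $b-1<\deg L\le b$, the sequence $0\to j_*(L(x))\to j_*\cK\to j_*L[1]\to 0$ is exact in $\cA(b)$, so $j_*\cK$ is \emph{not} simple in $\cA(b)$. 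Its stability is therefore a genuine slope computation: for a general subobject $A$ as above one must check
\[
\nu_{b,w}(A)=\frac{\nd(F)-w\,\rd(F)}{\dd(F)+1-b\,\rd(F)}\;<\;0=\nu_{b,w}(j_*\cK),
\]
i.e.\ $\nd(F)<w\,\rd(F)$ for every $0\neq F\in\mathbb{F}^b$, and this is exactly where the hypothesis $w>\Phi_C(b)$ enters (via the HN factors of $F$ and the Brill--Noether bound, as in the proof of Lemma~\ref{lem-pre}). Your argument never uses $w>\Phi_C(b)$, which is the telltale sign of the gap. Since ``$\sigma_{b,w}\in U_B$ for all $w>\Phi_C(b)$'' is precisely what you need both to place Type~B conditions in $U_B$ and to get surjectivity of $U_B/\GL\to\{b+iw\mid w>\Phi_C(b)\}$, this step must be proved, not asserted.

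A second, smaller gap concerns the description of $U_A$: to show that $U_A$ \emph{equals} the region~\eqref{eq.UA} you must also handle those $\sigma\in U_A$ that Theorem~\ref{thm-main} returns as Type~B. The paper does this by showing that $\sigma$-stability of $[0\to\cO]$ forces $b<0$ (for $b\ge 0$ the sequence $0\to[\cO\to 0]\to j_*\cO[1]\to[\cO\to\cO][1]\to 0$ is exact in $\cA(b)$ and gives $\nu_{b,w}([\cO\to 0])=+\infty\ge \nu_{b,w}(j_*\cO[1])$, so $j_*\cO[1]$ is not stable), and then converting such $\sigma$ into Type~A gluings via Corollary~\ref{cor-tilting is gluing}; you invoke that corollary only for the overlap remark $U_A\cap U_B\neq\emptyset$, leaving this case of the $U_A$-description unaddressed. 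By contrast, the point you single out as the main obstacle---that $\sigma_g$-stable objects of $j_*\cD(C)$ remain stable in the glued stability condition---is in fact fine and provable in two lines: if $0\to A\to F\to B\to 0$ is exact in the glued heart with $F\in j_*\cD(C)$, applying the projection $i^*$ (which kills $j_*\cD(C)$) gives $i^*B\cong i^*A[1]$ with both $i^*A$ and $i^*B$ in $\cA_{\cV}$, hence $i^*A=i^*B=0$; so $A,B\in j_*\Coh^{f(0)}(C)$ and stability reduces to $\sigma_g$-stability on $\cD(C)$.
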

\begin{proof}
Pick $\sigma \in U_{A}$, the open locus of stability conditions for which 
\([\cO \to 0],\ [0 \to \cK],\ [0 \to \cO]\) 
are $\sigma$-stable of phases $\phi_{1}, \phi_{2}, \phi_{3}$, respectively. 
Up to rotation, we may assume $\phi_{1} = 1$. 
By Theorem~\ref{thm-main}, $\sigma$ is either a gluing of Type~A or a tilting of Type~B. 
Since $[0 \to \cO]$ is $\sigma$-stable, we deduce that if $\sigma$ arises from tilting $\sigma_{b,w}$ of Type~B, then necessarily $b < 0$. 
By Corollary~\ref{cor-tilting is gluing}, such stability conditions are also of the gluing form of Type~A. 
Hence, every $\sigma \in U_{A}$ is of the form $\sigma = \gl^{(1)}(\sigma_{\mathcal{V}}, \sigma_{g})$ for some $g \in \GL$ satisfying $f(0) < \tfrac{1}{2}$.

The condition $f(0) < \tfrac{1}{2}$ is equivalent to the existence of $k \le 0$ such that 
$\cO[k] \in \Coh^{f(0)}(C)$, which in turn corresponds to $0 < \phi_{3}$. 
The inequalities 
\[
\phi_{3} < \phi_{2} < \phi_{3} + 1
\]
follow from the non-vanishing 
\(\Hom(j_{*}\cK, j_{*}\cO[1]) \neq 0 \neq \Hom(j_{*}\cO, j_{*}\cK)\).  
Proposition~\ref{prop-gluing-type 1} then ensures that $U_{A}$ is precisely the space of triples described in~\eqref{eq.UA}.

Now consider $\sigma \in U_{B}$, the open locus of stability conditions such that 
\([\cO \to 0]\) and \([0 \to \cK]\) are $\sigma$-stable with 
\(\phi_{\sigma}([0 \to \cK]) < \phi_{\sigma}([\cO \to 0])\). 
Up to rotation, we may assume $\phi_{\sigma}([\cO \to 0]) = 1$. 
From the proof of Theorem~\ref{thm-main}, it follows that $\sigma$ belongs to Case~(I), 
so the image of the central charge $Z_{\sigma}$ is not contained in a real line in~$\C$. 
Therefore, the $\GL$-action on $U_{B}$ is free. 
Moreover, Theorem~\ref{thm-tilting} guarantees that the quotient has the claimed description.
\end{proof}




\section{Chamber decomposition and large volume limit}\label{sec chamber dec}
In this section, we describe the wall and chamber decomposition in the two-dimensional slice of
Type~B stability conditions on \(\cD(\cT_C)\) in Theorem \ref{thm-main}. As a consequence, we interpret classical
\(\mu_{\alpha}\)-stability as a large-volume limit along a specified direction and derive a
Bogomolov-type inequality for \(\mu_{\alpha}\)-semistable objects.

We plot the $(b,w)$-plane simultaneously with the image of the projection map
\begin{equation*}
\Pi \colon \mathcal{N}(\mathcal{D}(\T)) \rightarrow \mathbb{R}^2 \;\;\;,\;\;\; \Pi(r,d,n) = \left(\dfrac{d}{r} , \dfrac{n}{r} \right). 
\end{equation*}
Define 
$$
U_C : = \{(b, w): w> \Phi_C(b)\} \subset \mathbb{R}^2.
$$
Note that since $\Phi_C$ is upper semi-continuous, $U_C$ is open. 

\begin{Prop}[\textbf{Wall and chamber structure}]\label{prop. locally finite set of walls}
	Fix $v = (r, d, n)\in \mathcal{N}(\mathcal{D}(\T))$. 
    There exists a set of line segments $\{\ell_i\}_{i \in I}$ in $U_{C}$ (called ``\emph{walls}") which are locally finite and satisfy 
	\begin{itemize*}
	    \item[\emph{(}a\emph{)}] If \(r\neq 0\), then the line containing \(\ell_i\) passes through \(\Pi(v)\).
	    \item[\emph{(}b\emph{)}] If $r=0$ then all $\ell_i$ are parallel of slope $\frac{n}{d}$.
        \item [\emph{(}c\emph{)}] The line segments \(\ell_i\) terminate on the boundary \(\partial U_C\).
	   		\item[\emph{(}d\emph{)}] The $\sigma\_{b,w}$-(semi)stability of any $T \in \cD(\cT_C)$ of class $v$ is unchanged as $(b,w)$ varies within any connected component (called a ``\emph{chamber}") of $U_C \setminus \bigcup_{i \in I}\ell_i$.
		\item[\emph{(}e\emph{)}] For any wall $\ell_i$ there is a map $f\colon T' \to T$ in $\cD(\cT_C)$ such that
\begin{itemize}
\item for any $(b,w) \in \ell_i$, the objects $T', T$ lie in the heart $\cA(b)$,
\item $T$ is $\sigma\_{b,w}$-semistable of class $v$ with $\nu\_{b,w}(T')=\nu\_{b,w}(T)=\,\mathrm{slope}\,(\ell_i)$ constant on the wall $\ell_i$, and
\item $f$ is an injection $T' \hookrightarrow T$ in $\cA(b)$ which strictly destabilises $T$ for $(b,w)$ in one of the two chambers adjacent to the wall $\ell_i$.
\hfill$\square$
\end{itemize} 
	\end{itemize*} 
\end{Prop}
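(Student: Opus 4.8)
The plan is to describe the walls for a fixed class $v=(r,d,n)$ as the loci in $U_C$ where the $\nu_{b,w}$-slope of $v$ agrees with that of a potential destabilising class, to identify these loci as explicit lines, and then to establish local finiteness using the support property from Lemma~\ref{lem-support}; everything else is then formal. For a class $v'=(r',d',n')$, define the \emph{numerical wall}
\[
W(v')=\bigl\{(b,w)\in U_C : \nu_{b,w}(v')=\nu_{b,w}(v)\bigr\},
\]
the locus on which $Z_{b,w}(v')$ and $Z_{b,w}(v)$ lie on a common ray. Every genuine wall is contained in some $W(v')$, namely for $v'=\cl(T')$ where $T'\hookrightarrow T$ is a sub-object realising the equality of slopes.

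First I would make $W(v')$ explicit. Clearing denominators in $(wr'-n')(d-br)=(wr-n)(d'-br')$, the mixed term $wbrr'$ cancels and one is left with the single linear equation
\[
w(r'd-rd')+b(n'r-nr')+(nd'-n'd)=0 .
\]
Substituting $(b,w)=(d/r,n/r)$ makes the left-hand side vanish identically, proving (a); when $r=0$ the equation becomes $r'(wd-bn)=n'd-nd'$, a line of slope $n/d$ independent of $v'$, proving (b). Moreover a direct rewriting gives $\nu_{b,w}(v)=(w-\tfrac nr)/(b-\tfrac dr)$ for $r\neq0$, and $\nu_{b,w}(v)=\tfrac nd$ for $r=0$, so along each such line the common slope $\nu_{b,w}(v)=\nu_{b,w}(v')$ is constant and equals the geometric slope of $\ell_i$; this is the slope assertion in (e). Since $U_C$ is open, $W(v')$ is a disjoint union of open segments; the large-volume analysis of Lemma~\ref{lem-vertical-large-volume-limit} shows no genuine wall survives for $w\gg0$, so these segments cannot escape to infinity and their endpoints lie on $\partial U_C$, giving (c).

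The crux is local finiteness, and this is where the support property is essential. Fix a compact $K\subset U_C$ and suppose $W(v')$ meets $K$ at a genuine wall: at some $(b_0,w_0)\in K$ there is a short exact sequence $0\to T'\to T\to T''\to 0$ in $\cA(b_0)$ with all three terms $\sigma_{b_0,w_0}$-semistable of equal phase, $\cl(T)=v$ and $\cl(T')=v'$. On the wall $Z_{b_0,w_0}(v')=t\,Z_{b_0,w_0}(v)$ with $t\in[0,1]$, hence $v'-t\,v$ lies in the real kernel of $Z_{b_0,w_0}$; writing $v'=t\,v+s\,k_0$ for a continuously chosen generator $k_0$ of that one-dimensional kernel, semistability of $T'$ and the support-property quadratic form $Q$ of Lemma~\ref{lem-support} give
\[
0\le Q(v')=t^2Q(v)+2ts\,B(v,k_0)+s^2Q(k_0),
\]
where $B$ is the associated bilinear form and $Q(k_0)<0$. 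This is a downward parabola in $s$, so $s$ is confined to a bounded interval; as $Q(v)$, $B(v,k_0)$, $Q(k_0)$ and $k_0$ depend continuously on $(b_0,w_0)$ over the compact set $K$, the bound is uniform. Thus $v'=t\,v+s\,k_0$ ranges over a bounded region of $\mathcal{N}(\cD(\cT_C))\otimes\R\cong\R^3$, and being integral it takes only finitely many values. Hence only finitely many $W(v')$ meet $K$, proving local finiteness. I expect this uniform boundedness of $s$ over $K$ to be the main obstacle.

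The remaining parts are formal. For (d), a chamber contains no wall, so along any path inside it no sub-object class can ever acquire the slope of $v$; by the standard well-behaved wall-crossing argument (as in \cite[Proposition~9.3]{bridgeland:K3-surfaces}) the Harder--Narasimhan and Jordan--Hölder factors of an object of class $v$ cannot jump, so $\sigma_{b,w}$-(semi)stability is constant on each chamber. For (e), on a genuine wall $\ell_i$ one chooses $T$ of class $v$ that is strictly $\sigma_{b,w}$-semistable and a Jordan--Hölder sub-object $T'\hookrightarrow T$ in $\cA(b)$ with $\nu_{b,w}(T')=\nu_{b,w}(T)=\mathrm{slope}(\ell_i)$; by the same well-behaved wall-crossing both objects remain in $\cA(b)$ and share the slope along $\ell_i$, while $f\colon T'\hookrightarrow T$ strictly destabilises $T$ on exactly one of the two adjacent chambers, as required.
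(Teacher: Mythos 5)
Most of your reconstruction is the standard tilt-stability argument that the paper itself invokes only by citation: the linear equation for the numerical wall, parts (a) and (b), the identification of $\nu_{b,w}(v)$ with the geometric slope, part (d), and above all the local-finiteness argument (writing $v'=tv+sk_0$ with $k_0$ spanning $\ker Z_{b_0,w_0}$ and playing the support-property form of Lemma~\ref{lem-support} against its negativity on the kernel) are all correct. The genuine gap is part (c), and it leaks into (e). What (c) asserts is \emph{wall completeness}: a genuine wall --- a locus where an actual destabilising sequence exists --- has no endpoint in the interior of $U_C$; if destabilisation occurs at one point of a numerical wall it must persist along the entire connected component of that line inside $U_C$. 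Your argument addresses a different question (whether walls can run off to infinity), and the claim you make there is false in this setting. Unlike the surface case, where the boundary parabola $w=b^2/2$ grows quadratically and forces every wall to be a compact segment, here $\Phi_C$ vanishes for $b<0$ and is linear for $b>2g-2$ (Lemma~\ref{lem-BN function}), so $U_C$ contains entire rays; indeed, by Proposition~\ref{prop-positive alpha}, a classical critical value of $\mu_{\alpha}$-stability (when one exists for the class $v$) produces a genuine wall that is an unbounded ray inside $U_C$ with $b\to-\infty$. Lemma~\ref{lem-vertical-large-volume-limit} concerns vertical lines with $b$ fixed and says nothing about such directions, so it neither proves your non-escape claim nor could it, since the claim is untrue; walls here simply need not be bounded, and (c) must be read as ``no interior endpoints.''

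That persistence statement is also exactly what you assert without proof in (e): ``by the same well-behaved wall-crossing both objects remain in $\cA(b)$ and share the slope along $\ell_i$'' does not follow from constancy of stability within chambers, because along a wall the heart $\cA(b)$ itself varies with $b$. The standard repair is geometric and short. Suppose $T'\hookrightarrow T$ are semistable of equal slope at $p\in\ell\cap U_C$, and suppose $T'$ (say) stopped being semistable at some interior point $q$ of the same component of $\ell\cap U_C$. Since semistability is a closed condition and slopes vary continuously, at $q$ a destabiliser $A$ of $T'$ appears with $\nu_q(A)=\nu_q(T')=\nu_q(T)$, so the numerical wall $\{\nu_{b,w}(\cl(A))=\nu_{b,w}(v)\}$ passes through $q$. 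But every numerical wall for $v$ is a line through $\Pi(v)$ when $r\neq 0$ (or a line of slope $n/d$ when $r=0$), and $q\neq\Pi(v)$ because $Z_q(v)\neq 0$ while $Z_{\Pi(v)}(v)=0$; two such lines sharing the point $q$ must therefore coincide. Hence $\nu(A)=\nu(T')$ identically along $\ell$, so $A$ never strictly destabilises $T'$ anywhere on $\ell$. This makes the locus where $T'$, $T''$ and $T$ are all semistable of equal slope open and closed in $\ell\cap U_C$, which is precisely (c) and the missing half of (e).
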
 
\begin{proof}
    The argument is identical to the standard proof for tilt stability on the derived category
\(\cD(X)\) of any smooth projective variety \(X\); we omit the repetition and refer to, e.g. \cite[Proposition 4.1]{feyz:thomas-noether-loci} for details.
\end{proof}

As a first application of the wall structure, we obtain a Bogomolov-type inequality for
\(\sigma_{b,w}\)-semistable objects.

\begin{Prop}\label{prop-bg}
Let \(U_{C}^{\mathrm{cvx}} \subset U_{C}\) be an open convex subset. 
If \(T \in \cD(\cT_C)\) with \(\rd(T)\neq 0\) is \(\sigma_{b_0,w_0}\)-semistable for some 
\((b_0,w_0)\in U_{C}^{\mathrm{cvx}}\), then \(\Pi(\cl(T))\notin U_{C}^{\mathrm{cvx}}\).
\end{Prop}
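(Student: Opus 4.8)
The plan is to argue by contradiction, exploiting that $\Pi(\cl(T))$ is precisely the point at which the central charge of $T$ vanishes. Write $v=\cl(T)=(r,d,n)$ with $r\neq 0$, put $p_0=(b_0,w_0)$ and $p_*=\Pi(v)=\bigl(\tfrac{d}{r},\tfrac{n}{r}\bigr)$, and suppose for contradiction that $p_*\in U_C^{\mathrm{cvx}}$. A direct substitution into $Z_{b,w}(v)=-n+wr+i(d-br)$ gives $Z_{p_*}(v)=0$; moreover $Z_{b,w}(v)$ is affine in $(b,w)$, so along the segment $p_t=(1-t)p_0+t\,p_*$ we have $Z_{p_t}(v)=(1-t)\,Z_{p_0}(v)$. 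Thus the central charge of $T$ shrinks linearly to $0$ while keeping a constant argument as $t\to1$. By convexity the whole segment lies in $U_C^{\mathrm{cvx}}\subset U_C$, so each $\sigma_{p_t}$ — in particular $\sigma_{p_*}$ — is a genuine stability condition by Theorem~\ref{thm-tilting}. After an overall shift we may assume $T\in\cA(b_0)$; this is harmless, since $\Pi$ is invariant under $v\mapsto -v$.

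The second step is to propagate semistability along the half-open segment. Here I would invoke the wall-and-chamber structure of Proposition~\ref{prop. locally finite set of walls}: because $r\neq 0$, every wall for the class $v$ lies on a line through $p_*$. Any such line other than $\overline{p_0p_*}$ meets the segment only at the excluded endpoint $p_*$, so, provided $\overline{p_0p_*}$ is not itself a wall-line, the open segment $(p_0,p_*)$ crosses no wall and therefore lies in a single chamber. By part~(d) of that proposition the $\sigma_{p_t}$-semistability of $T$ is then constant for $t\in[0,1)$, and it holds at $t=0$ by hypothesis. Should $\overline{p_0p_*}$ happen to be a wall-line, I would first nudge $p_0$ into an adjacent chamber on which $T$ remains semistable, reducing to the generic case.

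For the final step I would pass to the limit $t\to1$. Since $Z_{p_t}(T)=(1-t)Z_{p_0}(T)$ keeps a constant argument, $T$ is $\sigma_{p_t}$-semistable of a fixed phase $\phi_0$ for all $t\in[0,1)$; as $\sigma_{p_t}\to\sigma_{p_*}$, continuity of the slicings forces $T$ to be $\sigma_{p_*}$-semistable of phase $\phi_0$, whence $Z_{p_*}(T)=m\,e^{i\pi\phi_0}$ with $m>0$. This contradicts $Z_{p_*}(v)=0$ and proves $p_*\notin U_C^{\mathrm{cvx}}$. In the degenerate case $b_0=\tfrac{d}{r}$ the segment is vertical and the heart $\cA(\tfrac{d}{r})$ does not depend on $w$, so $T\in\cA(\tfrac{d}{r})$ already, and the stability-function positivity of $\sigma_{p_*}$ directly yields $Z_{p_*}(T)\neq0$, giving the contradiction with no limiting argument.

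I expect the limiting step to be the main obstacle: one must rule out walls accumulating at $p_*$ along the approach and confirm that semistability genuinely persists in the limit. The structural facts that make this work are the local finiteness of the walls together with the observation that all walls for $v$ emanate from $p_*$, so that a generic approach ray meets them only at $p_*$; continuity of slicings under $\sigma_{p_t}\to\sigma_{p_*}$ then closes the argument.
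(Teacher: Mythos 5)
Your overall strategy is the same as the paper's: assume $\Pi(\cl(T))\in U_C^{\mathrm{cvx}}$, use convexity to place the segment $\ell$ from $p_0=(b_0,w_0)$ to $p_*=\Pi(\cl(T))$ inside $U_C^{\mathrm{cvx}}\subset U_C$, invoke Proposition~\ref{prop. locally finite set of walls}(a) to see that every wall for the class $\cl(T)$ lies on a line through $p_*$, so that semistability propagates along $\ell$, and derive the contradiction from $Z_{p_*}(\cl(T))=0$. The paper compresses the endpoint issue into the single assertion that no wall separates $p_0$ from any point of $\ell$, whereas you make the limiting step explicit via closedness of semistability under $\sigma_{p_t}\to\sigma_{p_*}$; both are fine, and your treatment of the vertical case $b_0=d/r$ (where the heart $\cA(d/r)$ is literally unchanged, so positivity of the stability function gives $Z_{p_*}(T)\neq 0$ at once) is a clean shortcut.

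The one step that does not work as written is your repair for the case where the line $\overline{p_0p_*}$ is itself a wall-line: you propose to ``nudge $p_0$ into an adjacent chamber on which $T$ remains semistable.'' Such a chamber need not exist. An object can be semistable precisely on a wall and unstable in both adjacent chambers; for instance $T=A\oplus B$ with $\nu_{b,w}(A)=\nu_{b,w}(B)$ exactly on that line, where $A$ destabilises $T$ on one side and $B$ on the other, and the relevant numerical wall is exactly $\overline{p_0p_*}$ since all three projections $\Pi(\cl(A)),\Pi(\cl(B)),\Pi(\cl(T))$ are collinear on it. The correct handling of this case is to observe that moving along a wall never crosses it: for any potentially destabilising class $v'$, the numerical locus $\nu_{b,w}(v')=\nu_{b,w}(\cl(T))$ is a line through $p_*$, hence it is either the line $\overline{p_0p_*}$ itself---in which case the slope difference vanishes identically along $\ell$ and $v'$ never strictly destabilises---or a different line through $p_*$, which meets $\ell$ only at the excluded endpoint, so the sign of the slope difference is constant on the half-open segment. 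This is precisely what the paper's phrase ``no wall separates $(b_0,w_0)$ from any point of $\ell$'' encodes. With that replacement (and discarding the nudge), your argument is complete and agrees with the paper's.
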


\begin{proof}
Assume, for contradiction, that \(\Pi(\cl(T))\in U_{C}^{\mathrm{cvx}}\). Since \(U_{C}^{\mathrm{cvx}}\) is
convex, the line segment \(\ell\) joining the point \((b_0,w_0)\) to \(\Pi(\cl(T))\) lies entirely inside \(U_{C}^{\mathrm{cvx}}\).
By the structure of walls described in Proposition~\ref{prop. locally finite set of walls}, no wall
separates \((b_0,w_0)\) from any point of \(\ell\); hence \(T\) remains \(\sigma_{b,w}\)-semistable for
all \((b,w)\in\ell\), and, in particular, at \((b_1,w_1):=\Pi(\cl(T))\).
But \(Z_{b_1,w_1}(T)=0\), which contradicts semistability. Therefore,
\(\Pi(\cl(T))\notin U_{C}^{\mathrm{cvx}}\).
\end{proof}

In the next lemma we describe a natural candidate for \(U_{C}^{\mathrm{cvx}}\).

\begin{Lem}\label{lem-classic}
Let \(C\) be a smooth projective curve of genus \(g >1\) with first Clifford index
\(\operatorname{Cliff}_1(C)\ge 2\). Define the piecewise linear function
\[
f(b)=
\begin{cases}
\dfrac{1}{g}\,b + 1 - \dfrac{1}{g}, & 0 < b < 2+ 
\dfrac{2}{g-2}, \\[6pt]
\dfrac{1}{2}\,b, & 2+ 
\dfrac{2}{g-2} \le b < 2g - 4 - \dfrac{2}{g-2}, \\[6pt]
\Bigl(1-\dfrac{1}{g}\Bigr)b + 4 - g - \dfrac{3}{g}, & 2g - 4 - \dfrac{2}{g-2} \le b < 3g - 3, \\[6pt]
b + 1 - g, & 3g - 3 \le b \, .
\end{cases}
\]
Then the region
\[
U_{f} \;:=\; \{\, (b,w)\in\mathbb{R}^2 \mid b>0,\; w> f(b) \,\}
\]
is contained in \(U_C\) and is convex.
\end{Lem}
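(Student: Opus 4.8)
The plan is to prove the two assertions separately: the convexity of $U_f$, which is elementary, and the containment $U_f\subseteq U_C$, which carries all the Brill--Noether content. Throughout I would first record that the hypothesis $\operatorname{Cliff}_1(C)\ge 2$ forces $g\ge 5$ (since $\operatorname{Cliff}_1(C)\le\lfloor (g-1)/2\rfloor$), which is exactly what is needed for the four breakpoints $b_1=2+\tfrac{2}{g-2}$, $b_2=2g-4-\tfrac{2}{g-2}$, $b_3=3g-3$ to be genuinely ordered, $0<b_1<b_2<b_3$.

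For convexity I would simply verify that $f$ is continuous at each $b_i$ and then that its successive slopes are strictly increasing. Continuity is a one-line check at each node (for instance both adjacent expressions give $1+\tfrac{1}{g-2}$ at $b_1$ and $2g-2$ at $b_3$), and the slopes $\tfrac1g<\tfrac12<1-\tfrac1g<1$ are strictly increasing for $g\ge 2$. Hence $f$ is a continuous piecewise-linear convex function on $(0,\infty)$, so its strict epigraph is convex; intersecting with the half-plane $\{b>0\}$ leaves $U_f$ convex. It is also worth noting, and I would record it, that $f$ is invariant under the Serre-duality involution $\mu\mapsto 2g-2-\mu$ in the sense that $f(2g-2-\mu)=f(\mu)-\mu+g-1$; this is the combinatorial shadow of the symmetry used below and doubles as a consistency check on the constants.

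For the containment it suffices to show $f(b)\ge\Phi_C(b)$ for every $b>0$, since then $w>f(b)$ forces $w>\Phi_C(b)$. I would split at $b=2g-2$. For $b>2g-2$, Lemma~\ref{lem-BN function} gives $\Phi_C(b)=b+1-g$; on $[3g-3,\infty)$ the fourth piece of $f$ equals $b+1-g$ (equality), while on $(2g-2,3g-3)$ the third piece satisfies $(1-\tfrac1g)b+4-g-\tfrac3g\ge b+1-g$, which rearranges to the single inequality $b\le 3g-3$. For $0<b\le 2g-2$ I would use the Serre-duality symmetry of $\Phi_C$: the involution $E\mapsto E^{\vee}\otimes\omega_C$ preserves semistability, sends slope $\mu$ to $2g-2-\mu$, and Riemann--Roch together with $h^1(E)=h^0(E^{\vee}\otimes\omega_C)$ yields $\Phi_C(2g-2-\mu)=\Phi_C(\mu)-\mu+g-1$. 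Since $f$ obeys the identical relation, it is enough to prove $f(b)\ge\Phi_C(b)$ on the left half $0<b\le g-1$, the right half following formally. On that half, for a semistable $F$ of slope $\mu$ that contributes to the Clifford index, the comparison of the higher-rank Clifford index with the classical one gives $h^0(F)/\operatorname{rk}(F)\le\tfrac12\mu$, which matches the second piece; and as $\mu\to 0$, where this mechanism degenerates, one estimates $h^0(F)/\operatorname{rk}(F)$ directly to obtain the sharper bound $\tfrac1g\mu+1-\tfrac1g$ of the first piece. Taking the maximum of these linear bounds reproduces $f$ on $[0,2g-2]$ precisely because $f$ is the upper envelope of its linear pieces.

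The routine parts are the convexity and the range $b>2g-2$. The crux, and the only place where $\operatorname{Cliff}_1(C)\ge 2$ is genuinely used, is the refined upper bound on $\Phi_C$ over $[0,2g-2]$: converting $\operatorname{Cliff}_1(C)\ge 2$ into the slope-$\tfrac12$ Clifford bound for ``contributing'' semistable bundles via the comparison between higher-rank and classical Clifford indices, and—more delicately—controlling the extremal behaviour as $\mu\to 0$ (and symmetrically $\mu\to 2g-2$), where the genus-dependent slope $\tfrac1g$ appears and the clean Clifford inequality no longer applies. I expect this boundary analysis to be the main obstacle, since it requires a sharper input than the crude estimate $\Phi_C(x)\le\tfrac12 x+1$ furnished by Lemma~\ref{lem-BN function}.
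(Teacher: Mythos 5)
Your scaffolding is correct and in places more careful than the paper itself: the observation that $\operatorname{Cliff}_1(C)\ge 2$ forces $g\ge 5$ (which is genuinely needed for the breakpoints $2+\tfrac{2}{g-2}<2g-4-\tfrac{2}{g-2}<3g-3$ to be ordered), the continuity checks at the nodes together with the increasing slopes $\tfrac1g<\tfrac12<1-\tfrac1g<1$, the verification that $f(b)\ge b+1-g=\Phi_C(b)$ for $b>2g-2$ via Lemma~\ref{lem-BN function}, and the functional equation $f(2g-2-\mu)=f(\mu)-\mu+g-1$ (the third piece really is the Serre dual of the first) all check out. The paper's own treatment of convexity is a single sentence (piecewise linear with nondecreasing slopes), so this half of your argument is fine.

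The genuine gap sits exactly where you locate "the crux", and it is not closed. On $0<b\le 2g-2$ you assert that "the comparison of the higher-rank Clifford index with the classical one gives $h^0(F)/\operatorname{rk}(F)\le\tfrac12\mu$", and that near $\mu\to 0$ one "estimates $h^0(F)/\operatorname{rk}(F)$ directly" to get the bound $\tfrac1g\mu+1-\tfrac1g$. Neither statement is a formal consequence of $\operatorname{Cliff}_1(C)\ge 2$: transferring the Clifford index from line bundles to semistable bundles of arbitrary rank is precisely Mercat's conjecture, which is known to fail in general (there are rank-two counterexamples on curves of large Clifford index), so no soft "comparison" argument can exist. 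What is true are slope-restricted theorems — the small-slope bound $h^0(F)/\operatorname{rk}(F)\le 1+\tfrac{\mu-1}{g}$ and the middle-range bound $h^0(F)/\operatorname{rk}(F)\le\tfrac{\mu}{2}$ under $\operatorname{Cliff}_1(C)\ge 2$ — and these are substantive results, not direct estimates. This is exactly what the paper imports: its entire proof of $U_f\subset U_C$ is the citation of \cite[Theorem~2.1]{Mercat2002} and \cite[Theorem~4.3]{GrzegorczykTeixidor2009}, which yield $\Phi_C(b)\le f(b)$ for all $b>0$. So your proposal establishes the routine half of the lemma and correctly reduces the other half (via Serre duality) to two inequalities on $(0,g-1]$, but those inequalities are the actual Brill--Noether content of the statement; to finish, you must either quote Mercat's and Grzegorczyk--Teixidor i Bigas's theorems or reprove them, and reproving them is a serious argument, not a boundary estimate.
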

\begin{proof}
By \cite[Theorem~2.1]{Mercat2002} and \cite[Theorem~4.3]{GrzegorczykTeixidor2009} we have
\(\Phi_C(b)\le f(b)\) for all \(b>0\), hence \(U_f\subset U_C\).
For convexity, observe that \(f\) is piecewise linear with nondecreasing slopes on its
intervals of linearity; thus \(U_f\) is convex. 
\end{proof}
As a direct corollary of Proposition~\ref{prop-bg} and Lemma~\ref{lem-classic}, we obtain the following.
\begin{Cor}\label{Cor-bg}
Any \(\sigma_{b,w}\)-semistable object \(T\in\cD(\cT_C)\) with \(\rd(T)\neq 0\) for some
\((b,w)\in U_f\) satisfies \(\Pi(\cl(T))\notin U_f\).
\end{Cor}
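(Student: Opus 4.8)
The plan is to obtain the statement as an immediate specialization of Proposition~\ref{prop-bg}, taking the explicit region supplied by Lemma~\ref{lem-classic} as the subset $U_C^{\mathrm{cvx}}$. Since Proposition~\ref{prop-bg} is stated for an arbitrary \emph{open} convex $U_C^{\mathrm{cvx}}\subset U_C$, the only thing that needs checking before invoking it is that $U_f$ genuinely satisfies these three hypotheses.

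First I would record that Lemma~\ref{lem-classic} already delivers two of them directly: the containment $U_f\subset U_C$ and the convexity of $U_f$. It remains to confirm that $U_f$ is open. I would argue this from the continuity of the piecewise-linear function $f$: the strict epigraph $\{w>f(b)\}$ of a continuous function is open, and intersecting with the open half-plane $\{b>0\}$ preserves openness. Continuity of $f$ amounts to checking that the four linear pieces agree at the three breakpoints $b=2+\tfrac{2}{g-2}$, $b=2g-4-\tfrac{2}{g-2}$, and $b=3g-3$; this is a short direct computation (for instance, at the first breakpoint both the first and second pieces evaluate to $1+\tfrac{1}{g-2}$).

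With $U_f$ shown to be an open convex subset of $U_C$, I would apply Proposition~\ref{prop-bg} with $U_C^{\mathrm{cvx}}=U_f$: any $T\in\cD(\cT_C)$ with $\rd(T)\neq 0$ that is $\sigma_{b,w}$-semistable for some $(b,w)\in U_f$ satisfies $\Pi(\cl(T))\notin U_f$, which is precisely the assertion of the corollary.

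I do not expect a genuine obstacle here: the statement is a direct combination of the two preceding results, and the sole verification needed—openness of $U_f$ via continuity of $f$ at its breakpoints—is routine arithmetic. The conceptual work has already been carried out in establishing the locally finite wall structure (Proposition~\ref{prop. locally finite set of walls}) underlying Proposition~\ref{prop-bg}, and the Clifford-type bounds $\Phi_C(b)\le f(b)$ underlying Lemma~\ref{lem-classic}.
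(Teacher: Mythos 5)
Your proposal is correct and matches the paper's argument exactly: the paper derives Corollary~\ref{Cor-bg} as an immediate consequence of Proposition~\ref{prop-bg} applied with $U_C^{\mathrm{cvx}}=U_f$, whose containment in $U_C$ and convexity are supplied by Lemma~\ref{lem-classic}. Your additional verification that $U_f$ is open (continuity of the piecewise-linear $f$ at its breakpoints, e.g.\ both pieces giving $1+\tfrac{1}{g-2}$ at the first one) is a correct and worthwhile filling of a detail the paper leaves implicit, since Proposition~\ref{prop-bg} requires openness.
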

\medskip 

As another application of Proposition~\ref{prop. locally finite set of walls}, we can further investigate
\(\sigma_{b,w}\)-semistable objects for \(b<0\).
For the remainder of this section, we fix a class
\begin{equation*}
    v = (r, d, n) \in \mathcal{N}(\cD(\T)) \qquad \text{with $r, d, n > 0$ }
\end{equation*}
For any $0 \neq \alpha \in \mathbb{R}$, we denote by $\ell_{v}^{\alpha}$ the line of slope $-\frac{1}{\alpha}$ passing through $\Pi(v)$; it is of the equation  
\[
w \;=\; -\frac{1}{\alpha}\!\left(b-\frac{d}{r}\right)+\frac{n}{r}. 
\]

\begin{Lem}\label{lem-negative alpha}
 Assume \(\alpha<0\). If an object \(T=[\cO\otimes V \xrightarrow{\;\varphi\;} E]\in\cT_C\) of class
\(v=(r,d,n)\) is \(\sigma_{b_0,w_0}\)-semistable for some \((b_0,w_0)\in \ell_v^{\alpha}\) with \(b_0<0\), then the morphism
\(\varphi\) is surjective.
\end{Lem}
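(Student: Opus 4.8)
The plan is to argue by contradiction: assuming $\varphi$ is not surjective, I would exhibit a quotient of $T$ \emph{inside the heart} $\cA(b_0)$ whose $\nu_{b_0,w_0}$-slope is strictly smaller than $\nu_{b_0,w_0}(T)$, contradicting $\sigma_{b_0,w_0}$-semistability. The computation that drives everything is that $\nu_{b,w}$ is constant along $\ell_v^{\alpha}$: substituting $w=-\tfrac1\alpha(b-\tfrac dr)+\tfrac nr$ into $\nu_{b,w}(v)=\tfrac{\nd(v)-w\rd(v)}{\dd(v)-b\rd(v)}$ gives $\nu_{b,w}(v)=-\tfrac1\alpha$ identically. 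Hence $\nu_{b_0,w_0}(T)=-\tfrac1\alpha$, which is \emph{strictly positive} because $\alpha<0$; this positivity is exactly what I will contradict. Note that the line is only used to pin down this slope, so no wall-crossing input is required.

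First I would check that $T$ genuinely lies in $\cA(b_0)$ rather than in a shift. Since $r,d>0$ and $b_0<0$ we have $\Im Z_{b_0,w_0}(T)=\dd(T)-b_0\rd(T)=d-b_0r>0$, so the semistable object $T$ has phase in $(0,1)$ and thus $T\in\cA(b_0)$. As $T$ is an object of $\T$, this forces $T\in\mathbb{T}^{b_0}$, i.e. every $\mu$-quotient of $T$ has slope $>b_0$. I would also record that $(b_0,w_0)\in U_C$ together with $b_0<0$ and Lemma~\ref{lem-BN function} give $w_0>\Phi_C(b_0)=0$.

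Now set $G:=\operatorname{coker}\varphi\neq 0$ and consider the triple $[0\to G]$. Killing the vector-space component and passing to the cokernel of $\varphi$ on the sheaf side defines a surjection $T\twoheadrightarrow [0\to G]$ in $\T$, with kernel $T'=[\cO\otimes V\twoheadrightarrow \operatorname{im}\varphi]$. Since $\mathbb{T}^{b_0}$ is a torsion class (closed under quotients) and $T\in\mathbb{T}^{b_0}$, the quotient $[0\to G]$ again lies in $\mathbb{T}^{b_0}\subseteq\cA(b_0)$. The step that genuinely needs care — and the main obstacle — is to upgrade this to an epimorphism \emph{inside} $\cA(b_0)$: a $\T$-surjection between objects of $\mathbb{T}^{b_0}$ need not stay surjective after tilting, because the $\T$-kernel $T'$ may fail to lie in $\mathbb{T}^{b_0}$. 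I would resolve this via the $\cA(b_0)$-cohomology long exact sequence of the triangle $T'\to T\to [0\to G]$: any object of $\T$ has $\cA(b_0)$-cohomology concentrated in degrees $-1$ and $0$, so $\mathcal{H}^{1}_{\cA(b_0)}(T')=0$, which forces $T\to[0\to G]$ to be an epimorphism in $\cA(b_0)$ (its kernel being the torsion part of $T'$).

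Finally I would compare slopes. If $G$ is torsion then $\cl([0\to G])=(0,\deg G,0)$ and $\nu_{b_0,w_0}([0\to G])=0$; if $\rk G>0$ then $G\in\mathbb{T}^{b_0}$ yields $\dd([0\to G])-b_0\rd([0\to G])>0$ while the numerator $-w_0\rk G<0$ (using $w_0>0$), so $\nu_{b_0,w_0}([0\to G])<0$. In either case $\nu_{b_0,w_0}([0\to G])\le 0<-\tfrac1\alpha=\nu_{b_0,w_0}(T)$, which contradicts $\sigma_{b_0,w_0}$-semistability of $T$ tested against its $\cA(b_0)$-quotient $[0\to G]$. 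Therefore $\varphi$ must be surjective.
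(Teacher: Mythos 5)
Your overall strategy is sound and is essentially the paper's: exhibit $[0\to\operatorname{coker}(\varphi)]$ as a quotient of $T$ in $\cA(b_0)$ and compare slopes. Your observation that $\nu_{b,w}\equiv -\tfrac{1}{\alpha}$ along $\ell_v^{\alpha}$, so that the contradiction can be read off directly at $(b_0,w_0)$, is correct and even streamlines the paper's argument (which invokes the wall structure to move along the ray and lets $w\to 0$). However, the step you yourself single out as the main obstacle is ``resolved'' by a false claim. You assert that any object of $\T$ has $\cA(b_0)$-cohomology concentrated in degrees $-1$ and $0$, hence $\mathcal{H}^{1}_{\cA(b_0)}(T')=0$. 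This is backwards: for $K\in\T$ with torsion decomposition $0\to K_{T}\to K\to K_{F}\to 0$ ($K_T\in\mathbb{T}^{b_0}$, $K_F\in\mathbb{F}^{b_0}$), the $\cA(b_0)$-cohomology is $\mathcal{H}^0=K_T$ and $\mathcal{H}^1=K_F[1]$, concentrated in degrees $0$ and $1$; the degree-one piece is exactly the obstruction you need to kill, and it does not vanish for general $K\in\T$. Concretely, for $b_0\in(-1,0)$ the $\T$-surjection $[0\to\cO]\twoheadrightarrow[0\to\cK]$ has source and target in $\mathbb{T}^{b_0}$, yet its $\T$-kernel $[0\to\cO(-x)]$ lies in $\mathbb{F}^{b_0}$, and in $\cA(b_0)$ this map is a \emph{monomorphism} with cokernel $[0\to\cO(-x)][1]$, not an epimorphism. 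So degree bookkeeping plus ``$\T$-surjection between objects of $\mathbb{T}^{b_0}$'' cannot yield the epimorphism; you need an input specific to $T'$.

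That input is available, and it is precisely what the paper uses: $T'=[\cO\otimes V\to\operatorname{im}(\varphi)]$ lies in $\mathbb{T}^{b_0}$, because $\operatorname{im}(\varphi)$ is a quotient of the semistable sheaf $\cO\otimes V$ of slope $0$, so every quotient triple of $T'$ has slope $\geq 0>b_0$ (quotient triples with zero sheaf part have slope $+\infty$). Hence $\mathcal{H}^{1}_{\cA(b_0)}(T')=0$, the map $T\to[0\to\operatorname{coker}(\varphi)]$ is an epimorphism in $\cA(b_0)$, and your slope comparison then goes through verbatim. In short: the gap is a one-line fix, but as written the central step of your proof rests on an incorrect general statement about tilted hearts rather than on the property of $T'$ that actually makes the argument work.
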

\begin{proof}
     By the structure of the walls described in Proposition~\ref{prop. locally finite set of walls}, since $b_0<0$ there is no wall separating $(b_0, w_0)$ from any point $(b, w)\in \ell^{\alpha}_v$ where $0< w< w_0$. In particular, it follows that $T$ is $\sigma_{b, w}$-semistable for all $(b, w)\in \ell^{\alpha}_v$ where $0< w\leq  w_0$. 

    Assume that $\varphi$ is not surjective. By the definition of $\cA(b)$, we have a short exact sequence
\[
[\cO\otimes V \to \operatorname{im}(\varphi)] \hookrightarrow T \twoheadrightarrow [0 \to \operatorname{coker}(\varphi)]
\]
in $\cA(b)$, because
\[
b<0=\mu(\cO)\le \mu^{-}\!\bigl(\operatorname{im}(\varphi)\bigr)
\qquad\text{and}\qquad
b<\mu^{-}(E)\le \mu^{-}\!\bigl(\operatorname{coker}(\varphi)\bigr).
\]
Thus $\nu_{b,w}(T)\le \nu_{b,w}\bigl(j_*\operatorname{coker}(\varphi)\bigr)$ for all $(b,w)\in \ell^{\alpha}_v$ with $0<w\le w_0$. This yields
\[
\frac{n-wr}{d-br}
\;\le\;
\frac{-\,w\,\rd \bigl(j_*\operatorname{coker}(\varphi)\bigr)}
{\dd \bigl(j_*\operatorname{coker}(\varphi)\bigr)-b\,\rd \bigl(j_*\operatorname{coker}(\varphi)\bigr)},
\]
which gives a contradiction as $w\to 0$.
\end{proof}

\begin{Prop}\label{prop-positive alpha}
Assume \(\alpha>0\). An object \(T\in\cD(\cT_C)\) with \(\cl(T)=v\) is
\(\sigma_{b_0,w_0}\)-(semi)stable for some \((b_0,w_0)\in\ell_v^{\alpha}\) with \(b_0 <0\) if and only if
\(T\) is (a shift of) a \(\mu_{\alpha}\)-(semi)stable object of \(\cT_C\).  
\end{Prop}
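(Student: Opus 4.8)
The plan is to convert $\sigma_{b,w}$-stability along the line $\ell_v^\alpha$ into the purely numerical $\mu_\alpha$-comparison of Definition~\ref{def-alpha} by means of an \emph{exact} slope identity, and then to control the two sources of discrepancy: heart membership (whether a $\sigma$-semistable object is genuinely a triple, and whether a $\mu_\alpha$-semistable triple sits in $\cA(b)$) and the difference between subobjects taken in $\cA(b)$ versus in $\cT_C$. First I would record the identity. Parametrising $\ell_v^\alpha$ by $w=-\tfrac1\alpha\bigl(b-\tfrac dr\bigr)+\tfrac nr$ and substituting into $\nu_{b,w}$, a direct computation shows that for \emph{every} class $u=(r',d',n')$,
\[
\nu_{b,w}(u)+\tfrac1\alpha=\frac{r'\bigl(\mu_\alpha(u)-\mu_\alpha(v)\bigr)}{\alpha\,(d'-br')},
\qquad \nu_{b,w}(v)=-\tfrac1\alpha ,
\]
using the key algebraic fact $(rd'-r'd)+\alpha(rn'-r'n)=rr'\bigl(\mu_\alpha(u)-\mu_\alpha(v)\bigr)$. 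Since $b<0$ and $r,d>0$ force $d-br>0$, the class $v$ has finite slope $-1/\alpha$ at every point of $\ell_v^\alpha$. Consequently, for $\alpha>0$ and any $u$ with $r'>0$ and $\Im Z_{b,w}(u)=d'-br'>0$, the sign of $\nu_{b,w}(u)-\nu_{b,w}(v)$ equals that of $\mu_\alpha(u)-\mu_\alpha(v)$. This identity is the engine of the proof and, crucially, holds at every point of the line rather than only in a limit.

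Next I would pin down heart membership. Suppose $T$ of class $v$ is $\sigma_{b_0,w_0}$-semistable with $(b_0,w_0)\in\ell_v^\alpha$ and $b_0<0$; after a shift assume $T\in\cA(b_0)$. If $\cH^{-1}(T)\neq0$, then $\cH^{-1}(T)\in\mathbb{F}^{b_0}$ has $\mu^+\le b_0<0$; by Lemma~\ref{lem-BN function} the Brill--Noether function vanishes on $(-\infty,0)$, so every sheaf HN-factor of negative slope has no sections, forcing $\cH^{-1}(T)$ to have class $(r_1,d_1,0)$ with $r_1>0$ and $d_1<0$. Applying the identity above to the subobject $\cH^{-1}(T)[1]\hookrightarrow T$ in $\cA(b_0)$ gives $\nu_{b_0,w_0}\bigl(\cH^{-1}(T)[1]\bigr)>\nu_{b_0,w_0}(T)$, contradicting semistability; hence $\cH^{-1}(T)=0$ and $T\in\cT_C$. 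The same computation rules out rank-zero subobjects (whose slope is $\ge0>-1/\alpha$), so $T$ is torsion-free. Conversely, any $\mu_\alpha$-semistable triple of class $v$ is torsion-free, hence lies in $\mathbb{T}^{b_0}\subset\cA(b_0)$ as soon as $b_0<\mu^-(T)$.

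Now I would match the two notions at $b_0\ll0$. For $T\in\mathbb{T}^{b_0}$, the torsion pair $(\mathbb{T}^{b_0},\mathbb{F}^{b_0})$ identifies each $\cA(b_0)$-subobject $T'\hookrightarrow T$ with a sequence $0\to K\to T'\to G\to0$ in $\cT_C$, where $K\in\mathbb{F}^{b_0}$ (so $\mu_\alpha(K)\le b_0<0<\mu_\alpha(T)$) and $G\subseteq T$ in $\cT_C$; conversely any $\cT_C$-subobject $G$ with $\mu^-(G)>b_0$ is an $\cA(b_0)$-subobject. Combining the mediant inequality $\mu_\alpha(T')\le\max\{\mu_\alpha(K),\mu_\alpha(G)\}$ with the slope identity then yields that $T$ is $\sigma_{b_0,w_0}$-(semi)stable if and only if $\mu_\alpha(G)\le(<)\,\mu_\alpha(T)$ for all $\cT_C$-subobjects $G$, i.e.\ if and only if $T$ is $\mu_\alpha$-(semi)stable. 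To remove the dependence of ``$b_0\ll0$'' on $T$ in the forward implication, I would invoke Proposition~\ref{prop. locally finite set of walls}: all walls for the fixed class $v$ lie on lines through $\Pi(v)$, and $\ell_v^\alpha$ is itself such a line, so in the region $b<0$ it meets no other wall and $\sigma_{b,w}$-(semi)stability is constant along it. Thus semistability at the given $(b_0,w_0)$ propagates to the large-volume limit $b\to-\infty$, where the matching applies; the degenerate case in which $\ell_v^\alpha$ is itself a wall-line is handled by Proposition~\ref{prop. locally finite set of walls}(e), whose destabiliser has $\nu_{b,w}$-slope equal to $\nu_{b,w}(v)$ and hence, by the identity, equal $\mu_\alpha$-slope, matching strict $\mu_\alpha$-semistability.

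The main obstacle is the bookkeeping in the third step: translating $\cA(b_0)$-subobjects into $\cT_C$-subobjects and back, and checking that this translation preserves the \emph{strict} inequalities distinguishing stability from semistability. The slope identity makes the numerics automatic for positive-rank objects, but one must verify carefully that extensions by the $\mathbb{F}^{b_0}$-part $K$ never produce a spurious destabiliser and never collapse a strict inequality into an equality, and that passing to the large-volume limit via wall-constancy leaves the stable-versus-strictly-semistable status unchanged. This combinatorial reconciliation of the two torsion structures, rather than the slope computation itself, is where the real work lies.
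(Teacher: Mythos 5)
Your proposal is correct and takes essentially the same route as the paper's proof: both rely on the wall structure of Proposition~\ref{prop. locally finite set of walls} to move along $\ell_v^\alpha$ in the region $b<0$, both show that the relevant objects and destabilisers actually lie in $\cT_C$ (your Brill--Noether/section-vanishing argument at the given point replaces the paper's observation that $\mu(\cH^{-1}(T))\le b\ll 0$ forces $\cH^{-1}(T)=0$), and both then translate $\nu_{b,w}$-inequalities into $\mu_\alpha$-inequalities for subobjects. Your exact identity $\nu_{b,w}(u)+\tfrac{1}{\alpha}=\tfrac{r'\left(\mu_\alpha(u)-\mu_\alpha(v)\right)}{\alpha\left(d'-br'\right)}$ is a clean closed form of the computation the paper carries out asymptotically as $b\to-\infty$ (and it preserves strict inequalities automatically, which the limiting formulation obscures); just note that its sign hypothesis ($r'>0$, $d'-br'>0$) fails for the class of $\cH^{-1}(T)[1]$, where you should instead use the direct computation $\nu_{b_0,w_0}\bigl(\cH^{-1}(T)[1]\bigr)=\tfrac{w_0 r_1}{b_0 r_1-d_1}\ge 0>-\tfrac{1}{\alpha}$, which uses $\nd\bigl(\cH^{-1}(T)\bigr)=0$ exactly as you argued.
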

\begin{proof}
Since $b_0 < 0$ and $\alpha > 0$, it follows that the ray $\ell^{\alpha}_v$ starting at $(b_0, w_0)$ for $b \ll 0$ lies entirely in $U_C$. First, suppose that $T$ is $\sigma_{b_0, w_0}$-(semi)stable. As $T[k]\in \mathcal{A}(b)$ for all $b\ll 0$ and some even shift $k$, we may assume $T \in \cA(b_0)$. The structure of the walls described in Proposition~\ref{prop. locally finite set of walls} implies that $T$ is $\sigma_{b \ll 0, w}$-(semi)stable for $(b, w) \in \ell^{\alpha}_v$. Then $T \in \cT_C$, since the condition $\mu^+(\cH^{-1}(T)) < b \ll 0$ forces $\cH^{-1}(T) = 0$. 

Suppose, for a contradiction, that $T$ is not $\mu_{\alpha}$-(semi)stable, and let 
\begin{equation}\label{t}
    T' \hookrightarrow T \twoheadrightarrow T''
\end{equation}
be a destabilising sequence in $\cT_C$. We may choose $b$ sufficiently small so that $\mu^-(T')>b$, 
hence \eqref{t} is also a short exact sequence in $\cA(b)$. Then $\nu_{b,w}$-(semi)stability of $T$ implies 
\[
    \frac{\nd(T')r - \bigl(n - \frac{1}{\alpha}(br - d)\bigr)\rd(T')}{\dd(T') - b \rd(T')} 
    < (\leq) 
    \frac{nr - \bigl(n - \frac{1}{\alpha}(br - d)\bigr)r}{d - br}.
\]
After simplification, this becomes 
\[
    0 < (\leq ) \frac{b}{\alpha}\!\left(\frac{\dd(T')}{\rd(T')} - \frac{d}{r}\right)
      + b\!\left(\frac{\nd(T')}{\rd(T')} - \frac{n}{r}\right)
      - \frac{\nd(T')}{\rd(T')}\frac{d}{r}
      + \frac{n}{r}\frac{d}{r}.
\]
For $b \ll 0$, this inequality implies
\[
    \alpha \frac{n}{r} + \frac{d}{r} 
    > (\geq )
    \alpha \frac{\nd(T')}{\rd(T')} + \frac{\dd(T')}{\rd(T')},
\]
and hence $\mu_{\alpha}(T) >(\geq ) \mu_{\alpha}(T')$, a contradiction. 

Conversely, if $T$ is a $\mu_{\alpha}$-(semi)stable object in $\cT_C$ of class $v$, then $T \in \cA(b)$ for any $b < 0$. Suppose $T$ is not $\sigma_{b,w}$-(semi)stable; then there exists a destabilising sequence
\begin{equation}
    T_1 \hookrightarrow T \twoheadrightarrow T_2
\end{equation}
in $\cA(b)$ such that $T_2$ is $\sigma_{b,w}$-semistable when $(b,w) \in \ell^{\alpha}_v$ and $b \ll 0$. Taking cohomology implies that $T_1 \in \cT_C$, and the argument above shows that $\cH^{-1}(T_2) = 0$. Comparing the $\nu_{b,w}$-slopes then contradicts the $\mu_{\alpha}$-(semi)stability of $T$, as established by the above computations.
\end{proof}
Finally, combining Corollary~\ref{Cor-bg} with Proposition~\ref{prop-positive alpha}
yields the following Bogomolov-type inequality for \(\mu_{\alpha}\)-semistable objects.

\begin{Cor}\label{cor-bg-alpha}
Take a $\mu_{\alpha}$-semistable object $T \in \cT_C$ with $\rd(T) \neq 0$, then $\Pi(\cl(T)) \notin U_f$.      
\end{Cor}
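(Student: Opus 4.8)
The plan is to deduce the statement by contradiction from the two results it is advertised to combine: the identification of $\mu_\alpha$-semistability with $\sigma_{b,w}$-semistability along the ray $\ell_v^\alpha$ (Proposition~\ref{prop-positive alpha}), and the Bogomolov-type inequality for $\sigma_{b,w}$-semistable objects (Corollary~\ref{Cor-bg}). Since the sheaf part of an object of $\cT_C$ has nonnegative rank, the hypothesis $\rd(T)\neq 0$ means $r:=\rd(T)>0$; write $v=\cl(T)=(r,d,n)$ and $\Pi(v)=(d/r,n/r)$. If $\Pi(v)\notin U_f$ there is nothing to prove, so suppose $\Pi(v)\in U_f$. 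As $U_f=\{b>0,\ w>f(b)\}$ and $f(b)>0$ for $b>0$, this forces $d/r>0$ and $n/r>f(d/r)>0$, hence $d,n>0$; in particular the hypotheses of Proposition~\ref{prop-positive alpha} are met. Since $\alpha>0$, that proposition provides a point $(b_0,w_0)\in\ell_v^\alpha$ with $b_0<0$ at which $T$ is $\sigma_{b_0,w_0}$-semistable.

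Next I would propagate this semistability along $\ell_v^\alpha$ into $U_f$. The line $\ell_v^\alpha$ passes through $\Pi(v)$, and writing $w(b)$ for its $w$-coordinate as a function of $b$, the slope $\nu_{b,w}(T)$ is the constant $-1/\alpha$ on the portion $\{b<d/r\}$. I claim this whole portion lies in $U_C$ and is free of transverse walls. Indeed, for $b<0$ one has $\Phi_C(b)=0<n/r<w(b)$ by Lemma~\ref{lem-BN function}; and for $0<b<d/r$, using that $w(b)$ is decreasing while $f$ is increasing, $w(b)>w(d/r)=n/r>f(d/r)\ge f(b)$, so that $(b,w(b))\in U_f\subset U_C$. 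The key geometric input is Proposition~\ref{prop. locally finite set of walls}(a): every wall for the class $v$ with $r\neq 0$ lies on a line through $\Pi(v)$, and such a line meets $\ell_v^\alpha$ only at $\Pi(v)$ unless it coincides with $\ell_v^\alpha$. Hence no transverse wall is crossed as one moves along $\ell_v^\alpha$ on the side $b<d/r$, and if $\ell_v^\alpha$ itself happens to be a wall line then only stability, not semistability, is lost along it. Either way $T$ remains $\sigma_{b,w}$-semistable, so $T$ is $\sigma_{b_1,w_1}$-semistable for any point $(b_1,w_1)\in\ell_v^\alpha$ with $0<b_1<d/r$, and such a point lies in $U_f$.

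Finally, $(b_1,w_1)\in U_f$ and $\rd(T)\neq 0$, so Corollary~\ref{Cor-bg} yields $\Pi(\cl(T))\notin U_f$, contradicting the assumption $\Pi(v)\in U_f$. This establishes the claim.

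The argument is essentially a short assembly of Proposition~\ref{prop-positive alpha}, the wall structure of Proposition~\ref{prop. locally finite set of walls}, and Corollary~\ref{Cor-bg}; I expect the only delicate point to be the propagation step, namely verifying that the path along $\ell_v^\alpha$ from the large-volume region $b\ll 0$ up to the point $(b_1,w_1)\in U_f$ stays inside $U_C$ and crosses no destabilising wall. The one value not covered by the estimates above is $b=0$, so the genuine question is whether the two pieces $\{b<0\}$ and $\{0<b<d/r\}$ of $\ell_v^\alpha$ remain connected across $b=0$ within $U_C$; this hinges on comparing $w(0)$ with $\Phi_C(0)$. Should the straight segment dip out of $U_C$ near $b=0$, I would instead route the path through large $w$ while remaining in the wall-sector of $\Pi(v)$ that contains $\ell_v^\alpha$, where $\sigma_{b,w}$-semistability is still constant by Proposition~\ref{prop. locally finite set of walls}(d).
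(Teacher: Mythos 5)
Your reconstruction is faithful to the paper: the paper offers nothing beyond the one-line ``combine Proposition~\ref{prop-positive alpha} with Corollary~\ref{Cor-bg}'', and your reduction to $r,d,n>0$, your invocation of Proposition~\ref{prop-positive alpha}, and your in-$U_C$ checks for $b<0$ and $0<b<d/r$ are all correct. But the point you flagged at $b=0$ is not a loose end --- it is a genuine gap, and neither of your routes closes it. By Lemma~\ref{lem-BN function} (smallest upper semicontinuous majorant, applied to $F=\cO$) one has $\Phi_C(0)\ge 1$, while $\Phi_C\equiv 0$ on $b<0$ and $f(b)\to 1-\tfrac1g<1$ as $b\to 0^+$; so $U_C$ has a notch along the line $b=0$, where only points with $w>1$ survive. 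The line $\ell_v^\alpha$ crosses $b=0$ at height $w(0)=\tfrac{d}{\alpha r}+\tfrac{n}{r}$, and nothing in the hypotheses forces $w(0)>1$: if $n<r$ and $\alpha\ge\tfrac{d}{r-n}$ then $w(0)\le 1$, the set $\ell_v^\alpha\cap U_C$ is disconnected, and Proposition~\ref{prop. locally finite set of walls} cannot carry semistability across. Your fallback also fails: by Lemma~\ref{lem-vertical-large-volume-limit}, semistability at $(b,w)$ with $w\gg 0$ forces $\mu$-semistability of $T$ in $\cT_C$, which a $\mu_\alpha$-semistable object need not satisfy, so any detour through large $w$ necessarily crosses destabilising walls; in the critical case $\alpha=\tfrac{d}{r-n}$ the line $\ell_v^\alpha$ is itself a wall and $T$ is semistable \emph{only} on it, so there is no ``sector'' to move in.

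Worse, no argument can close this gap, because the statement is false for strictly semistable objects. On any curve to which Lemma~\ref{lem-classic} applies, take $r\ge g+2$, a line bundle $L$ of degree $1$, $\alpha=1$, and
\[
T=\bigl[\cO\otimes\C^{r-1}\xrightarrow{\ (\mathrm{id},\,0)\ }\cO^{\oplus (r-1)}\oplus L\bigr],
\qquad \cl(T)=(r,1,r-1).
\]
For any subobject $T'=[\cO\otimes V'\to E']$, writing $A=E'\cap\cO^{\oplus(r-1)}$ and $B=\operatorname{im}(E'\to L)$, one has $\deg A\le 0$, $\deg B\le\rk B$ (as $\deg L=1$) and $\dim V'\le\rk A$, hence $\dd(T')+\nd(T')\le\rd(T')$, i.e.\ $\mu_1(T')\le 1=\mu_1(T)$; since $T$ also has no rank-zero subobjects, $T$ is $\mu_1$-semistable. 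Yet $\Pi(\cl(T))=(\tfrac1r,\,1-\tfrac1r)$ lies in $U_f$, because $1-\tfrac1r-f(\tfrac1r)=\tfrac{r-g-1}{gr}>0$. (This is consistent with Corollary~\ref{Cor-bg}: for $0<b<1$ no shift of $T$ lies in $\cA(b)$, its $\mu$-HN factors having slopes $1$ and $0$, so $T$ is semistable nowhere in $U_f$ --- it is precisely the crossing of $b=0$ that breaks. Note that $T$ is S-equivalent to $[\cO\to\cO]^{\oplus(r-1)}\oplus[0\to L]$, whose stable summands have $\Pi$-images $(0,1)$ and $(1,0)$ outside $U_f$; only their $\rd$-weighted average lands inside, which is possible exactly because the complement of $U_f$ is not convex across $b=0$.) The corollary, and your proof of it, do hold if one assumes $T$ is $\mu_\alpha$-\emph{stable}: if $n<r$, applying stability to the subobject $[\cO\otimes V\to\operatorname{im}\varphi]$, of rank $m\le n<r$ and degree $e\ge 0$, gives $\alpha<\tfrac{md-re}{n(r-m)}\le\tfrac{d}{r-n}$, hence $w(0)>1$ (and $w(0)>1$ is automatic when $n\ge r$); then the whole segment of $\ell_v^\alpha$ over $0\le b<d/r$ lies in $U_C$, your propagation applies verbatim, and Corollary~\ref{Cor-bg} concludes. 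So the correct repair is to strengthen the hypothesis (``stable'', or $\alpha(r-n)<d$ when $n<r$), not to look for a cleverer path around the notch.
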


\section{Second type of gluing}\label{sec second type}
In this section, we describe a second open subset \( \widetilde{\Stab}^{\circ}(\mathcal{D}(\T)) \subset \stab(\cD(\mathcal{T}_C))\). Recall that it consists of $\sigma$ such that $[0\to\cO]$ and $[0\to\cK]$ are $\sigma$-stable for all $x\in C$.
Our goal is to prove the following theorem, which shows that all such stability conditions
arise by gluing along a suitable semiorthogonal decomposition. 

\begin{Thm}\label{thm-second open subset}
    If $\sigma\in \widetilde{\Stab}^{\circ}(\mathcal{D}(\T))$ then, up to the $\GL$-action, $\sigma$ is either of the form $\gl^{(1)}(\sigma_{\mathcal{V}}, \sigma_{g})$ for some $g \in \GL$ where $f(0)<\tfrac{1}{2}$ or $\gl^{(2)}( \sigma_g, \sigma_{\mathcal{V}})$ 
    where $f(0)\geq \tfrac{1}{2}$.
\end{Thm}

\subsection*{Geometric stability conditions}
Before proving Theorem~\ref{thm-second open subset}, we first study stability conditions $\sigma$ for which
$j_{*}\cK$ is $\sigma$-stable for every point $x\in C$, without imposing any condition on $[0\to\cO]$. 
By an argument analogous to~\cite[Prop.~2.9]{li-albanese}, we may assume—after the $\GL$-action—that all objects $j_{*}\cK$ are $\sigma$-stable of phase~1. 
The next proposition lists all possible destabilizing sequences for $[\cO\to 0]$.

\begin{Prop}\label{prop destab triangle [O_c -> 0] for geometric}
Let $\sigma$ be a stability condition such that, for every $x\in C$, the object $j_{*}\cK$ is $\sigma$-stable of phase~$1$.
Consider an exact triangle
\begin{equation}\label{eq destab triangle for [O_C -> 0]}
  T_{1} \longrightarrow [\cO\to 0] \longrightarrow T_{2}[1]
\end{equation}
with $T_{1},T_{2}\neq 0$, satisfying $\Hom^{\le 1}(T_{1},T_{2})=0$, where $T_{1}$ is $\sigma$-semistable and all its stable factors are isomorphic, and
\[
\phi_{\sigma}^{+}\!\bigl(T_{2}[1]\bigr) \;\leq\; \phi_{\sigma}(T_{1}).
\]
Then $T_{1},T_{2}\in\T$ and $\cH^{0}\!\bigl(j^{*}T_{1}\bigr)=0$.
\end{Prop}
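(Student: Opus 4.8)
The plan is to push the distinguished triangle \eqref{eq destab triangle for [O_C -> 0]} through the projection functors attached to the two semiorthogonal decompositions \eqref{rep.1} and \eqref{rep.3}, and then convert the resulting phase inequalities into statements about cohomology sheaves on $C$, exploiting that the $\sigma$-stable skyscrapers $j_*\cK$ of phase $1$ detect those sheaves. First I would record the elementary computations from Proposition~\ref{prop.hom seqeunce}: $\Hom^{\bullet}(j_*\cK,[\cO\to 0])=0$ and $\Hom^{\bullet}([\cO\to 0],j_*\cK)=\C[-1]$, together with $j^{\dagger}[\cO\to 0]=0$ and $j^{*}[\cO\to 0]=\cO[1]$. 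Applying $j^{\dagger}$ to \eqref{eq destab triangle for [O_C -> 0]} then gives $j^{\dagger}T_1\cong j^{\dagger}T_2$, while applying $j^{*}$ produces a triangle $\cO\to j^{*}T_2\to j^{*}T_1$ in $\cD(C)$; applying $\Hom(j_*\cK,-)$ and $\Hom(-,j_*\cK)$ shows that $T_1$ and $T_2$ carry the same skyscraper cohomology outside degrees $0,1$.

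The technical heart is converting the stability data into a cohomological amplitude bound. Since $T_1$ is $\sigma$-semistable of phase $\phi_1:=\phi_{\sigma}(T_1)$ and $\phi_{\sigma}^{+}(T_2[1])\le\phi_1$, a nonzero map $T_1\to j_*\cK[k]$ forces $k\ge\phi_1-1$ and a nonzero map $j_*\cK[k]\to T_1$ forces $k\le\phi_1-1$, with the analogous bounds for $T_2[1]$. On the curve the skyscrapers $\cO_x$ detect the top cohomology sheaf of a complex through $\Hom(-,\cO_x[\,\cdot\,])$ and its bottom cohomology sheaf through $\Ext^1(\cO_x,-)$; via the adjunctions $\Hom(-,j_*\cK[k])=\Hom(j^{*}-,\cK[k])$ and $\Hom(j_*\cK[k],-)=\Hom(\cK[k],j^{\dagger}-)$ these inequalities confine $\cH^{\bullet}(j^{*}T_i)$ and $\cH^{\bullet}(j^{\dagger}T_i)$ to two consecutive degrees, while the vector-space parts $i^{*}T_i$ and ${i'}^{\dagger}T_i$ are controlled the same way through $\Hom^{\bullet}(\pm,[\cO\to 0])$. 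Feeding this back into the triangles $j_*j^{\dagger}T_i\to T_i\to i_*i^{*}T_i$ and $i'_*{i'}^{\dagger}T_i\to T_i\to j_*j^{*}T_i$ of \eqref{rep.1} and \eqref{rep.3} should force $T_1,T_2\in\cT_C$. I expect this to be the main obstacle: one must pin down $\phi_1$ exactly (the constraints ought to force the relevant phases into the window in which the standard heart $\cT_C$ appears) and rule out the borderline values, following the amplitude bookkeeping of Lemma~\ref{lem-vanishing of cohomology} and \cite[Prop.~2.9]{li-albanese}.

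Finally, once $T_1,T_2\in\cT_C$ is established, I would rotate \eqref{eq destab triangle for [O_C -> 0]} to the short exact sequence $0\to T_2\to T_1\to[\cO\to 0]\to 0$ in $\cT_C$; comparing sheaf and vector-space components yields $E_{T_2}\cong E_{T_1}$ and $\dim V_{T_1}=\dim V_{T_2}+1$, and exhibits $[\cO\to 0]$ as a nonzero quotient of $T_1$, so $\Hom(T_1,[\cO\to 0])\neq 0$. To conclude $\cH^{0}(j^{*}T_1)=\operatorname{coker}(\varphi_{T_1})=0$, suppose the cokernel $Q$ is nonzero. Then $T_1\twoheadrightarrow j_*Q$ in $\cT_C$, and composing with a skyscraper quotient $Q\twoheadrightarrow\cK$ gives $\Hom(T_1,j_*\cK)\neq 0$; semistability then forces $\phi_1\le 1$. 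The case $\phi_1<1$ is excluded by the phase located in the previous step, and if $\phi_1=1$ then $j_*\cK$ is a Jordan--Hölder factor of $T_1$, so by the hypothesis that all stable factors of $T_1$ are isomorphic, $T_1$ is an iterated extension of copies of $j_*\cK$; since $\Hom(j_*\cK,[\cO\to 0])=0$ this gives $\Hom(T_1,[\cO\to 0])=0$, contradicting that $[\cO\to 0]$ is a quotient of $T_1$. Hence $Q=0$, which completes the argument.
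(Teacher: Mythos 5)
Your setup (the computations $j^{\dagger}[\cO\to 0]=0$, $j^{*}[\cO\to 0]=\cO[1]$, the triangle $\cO\to j^{*}T_2\to j^{*}T_1$, the identification $j^{\dagger}T_1\cong j^{\dagger}T_2$) and your final cokernel argument do match ingredients of the paper's proof, but the central step --- proving $T_1,T_2\in\T$ --- has a genuine gap, which you yourself flag as ``the main obstacle''. Two things go wrong there. First, skyscraper detection is asymmetric in this situation: $T_1$ is semistable, but for $T_2[1]$ you only control $\phi_{\sigma}^{+}$, so you can bound $\Hom(j_*\cK[k],T_2[1])$ but not $\Hom(T_2[1],j_*\cK[k])$; hence the upper cohomological amplitude of $T_2$ (equivalently of $j^{*}T_2$ and $j^{\dagger}T_2$) cannot be confined by phase considerations alone. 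Second, and more seriously, your claim that the vector-space parts $i^{*}T_i$ and ${i'}^{\dagger}T_i$ are ``controlled the same way through $\Hom^{\bullet}(\pm,[\cO\to 0])$'' is unfounded: $[\cO\to 0]$ is precisely the object being destabilised, it is not $\sigma$-semistable and carries no phase, so morphisms into or out of it impose no constraints at all.

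The paper's mechanism for both problems is the hypothesis $\Hom^{\le 1}(T_1,T_2)=0$, which your amplitude step never invokes. Concretely: the nonzero map $T_1\to[\cO\to 0]$ induces a surjection $\cH^{0}(T_1)\twoheadrightarrow[\cO\to 0]$ in $\T$, whence $\cH^{k}(T_1)\cong\cH^{k}(T_2)$ for all $k\neq 0$; then Lemma~\ref{lem-decompos} shows that vector-space cohomology sitting in a forbidden degree splits off as a direct summand of the form $i_*V$ or $i'_*V$, and any such summand, combined with the isomorphism of cohomologies, produces a nonzero morphism in $\Hom^{\le 1}(T_1,T_2)$ --- contradiction. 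This is the content of Lemma~\ref{lem-criterion}(a)--(d), and it is these criteria, run through the case analysis $\phi_{\sigma}(T_1)\le 1$, $=1$, $\in(1,2)$, $\ge 2$, that actually force $T_1,T_2\in\T$ and $\cH^{0}(j^{*}T_1)=0$. Note finally that your last paragraph excludes $\phi_1<1$ by appealing to ``the phase located in the previous step'', i.e.\ to the very step that is missing; in the paper this exclusion (Case (1), where one derives $T_2=0$) again rests on Lemma~\ref{lem-criterion}(b), hence on the Hom-vanishing hypothesis. So the proposal assembles the peripheral ingredients correctly but omits the decomposition-plus-Hom-vanishing argument that is the actual core of the proof.
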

\begin{proof}
    Applying $j^!$ gives $j^!T_{1}\cong j^!T_{2}$. Moreover, for any $x\in C$ and any $k\in\mathbb{Z}$ we have $\Hom\bigl(j_{*}\cK[k],\,[\cO\to 0]\bigr)=0$. Hence the stable factors of $T_{1}$ (which are all isomorphic) are neither $j_{*}\cK$ nor any of its shifts.

    (1) If $\phi_{\sigma}(T_1)\leq 1$ and $\phi^{+}_{\sigma}(T_2[1])<1$, then $\Hom^{\leq 0}(j_*\cK, T_2[1])=0$, so $\mathcal{H}^{\leq 0}(j^!T_2)=0$. By Lemma~\ref{lem-criterion}(b), it follows that $T_2=0$, a contradiction.

    (2) If $\phi_{\sigma}(T_1)=1$ and $\phi^{+}_{\sigma}(T_2[1])=1$, so $\Hom^{\leq 0}(j_*\cK, T_2)=0$, which means $\mathcal{H}^{\leq -1}(j^!T_2)=0$. Thus, $\mathcal{H}^{\leq -1}(T_1)=\mathcal{H}^{\leq -1}(T_2)=0$ from Lemma~\ref{lem-criterion}(a). We also have $\mathcal{H}^{\ge 0}(j^{*}T_{1})=0$ by Lemma~\ref{lem-vanishing-heart}. Hence, by Lemma~\ref{lem-criterion}(c) we obtain $\mathcal{H}^{\ge 1}(T_{1})=\mathcal{H}^{\ge 1}(T_{2})=0$. In particular, $T_{1},T_{2}\in\mathcal{T}_{C}$ and $\mathcal{H}^{0}(j^{*}T_{1})=0$.

    (3) If $1<\phi_{\sigma}(T_1)<2$, then from Lemma~\ref{lem-vanishing-heart}, we get $\mathcal{H}^{\geq 0}(j^*T_1)=0$ which alongside with Lemma~\ref{lem-criterion}(c) gives $\mathcal{H}^{\geq 1}(T_1)=\mathcal{H}^{\geq 1}(T_2)=0$. Moreover, $\phi^{+}_{\sigma}(T_2[1])\leq \phi_{\sigma}(T_1)<2$, so $\Hom^{\leq 0}(j_*\cK, T_2)=0$ which implies $\mathcal{H}^{\leq -1}(j^!T_2)=0$. Thus $\mathcal{H}^{\leq -1}(T_1)=\mathcal{H}^{\leq -1}(T_2)=0$ from Lemma~\ref{lem-criterion}(a). Therefore, $T_1, T_2\in \mathcal{T}_C$ and $\mathcal{H}^0(j^*T_1)=0$. 

    (4) If $\phi_{\sigma}(T_1)\geq 2$, then from Lemma~\ref{lem-vanishing-heart} it follows that $\mathcal{H}^{\geq -1}(j^*T_1) = 0$ together with Lemma~\ref{lem-criterion}(d) the claim follows. 
\end{proof}
We start with the following useful lemma, which provides with decomposition. 
\begin{Lem}\label{lem-decompos}
    Take $T\in \mathcal{D}(\mathcal{T}_C)$.
    \begin{enumerate}
        \item If $\mathcal{H}^{\leq k}(j^! T)=0$, then $T = i_*V\oplus T'$ for some $V\in \mathcal{D}(\mathcal{V})$ such that $\mathcal{H}^{\geq k+1}(V)=0$ and $\mathcal{H}^{\leq k}(T')=0$.
        \item If $\mathcal{H}^{\geq k}(j^*T)=0$, then $T = i_*' V \oplus T'$ such that $\mathcal{H}^{\leq k}(V)=0$ and $\mathcal{H}^{\geq k+1}(T')=0$. 
    \end{enumerate}
\end{Lem}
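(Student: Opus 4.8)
The plan is to prove each part by splitting off a ``vector-space'' direct summand that is supported in a single range of cohomological degrees, using a truncation triangle for the standard $t$-structure with heart $\cT_C$ and showing that its connecting morphism vanishes. The two parts are mirror images of one another, attached to the two semiorthogonal decompositions.

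For part (1) I would first feed the hypothesis $\mathcal{H}^{\le k}(j^{\dagger}T)=0$ into the short exact sequence \eqref{eq.coho}, which holds for every object. In degrees $i\le k$ the subobject $j_*\mathcal{H}^i(j^{\dagger}T)$ vanishes, so $\mathcal{H}^i(T)\cong i_*\mathcal{H}^i(i^*T)$ lies in $i_*\mathcal{V}$. Hence $\tau^{\le k}T$ has all its cohomology in $i_*\mathcal{V}$; since $[\cO\to 0]$ is exceptional, $R\Hom(i_*\mathbb{C},i_*\mathbb{C})=\mathbb{C}$, so all higher obstruction groups (which are $\Ext^{\ge 1}$ between the cohomology objects) vanish and $\tau^{\le k}T\cong i_*V$ is formal, with $V$ concentrated in degrees $\le k$. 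Writing $T'=\tau^{\ge k+1}T$, so $\mathcal{H}^{\le k}(T')=0$ automatically, it remains to split the triangle $i_*V\to T\to T'\xrightarrow{\delta}i_*V[1]$. By the adjunction $i^*\dashv i_*$ we have $\Hom(T',i_*V[1])\cong\Hom(i^*T',V[1])$, and because $[\cO\to 0]$ is injective the complex $R\Hom(T',i_*\mathbb{C})$ is computed degreewise, forcing $i^*T'$ into degrees $\ge k+1$; as $V[1]$ sits in degrees $\le k-1$ and $\cD(\mathcal{V})$ is semisimple (there are no morphisms between objects with disjoint degree support), this group vanishes. Thus $\delta=0$ and $T\cong i_*V\oplus T'$.

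Part (2) is the mirror image, carried out with the decomposition \eqref{sod.3} and its triangle \eqref{rep.3}, namely $i'_*{i'}^{\dagger}T\to T\to j_*j^*T$. From $\mathcal{H}^{\ge k}(j^*T)=0$ the term $j_*j^*T$ has no cohomology in degrees $\ge k$, so in the long exact sequence the neighbouring terms vanish and one gets isomorphisms $\mathcal{H}^i(T)\cong\mathcal{H}^i(i'_*{i'}^{\dagger}T)\in i'_*\mathcal{V}$ for $i\ge k+1$. Consequently $\tau^{\ge k+1}T$ is formal and equal to $i'_*V$ with $V$ in degrees $\ge k+1$, and with $T'=\tau^{\le k}T$ I would split the triangle $T'\to T\to i'_*V\xrightarrow{\delta}T'[1]$. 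Here the adjunction $i'_*\dashv {i'}^{\dagger}$ gives $\Hom(i'_*V,T'[1])\cong\Hom(V,{i'}^{\dagger}T'[1])$, and the key structural input is that $[\cO\to\cO]$ is \emph{projective} in $\cT_C$: Proposition~\ref{prop.hom seqeunce} yields $\Ext^{>0}_{\cT_C}([\cO\to\cO],S)=0$ for all $S\in\cT_C$. This makes ${i'}^{\dagger}T'=R\Hom([\cO\to\cO],T')$ degreewise, so ${i'}^{\dagger}T'$ lies in degrees $\le k$, disjoint from $V$ in degrees $\ge k+1$; semisimplicity of $\cD(\mathcal{V})$ again kills $\delta$, and $T\cong T'\oplus i'_*V$.

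The main obstacle is exactly the vanishing of these connecting morphisms, and it is here that the two parts fail to be perfectly symmetric. In part (1) the needed degree bound on $i^*T'$ can be read off from the \emph{short exact} sequence \eqref{eq.coho}, but in part (2) the analogous cohomology sequence of \eqref{rep.3} is not short exact: its connecting differentials are maps $j_*(\text{sheaf})\to i'_*(\text{vector space})$, and one checks $\Hom_{\cT_C}(j_*E,i'_*\mathbb{C})=\Hom(E,\cO)\neq 0$, so they need not vanish. This is precisely why part (2) must instead invoke the projectivity of $[\cO\to\cO]$ to control the degrees of ${i'}^{\dagger}T'$. A conceptually cleaner alternative for part (2) would be to apply the duality $\du$ of Lemma~\ref{lem-dual}, which exchanges $i_*$ and $i'_*$ and satisfies $j^{\dagger}\du\cong(j^*(-))^{\vee}[1]$, thereby transporting part (1) to part (2); the caveat is that $\du$ does not preserve the heart $\cT_C$ and can shift $t$-degrees by one under dualization on the curve, so some bookkeeping would be needed to recover the exact bounds in the statement.
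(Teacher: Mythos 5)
Your proof is correct, but it takes a genuinely different route from the paper's. The paper never truncates $T$ itself: for part (a) it truncates the vector-space component of the decomposition triangle \eqref{rep.1}, i.e.\ it lifts $\tau^{\leq k}(i_*i^*T)\to i_*i^*T$ to a map into $T$, and then splits the resulting triangle $\tau^{\leq k}(i_*i^*T)\to T\to T'$ using the semiorthogonal vanishing $\Hom(j_*\cD(C),\,i_*\cD(\mathcal{V}))=0$ together with semisimplicity of $\cD(\mathcal{V})$; part (b) is the exact mirror image, run with \eqref{rep.3} and the vanishing $\Hom(i'_*\cD(\mathcal{V}),\,j_*\cD(C))=0$. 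You instead truncate $T$ with respect to the standard t-structure, identify $\tau^{\le k}T$ (resp.\ $\tau^{\ge k+1}T$) as an object of $i_*\cD(\mathcal{V})$ (resp.\ $i'_*\cD(\mathcal{V})$) via \eqref{eq.coho} (resp.\ the long exact sequence of \eqref{rep.3}) plus a formality argument resting on exceptionality, and then kill the connecting morphism by computing the relevant adjoint degreewise: injectivity of $[\cO\to 0]$ in part (1), projectivity of $[\cO\to\cO]$ in part (2). Both mechanisms are sound; in particular your structural claim $\Ext^{>0}_{\cT_C}([\cO\to\cO],-)=0$ does follow from Proposition~\ref{prop.hom seqeunce}, since for $\varphi_1=\mathrm{id}$ the maps $\Ext^i(E_1,E_2)\to\Ext^i(\cO\otimes V_1,E_2)$ are isomorphisms. (Note that this is the correct statement to use: $[\cO\to\cO]$ is \emph{not} injective in $\cT_C$ for $g\ge 1$, as $\Ext^1([0\to\cO],[\cO\to\cO])=H^1(\cO)\neq 0$, despite the paper's passing remark to the contrary; your proof correctly relies on projectivity instead.) As for what each approach buys: the paper's argument is shorter and perfectly symmetric between the two parts, needing only the SOD Hom-vanishings and no formality or degreewise computations; yours pins down the summands canonically as the t-truncations of $T$ and makes explicit the asymmetry between the two semiorthogonal decompositions — your observation that \eqref{eq.coho} has no short-exact analogue for the second SOD (because $\Hom_{\cT_C}(j_*E,[\cO\to\cO])=\Hom(E,\cO)$ need not vanish), which forces the projectivity input in part (2), is exactly right.
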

\begin{proof}
From $\mathcal{H}^{\leq k}(j^! T)=0$ it follows that $\Hom(\tau^{\leq k} (i_*i^*T), j_*j^!T[1]) = 0$, where $\tau^{\leq k}$ is a natural truncation functor associated to the standard $t$-structure on $\T$. Therefore,   
there is the following commutative diagram
 \begin{equation*}
 	\xymatrix{
     \tau^{\leq k} (i_*i^*T) \ar[r]^{=}\ar[d] & \tau^{\leq k} (i_*i^*T) \ar[r]\ar[d]  & 0\ar[d]  \\
      T \ar[d] \ar[r]& i_*i^*T \ar[r]\ar[d]  & j_*j^!T[1]\ar[d]      \\
     T' \ar[r]& \tau^{\geq k+1} (i_*i^*T) \ar[r]& j_*j^!T[1].
}
  \end{equation*}
On the other hand, from the last raw of the diagram above, we obtain vanishing
  $$
  \Hom(T',\tau^{\leq k} (i_*i^*T)[1]) = \Hom(j_*j^!T, \ \tau^{\leq k} (i_*i^*T)[1]) = 0.
  $$
 Thus $T = \tau^{\leq k} (i_*i^*T) \oplus T' $, which shows the part (a). Similarly part (b) follows from the following commutative diagram
  	   \begin{equation*}
 	\xymatrix{
     j_*j^*T[-1] \ar[r]\ar[d] & \tau^{\leq k} (i'_*i'^!T) \ar[r]\ar[d]  & T'\ar[d]  \\
      j_*j^*T[-1] \ar[d] \ar[r]& i'_*i'^!T \ar[r]\ar[d]  &T\ar[d]      \\
     0 \ar[r]& \tau^{\geq k+1} (i'_*i'^!T) \ar[r]^{=}& \tau^{\geq k+1} (i'_*i'^!T). 
}
  \end{equation*}
\end{proof}

\begin{Lem}\label{lem-criterion} Let $T_1, T_2$ be as in Proposition ~\ref{prop destab triangle [O_c -> 0] for geometric}.
\begin{enumerate}
    \item  If $\mathcal{H}^{\leq k}(j^!T_2)=0$ for some $k<0$ then $\mathcal{H}^{\leq k}(T_1)=\mathcal{H}^{\leq k}(T_2)=0$.
    \item There is $i_0 \leq 0$ such that $\mathcal{H}^{i_0}(j^!T_2) \neq 0$. 
    \item If $\mathcal{H}^{\geq k}(j^*T_1)=0$ 
    for some $k\geq 0$, then $\mathcal{H}^{\geq k+1}(T_1)=\mathcal{H}^{\geq k+1}(T_2)=0$. 
    \item If $\mathcal{H}^{\geq -1}(j^*T_1)=0$, then $T_1 = [\cO \to \cO]$ and $T_2 = [0\to \cO]$. 
\end{enumerate}
\end{Lem}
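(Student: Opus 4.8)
The four parts share a single mechanism, which I would set up first. Rotating the defining triangle of Proposition~\ref{prop destab triangle [O_c -> 0] for geometric} gives $T_2\to T_1\xrightarrow{\pi}[\cO\to 0]$, and the hypothesis $\Hom^{\le 1}(T_1,T_2)=0$ forces $\pi\neq 0$ (otherwise the triangle splits and $\mathrm{id}_{T_1}\in\Hom^0(T_1,T_2)$, using $T_1\neq 0$). The key point is that \emph{any direct summand $M$ of $T_1$ with $\pi|_M=0$ must vanish}: the inclusion $M\hookrightarrow T_1$ then lifts along $T_2\to T_1$, exhibiting $M$ as a summand of $T_2$ as well, and composing the two structure maps gives a nonzero element of $\Hom^0(T_1,T_2)$, contradicting $\Hom^{\le 1}(T_1,T_2)=0$. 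I would also record the two standard consequences of the triangle: applying $j^{\dagger}$ and using $j^{\dagger}[\cO\to 0]=0$ yields $j^{\dagger}T_1\cong j^{\dagger}T_2$, while the long exact cohomology sequence in $\cT_C$ gives $\cH^p(T_1)\cong\cH^p(T_2)$ for $p\neq 0,1$ together with the five-term sequence $0\to\cH^0(T_2)\to\cH^0(T_1)\to[\cO\to 0]\to\cH^1(T_2)\to\cH^1(T_1)\to 0$.

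For \textbf{(a)}, the isomorphism $j^{\dagger}T_1\cong j^{\dagger}T_2$ transports the hypothesis to $\cH^{\le k}(j^{\dagger}T_1)=0$, so Lemma~\ref{lem-decompos}(a) splits $T_1=i_*V\oplus T_1'$ with $V$ concentrated in degrees $\le k<0$. Since $[\cO\to 0]$ is exceptional and $k<0$, one has $\Hom(i_*V,[\cO\to 0])=0$, whence $\pi|_{i_*V}=0$ and the mechanism above forces $i_*V=0$. Thus $\cH^{\le k}(T_1)=0$, and the isomorphisms $\cH^p(T_1)\cong\cH^p(T_2)$, which hold for $p\le -1$ and hence for all $p\le k$, give $\cH^{\le k}(T_2)=0$.

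For \textbf{(c)} and \textbf{(d)} I would run the same argument through the other semiorthogonal decomposition, using $j^*$, $i'_*$, Lemma~\ref{lem-decompos}(b), and the exceptional computation $\Hom([\cO\to\cO],[\cO\to 0][j])=\mathbb{C}^{\delta_{j,0}}$ obtained from Proposition~\ref{prop.hom seqeunce}. In (c), $\cH^{\ge k}(j^*T_1)=0$ with $k\ge 0$ splits off $i'_*U$ with $U$ in degrees $\ge k+1\ge 1$; the vanishing for $j\ge 1$ kills $\pi|_{i'_*U}$, so $U=0$ and $\cH^{\ge k+1}(T_1)=0$, transported to $T_2$ by the degree-$\ge 2$ isomorphisms. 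The only boundary case is $p=1$ when $k=0$, handled by the five-term sequence: $\pi\neq 0$ and simplicity of $[\cO\to 0]$ make $\cH^0(T_1)\to[\cO\to 0]$ surjective, so $\cH^1(T_2)=0$. In (d), $\cH^{\ge -1}(j^*T_1)=0$ splits $T_1=M\oplus[\cO\to\cO]^{u_0}$, where $M$ collects the $\cD^{\le -1}$ summand and the degree-$\ge 1$ part of $i'_*U$, so that $\Hom(M,[\cO\to 0])=0$; the mechanism gives $M=0$, and applying $\Hom^{\le 1}(T_1,T_2)=0$ to the explicit fibre of $\pi_0\colon[\cO\to\cO]^{u_0}\to[\cO\to 0]$ forces $u_0=1$. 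Then $\pi$ is, up to scalar, the canonical quotient $[\cO\to\cO]\to[\cO\to 0]$, whose fibre is $[0\to\cO]$, giving $T_1=[\cO\to\cO]$ and $T_2=[0\to\cO]$.

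For \textbf{(b)} I would argue by contradiction, assuming $\cH^{\le 0}(j^{\dagger}T_2)=0$ and aiming at $T_2=0$. By Lemma~\ref{lem-decompos}(a) the $\cD^{\ge 1}$-summand $T_1'$ of $T_1$ satisfies $\Hom(T_1',[\cO\to 0])=0$, so the mechanism kills it and leaves $T_1=i_*W$ with $W\in\cD^{\le 0}(\mathcal V)$; then $j^{\dagger}T_2\cong j^{\dagger}T_1=0$, so $T_2=i_*W'$ as well. Via full faithfulness of $i_*$, the triangle becomes $i_*\bigl(W'\to W\to\mathbb{C}\bigr)$ with $\Hom^{\le 1}_{\cD(\mathcal V)}(W,W')=0$, and I would finish by a short dimension count on the cohomology of $W'\to W\to\mathbb{C}$ in the semisimple category $\cD(\mathcal V)$, using that $\pi\neq 0$ (so $W\to\mathbb{C}$ is nonzero) and $W\in\cD^{\le 0}$, to deduce $W'=0$, contradicting $T_2\neq 0$. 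I expect this final combinatorial endgame in $\cD(\mathcal V)$ — reconciling the $\Hom$-vanishing with the nonvanishing of $\pi$ and the degree constraint on $W$ — to be the main obstacle, together with the careful boundary bookkeeping at $p=0,1$ (through the five-term sequence) that recurs in all four parts.
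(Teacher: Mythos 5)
Your proposal is correct, and parts of it genuinely diverge from the paper's own proof, so a comparison is worthwhile. Part (a) and the case $k\ge 1$ of part (c) are essentially the paper's argument: decompose via Lemma~\ref{lem-decompos} and produce a forbidden nonzero map $T_1\to T_2$; your ``summand-killing mechanism'' is exactly this construction abstracted once and for all, which buys uniformity. The real differences are these. The paper first collapses the cohomology sequence: by adjunction $\Hom(T_1,[\cO\to 0])\cong\Hom(i^*T_1,\C)\neq 0$, which via \eqref{eq.coho} yields a surjection $\cH^0(T_1)\twoheadrightarrow[\cO\to 0]$, hence isomorphisms $\cH^p(T_1)\cong\cH^p(T_2)$ for \emph{all} $p\neq 0$ together with the short exact sequence \eqref{eq-t}; you instead keep the five-term sequence and invoke surjectivity only after $T_1\in\cD^{\le 0}$ is known, where it follows from simplicity of $[\cO\to 0]$ --- both are valid. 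In (b) the paper decomposes both $T_1$ and $T_2$, deduces $V_1\cong V_2\oplus\C$ and $T_1'\cong T_2'$, and exhibits a forbidden map; you reduce entirely to a triangle $W'\to W\to\C$ in $\cD(\cV)$, and your deferred ``dimension count'' does close: $\cH^0(W)\neq 0$ since the nonzero map $W\to\C$ with $W\in\cD^{\le 0}$ factors through $\cH^0(W)$, so pairing $\cH^0(W)$ against $\cH^j(W')$ in $\Hom(W,W'[j])=0$ for $j\le 1$ forces $\cH^{\le 1}(W')=0$, while the triangle gives $\cH^{\ge 2}(W')\cong\cH^{\ge 2}(W)=0$, whence $W'=0$, contradicting $T_2\neq 0$. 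In the boundary case $k=0$ of (c) the paper splits $T_2\cong i'_*V_1[-1]\oplus\tau^{\le 0}T_2$ by an adjunction computation and kills $V_1$ by Hom-vanishing, whereas you kill the degree-one part of $T_1$ first (using $\Ext^{\ge 1}([\cO\to\cO],[\cO\to 0])=0$) and finish with the five-term sequence. In (d) the paper proves $i^*T_2=0$ and pins down $\cH^0(T_2)=[0\to\cO]$ via \eqref{eq-t}; your route --- $T_1\cong[\cO\to\cO]^{u_0}$ by the mechanism, then $u_0=1$ by mapping $T_1$ into the explicit fibre $[0\to\cO]\oplus[\cO\to\cO]^{u_0-1}$ --- is shorter and equally correct.

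One assertion in (b) needs an explicit justification: you claim $\Hom(T_1',[\cO\to 0])=0$ for the summand $T_1'\in\cD^{\ge 1}$. This is \emph{not} a t-structure vanishing (maps from $\cD^{\ge 1}$ to the heart need not vanish; e.g.\ $\Hom(A[-1],B)=\Ext^1(A,B)$), and it is the one place where your mechanism requires more than degree bookkeeping or the exceptional computations you cite. It is true because $[\cO\to 0]$ is injective in $\cT_C$: Proposition~\ref{prop.hom seqeunce} with $E_2=0$ gives $\Ext^1(A,[\cO\to 0])=\Ext^2(A,[\cO\to 0])=0$ for every $A\in\cT_C$, and induction over the cohomology objects of $T_1'$ then yields the vanishing. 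With that one line added, your proof is complete.
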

\begin{proof}
First of all, by adjunction we have
\begin{equation}\label{eq-vanishing-H0-i*T1}
    0 \neq \Hom(T_1, [\cO \to 0]) = \Hom(i^*T_1, \C),
\end{equation}
which implies $\cH^0(i^*T_1)\neq 0$. In other words, the adjunction sends the nonzero map $T_1\to [\cO\to 0]$ from~\eqref{eq destab triangle for [O_C -> 0]} to a nontrivial surjective map $\cH^0(i_*i^*T_1) \twoheadrightarrow [\cO \to 0]$ in $\T$. By \eqref{eq.coho}, we have the short exact sequence in $\cT_C$
$$
\cH^{0}(j_*j^!T_1) \hookrightarrow \cH^0(T_1) \twoheadrightarrow \cH^0(i_*i^{*}T_1) 
$$
which induces the surjection $\cH^0(T_1) \twoheadrightarrow [\cO \to 0]$. Thus, taking cohomology of the exact sequence~\eqref{eq destab triangle for [O_C -> 0]}, implies that $\mathcal{H}^k(T_1)=\mathcal{H}^k(T_2)$ unless $k=0$ and we have the following short exact sequence in $\cT_C$:
\begin{equation}\label{eq-t}
    0 \to \cH^0(T_2) \to \cH^0(T_1) \to [\cO \to 0] \to 0.
\end{equation} 

(a) Suppose there exists $k_0 \leq k < 0$ such that $\cH^{k_0}(T_2)\neq 0$. Then by the decomposition of Lemma \ref{lem-decompos}(a), together with the isomorphism $\cH^{k_0}(T_1) = \cH^{k_0}(T_1)$, we obtain a nonzero morphism $T_1 \to T_2$, which contradicts the assumption that $\Hom^{\leq 1}(T_1, T_2)=0$. 

(b) Suppose, for contradiction, that $\cH^{\leq 0}(j^!T_2) = 0$. By Lemma~\ref{lem-decompos}(a) and part (a) above, we may write 
\[
T_1 \cong i_*V_1 \oplus T'_1, \qquad 
T_2 \cong i_*V_2 \oplus T'_2
\]
where $V_1,V_2$ are finite-dimensional vector spaces and 
\(\cH^{\leq 0}(T'_1) = \cH^{\leq 0}(T'_2)=0\). Since $\mathcal{H}^k(T_1)=\mathcal{H}^k(T_2)$ unless $k=0$, we conclude that 
\[
V_1 \cong V_2 \oplus \C, \qquad T'_1 \cong T'_2.
\]
Hence there always exists a nonzero map $T_1\to T_2$, which contradicts with the assumption $\Hom(T_1, T_2)=0$. 

(c) Applying $j^*$ to the destabilizing sequence~\eqref{eq destab triangle for [O_C -> 0]} gives an exact triangle
\[
\cO \to j^*T_2 \to j^*T_1.
\]
If $k\geq 1$, then $\cH^k(j^*T_2) = \cH^k(j^*T_1)$, so the vanishing $\mathcal{H}^{\geq k}(j^*T_1)=\mathcal{H}^{\geq k}(j^*T_2)=0$ from the assumption, together with Lemma~\ref{lem-decompos}(b), implies the claim as in part (a). 
It remains to show the claim when $\cH^{\geq 0}(j^*T_1)=0$. Then, from Lemma~\ref{lem-decompos}(b), we get
\[
T_1 \cong i'_*V_1[-1] \oplus T'_1,
\]
for some vector space $V_1$ and $\cH^{\geq 1}(T'_1)=0$. From the previous discussion, we also have $\cH^{\geq 2}(T_2)=0$. We know $\cH^1(T_2)= \cH^1(T_1)= i'_*V_1$. By adjunction we get 
$$\Hom(i'_*V_1[-1], (\tau^{\leq 0}T_2)[1])=\Hom(V_1, i'^!(\tau^{\leq 0}T_2) [2])= 0, $$
so 
\[
T_2 \cong i'_*V_1[-1] \oplus \tau^{\leq 0}T_2
\]
which forces $V_1=0$ as $\Hom(T_1, T_2)=0$. This completes part (c).

\medskip

(d) From part (c) we have $\cH^{\ge 1}(T_1)=\cH^{\ge 1}(T_2)=0$. Moreover, by Lemma~\ref{lem-decompos}(b) we have
\[
T_1=i'_*V_1\oplus T'_1,
\]
where $V_1$ is a vector space and $\cH^{\ge 0}(T'_1)=0$. Note that $V_1\neq 0$ by~\eqref{eq-vanishing-H0-i*T1}. The assumption $\Hom^{\le 1}(T_1,T_2)=0$ implies
\[
0=\Hom^{\le 1}(i'_*V_1,T_2)=\Hom^{\le 1}\bigl(V_1,i'^!T_2\bigr),
\]
which shows $\cH^{\le 1}(i'^!T_2)=0$. Recalling that $i'^!T_2=i^*T_2$ and combining with $\cH^{\ge 1}(T_2)=0$, we obtain $i^*T_2=0$. Thus the short exact sequence \eqref{eq-t} gives $V_1=\Bbb C$ and $\cH^0(T_2)=[0\to \cO]$. Moreover, since $T_2=j_*j^!T_2$, we deduce
\[
T_2=[0\to \cO]\oplus T_2',
\]
where $\cH^{\ge 0}(T_2')=0$. From the exact sequence~\eqref{eq destab triangle for [O_C -> 0]} we get $T_2'=T_1'$, which yields a nonzero morphism $T_1\to T_2$ unless $T_1'=T_2'=0$. Hence the claim follows in this case.


\end{proof}
Similarly to Lemma~\ref{lem-vanishing of cohomology}, we get the following Lemma. 
\begin{Lem}\label{lem-vanishing-heart} Take $\sigma=(\mathcal{A}, Z)$ as in Proposition~\ref{prop destab triangle [O_c -> 0] for geometric}. Let $T[n]\in \mathcal{A}$, then $\mathcal{H}^{\leq n-2}(j^!T)=\mathcal{H}^{\geq n+1}(j^*T)=0$. Moreover, if $T[n]$ is $\sigma$-semistable of phase one whose none of the stable factors is a skyscraper sheaf $j_*\cK$ at a point $x \in C$, then $\mathcal{H}^{\leq n-1}(j^!T)=\mathcal{H}^{\geq n}(j^*T)=0$. 
\end{Lem}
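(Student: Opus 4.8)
The plan is to follow the proof of Lemma~\ref{lem-vanishing of cohomology}: detect the extremal (top and bottom) cohomology sheaves of $j^*T$ and $j^\dagger T$ by mapping into or out of the skyscrapers $\cK$, then convert these groups, via the adjunctions $j^*\dashv j_*\dashv j^\dagger$ and Serre duality on $C$, into Hom-groups to which $\sigma$-stability applies. The one structural input needed about $\cD(C)$ is that $\Ext^{<0}=\Ext^{\ge 2}=0$ on the curve, so that in the hypercohomology spectral sequence $E_2^{p,q}=\Ext^p(\mathcal{H}^{-q}(G),\cK)\Rightarrow \Hom(G,\cK[p+q])$ the corner term $E_2^{0,-j}=\Hom(\mathcal{H}^{j}(G),\cK)$ attached to the top cohomology survives and is a quotient of $\Hom(G,\cK[-j])$, while the corner term attached to the bottom cohomology survives likewise; hence the top (resp.\ bottom) nonzero $\mathcal{H}^\bullet(G)$ is witnessed by a nonzero $\Hom(G,\cK[-j])$ for suitable $x\in C$, with no cancellation.

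To prove $\mathcal{H}^{\ge n+1}(j^*T)=0$, I would write $\Hom(j^*T,\cK[k])=\Hom(T,j_*\cK[k])=\Hom\bigl(T[n],\,j_*\cK[n+k]\bigr)$ using $j^*\dashv j_*$. Since $T[n]\in\cA=\mathcal{P}_\sigma(0,1]$ has all Harder--Narasimhan factors of phase $>0$, while $j_*\cK[n+k]$ is stable of phase $1+n+k$, this group vanishes whenever $1+n+k\le 0$, i.e.\ for $k\le -n-1$. If $\mathcal{H}^{\ge n+1}(j^*T)\neq 0$, pick the largest $j\ge n+1$ with $\mathcal{H}^{j}(j^*T)\neq 0$ and a point $x$ in its support; the corner argument yields a nonzero class in $\Hom(j^*T,\cK[-j])$ with $-j\le -n-1$, a contradiction.

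For $\mathcal{H}^{\le n-2}(j^\dagger T)=0$ I would route through Serre duality: using $\cK\otimes\omega_C\cong\cK$ gives $\Hom(j^\dagger T,\cK[k])^{\vee}\cong \Hom(\cK,\,j^\dagger T[1-k])\cong \Hom\bigl(j_*\cK[n+k-1],\,T[n]\bigr)$, the last step by $j_*\dashv j^\dagger$. The source is stable of phase $n+k$, and as $\phi^{+}(T[n])\le 1$ this vanishes once $n+k>1$, i.e.\ for $k\ge 2-n$. A nonzero bottom cohomology $\mathcal{H}^{i}(j^\dagger T)$ with minimal $i\le n-2$ would produce, by the corner argument, a nonzero $\Hom(j^\dagger T,\cK[-i])$ with $-i\ge 2-n$, again a contradiction. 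This settles the first assertion.

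For the sharpened bounds I would revisit the two boundary cases $k=-n$ and $k=1-n$. Under the extra hypothesis, $T[n]$ is $\sigma$-semistable of phase one and none of its stable factors is a skyscraper; since $j_*\cK$ is stable of phase one, this forces $\Hom(T[n],j_*\cK)=0$ and $\Hom(j_*\cK,T[n])=0$ for every $x\in C$ (a nonzero map either way would exhibit $j_*\cK$ as a stable factor of $T[n]$). Taking $k=-n$ in the first computation now rules out $\mathcal{H}^{n}(j^*T)$, so $\mathcal{H}^{\ge n}(j^*T)=0$; taking $k=1-n$ in the second rules out $\mathcal{H}^{n-1}(j^\dagger T)$, so $\mathcal{H}^{\le n-1}(j^\dagger T)=0$. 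The main point requiring care is precisely the spectral-sequence statement that the extremal cohomology of a complex on $C$ is detected, without interference, by a single skyscraper Hom; once that and the shift/phase bookkeeping are pinned down, every vanishing drops out of a phase inequality.
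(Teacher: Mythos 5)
Your proof is correct and takes essentially the same approach as the paper's: both derive the vanishing of $\Hom$-groups between $T[n]$ and shifted skyscrapers from phase considerations, transfer these through the adjunctions $j^*\dashv j_*\dashv j^{\dagger}$ and Serre duality on $C$ (using $\cK\otimes\omega_C\cong\cK$), and then detect the extremal cohomology sheaves of $j^*T$ and $j^{\dagger}T$ by maps to skyscrapers. The paper's version is merely terser, leaving implicit the hereditary-curve detection step (your spectral-sequence corner argument) and the ``moreover'' case, which you correctly settle by noting that a nonzero map in either direction between $T[n]$ and the stable object $j_*\cK$ of the same phase would force $j_*\cK$ to be a stable factor of $T[n]$.
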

\begin{proof}
    For any $k\geq 0$ and any point $x \in C$, we have 
    \[
    0=\Hom(j_*\mathcal{O}_x[k+1],T[n])=\Hom(\mathcal{O}_x, j^!T[n-k-1])=\Hom(j^!T[n-k-2], \mathcal{O}_x), 
    \]
    \[
0=\Hom(T[n+k+1],j_*\mathcal{O}_x)=\Hom(j^*T[n+k+1], \mathcal{O}_x), 
    \]
    which implies that $\mathcal{H}^{\leq n-2}(j^!T)=\mathcal{H}^{\geq n+1}(j^*T)=0$. The second claim follows similarly. 
\end{proof}

The following lemma provides a complete description of a destabilizing sequence of $[\cO\to 0]$ under the additional assumption that $[0\to\cO]$ is $\sigma$-stable.

\begin{Lem}\label{lem-destab-triangle}
Let $T_1,T_2$ be as in Proposition~\ref{prop destab triangle [O_c -> 0] for geometric}. If $[0\to\cO]$ is $\sigma$-stable, then $T_1=[\cO\to\cO]$ and $T_2=[0\to\cO]$, and $T_1$ is $\sigma$-stable.
\end{Lem}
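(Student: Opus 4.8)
The plan is to reduce the whole statement to the single vanishing $\cH^{-1}(j^{*}T_{1})=0$ and then quote Lemma~\ref{lem-criterion}(d). By Proposition~\ref{prop destab triangle [O_c -> 0] for geometric} we already know $T_{1},T_{2}\in\T$ and $\cH^{0}(j^{*}T_{1})=0$. Writing $T_{1}=[\cO\otimes V\xrightarrow{\varphi}E]$ and using $j^{*}T_{1}=\cone(\varphi)$ from Lemma~\ref{lem-computing j^*}, the vanishing $\cH^{0}(j^{*}T_{1})=\operatorname{coker}(\varphi)=0$ says that $\varphi$ is surjective, so $j^{*}T_{1}=K[1]$ with $K:=\ker(\varphi)$ a locally free subsheaf of $\cO\otimes V$. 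Moreover \eqref{eq-t} becomes a short exact sequence $0\to T_{2}\to T_{1}\to[\cO\to 0]\to 0$ in $\T$; hence $T_{2}=[\cO\otimes V'\xrightarrow{\varphi|_{V'}}E]$ with $V'\subset V$ of codimension one and the \emph{same} sheaf $E$, and in particular $j^{\dagger}T_{1}=j^{\dagger}T_{2}=E$.

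The crux, where the stability of $[0\to\cO]$ enters, is to prove $K=0$. First I would dispose of the degenerate case $E=0$: then $T_{1}=i_{*}V$ and $T_{2}=i_{*}V'$ lie in $i_{*}\cD(\cV)$, so $\Hom(T_{1},T_{2})=\Hom_{\C}(V,V')$, which is nonzero unless $V'=0$; since $\Hom(T_{1},T_{2})=0$ and $\dim V\ge 1$, this forces $T_{2}=0$, contradicting the hypothesis. So $E\neq 0$, and since $\varphi$ is surjective $E$ is globally generated, whence $H^{0}(E)\neq 0$. Now set $\phi:=\phi_{\sigma}(T_{1})$ and $\phi_{0}:=\phi_{\sigma}([0\to\cO])$, and compute two Hom-spaces by adjunction: $\Hom([0\to\cO],T_{2})=\Hom(\cO,j^{\dagger}T_{2})=H^{0}(E)\neq 0$, and, \emph{assuming} $K\neq 0$, $\Hom(T_{1},[0\to\cO][1])=\Hom(j^{*}T_{1},\cO[1])=\Hom(K,\cO)\neq 0$ (the last nonvanishing because composing $K\hookrightarrow\cO\otimes V$ with a coordinate projection gives a nonzero map $K\to\cO$). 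Using that $[0\to\cO]$ is $\sigma$-stable and $T_{1}$ is $\sigma$-semistable, the first nonvanishing together with $\phi_{\sigma}^{+}(T_{2})\le\phi-1$ yields $\phi_{0}\le\phi-1$, while the second yields $\phi\le\phi_{0}+1$; hence $\phi=\phi_{0}+1$.

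To extract a contradiction from $\phi=\phi_{0}+1$, I would observe that the nonzero map $T_{1}\to[0\to\cO][1]$ is then a morphism between semistable objects of the \emph{same} phase; working in the abelian category $\mathcal{P}_{\sigma}(\phi)$, its image is a nonzero subobject of the stable object $[0\to\cO][1]$, hence all of it. Thus $[0\to\cO][1]=j_{*}\cO[1]$ is a Jordan--Hölder quotient of $T_{1}$. Because all stable factors of $T_{1}$ are isomorphic to a fixed $S$, this forces $S\cong j_{*}\cO[1]$, so $T_{1}$ is an iterated extension of $j_{*}\cO[1]$ and therefore lies in the shifted heart $\T[1]$; this is incompatible with $T_{1}\in\T$. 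Therefore $K=0$, so $\cH^{\ge -1}(j^{*}T_{1})=0$, and Lemma~\ref{lem-criterion}(d) gives $T_{1}=[\cO\to\cO]$ and $T_{2}=[0\to\cO]$.

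Finally, $T_{1}=[\cO\to\cO]$ is exceptional, so $\operatorname{End}(T_{1})=\C$; a strictly semistable object whose Jordan--Hölder factors are all isomorphic to a fixed stable $S$ admits the non-invertible endomorphism $T_{1}\twoheadrightarrow S\hookrightarrow T_{1}$ and hence has $\dim\operatorname{End}\ge 2$, a contradiction. Thus $T_{1}$ is $\sigma$-stable. I expect the main obstacle to be the step $K=0$: the delicate point is to arrange the two adjunction computations so that the phase inequalities pinch exactly to $\phi=\phi_{0}+1$, and then to recognise that this equality cannot hold for an object genuinely concentrated in the heart $\T$.
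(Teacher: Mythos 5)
Your proof is correct and follows essentially the same route as the paper's: both reduce the lemma to showing $\ker(\varphi)=0$ by using the $\sigma$-stability of $[0\to\cO]$ to pinch phases via two adjunction-computed Hom nonvanishings, and then derive a contradiction from the fact that the common stable factor of $T_1$ would have to be $j_*\cO[1]$. The remaining differences are cosmetic: the paper gets its contradiction from $i^*T_1\neq 0$ rather than from $\T\cap\T[1]=0$, finishes by the adjunction $\Hom(V,V')=0$ rather than by quoting Lemma~\ref{lem-criterion}(d), and deduces stability of $[\cO\to\cO]$ from primitivity of its class rather than from exceptionality.
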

\begin{proof}
By Proposition~\ref{prop destab triangle [O_c -> 0] for geometric}, we have $T_1,T_2\in\T$ and $\cH^0(j^*T_1)=0$. Write
\[
T_1=[\cO\otimes V \xrightarrow{\ \varphi\ } E].
\]
If $E=0$, then $j^!T_2=E=0$, which is impossible by Lemma~\ref{lem-criterion}(b). Thus $E\neq 0$, the morphism $\varphi$ is surjective with $j^*T_1=\ker(\varphi)[1]$. And we may write
\[
T_2=[\cO\otimes V' \to E],
\]
where $V'$ fits into a short exact sequence of vector spaces
\[
0\to V'\to V\to \Bbb C\to 0.
\]

First assume $\ker(\varphi)\neq 0$. Then there is a morphism
\[
\Hom\bigl(T_1[-1],[0\to \cO\otimes V]\bigr)=\Hom\bigl(j^*T_1[-1],\cO\otimes V\bigr)\neq 0,
\]
given by $\ker(\varphi)\hookrightarrow \cO\otimes V$. 
On the other hand, there is a morphism
\begin{equation}\label{nonv}
\Hom\bigl([0\to \cO\otimes V],T_2\bigr)=\Hom\bigl(\cO\otimes V,j^!T_2\bigr)\neq 0,
\end{equation}
induced by $\varphi$. Since $[0\to \cO]$ is $\sigma$-stable, we obtain the inequalities of phases
\[
\phi_\sigma(T_1)-1 \;\le\; \phi_\sigma([0\to \cO]) \;\le\; \phi_\sigma^{+}(T_2).
\]
As $\phi_\sigma^{+}(T_2[1])\le \phi_\sigma(T_1)$ by assumption, we get
\[
\phi_\sigma([0\to \cO])=\phi_\sigma(T_1[-1]).
\]
Moreover, since all $\sigma$-stable factors of $T_1[-1]$ are isomorphic, the nonvanishing in \eqref{nonv} implies that all stable factors of $T_1[-1]$ are isomorphic to $[0\to \cO]$. But then $i^*T_1=0$, contradicting~\eqref{eq-vanishing-H0-i*T1}. Hence $\ker(\varphi)=0$, so $\varphi$ is an isomorphism and $T_1=i'_*V$.

By adjunction we obtain
\[
0=\Hom(T_1,T_2)=\Hom\bigl(V,i'^!T_2\bigr)=\Hom(V,V'),
\]
which forces $V'=0$ and $V=\Bbb C$. Therefore, $T_1=[\cO\to \cO]$ and $T_2=[0\to \cO]$. Finally, the strict $\sigma$-stability of $T_1$ follows from the primitivity of the class $\cl(T_1)$ together with the fact that all its stable factors are isomorphic.
\end{proof}

Now we can proceed to the proof of the main Theorem. 

\begin{proof}[Proof of Theorem \ref{thm-second open subset}]
   Take $\sigma=(\cA,Z)\in \widetilde{\Stab}^{\circ}(\mathcal{D}(\T))$. As before, we may assume that the objects $j_*\cK$ have the same phase for all $x\in C$. We consider two cases, according to the stability of $[\cO\to 0]$.

    \textbf{Case 1: $[\cO \to 0]$ is $\sigma$-stable.} Up to the action of $\GL$, we may assume $\phi_{\sigma}([\cO\to 0])=1$. By adjunction,
\[
\Hom\bigl([\cO\to 0],[0\to \cO][1]\bigr)\neq 0,
\]
hence $0<\phi_{\sigma}([0\to \cO])$. By Theorem~\ref{thm-main}, either $\sigma=\gl^{(1)}(\sigma_{\cV},\sigma_g)$ for some $g\in\GL$, or $\sigma$ is of Type~B (tilting). If $\sigma=\sigma_{b,w}$ is of Type~B, then $0<\phi_{\sigma}([0\to \cO])$ forces $b<0$; by Corollary~\ref{cor-tilting is gluing}, this again implies that $\sigma$ arises from gluing, i.e.\ $\sigma=\gl^{(1)}(\sigma_{\cV},\sigma_g)$ with $f(0)< \tfrac{1}{2}$ as described in Proposition~\ref{prop-gluing-type 1}. This proves the claim in this case.


\textbf{Case 2: $[\cO\to 0]$ is strictly $\sigma$-semistable or $\sigma$-unstable.}
By Lemma~\ref{lem-destab-triangle}, $[\cO\to \cO]$ is $\sigma$-stable and
\begin{equation}\label{order phase}
    \phi_{\sigma}([0\to \cO][1]) \leq \phi_{\sigma}([\cO\to \cO]).
\end{equation}
Up to the action of $\GL$, we may assume $\phi_{\sigma}([\cO\to \cO])=1$, which also gives $\phi_{\sigma}([0\to \cO])\le 0$.
We claim that in this case $\sigma$ comes from gluing, namely $\sigma=\gl^{(2)}(\sigma_g,\sigma_{\cV})$ for some $g = (T, f)\in\GL$ with $f(0) \geq \frac{1}{2}$ as described in Proposition \ref{prop-gluing-type 2}. 

We know $\Hom(j_*\cK,\, j_*\cO[1])=\Bbb C$ and $\Hom(j_*\cO,\, j_*\cK)=\Bbb C$, hence the phases satisfy
\begin{equation}\label{eq inequality of phases}
\phi_{\sigma}([0\to \cO]) \;<\; \phi_{\sigma}(j_*\cK) \;<\; \phi_{\sigma}([0\to \cO]) + 1 \;\le\; 1.
\end{equation}
In particular, $j_*\cK[n]\in\cA$ for some $n\ge 0$.
We now proceed as in Section~\ref{sub}, Case~(III), dividing the argument into steps.

\textbf{Step 1.} We show that for any $T\in\cA$ we have $\cH^k(i'^!T)=0$ unless $k=0$, and $\cH^k(j^*T)=0$ unless $k=-n-1,-n$.

As in Lemma~\ref{lem-vanishing of cohomology}, we obtain $\cH^{\le -n-2}(j^!T)=\cH^{\ge -n+1}(j^*T)=0$. Moreover, we have the vanishing
\begin{equation}\label{van}
    \Hom^{<0}([\cO\to \cO],T)=\cH^{<0}(i'^!T)=0.
\end{equation}
Recall that by Lemma~\ref{lem-simple} there is an exact sequence
\begin{equation}\label{s.e.s}
\cO\otimes i'^!T \longrightarrow j^!T \longrightarrow j^*T.
\end{equation}
Combining \eqref{van} with $\cH^{\le -n-2}(j^!T)=0$, we deduce
$\cH^{\le -n-2}(j^*T)=0$ since $n\ge0$. Hence $\cH^k(j^*T)=0$ unless $k=-n-1,-n$, proving the second part of the claim.

It remains to show the vanishing of $\cH^k(i'^!T)$ for $k > 0$. By Lemma~\ref{lem-decompos}(b) and vanishing $\mathcal{H}^{\geq -n+1}(j^*T)=0$, we can write
\[
T = i'_*V \oplus T' ,
\]
where $\cH^{\le -n+1}(V)=0$ and $\cH^{\ge -n+2}(T')=0$.  
Since $T\in\cA$, we also have
\[
\Hom^{<0}(T,[\cO\to \cO])=0,
\]
which implies $\cH^{>0}(V)=0$. Then:

\begin{enumerate}
    \item[(i)] If $n\ge1$, we have $\cH^{\ge1}(T')=0$, hence $\cH^{\ge1}(i'^!T')=\cH^{\ge1}(i^*T')=0$. Together with $\cH^{>0}(V)=0$, this gives $\cH^{>0}(i'^!T)=0$, as claimed.
    \item[(ii)] If $n=0$, then $V=0$ and hence $\cH^{\ge 2}(T)=0$. Thus it remains to show $\cH^{1}(i'^!T)=0$.  
Combining \eqref{order phase} with \eqref{eq inequality of phases} yields $[0\to \cO][1]\in\cA$, which implies $[\cO\to 0]\in \mathcal{A}$, and therefore
\[
\Hom\bigl(T[1],[\cO\to 0]\bigr)
=\cH^{1}\bigl(i'^!T\bigr)
=0,
\]
as required.
\end{enumerate}


    

\textbf{Step 2.} We claim that for any $F \in \Coh(C)$, we have $j_*F \in \mathcal{P}_{\sigma}(-n-1, -n+1]$. For any object $T\in \mathcal{P}_{\sigma}(>-n+1)$ Step~1 implies that $\mathcal{H}^{\geq 0}(j^*T)=0$, therefore
    \[
    \Hom(T, j_*F) = \Hom(j^*T, F)= 0. 
    \]
    Analogously, if $T\in \mathcal{P}_{\sigma}(\leq -n-1)$ then Step~1 implies $\mathcal{H}^{\leq 0}(j^*T)=\mathcal{H}^{\leq 0}(i^*T)=0$, so it follows that 
    $\mathcal{H}^{\leq 0}(j^!T)=0$ by \eqref{s.e.s}, thus
    \[
    \Hom(j_*F, T)= \Hom(F, j^!T)= 0.  
    \]
    This concludes that $j_*F \in \mathcal{P}_{\sigma}(-n-1, -n+1]$ as claimed. 

    Let $(\mathcal{F}_1,\, \mathcal{F}_2)$ be a pair of subcategories defined as
\begin{equation*}
    \mathcal{F}_1 = j_*\Coh(C) \cap \mathcal{P}_{\sigma}(-n, -n+1], \qquad  \qquad 
    \mathcal{F}_2 = j_*\Coh(C) \cap \mathcal{P}_{\sigma}(-n-1, -n]. 
\end{equation*}
Then it is a torsion pair on the abelian category $j_*\Coh(C)$, and $\cA_1 := \langle \mathcal{F}_2[1], \mathcal{F}_1  \rangle [n]$ is the heart of a bounded t-structure on $j_*\cD(C)$. 

 Finally, we show the vanishing $\Hom^{\leq 0}(\mathcal{A}_1, [\cO \to \cO]) = 0$. 
Take $j_*F \in \mathcal{A}_1$. By adjunction, we have
\[
\Hom^{\leq 0}(j_*F, [\cO \to \cO])
 = \Hom^{\leq 0}(F, j^![\cO \to \cO])
 = \Hom^{\leq 0}(F, \cO).
\]
Recall that, by~\eqref{order phase}, we have $\phi_{\sigma}([0 \to \cO]) \le 0$. Hence 
$\Hom^{\leq 0}(j_*F, [0 \to \cO]) = 0$, which implies $\Hom^{\leq 0}(F, \cO) = 0$, as required. 
Therefore, by Proposition \ref{prop 2.2 cp10} and \ref{prop-gluing-type 2}, we conclude that 
$\sigma = \gl^{(2)}(\sigma_g, \sigma_{\cV})$, as claimed. 
\end{proof}
\section{Appendix: Elliptic curves}
In the case where $C$ is an elliptic curve, we completely describe an open subset
\[
\Stab^{\mathrm{Geo}}\big(\mathcal{D}(\T)\big)\subset \Stab\big(\mathcal{D}(\T)\big),
\]
consisting of geometric stability conditions, namely those $\sigma$ for which $j_*\mathcal{O}_x$ is $\sigma$-stable for every $x\in C$, and obtain the following stronger version of Theorem~\ref{thm-main}.

\begin{Thm}\label{thm-elliptic} Let $C$ be an elliptic curve. 
Up to the action of $\GL$, any stability condition $\sigma \in \mathrm{Stab}^{\mathrm{Geo}}\big(\mathcal{D}(\mathcal{T}_C)\big)$ is of one of the following types:  
  \begin{enumerate}
      \item[\textbf{Type A.}] $\sigma$ is the gluing $\gl^{(1)}(\sigma_{\mathcal{V}}, \sigma_g)$ where $g= (T, f) \in \GL$ with $f(0) < \frac{1}{2}$.  
   
   \item[\textbf{Type B.}] The heart of $\sigma$ is $\cA(b) = \langle \mathbb{F}^b[1], \mathbb{T}^b \rangle$ for $b \in \mathbb{R}$ and the stability function is 
  \begin{equation*}
  Z_{b,w} \colon \mathcal{N}(\cD(\mathcal{T})) \rightarrow \mathbb{C} \ ,\;\;\; Z_{b,w}(T) = -\nd(T)+w\rd(T)+i (\dd(T)-b\rd(T))
  \end{equation*}
  where $w > \Phi_C(b)$. 

   \item[\textbf{Type C.}] $\sigma$ is the gluing $\gl^{(2)}(\sigma_g, \sigma_{\mathcal{V}})$ where $g= (T, f) \in \GL$ with $f(0) \geq \frac{1}{2}$.  
   \end{enumerate}
  \end{Thm}
Fix a stability condition $\sigma \in \mathrm{Stab}^{\mathrm{Geo}}\big(\mathcal{D}(\mathcal{T}_C)\big)$. Up to the $\GL$-action, we may assume that $j_*\mathcal{O}_x$ is $\sigma$-stable of phase~$1$ for every point $x \in C$. The proof of Theorem~\ref{thm-elliptic} begins with the following strengthening of Proposition~\ref{prop destab triangle [O_c -> 0] for geometric}, which describes the destabilising sequence of $[\cO\to 0]$.


\begin{Lem}\label{lem-destab-seq-elliptic}
    Consider the destabilizing exact triangle  $T_1 \to [\cO \to 0] \to T_2[1]$ as in Proposition~\ref{prop destab triangle [O_c -> 0] for geometric}. Then
\[
T_1=[\cO\to\cO]
\qquad\text{and}\qquad
T_2=[0\to\cO].
\]
Moreover, both $T_1$ and $T_2$ are $\sigma$-stable.
\end{Lem}
\begin{proof} By Proposition~\ref{prop destab triangle [O_c -> 0] for geometric}, we can write
\[
T_1=[\cO\otimes V \overset{\varphi}{\twoheadrightarrow} E], \qquad T_2 = [\cO\otimes V' \to E]
\]
with $E\neq 0$, $j^*T_1=\ker(\varphi)[1]$ and  $V'$ fits into a short exact sequence of vector spaces
\[
0\to V'\to V\to \Bbb C\to 0.
\]
We first claim that $T_1=[\cO\to\cO]$. To prove the claim, it suffices to show that $\ker(\varphi)=0$, since
\[
\Hom([\cO\to\cO],[\cO\to 0])\cong\mathbb{C},
\]
and the claim then follows. Suppose, for a contradiction, that $\ker(\varphi)\neq 0$.  
Since $\Hom([\cO \to 0],T_1)=0$, as the short exact sequence $T_2 \hookrightarrow T_1 \twoheadrightarrow [\cO \to 0]$ in $\cT_C$ is non-split, the map $H^0(\varphi)$ is injective. Hence $H^0(\ker(\varphi))=0$ and 
\[
S(T_1)
=
\left[
\vcenter{
  \hbox{
    $\xymatrix@C=0em@R=2em{
      \cO \otimes \left(H^1(E)\oplus \faktor{H^0(E)}{V}[1]\right)  \ar[d]^{\tilde{ev}} \\
     \ker(\varphi)[2].
    }$
  }
}
\right].
\]
By assumption, we have the vanishing $\Hom^{\leq 1}(T_1, T_2)=\Hom^{\geq -1}(T_2, S(T_1))=0$. Applying $\Hom(T_2, -)$ to the decomposition~\eqref{rep.1} of $S(T_1)$, we get the following exact sequence 
\[
0=\Hom(T_2, S(T_1)) \to \Hom(T_2, i_*i^*S(T_1)) \to \Hom^1(T_2,j_* \ker(\varphi)[2]) =0. 
\]
This gives
\[
0=\Hom(T_2, i_*i^*S(T_1))=\Hom\left(V', H^1(E)\oplus \faktor{H^0(F)}{V}[1]\right), 
\]
which implies that either $H^1(E)=0$ or $V'=0$. If $H^1(E)=0$, then from Proposition~\ref{prop.hom seqeunce} and $\Hom^1(T_1, T_2)=0$ it follows that $\Hom^1(E,E) = 0$ and so $\Hom(E, E) =0$, a contradiction. If $V'=0$, then $V=\mathbb{C}$. Hence the surjection $\varphi:\cO\twoheadrightarrow E$ with non-zero kernel implies that $\ker(\varphi)$ is a line bundle and $E$ is a zero-dimensional torsion sheaf. Therefore,
\[
0\neq \Hom(\ker(\varphi),E)
=\Hom^1(j^*T_1,E)
=\Hom^1(T_1,T_2)
=0,
\]
a contradiction.


It remains to show that $[0\to \cO]$ is $\sigma$-stable. To prove this, we consider the last piece in the HN filtration of $[0 \to \cO]$, which is a destabilising sequence
\begin{equation}\label{eq-destab-ell}
    T_1' \to [\cO \to 0] \to T_2'[1], 
\end{equation}
with $\Hom^{\leq 1}(T_1', T_2')=0$ such that $T_2'$ is $\sigma$-semistable and all its stable factors are isomorphic. Then we show that $T_1'= [\cO\to \cO]$ and $T_2'=[0\to \cO]$ which directly implies the final statement, as class of $T_2'$ is primitive.  

By the first part of the argument, we already know $[\cO \to \cO]$ is $\sigma$-stable and 
\[
\phi([\cO \to \cO])=\phi^+([\cO \to 0])\geq \phi(T_2'[1]), 
\]
and so
\[
0=\Hom^{\leq 0}([\cO \to \cO], T_2'[1])=\Hom^{\leq 1}(\mathbb{\C}, i'^!T_2'), 
\]
which implies 
\begin{equation}\label{H}
 \mathcal{H}^{\leq 1}(i^*T_2')=0  
\end{equation}
by Lemma~\ref{lem-simple}. This implies that the truncation 
\begin{equation}\label{eq-ell-curve}
    \tau^{\leq 0}T_2' = \tau^{\leq 0}j_*j^!T_2' \to T_2' \to \tau^{\geq 1}T_2'.
\end{equation}
splits as $i^*\cH^1(\tau^{\geq 1}T_2') = 0$ and $$
\Hom^1(j_*j^!\cH^1(\tau^{\geq 1}T_2')[-1] , \tau^{\leq 0}j_*j^!T_2') = 0 = \Hom^1( \tau^{\geq 2}T_2',  \tau^{\leq 0}T_2')
$$  
which gives $\Hom^1(\tau^{\geq 1}T_2', \tau^{\leq 0}T_2'[1]) = 0$ and so splitness of \eqref{eq-ell-curve}. But we know $T_2'$ is $\sigma$-semistable and 
$$
\Hom([\cO \to 0], \tau^{\geq 1}T_2'[1]) = 0
$$
where we again use $i^*\cH^1(\tau^{\geq 1}T_2') = 0$. Hence $\tau^{\geq 1}T_2' = 0$ and $T_2' = \tau^{\leq 0}j_*j^!T_2'$. Since $T_2'$ is $\sigma$-semistable with isomorphic stable factors and there is a nonzero morphism from $[\cO \to 0]$, there are only two possibilities: either $T_2' \cong j_*F$ or $T_2' \cong j_*F'[1]$, for some coherent sheaves $F$ and $F'$. We first show that the second case cannot occur. Suppose otherwise, then $T_1'=[\cO \xrightarrow{\varphi} F'[1]]$ for some morphism $\varphi$ in $\cD(C)$. We have
\begin{equation}\label{eq-0}
    0=\Hom^{1}(T_1',T_2')=\Hom^{1}(T_1',j_*F'[1])=\Hom^2_{\cD(C)}(\cone(\varphi),F'),
\end{equation}
where the last equality follows from Lemma~\ref{lem-simple}. On the other hand, applying $\Hom(-,F')$ to the exact triangle $\cO \xrightarrow{\varphi} F'[1] \to \cone(\varphi)$ yields a surjection
\[
\Hom^2_{\cD(C)}(\cone(\varphi),F') \twoheadrightarrow \Hom^1_{\cD(C)}(F',F') \neq 0.
\]
which contradicts~\eqref{eq-0}. Therefore, $T_2' = j_*F$, and hence $T_1' = [\cO \xrightarrow{\varphi} F]$. Applying the same argument as in the previous case shows that
\begin{equation}\label{eqqq}
\Hom^{\leq 1}(\cone(\varphi),F)=0.    
\end{equation}
On the other hand, since $\Hom([\cO \to 0],T_2')=0$ and the destabilizing sequence~\eqref{eq-destab-ell} is non-split, we also have $\Hom([\cO \to 0],T_1')=0$. It follows that $H^0(\varphi)$ is injective. There are therefore two possibilities: either $\operatorname{im}(\varphi)$ is a torsion sheaf or $\varphi$ is injective. In the first case, we have $\operatorname{cone}(\varphi)\cong L[1]\oplus\widetilde{F}$ for a line bundle $L$ and a sheaf $\widetilde{F}$. However, \eqref{eqqq} implies that $\Hom(L,F)=0$, which is impossible since $F$ contains a nonzero torsion subsheaf. Hence we are in the second case, and $\varphi$ is injective. Applying $\Hom(-,F)$ to the short exact sequence $0\to\cO\xrightarrow{\varphi}F\to\operatorname{coker}(\varphi)\to0$ and using \eqref{eqqq}, we obtain $\Hom^i(F,F)\cong\Hom^i(\cO,F)$ for all $i\in\mathbb{Z}$. This forces $F\cong\cO$, as claimed.

\end{proof}
\begin{proof}[Proof of Theorem \ref{thm-elliptic}]
    Take $\sigma\in \Stab^{\mathrm{Geo}}\big(\mathcal{D}(\T)\big)$. If $[\cO \to 0]$ is $\sigma$-stable, then the claim follows from Theorem~\ref{thm-main}. If $[\cO \to 0]$ is $\sigma$-unstable or strictly $\sigma$-semistable, then from Lemma~\ref{lem-destab-seq-elliptic}, it follows that $[0\to \cO]$ is $\sigma$-stable. Thus the claim follows from Theorem~\ref{thm-second open subset}. 
\end{proof}

\bibliography{mybib}
 \bibliographystyle{halpha}

\end{document}